\newtheorem{theorem}{Theorem}[section]
\newtheorem*{theorem*}{Theorem}
\newtheorem{lemma}[theorem]{Lemma}
\newtheorem{corollary}[theorem]{Corollary}
\newtheorem{proposition}[theorem]{Proposition}
\newtheorem{condition}{Condition}
\newtheorem{conditionA}{A\kern-0.1mm}
\renewcommand\dots{\hbox to 1em{.\hss.\hss.}}
\theoremstyle{definition}
\newtheorem{remark}[theorem]{Remark}
\numberwithin{equation}{section}
\newcommand*{\abs}[1]{\left\lvert#1\right\rvert}
\newcommand*{\norm}[1]{\left\lVert#1\right\rVert}
\newcommand{\Mf}{\mathfrak{M}}
\def\bb#1{\mathbb{#1}}
\def\bf#1{\mathbf{#1}}
\def\scr#1{\mathscr{#1}}
\def\geq{\geqslant}
\def\leq{\leqslant}
\def\vphi{\varphi}
\def\EQ{\mathbb{E}_{\mathbb{Q}_s^x}} 
\newcommand\ee{\varepsilon}
\DeclareMathOperator{\Leb}{Leb}
\def\geq{\geqslant}
\def\leq{\leqslant}
\def\Rd {\mathbb{R}^d}
\def\Rd*{(\mathbb{R}^d)^*}
\def\Pd{{\mathbb{P}}^{d-1}}
\def\Pd*{(\bb P(V))^*}
\begin{document}

\begin{frontmatter}

%%%%%%%%%%%%%%%%%%%%%%%%%%%%%%%%%%%%%%%%%%%%%%
%%                                          %%
%% Enter the title of your article here     %%
%%                                          %%
%%%%%%%%%%%%%%%%%%%%%%%%%%%%%%%%%%%%%%%%%%%%%%
\title{The extremal position of a branching random walk\\ on the general linear group}
%\title{A sample article title with some additional note\thanksref{T1}}
\runtitle{Extremal position of a branching random walk on ${\rm GL}(d, \bb R)$}
%\thankstext{T1}{A sample of additional note to the title.}

\begin{aug}
%%%%%%%%%%%%%%%%%%%%%%%%%%%%%%%%%%%%%%%%%%%%%%%
%% ORCID can be inserted by command:         %%
%% \orcid{0000-0000-0000-0000}               %%
%%%%%%%%%%%%%%%%%%%%%%%%%%%%%%%%%%%%%%%%%%%%%%%
\author[A]{\inits{I.}\fnms{Ion}~\snm{Grama}\ead[label=e1]{ion.grama@univ-ubs.fr}},
\author[B]{\inits{S.}\fnms{Sebastian}~\snm{Mentemeier}\ead[label=e2]{mentemeier@uni-hildesheim.de}}
\and
\author[C]{\inits{H.}\fnms{Hui}~\snm{Xiao}\ead[label=e3]{xiaohui@amss.ac.cn}}
%%%%%%%%%%%%%%%%%%%%%%%%%%%%%%%%%%%%%%%%%%%%%%
%% Addresses                                %%
%%%%%%%%%%%%%%%%%%%%%%%%%%%%%%%%%%%%%%%%%%%%%%
\address[A]{Univ Bretagne Sud, CNRS UMR 6205 LMBA, Campus de Tohannic 56017, Vannes, France\printead[presep={,\ }]{e1}}

\address[B]{Universit\"at Hildesheim, Institut f\"ur Mathematik und Angewandte Informatik, Hildesheim, Germany\printead[presep={,\ }]{e2}}

\address[C]{Academy of Mathematics and Systems Science, 
Chinese Academy of Sciences, Beijing 100190, China\printead[presep={,\ }]{e3}}
\end{aug}

\begin{abstract}
Consider a branching random walk $(G_u)_{u\in \mathbb T}$ on the general linear group $\textrm{GL}(V)$ of a finite dimensional space $V$, where $\mathbb T$ is the associated genealogical tree with nodes $u$. 
For any starting point $v \in V \setminus\{0\}$ with $\|v\|=1$ and $x = \bb R v \in \bb P(V)$, 
let $M^x_n=\max_{|u| = n} \log \| G_u v \|$ denote the maximal position of the walk $\log \| G_u v \|$ in the generation $n$. 
We first show that under suitable conditions, 
$\lim_{n \to \infty} \frac{M_n^x }{n} = \gamma$ almost surely, 
where $\gamma \in \mathbb R$ is a constant. 
Then, in the case when $\gamma = 0$, 
under appropriate {\it boundary conditions}, 
we refine the last statement by determining the rate of convergence at which $M_n^x$ 
converges to $-\infty$. We prove in particular that $\lim_{n \to \infty} \frac{M_n^x}{\log n} = -\frac{3}{2\alpha}$ in probability, where $\alpha >0$ is a constant determined by the boundary conditions. Analogous properties are established for the minimal position. As a consequence we derive the asymptotic speed of the maximal and minimal positions for the coefficients, the operator norm and the spectral radius of $G_u$. 
\end{abstract}

\begin{abstract}[language=french]
Consid\'erons une marche al\'eatoire branchante $(G_u)_{u\in \mathbb T}$ sur le groupe lin\'eaire g\'en\'eral $\textrm{GL}(V)$ 
d'un espace $V$ de dimension finie, o\`u  $\mathbb T$ est l'arbre g\'en\'ealogique associ\'e avec des n\oe uds $u$.
Pour tout point de d\'epart $v \in V \setminus{0}$ avec $\| v \|=1$ et $x = \bb R v \in \bb P(V)$,
soit $M^x_n = \max_{| u | = n} \log \| G_u v \|$ la position maximale de la marche $\log \| G_u v \|$ \`a la g\'en\'eration $n$.
Nous montrons d'abord que sous des conditions appropri\'ees,
$\lim_{n \to \infty} \frac{M_n^x }{n} = \gamma$ presque s\^urement,
o\`u $\gamma \in \mathbb R$ est une constante.
Ensuite, dans le cas o\`u $\gamma = 0$,
sous des {\it conditions fronti\`eres} appropri\'ees,
nous affinons la derni\`ere affirmation en d\'eterminant la vitesse de convergence \`a laquelle $M_n^x$
converge vers $-\infty$. Nous prouvons en particulier que $\lim_{n \to \infty} \frac{M_n^x}{\log n} = -\frac{3}{2\alpha}$ en probabilit\'e, o\`u $\alpha >0$ est une constante d\'etermin\'ee par les conditions fronti\`eres. 
Des propri\'et\'es analogues sont \'etablies pour la position minimale. Comme cons\'equence, % de ces r\'esultats, 
nous d\'erivons la vitesse asymptotique des positions maximale et minimale pour les coefficients, la norme op\'erateur et le rayon spectral de $G_u$.
\end{abstract}

\begin{keyword}[class=MSC]
\kwd[Primary ]{60J80}
%\kwd{???}
\kwd[; secondary ]{60B20}
\kwd{60J05}
\end{keyword}

\begin{keyword}
\kwd{Branching random walk}
\kwd{Maximal and minimal positions}
\kwd{Products of random matrices}
\kwd{Conditioned local limit theorem}
\end{keyword}

\end{frontmatter}

%%%%%%%%%%%%%%%%%%%%%%%%%%%%%%%%%%%%%%%%%%%%%%
%%%% Main text entry area:

%%%%%%%%%%%%%%%%%%%%%%%%%%%%%%%%%%%%%%%%%%%%%%%%%%%%%%%%
%%%%%%%%%%%%%%%%%%%%%%%%%%%%%%%%%%%%%%%%%%%%%%%%%%%%%%%%
\section{Introduction}
Let $V=\bb R^d$ be a $d$-dimensional Euclidean vector space equipped with the norm $\| \cdot \|$, 
where $d\geq 1$ is an integer. 
Denote by  $\bb G = {\rm GL}(V)$ the general linear group of the vector space $V$. 
First at time $0$ we start with one root particle $\emptyset$. 
At time step 1, the root particle generates a random number of children.  
At subsequent time steps, this happens in an independent manner according to the same random mechanism
for every obtained child.
The above iterations generate a branching process whose genealogical tree is denoted by $\bb T$ with nodes denoted by $u$. 
We write $|u|=n$ when $u$ is a node of generation $n\geq 1$.
A branching random walk on the group $\bb G$ is then obtained as follows. 
We assign to the root particle $\emptyset$ the identity matrix ${\rm Id}$ and to each of its children $u$ with $|u|=1$ a random matrix $g_u$. 
Note that the sequence $g_u$, $|u|=1$ may have any dependence structure. 
On each of the further generations children are assigned matrices $g_u$ in an independent manner according to the same random mechanism. 
Starting with the root, by successive multiplication from the left of the random elements 
on the branch corresponding to a node $u\in \bb T$,  we obtain a branching random walk with values in $\bb G$ 
which we shall denote by $(G_u)_{u\in \bb T}$.

Denote by $\bb P(V)$ the projective space of $\bb R^d$. 
Let  $v\in \bb R^d$ be a vector with $\|v\| =1$ and let $x=\bb R v \in \bb P(V)$ be the direction of $v$. 
The first objective of the paper is to study the asymptotic behaviour of 
the maximal displacement $M_n^x=\max_{| u | = n} \log \|G_u v\|$ as $n \to \infty$.  
Moreover, we shall consider the following extension of this problem.  
Denote by $G_u\cdot x$ the Markov branching process associated   
to the projective action of the group $\bb G$ on the projective space $\bb P(V)$, 
which describes the behaviour of the directions of the walk $(G_u v)_{u\in \bb T}$.
Given a set of directions $A \subseteq \bb P(V)$, it is of interest to study 
the maximal position of  $\log \|G_u v\|$ provided $G_u\cdot x\in A$, 
i.e.\  the directed maximal displacement $M_n^x(A)= \sup_{G_u\cdot x\in A, | u | = n} \log \|G_u v\|$,
where, by convention we set $\sup \emptyset =-\infty$, to include the case when the condition $G_u \cdot x \in A$ is not satisfied.
We shall  as well study similar problems for the related characteristics of $(G_u)$ such as the coefficients, the operator norm and the spectral radius. 

Let us describe briefly our main results in the particular case when $A = \bb P(V)$.
Under suitable conditions, the following convergence holds (see Theorem \ref{Thm_LLN-001}): 
for any $x \in \bb P(V)$,  conditionally on the system's survival,
\begin{align} \label{LawLN0}
\lim_{n \to \infty} \frac{M_n^x }{n} = \gamma   \quad \mbox{almost surely,} 
\end{align} 
where $\gamma\in \bb R$ is a constant. 
Upon subtracting this linear drift $\gamma n$, we end up with a transformed process that is in the {\it boundary case} 
(for a precise formulation see condition \ref{Condi_ms} 
formulated in the next section). 
Our main interest is to investigate second-order asymptotics. 
We prove in particular that, for any $x \in \bb P(V)$, conditionally on the system's survival, 
$$ 
\lim_{n \to \infty} \frac{M_n^x - \gamma n}{\log n} = -\frac{3}{2\alpha}  \quad \text{in probability,}
$$
where $\alpha >0$ is a constant defined by the boundary condition \ref{Condi_ms}.
The full statement of our result for the maximal displacement is the content of Theorem \ref{Thm_In_Pro} below. 

The asymptotic behaviour of the maximal position of classical branching random walks in 
$\bb R^1$ has been investigated by many authors, 
see for example Hammersley \cite{Ham74}, Kingman \cite{Kin75},  Biggins \cite{Big76}. 
Significant progress has been made by Hu and Shi \cite{HS09}, Addario-Berry and Reed \cite{AR09},  A\"id\'ekon \cite{Aid13},
A\"id\'ekon and Shi \cite{AS10, AS14}, 
where  conditioned limit theorems for sums of independent and identically distributed (i.i.d.)\ random variables are used to 
establish a law of large numbers and limit theorems for its fluctuations. 

The branching random walk on the general linear group $\bb G$ studied here is a natural extension of the classical branching random walk in $\bb R^1$ and is of particular interest because the underlying group is non-commutative. 
The model was investigated in Buraczewski,  Damek, Guivarc'h and Mentemeier \cite{BDGM14}, Mentemeier \cite{Men16} and Bui, Grama and Liu \cite{BGL20}.
In view of the one-dimensional results, it is natural to ask for the behaviour of $M_n^x$ and, moreover, 
to study the maximal position of particles with a given direction $M_n^x(A)$. 
Both these questions have not been addressed in the works mentioned above and become considerably more involved for branching random walks on groups.  
The difficulty in obtaining such results is in a great part due to the lack of the corresponding 
conditioned local limit theory for products of random matrices, besides the 
inherently heavy argument in dealing with extremal position of the walk $(G_u v)_{u\in \bb T}$. 

Recent progress for products of random matrices has been made in Grama, Le Page and Peign\'e \cite{GLP17} and
Grama, Lauvergnat and Le Page  \cite{GLL18}, where some integral conditioned theorems have been established.
Local limit theorems for Markov chains with finite states have been considered in  Grama, Lauvergnat and Le Page \cite{GLL20},
however, establishing convenient conditioned local limit theorems 
for products of random matrices is still an open problem. 
Following some recent developments in Grama, Quint and Xiao  \cite{GQX21} and Grama and Xiao \cite{GX21}, 
in the present paper we shall establish some new conditioned limit theorems for products of random matrices
under the  assumption that the matrices have a density with respect to the Haar measure on $\bb G$.  
This conditioned local limit theory is the key point in studying the maximal displacement $M_n^x(A)$. 
Besides, it could also be useful for studying other problems like a version of the Seneta-Heyde 
theorem for branching random walks on groups. 
This question will be considered in a subsequent work.

%%%%%%%%%%%%%%%%%%%%%%%%%%%%%%%%%%%%%%%%%%%%%%%%%%%%%%%%
%%%%%%%%%%%%%%%%%%%%%%%%%%%%%%%%%%%%%%%%%%%%%%%%%%%%%%%%
\section{The setup and main results}

\subsection{Notations and Assumptions} \label{Sec-notation}
Let $\bb R_{+} = [0, \infty)$ and denote by  $\bb N$ the set of nonnegative integers.
For any integer $d \geq 1$, denote by $V = \bb R^d$ the $d$-dimensional Euclidean space.
We fix a  basis $(e_i)_{1 \leq i \leq d}$ of $V$ 
and define the associated norm on $V$ by $\|v\|^2 = \sum_{i=1}^d |v_i|^2$ for $v = \sum_{i=1}^d v_i e_i \in V$. 
Let $\bb G = \textup{GL}(V)$ be the general linear group of $V$.
The action of $g \in \bb G$ on a vector $v \in V$ is denoted by $gv$.
For any $g \in \bb G$, let $\| g \| = \sup_{v \in V \setminus \{0\} } \frac{\| g v \|}{\|v\|}$.
We write $\log_+ t = \max\{ \log t, 0\}$ for $t >0$, and $a \wedge b = \min \{ a, b \}$ for $a, b \in \bb R$. 
For any Borel set $B \subset \bb R$, denote by $B^{\circ}$ its interior. 
All over the paper, $c, C$  possibly supplied with indices will denote positive constants depending on their indices.

Let  $\bb P(V)$ be the projective space of $V$ 
equipped with the angular distance $\bf d$ defined by 
$\bf d(x, x') = \sqrt{1 - \frac{|\langle v, v' \rangle|^2}{ \|v\|^2 \|v'\|^2 }}$
for $x = \bb R v$ and $x' = \bb R v'$ with $v, v' \in \bb R^d \setminus \{0\}$. 
The action of $g\in \bb G$ on $x = \bb R v \in \bb P(V)$ is defined by $g\cdot x = \bb R gv \in \bb P(V)$.
For $g \in \bb G$ and $x = \bb R v \in \bb P(V)$ with $v \in V \setminus \{0\}$,
introduce the norm cocycle
\begin{align}\label{Def-dot-cocycle}
\sigma \, :\, \bb G \times \bb P(V) \to \bb R, \qquad \sigma(g, x) = \log \frac{\|gv\|}{\|v\|}. 
\end{align}

To define a branching random walk on the group $\bb G$, 
we assume that on  the probability space  $(\Omega,\mathscr F, \bb P)$ 
we are given a point process $\mathscr N$ on $\bb G$, that is a random counting measure on the Borel subsets of $\bb G$. 
At the time $0$ there is one root ancestor
to which we assign the identity matrix of the group $\bb G$. 
In the first generation the root ancestor gives birth to $\mathscr N(\bb G)$ children to which we assign 
the collection of matrices given by the point process $\mathscr N$. 
Each particle in the first generation gives birth to new particles with corresponding matrices 
according to the independent copies of the same point process $\mathscr N$, which form
the second generation. The system goes on according to the same mechanism. 
The genealogy of these particles forms a Galton-Watson tree $\bb T$ with the root $\emptyset$. We use Ulam-Harris notation, identifying $\bb T$ as a random subset of $\bigcup_{n=0}^\infty \bb N^n$. Hence, each node $u\in \bb T$ of generation $n$ will be identified with an $n$-tuple  $u=(u_1, \ldots, u_n)$, where $u_i\in \bb N$. For notational simplicity, we write $u=u_1\dots u_n$ and further denote by $u|k:=u_1 \dots u_k$ the restriction of $u$ to its first $k$ components. Given $u=u_1 \dots u_n$ and $v=v_1 \dots v_m$, we write $uv=u_1 \dots u_nv_1 \dots v_m$ for the concatenation. 
For a node $u$ in $\bb T$ by $|u|$ we denote its generation.

From the construction given above, to each node $u\in \bb T$ 
 corresponds a random element $g_u \in \bb G$. 
The branching random walk $(G_u)_{u\in \bb T}$  on the group $\bb G$
is defined then by taking the left product of the elements 
along the branch leading from $\emptyset$ to $u \in \bb T$:  
\begin{align*}
 G_u := g_u g_{u|n-1}\ldots g_{u|2} g_{u|1}, \ u\in \bb T.
\end{align*} 
Note that these factors are independent, but not necessarily identically distributed. 
There is a natural filtration given by $\scr{F}_n:= \sigma\big( \{ g_u \, : \, |u| \leq n\}\big)$. 
Obviously, the point process $\mathscr N$ can be written as $\mathscr N = \sum_{|u|=1}\delta_{ G_u }$. 
We denote by $N=\mathscr N(\bb G)$ the number of particles in the first generation as well as by $N_u$ the number of children of the particle at node $u \in \bb T$.

%To make use of the branching property, that each particle can as well be considered as the root of a branching process which has the same law as the original one, 
We define the following shift operator which will help us to  use the branching property. For any function $F=F\big((g_u)_{u \in \bb T} \big)$ of the branching process and any node $w \in \bb T$, we define 
\begin{equation}\label{eq:shift-operator}
[F]_w=F \big( (g_{wu})_{u \in \bb T_w} \big), 
\end{equation}
 where $\bb T_w$ denotes the random subtree rooted at $w$, its first generation being formed by the children of the particle at $w$. That is, $[F]_w$ is evaluated on the subtree started at $w$.
For example, $[G_u]_w$ means the product 
$g_{wu} g_{wu|n-1} \ldots g_{wu|2} g_{wu|1}$ for $u=u_1 u_2 \ldots u_n \in \bb T_{w}$, 
i.e. the product of the elements of $\bb G$ along the branch leading from $w$ to $wu$.

Denote the event corresponding to the system's survival after $n$ generations by
$\mathscr S_n   = \{ \sum_{|u|=n} 1 >0 \}.$ 
Then $\mathscr S = \cap_{n = 1}^{\infty} \mathscr S_n$ is the event corresponding to the system's ultimate survival.
We say that a sequence of random variables $(\xi_n)_{n\geq 1}$ converges in probability to the random variable $\xi$
under the system's survival if  
$\lim_{n\to\infty} \bb P( |  \xi_n - \xi | \geq \ee | \mathscr S) =0$ for any $\ee >0$. 
The convergence almost surely under the system's survival is defined in the same way: 
$\bb P( \lim_{n\to\infty} \xi_n = \xi  | \mathscr S) =1$.

For a non-zero vector  $v \in \bb R^d$, our goal is to study some extremal properties of the branching random 
walk $(G_u v )_{u\in \bb T}$ on $\bb R^d$ defined by the action of $G_u$ on $v$. 
We shall do it jointly with the  walk $(G_u \cdot  x)_{u\in \bb T}$ on $\bb P(V)$ defined by the action of $G_u$ 
on the projective element $x = \bb R v \in \bb P(V)$. 
To be more precise, we shall study the pair
\begin{align} \label{Def_Xnxu_Snxu}
X_u^x: = G_u \cdot x  = \bb R  G_u  v \in \bb P(V), 
	\quad 
S_u^x: = \sigma(G_u, x) = \log \frac{ \|G_u v\|}{\|v\|} \in \bb R, \quad u\in \bb T.
\end{align}
Given a Borel set $A \subseteq \bb P(V)$, 
 define respectively the maximal and minimal positions of $S_u^x$ provided 
that direction $X_u^x$ belongs to the set $A$ for nodes $u$ in the $n$-th generation  
by setting 
\begin{align} \label{MinPosition001A}
	M_n^x(A): = \sup \left\{ S_u^x:  \,  X_u^x \in A, \, |u| = n \right\},
	\qquad
	m_n^x(A): = \inf \left\{ S_u^x:   \,  X_u^x \in A, \, |u| = n \right\}, 
\end{align}
where $\sup \emptyset =-\infty$ and $\inf \emptyset = + \infty$. 
For conditions on $A$ under which the sets in \eqref{MinPosition001A} become nonempty eventually for large $n$,
we refer to Proposition \ref{Prop-SLLN-direction} and to the discussion before Theorem \ref{Thm_LLN-001}. 
The global maximal and minimal displacements 
in the $n$-th generation are defined by
\begin{align*} 
M_n^x: = M_n^x(\bb P(V)) = \sup \left\{ S_u^x: |u| = n  \right\},  
\qquad m_n^x: = m_n^x(\bb P(V)) =  \inf \left\{ S_u^x: |u| = n  \right\},   
\end{align*}
with the same convention $\sup \emptyset =-\infty$ and $\inf \emptyset = + \infty$.

Let $\mathbb E$ be the expectation corresponding to the probability measure $\mathbb P$. 
We need the following moment condition on $N$,
which implies that the branching random walk is in the supercritical regime.

\begin{conditionA}\label{Condi_N}
	There exists a constant $\delta>0$ such that $1 < \bb E N$ and  $\bb E (N^{1 + \delta}) <\infty$. 
\end{conditionA} 
This condition implies $\bb P(\scr S)>0$ for the survival event $\scr{S}$ and allows us to define a probability measure $\mu$ on $\bb G$ by\footnote{Note that since $\norm{\cdot}$ is submultiplicative, trying to avoid assumption \ref{Condi_N} by defining
	$$ \mu(B) ~=~ \frac{\bb E \big[ \sum_{|u|=1} \norm{G_u}^\theta \mathds 1_B(G_u)\big]}{\bb E \big[ \sum_{|u|=1} \norm{G_u}^\theta \big]}$$ for some $\theta>0$  does not lead to a convolution-stable definition of a measure $\mu$.}
\begin{align}\label{Def-mu}
\mu(B) := \frac{1}{\bb E N}\bb E \Big[ \sum_{|u|=1} \mathds 1_B(G_u) \Big], 
\end{align}
for any Borel measurable set $B \subseteq \bb G$.  
Our further assumptions are formulated in terms of the measure $\mu$. 
As announced before, we will work with a density assumption on the law of the matrices.

\begin{conditionA}\label{Condi-density}
The measure $\mu$ has a density $\dot{\mu}$ with respect to the Haar measure on $\bb G$ and $\dot{\mu}$ is bounded or continuous. 
Moreover,  there exist constants $c_0, \eta_0  > 0$ such that $\inf_{x \in \bb P(V)} \mu \{ g \in \bb G:  \sigma(g, x) > c_0 \} >0$ 
and 
\begin{align} \label{Moment001}
 \int_{\bb G} \max \big\{ \|g\|, \|g^{-1}\| \big\}^{\eta_0} \left( 1 + \|g\|^d   | \textup{det}(g^{-1})|  \right)  \mu(dg)< \infty.
\end{align}
\end{conditionA}

The condition $\inf_{x \in \bb P(V)} \mu \{ g \in \bb G:  \sigma(g, x) > c_0 \} >0$ 
is required to ensure that a certain harmonic function $V_s(x, y)$ (cf.\ Proposition \ref{Prop-harmonic}) 
is strictly positive for any $x \in \bb P(V)$ and $y \geq 0$. 

Let $(g_i)_{i \geq 1}$ be a sequence of i.i.d.\
random elements on $\bb G$ with the law $\mu$ (defined by \eqref{Def-mu})  and denote by 
\begin{align*}
	G_n := g_n \ldots g_1,   \quad  n\geq 1, 
\end{align*}
their left product. Our next assumption can be understood as an irreducibility assumption on the action of $G_n$ on $\bb P(V)$.

\begin{conditionA}\label{Condi-density-invariant}
For all $x \in \bb P(V)$ and all non-empty open sets $O \subset \bb P(V)$ 
there is $n \in \bb N$ such that $\bb P(G_n \cdot x \in O) >0$. 
\end{conditionA}

Condition \ref{Condi-density} is obviously satisfied when $\mu$ has a continuous density with compact support on $\bb G$.  Condition \ref{Condi-density-invariant} is satisfied for example when the support of $\mu$ contains an open subset of 
the special orthogonal group $SO(d,\bb R)$ in which case it is also possible for $\mu$ to have a compact support in $\bb G$. 
In fact, if the support of $\mu$ contains an open subset of $SO(d,\bb R)$, 
then by \cite[Lemma 2.1]{Bhatt72}, there exists $n\in \bb N$ such that the support of the $n$-fold convolution of $\mu$ contains the whole group $SO(d,\bb R)$, which means that the action of $G_n$ is transitive on $\bb P(V)$. 
Another example where  condition \ref{Condi-density-invariant} is satisfied is when the density of $\mu$ from \ref{Condi-density} is strictly positive on the whole group $\bb G$.

First let us note that, under conditions \ref{Condi-density} and \ref{Condi-density-invariant}, 
there exists a unique $\mu$-invariant probability measure $\nu$ on $\bb P(V)$ such that $\mu * \nu = \nu$ 
(this is a consequence of Proposition \ref{prop:Ps} stated below).
In fact, this conclusion holds under a more general condition (i-p) stated in Section \ref{Sec-SP-perturbed}; 
we refer to  \cite{BL85, GL16} for details.

Conditions \ref{Condi-density} and \ref{Condi-density-invariant} 
 play an important role in the paper. Besides being essential for the existence of a dual random walk, 
they ensure spectral gap properties of the transfer operators $P_s$, which we are going to define now. 
Set 
\begin{align}
& I_{\mu}^+  = \left\{ s \geq 0: \int_{\bb G} \norm{g}^s  \left( 1 + \|g\|^d   | \textup{det}(g^{-1})|  \right) \mu(dg)< \infty  \right\},  
 \label{def-I-mu-plus}  \\
& I_{\mu}^- = \left\{ s \leq 0:   \int_{\mathbb G}  \|g^{-1}\|^{-s}  
  \left( 1 + \|g\|^{d}   | \textup{det}(g^{-1})|  \right)  \mu(dg) < \infty \right\}.  \label{def-I-mu-minus}
\end{align}
By \ref{Condi-density} and  H\"{o}lder's inequality, 
 both $I_{\mu}^+$ and $I_{\mu}^-$ are non-empty intervals of $\bb R$. 
Let $\scr B$ be the Banach space of real-valued continuous functions on $\bb P(V)$
endowed with the supremum norm $\| \cdot \|_{\scr B}$.
For any $s \in I_{\mu} := I_{\mu}^+ \cup I_{\mu}^-$, define the transfer operator $P_s$ as follows: 
for $\varphi \in \scr B$ and $x \in \bb P(V)$,
\begin{align}\label{Def-Ps}
P_s \varphi(x) 
:= \int_{\bb G}  e^{s \sigma(g, x)} \varphi(g \cdot x) \mu(dg) 
= \frac{1}{\bb E (N)} \bb E \Big( \sum_{|u| = 1}  e^{s \sigma (G_u, x)} \varphi(G_u \cdot x) \Big),
\end{align}
where the second equality follows directly from the definition of $\mu$. 
By Proposition \ref{prop:Ps} below, for any $s \in I_{\mu}$, 
there exists a unique dominant eigenvalue $\kappa(s)>0$ of $P_s$ such that
the function $s \mapsto \kappa(s)$ is analytic on $I_{\mu}^\circ$ (the interior of $I_{\mu}$).
Moreover, it will be shown in Lemma \ref{lem:strict_convex}
that the  function $s \mapsto \log \kappa(s)$ is strictly convex on $I_{\mu}^{\circ}$.

For a branching random walk on the real line, many of its properties are encoded in the (log-) Laplace transform of the intensity measure of the point process. The corresponding object in our setting is the function $\mathfrak{m}$ defined by 
\begin{align}\label{Def-m-s}
\mathfrak m(s) =  \kappa (s)  \bb E N  \quad \mbox{and} \quad  \Mf(s) =\log \mathfrak{m}(s),  \quad  s \in I_{\mu}. 
\end{align}
The next condition is the analogue of the {\em boundary-case} condition (cf. \cite{BK05,HS09}) imposed for branching random walks on the real line in order to study second-order asymptotic terms of the maximal positions $M_n^x(A)$ and $M_n^x$. 
\begin{conditionA}\label{Condi_ms}
There exists a constant $\alpha \in (I_{\mu}^{+})^{\circ}$ with $\alpha >0$ such that 
$\mathfrak m (\alpha)  = 1$ and $\mathfrak m '(\alpha) = 0$. 
In addition, 
$$\bb E \Big(\sum_{|u|=1}  \|G_u\|^{\alpha} \big( 1+\log_+ \|G_u\| + \log_+ \|G_u^{-1}\| \big) \Big)^2 < \infty.$$ 
\end{conditionA}

The boundary condition \ref{Condi_ms} guarantees 
that, under some changed measure, the behaviour of our branching random walk is similar to that of 
a left product of random matrices whose Lyapunov exponent is $0$. 
The corresponding change of measure and a many-to-one
formula will be introduced in Section \ref{sec-LLN}.

In the one-dimensional case, the study of the minimal position of a branching random walk is equivalent to studying the maximal position of the same walk with negative increments. This simple relation is not true in our matrix-valued setting, where the maximal position is related to $\| G_u \|$, while the minimal position relates to $\| G_u^{-1} \|$. 
Hence, to study the second-order asymptotic terms of the minimal positions $m_n^x(A)$ and $m_n^x$, 
we need a separate boundary condition.

\begin{conditionA}\label{Condi_Neg_alpha}
There exists a constant $-\beta \in (I_{\mu}^{-})^{\circ}$ with $\beta >0$
such that $\mathfrak m (-\beta)  = 1$ and $\mathfrak m'(-\beta) = 0$.  
In addition, 
$$\bb E \Big(\sum_{|u|=1}  \|G_u^{-1}\|^{-\beta} \big( 1+\log_+ \|G_u\| + \log_+ \|G_u^{-1}\| \big) \Big)^2 < \infty.$$
\end{conditionA}

We will show below that - as in the one-dimensional case - many branching random walks on $\bb G$ 
can be transformed into the boundary case.

We finish with two moment assumptions of $Z \log Z$-type, which will only appear in Theorem \ref{Thm_LLN-001}. Denote $Y_s=\sum_{|u| = 1}  \| G_u \|^s$.  

\begin{conditionA}\label{Condition_Extra}
For any $s \in I_{\mu}^+$, it holds $\bb E (Y_s \log_+ Y_s ) < \infty.$
\end{conditionA}

\begin{conditionA}\label{Condition_Extra-mini}
For any $s \in I_{\mu}^-$, it holds $\bb E ( Y_s \log_+ Y_s ) < \infty.$ 
\end{conditionA}

Note that if $\mu$ has a compact support on $\bb G$, then conditions \ref{Condition_Extra} and \ref{Condition_Extra-mini}
are satisfied due to condition \ref{Condi_N}.

%%%%%%%%%%%%%%%%%%%%%%%%%%%%%%%%%%%%%%%%%%%%%%%%%%%%%%
\subsection{Results for maximal and minimal positions}

Now we are ready to state the main results of the paper. Our first result is a law of large numbers for maximal and minimal positions of our branching random walk.  
Note that, in Proposition \ref{Prop-SLLN-direction} below, a strong law of large numbers for 
the counting measure of a branching random walk on $\bb G$ is established, from which
it follows that, for each Borel set $A \subseteq \bb P(V)$ satisfying $\nu(A) > 0$ and $\nu(\partial A) = 0$, 
there exists $n_0: = n_0(A) \geq 1$ such that in all generations after $n_0$,
there are particles $u$ such that $X_u^x$ is in $A$,
so that both $M_n^x(A)$ and $m_n^x(A)$ (cf.\ \eqref{MinPosition001A}) are real-valued.

\begin{theorem}\label{Thm_LLN-001}
Assume conditions \ref{Condi_N},  \ref{Condi-density}, \ref{Condi-density-invariant} and \ref{Condition_Extra}. 
Assume there is $s^* \in (I_\mu^+)^\circ$ such that $s^*\Mf'(s^*)= \Mf(s^*)$.  
Then, for any $x \in \bb P(V)$ and any Borel set $A \subseteq \bb P(V)$ satisfying $\nu(A) > 0$ and $\nu(\partial A) = 0$,  
conditionally on the system's survival, 
\begin{align}\label{eq_LLN-001}
\lim_{n \to \infty}  \frac{M_n^x(A)}{n} = \Mf'(s^*)  \quad  \bb P\mbox{-almost surely}. 
\end{align}
Similarly, assume conditions \ref{Condi_N},  \ref{Condi-density}, \ref{Condi-density-invariant} and  \ref{Condition_Extra-mini}.  
Assume there is $s^* \in (I_\mu^-)^\circ$ such that $s^*\Mf'(s^*)= \Mf(s^*)$. 
Then, for any $x \in \bb P(V)$ and any Borel set $A \subseteq \bb P(V)$ satisfying $\nu(A) > 0$ and $\nu(\partial A) = 0$,  
conditionally on the system's survival, 
\begin{align}\label{eq_LLN-001mini}
\lim_{n \to \infty}  \frac{m_n^x(A)}{n} = \Mf'(s^*)  \quad  \bb P\mbox{-almost surely}. 
\end{align}
\end{theorem}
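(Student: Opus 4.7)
The plan is to prove matching upper and lower bounds for $M_n^x(A)/n$, after which the corresponding statement for $m_n^x(A)/n$ follows by a parallel argument using $-\beta < 0$ in $(I_\mu^-)^\circ$ and the dual transfer operator. The upper bound $\limsup_n M_n^x(A)/n \leq \gamma_+$ is the simpler half: since $M_n^x(A) \leq M_n^x$, it suffices to bound the global maximum. By the many-to-one identity
$$\bb E \sum_{|u|=n} e^{s\sigma(G_u,x)} \;=\; (\bb E N)^n \, P_s^n \mathbf 1(x),$$
and the spectral gap of $P_s$ from Lemma \ref{Lem_spectral_gap} (giving $P_s^n \mathbf 1(x) = O(\kappa(s)^n)$), this expectation is at most $C(x)\mathfrak m(s)^n$ for each $s \in (I_\mu^+)^\circ$. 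Markov's inequality then yields $\bb P(M_n^x \geq an) \leq C(x) \exp(n[\log\mathfrak m(s) - sa])$, which is summable as soon as $a > \log\mathfrak m(s)/s$; optimizing over $s$ and applying Borel-Cantelli gives $\limsup_n M_n^x/n \leq \gamma_+$ almost surely.

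For the lower bound $\liminf_n M_n^x(A)/n \geq \gamma_+$ on $\scr S$, pick $s^\ast \in (I_\mu^+)^\circ$ realizing the infimum defining $\gamma_+$, so that $(\log\mathfrak m)'(s^\ast) = \gamma_+$ by optimality (if the infimum is only approached at the boundary, work with a sequence $s_k$ tending to $\partial I_\mu^+$). Introduce the Biggins additive martingale
$$W_n(s^\ast) \;=\; \frac{1}{\mathfrak m(s^\ast)^n\, r_{s^\ast}(x)} \sum_{|u|=n} e^{s^\ast \sigma(G_u,x)}\, r_{s^\ast}(G_u\cdot x).$$
Condition \ref{Condition_Extra} is the Kesten-Stigum-type moment bound forcing $W_n(s^\ast) \to W_\infty$ in $L^1$ with $\{W_\infty > 0\} = \scr S$ modulo null sets. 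Lyons' size-biased construction then yields, under the tilted probability $\widetilde{\bb P}$ (with density $W_n(s^\ast)$ on $\scr F_n$), a distinguished spine $(\xi_k)_{k\geq 0}$ whose matrix increments have the tilted law $\mathfrak m(s^\ast)^{-1} e^{s^\ast\sigma(g,\cdot)} [r_{s^\ast}(g\cdot)/r_{s^\ast}(\cdot)]\,\mu(dg)$, so that $\sigma(\xi_k\cdots\xi_1,x)/k \to \gamma_+$ $\widetilde{\bb P}$-a.s., while the projective component $X_k$ is an ergodic Markov chain on $\bb P(V)$ with stationary measure $\pi_{s^\ast}$ proportional to $r_{s^\ast}\,d\nu_{s^\ast}$.

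For the directed refinement, Birkhoff's theorem along the spine combined with the above LLN gives, for any $\ee > 0$,
$$\widetilde{\bb P}\big(\sigma(\xi_n\cdots\xi_1,x) \geq (\gamma_+ - \ee)n,\ X_n \in A\big) \;\longrightarrow\; \pi_{s^\ast}(A) > 0,$$
where positivity uses $\nu(A)>0$, $\nu(\partial A)=0$, and the absolute continuity of $\nu_{s^\ast}$ with respect to $\nu$ inherited from the density assumption \ref{Condi-density} via the spectral analysis of $P_s$. Undoing the size-biased decomposition, on $\scr S$ and with $\bb P$-probability tending to one there exists a node $u$ of generation $n$ with $G_u\cdot x \in A$ and $\sigma(G_u,x) \geq (\gamma_+-\ee)n$; a standard Galton-Watson subtree argument (exploiting the independent copies of the root process rooted at surviving particles of an early generation) upgrades convergence in probability to almost sure convergence on $\scr S$. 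The main obstacle will be the directional bookkeeping in this last step: establishing ergodicity of $(X_k)$ with quantitative rates under the tilted measure, and verifying $\pi_{s^\ast}(A) > 0$ under the density hypotheses of the theorem (without invoking the stronger \ref{Condi-density-invariant}), are precisely the points where the matrix-valued setting diverges from classical one-dimensional BRW and requires the full spectral-gap machinery developed in the paper.
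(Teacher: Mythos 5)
Your upper bound is the same as the paper's: many-to-one, Markov's inequality, Borel--Cantelli. Your lower bound, however, is built on a route — Biggins additive martingale at the optimal tilt plus a Lyons spinal decomposition — that differs from the paper's (which invokes the precise large-deviation asymptotics of Bui--Grama--Liu \cite[Theorem~2.6]{BGL20} for the counting measure), and it contains a genuine gap.

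The problem is at the very first step of your lower bound. You tilt at $s^\ast$, the point where $s \mapsto \log\mathfrak m(s)/s$ attains its infimum $\gamma_+$, and you assert that condition \ref{Condition_Extra} forces $W_n(s^\ast) \to W_\infty$ in $L^1$ with $W_\infty>0$ on $\scr S$. This is false: at $s^\ast$ the Biggins martingale \emph{degenerates}. Indeed, $\log\mathfrak m(s^\ast)=s^\ast\gamma_+$ and $\Lambda'(s^\ast)=\gamma_+$, so
\[
\log\mathfrak m(s^\ast) - s^\ast\,\Lambda'(s^\ast) \;=\; s^\ast\gamma_+ - s^\ast\gamma_+ \;=\; 0,
\]
which is precisely the boundary of the Biggins mean-convergence region. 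The Biggins/Lyons dichotomy gives $W_n(s^\ast)\to 0$ almost surely regardless of the $X\log X$ condition; the $X\log X$ condition \ref{Condition_Extra} is necessary but not sufficient, and the spinal construction at $s^\ast$ gives a size-biased measure under which the martingale is a.s.~bounded but the original martingale limit is $0$, so you cannot ``undo'' the change of measure to get a positive-probability event on $\scr S$. (This degeneracy is exactly the point of conditions \ref{Condi_ms}, \ref{Condi_Neg_alpha} and of Theorems \ref{Thm_In_Pro}, \ref{Thm_In_Pro_min}: $\alpha$ there is this very critical exponent.) Your contingency remark about ``a sequence $s_k$ tending to $\partial I_\mu^+$'' addresses a different issue (whether the infimum is attained inside $I_\mu^+$), not this one.

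The fix is standard but must be built in: work at sub-critical $s<s^\ast$ so that $\log\mathfrak m(s)-s\Lambda'(s)>0$; then $W_n(s)$ does converge in $L^1$ with $\{W_\infty(s)>0\}=\scr S$, the spine has drift $q(s)=\Lambda'(s)<\gamma_+$, and you obtain $\liminf_n M_n^x(A)/n \geq q(s)$ a.s.~on $\scr S$ together with the direction constraint via ergodicity of $(X_k)$ under $\widetilde{\bb P}_s$; finally let $s\uparrow s^\ast$ so that $q(s)\uparrow\gamma_+$. This recovers the result and is a genuinely different (and, arguably, more self-contained) proof than the paper's, which instead reads off both the a.s.~lower bound and the directional statement in one stroke from the precise LDP \cite[Theorem~2.6]{BGL20} at any $q>\gamma_+$. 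What the paper's route buys is that directional control for $G_u\cdot x\in A$ (with $\nu(A)>0$, $\nu(\partial A)=0$) comes packaged in the cited theorem; in your route you have to establish ergodicity of the projective chain along the spine and positivity of $\pi_s(A)$ yourself, which you correctly flag as the delicate part. One minor further warning: for the a.s.~lower bound you appeal to a ``Galton--Watson subtree argument'' to upgrade from convergence in probability, but along the spine route the additive-martingale convergence is already almost sure, so no such upgrade step should be needed once the degeneracy issue is fixed.
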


From the proof of Theorem \ref{Thm_LLN-001} 
one has $\Mf'(s^*) = \inf_{s \in I_{\mu}^+} \frac{ \mathfrak M(s) }{s}$ in \eqref{eq_LLN-001},
and $\Mf'(s^*) =  \sup_{s \in I_{\mu}^-} \frac{ \mathfrak M(s) }{s}$ in \eqref{eq_LLN-001mini}. 

For the proof of \eqref{eq_LLN-001} and \eqref{eq_LLN-001mini}, we make use of the precise 
large deviation result for the counting measure which has been established recently in Bui, Grama and Liu \cite{BGL20}. 
We believe that if we do not account for the direction, 
with some extra effort conditions \ref{Condition_Extra} and \ref{Condition_Extra-mini}  can be removed, 
but we are refraining from doing so as our main goal is a second order asymptotic 
of the extreme position formulated below. 

The condition $s^*\Mf'(s^*)= \Mf(s^*)$ is the direct analogue of the condition which is required to transform a branching random walk on the real line into the boundary case, see \cite[Section 5.1]{Shi12} for a discussion. We can still deduce a strong law of large numbers for the maximal position without it (see Remark \ref{rem:linear speed extremal position}), but it will be required anyway for the study of second-order asymptotics. 
The following result shows that indeed any branching random walk $(\widetilde{G}_u)_{u \in \bb T}$ on $\bb G$ 
that satisfies $s^*\widetilde{\Mf}'(s^*)= \widetilde{\Mf}(s^*)$ can be converted to satisfy the boundary condition. 
For convenience, below we supply all quantities corresponding to 
the branching random walk $(\widetilde{G}_u)_{u \in \bb T}$ before the transformation with a tilde. 
	
\begin{lemma}\label{lem:transform}
Assume that there is $s^* \in I_\mu^\circ$ such that $s^*\widetilde{\Mf}'(s^*)= \widetilde{\Mf}(s^*)$.
Let $(\widetilde{G}_u)_{|u|=1}$ be the collection of matrices in the first generation.
Then the branching random walk on $\bb G$ with matrices 
$G_u := \exp\big(-{ \widetilde{\Mf} '(s^*)}\big) \widetilde{G}_u$ 
satisfies $\mathfrak m (s^*)  = 1$ and $\mathfrak m'(s^*) = 0$ (cf.\ condition \ref{Condi_ms}),
where $\mathfrak m$  corresponding to the transformed process $(G_u)_{u \in \bb T}$ is defined by \eqref{Def-m-s}. 
\end{lemma}

Recalling \eqref{Def_Xnxu_Snxu}, 
we see that we transform ${\widetilde{S}_u^x}: = \sigma (\widetilde{G}_u, x)$ by subtracting 
the drift $\widetilde{\Mf}'(s^*)$ of the maximal (or minimal) position, 
so that for the transformed process it holds $M_n^x(A)/n \to 0$, $\bb P$-almost surely. 
This motivates our next two theorems 
where we assume the boundary condition and normalize $M_n^x(A)$ and $m_n^x(A)$ by $\log n$.

\begin{theorem}\label{Thm_In_Pro}
Assume conditions \ref{Condi_N},  \ref{Condi-density}, \ref{Condi-density-invariant} and \ref{Condi_ms}. 
Then, for any $x \in \bb P(V)$ and any Borel set $A \subseteq \bb P(V)$ satisfying $\nu(A) > 0$ and $\nu(\partial A) = 0$, 
conditionally on the system's survival, 
\begin{align}\label{thm1_Minimal_aa_SetA}
\lim_{n \to \infty}  \frac{M_n^x(A)}{\log n} = - \frac{3}{2 \alpha}  \quad  \mbox{in probability} \ \bb P. 
\end{align}
In particular,  for any $x \in \bb P(V)$, conditionally on the system's survival, 
\begin{align}\label{thm1_Minimal_aa}
\lim_{n \to \infty}  \frac{M_n^x}{\log n} = - \frac{3}{2 \alpha}  \quad  \mbox{in probability} \ \bb P. 
\end{align}
\end{theorem}
As mentioned in Section \ref{Sec-notation}, 
conditions \ref{Condi-density} and \ref{Condi-density-invariant}
are satisfied for example when the measure $\mu$ has a compact support with a continuous density around the identity matrix.

Clearly the result \eqref{thm1_Minimal_aa} follows from \eqref{thm1_Minimal_aa_SetA} by taking $A = \bb P(V)$.
It is worth mentioning that, even though the limits are the same, the first assertion \eqref{thm1_Minimal_aa_SetA} 
is much stronger than the second one, since it shows what is the limit behaviour of the  maximal position
$\sigma (G_u, x)$ over the subset of particles with the direction $G_u \!\cdot\! x \in A$. 
We also note that convergence in probability cannot be sharpened to an almost sure convergence,  
which will be considered in a forthcoming work.

Our second result proves similar properties for the minimal position $m_n^x$.  

\begin{theorem}\label{Thm_In_Pro_min}
Assume conditions \ref{Condi_N},  \ref{Condi-density}, \ref{Condi-density-invariant} and \ref{Condi_Neg_alpha}. 
Then, for any $x \in \bb P(V)$ and any Borel set $A \subseteq \bb P(V)$ satisfying $\nu(A) > 0$  and $\nu(\partial A) = 0$,
conditionally on the system's survival,  
\begin{align}\label{thm1_Minimal_aa_minSetA}
\lim_{n \to \infty}  \frac{m_n^x(A)}{\log n} =  \frac{3}{2 \beta}  \quad  \mbox{in probability}\ \bb P. 
\end{align}
In particular, for any $x \in \bb P(V)$, conditionally on the system's survival,
\begin{align}\label{thm1_Minimal_aa_min}
\lim_{n \to \infty} \frac{m_n^x}{\log n} =  \frac{3}{2 \beta}  \quad  \mbox{in probability}\ \bb P. 
\end{align}
\end{theorem}

Note that the limit in  \eqref{thm1_Minimal_aa_minSetA} and \eqref{thm1_Minimal_aa_min} is strictly positive, 
while in the case of the maximal position the limit in \eqref{thm1_Minimal_aa_SetA} and \eqref{thm1_Minimal_aa} 
is strictly negative, since both $\beta>0$ and $\alpha>0$, see \ref{Condi_Neg_alpha} and \ref{Condi_ms}, respectively.
The appearance of $\alpha$ and $\beta$ in the formulas \eqref{thm1_Minimal_aa_SetA}, 
\eqref{thm1_Minimal_aa},  \eqref{thm1_Minimal_aa_minSetA} and \eqref{thm1_Minimal_aa_min}
is due to the fact that by the means of Lemma \ref{lem:transform}, 
we cannot transform to the case where $\alpha=1$ ($\beta=1$), 
because the corresponding transformation from the one-dimensional setting 
(a scaling of $S_u^x=\log \frac{\norm{G_u v}}{ \norm{v} }$) 
would amount to taking powers of matrix norms. 
This transformation has no interpretation on the underlying product of random matrices.

We complement the above results by 
 proving that the asymptotic behaviour of maximal (minimal) value of the logarithm of a given coefficient,  operator norm or  spectral radius is similar to that of maximal (minimal) value of the logarithm of the vector norm. 
Let $V^*$ be the dual vector space of $V$, i.e.\ the space of linear forms on $V$.  
For $v \in V$ and $f \in V^*$ denote by $\langle f, v \rangle=f(v)$  the duality bracket.  
Let $\norm{g}$ and $\rho(g)$ be respectively the operator norm and the spectral radius of the matrix $g\in \bb G$. 
Then we have the following results.
Let $v \in V$, $f \in V^*$ and $F_u$ be one of $|\langle f, G_u v \rangle|$, $\norm{G_u}$ or $\rho(G_u)$.
Theorems \ref{Thm_LLN-001}, \ref{Thm_In_Pro} and \ref{Thm_In_Pro_min} hold with $M_n^x$ or $m_n^x$ 
replaced by $\max_{|u|=n} \log F_u$ or $\min_{|u|=n} \log F_u$, respectively. 
We will only give precise formulations of the results corresponding to Theorems  \ref{Thm_In_Pro} and \ref{Thm_In_Pro_min}. 

\begin{theorem}\label{Thm_In_Pro_Coeff}
Assume conditions \ref{Condi_N},  \ref{Condi-density}, \ref{Condi-density-invariant} and \ref{Condi_ms}.
Let $A \subseteq \bb P(V)$ be any  Borel set satisfying $\nu(A) > 0$  and $\nu(\partial A) = 0$.  
Let $v \in V$, $f \in V^*$ and $F_u$ be one of $|\langle f, G_u v \rangle|$, $\norm{G_u}$ or $\rho(G_u)$. Then, conditionally on the system's survival,
\begin{align}\label{thm1_Minimal_aa_Coeff}  
\lim_{n \to \infty}  \frac{\sup_{G_u\cdot x\in A, | u | = n} \log F_u}{\log n}  
= - \frac{3}{2 \alpha}  \quad  \mbox{in probability} \ \bb P. 
\end{align} 
Similarly, under conditions \ref{Condi_N},  \ref{Condi-density}, \ref{Condi-density-invariant} and \ref{Condi_Neg_alpha}, 
conditionally on the system's survival, 
\begin{equation}
\lim_{n \to \infty}  \frac{\inf_{G_u\cdot x\in A, | u | = n} \log F_u}{\log n}
=  \frac{3}{2 \beta}  \quad  \mbox{in probability} \ \bb P.
\end{equation}
\end{theorem}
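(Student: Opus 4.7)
I would reduce all three cases to Theorems \ref{Thm_In_Pro} and \ref{Thm_In_Pro_min} by a sandwich argument, showing that on the survival event $\scr S$ and in probability, $\log F_u$ differs from $\log\|G_u v\|$ by $o(\log n)$ uniformly over the relevant nodes. I treat only the maximum; the minimum runs in parallel with $\alpha$, \ref{Condi_ms}, Theorem \ref{Thm_In_Pro} replaced by $\beta$, \ref{Condi_Neg_alpha}, Theorem \ref{Thm_In_Pro_min}.

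\emph{Coefficient case.} For $F_u=|\langle f, G_u v\rangle|$, the upper bound follows from $|\langle f, G_u v\rangle|\leq \|f\|\,\|G_u v\|$. For the lower bound, set $A_\delta=\{y\in A : |\langle f,\hat y\rangle|>\delta\}$ where $\hat y$ is a unit representative of $y\in\bb P(V)$. Using \ref{Condi-density-invariant} and the fact that $f^\perp\cap\bb P(V)$ is $\nu$-null, $\nu(A_\delta)>0$ for $\delta$ small enough; on $\{G_u\cdot x\in A_\delta\}$ one has $\log F_u\geq \log\|G_u v\|+\log\delta$, so
$$
\sup_{G_u\cdot x\in A, |u|=n}\log F_u \,\geq\, M_n^x(A_\delta)+\log\delta.
$$
Applying Theorem \ref{Thm_In_Pro} to $A$ and $A_\delta$ sandwiches the normalized limit to $-3/(2\alpha)$.

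\emph{Operator norm and spectral radius.} Using $\|G_u v\|/\|v\|\leq\|G_u\|$ and $\rho(G_u)\leq\|G_u\|$, both remaining cases reduce to the two uniform regularity statements
$$
\frac{1}{\log n}\max_{|u|=n}\bigl(\log\|G_u\|-\log\|G_u v\|\bigr)\to 0, \quad \frac{1}{\log n}\max_{|u|=n}\bigl(\log\|G_u\|-\log\rho(G_u)\bigr)\to 0
$$
in probability on $\scr S$. Granting these, the sandwich $\log\|G_u v\|-\log\|v\|\leq \log F_u\leq \log\|G_u\|$ combined with Theorems \ref{Thm_In_Pro} and \ref{Thm_In_Pro_min} immediately gives both the supremum and infimum assertions. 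To prove the two regularities I would apply the first-moment method together with the many-to-one formula associated to the Cram\'er change of measure at $s=\alpha$ (available thanks to $\mathfrak m(\alpha)=1$ in \ref{Condi_ms}): the expected number of nodes at which the ratio exceeds $\epsilon\log n$ is rewritten as an expectation for a single tilted random-matrix walk $\tilde G_n$, carrying the bounded eigenfunction factor $r_\alpha(\tilde G_n\cdot x)\,e^{-\alpha\sigma(\tilde G_n,x)}/r_\alpha(x)$. Polynomial tail estimates on $\|\tilde G_n\|/\|\tilde G_n v\|$ and $\|\tilde G_n\|/\rho(\tilde G_n)$ under $\tilde\mu_\alpha$, standard consequences of proximality and the spectral-gap theory of \cite{GQX21,GX21}, combined with the conditioned-limit control of $e^{-\alpha\sigma(\tilde G_n,x)}$ restricted to the range $\sigma(\tilde G_n,x)\approx -\tfrac{3}{2\alpha}\log n$ relevant for the extremum, yield the required decay.

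\textbf{Main obstacle.} The non-elementary ingredient is exactly the uniform regularity above. For a single tilted walk the ratios $\|G_n\|/\|G_n v\|$ and $\|G_n\|/\rho(G_n)$ are tight by proximality, but lifting tightness to a statement uniform over the exponentially many particles of the branching tree requires a careful balance between the crude first-moment count and the growth of the reweighting factor $e^{-\alpha\sigma(\tilde G_n,x)}$ in the many-to-one identity; this is where the full spectral-gap and conditioned-limit machinery developed in the paper is essential. For the infimum case, the same scheme is run through the corresponding machinery at $s=-\beta$.
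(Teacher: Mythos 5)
Your coefficient case is correct and genuinely different from the paper's route. The paper also starts from $|\langle f,G_u v\rangle|\leq\|f\|\,\|G_u v\|$, but for the other side it invokes the single-walk regularity estimate of \cite[Lemma 14.11]{BQ16b}, which gives $\bb P\big(\log|\langle f,G_n v\rangle|-\log\|G_n v\|\leq -\tfrac{\ee}{2}\log n\big)\leq n^{-c}$, and sandwiches against $\sup\log\|G_u v\|$. Your alternative --- restrict to $A_\delta=\{y\in A:\;|\langle f,\hat y\rangle|>\delta\}$, note that $\nu(A_\delta)>0$ for small $\delta$ because \ref{Condi-density-invariant} makes the projective hyperplane $\{\langle f,\hat y\rangle=0\}$ $\nu$-null, observe $\log F_u\geq\log\|G_u v\|+\log\delta$ on $\{G_u\cdot x\in A_\delta\}$, and apply Theorem \ref{Thm_In_Pro} to $A_\delta$ --- is self-contained, avoids the external citation, and correctly exploits that Theorem \ref{Thm_In_Pro} requires only $\nu(A)>0$ with no boundary regularity. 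This is a clean simplification.

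For the operator norm and spectral radius, your sketch stalls exactly where you say it does, and the gap is real. A crude first-moment bound $\bb E\sum_{|u|=n}\mathds 1\{\log\|G_u\|-\log\|G_u v\|>\tfrac{\ee}{2}\log n\}=(\bb E N)^n\,\bb P(\cdots)$ loses: the ratio tail is only polynomially small under proximality. Tilting at $s=\alpha$ trades $(\bb E N)^n$ for a factor $e^{-\alpha S_n}$, but on the scale $S_n\approx-\tfrac{3}{2\alpha}\log n$ this is $n^{3/2}$, still dominating a $n^{-c}$ tail unless the count is restricted to a polynomially thin set of near-extremal nodes; you identify this but do not carry it out. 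The paper does not attempt that derivation. It uses $\|G_u\|\geq\|G_u v\|$ (resp. $\rho(G_u)\leq\|G_u\|$) for one direction and, for the other, cites the single-walk polynomial tail bounds \cite[Proposition 3.11]{Hde19} for $\log\|G_n\|-\log\|G_n v\|$ and \cite[Lemma 14.13]{BQ16b} for $\log\|G_n\|-\log\rho(G_n)$, then runs the same sandwich against $\sup\log\|G_u v\|$. So the concrete step you are missing is not a from-scratch spectral-gap derivation but these two citations; be aware, though, that the coupling of a single-walk estimate with the particle-dependent $\arg\max$ that worries you is also compressed in the paper's exposition, and a fully rigorous version still requires some care (for example a first-moment bound restricted to nodes with $\log\|G_u v\|$ in a window of width $O(\log n)$ around the extremal level).
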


%%%%%%%%%%%%%%%%%%%%%%%%%%%%%%%%%%%%%%%%%%%%%%%%%%%%%
%%%%%%%%%%%%%%%%%%%%%%%%%%%%%%%%%%%%%%%%%%%%%%%%%%%%%
\section{Preliminaries} \label{sec-Prelim}

In this section we discuss spectral gap properties of the transfer operator $P_s$ and their application to a change of measure and a many-to-one formula.

%%%%%%%%%%%%%%%%%%%%%%%%%%%%%%%%%%%%%%%%%%%%%%%%%%%%%
\subsection{Spectral gap property of the transfer operator $P_s$}\label{subsect:spectral gap}

We are going to prove Proposition \ref{prop:Ps}, i.e., that the transfer operator $P_{s}$ has spectral gap properties 
for all $s \in I_{\mu} = I_{\mu}^+ \cup I_{\mu}^-$.  For positive $s$, these properties have been proved under various assumptions, see \cite{BM16} for a survey. So the main burden is to extend these results to the case of negative $s$. Recall that $\scr{B}$ denotes the space of real-valued continuous functions on $\bb P(V)$, equipped with the 
supremum norm $\| \cdot \|_{\scr B}$. 

\begin{lemma}\label{lem:compact}
Assume \ref{Condi-density}. Let $s \in I_\mu$. Then the operator $P_s : \scr{B} \to \scr{B}$ is a compact operator.
\end{lemma}

\begin{proof} 
We give the proof for negative $s$ ($s \in I_{\mu}^-$), 
the proof for positive $s$ ($s \in I_{\mu}^+$) being similar but easier. We have to show that 
the set $A: = \{ P_s \varphi: \|\varphi\|_{\scr B} \leq 1 \}$ is a compact subset of $\mathscr B$. 
To this end, by the theorem of Arzela-Ascoli, 
it is enough to prove that the set $A$ is uniformly bounded and equicontinuous. 
Note that, by \eqref{def-I-mu-minus}, for any $s \in I_{\mu}^-$, 
it holds that $\int_{\bb G}   e^{ |s| \log \| g^{-1}\| }  \mu(dg) < \infty.$
Since $\sigma(g, x) \geq \log \|g^{-1}\|^{-1}$, we have, for any $s \in I_{\mu}^-$, 
\begin{align*} 
\|P_s \varphi \|_{\mathscr B}  \leq  \int_{\bb G}   e^{ |s| \log \| g^{-1}\| }  \mu(dg), 
\end{align*}
uniformly in $\|\varphi\|_{\scr B} \leq 1$, so that the set $A$ is uniformly bounded.   
Next we prove that the set $A$ is equicontinuous, i.e. we show that 
for given $\ee >0$, there is $\delta>0$ such that $|P_s \varphi (x_1) - P_s \varphi (x_2)| < \ee$
whenever $\bf d (x_1, x_2) \leq \delta$ and $\varphi \in \scr B$ with $\| \varphi \|_{\scr B} \leq 1$. 
Since $s \in I_{\mu}^-$, there exists a constant $B: = B(\ee) >0$ such that, 
with $E=\{g \in \bb G: |\log \| g^{-1}\| | \leq B  \}$, 
\begin{align} \label{bound-Ec-dg}
\int_{E^c}   e^{ - s \log \| g^{-1}\| } \mu(dg) \leq \frac{\ee}{8}. 
\end{align}
By assumption \ref{Condi-density}, the measure $\mu$ has a density, so that $\mu(dg) = \overset{\cdot} \mu(g) dg$. 
Since $\overset{\cdot} \mu \in L^1$, there exists a continuous function $\overset{\cdot} \mu_E$ 
supported on $E$ such that
\begin{align}\label{bound-Ec-dg-002}
\int_{E} e^{ - s \log \| g^{-1}\| }  \left|  \overset{\cdot} \mu_E(g) - \overset{\cdot} \mu(g) \right| dg \leq \frac{\ee}{8}. 
\end{align} 
By \eqref{bound-Ec-dg}, \eqref{bound-Ec-dg-002} and the fact that $\sigma(g, x) \geq \log \|g^{-1}\|^{-1}$,
we get that for  $\varphi \in \scr B$ with $\| \varphi \|_{\scr B} \leq 1$, 
\begin{align}\label{Density_SP001}
|P_s \varphi (x_1) - P_s \varphi (x_2)|   
& = \left|  \int_{\bb G}  e^{s \sigma(g,  x_1)} \varphi(g \cdot x_1) \mu(dg) 
-   \int_{\bb G}  e^{s \sigma(g,  x_2)} \varphi(g \cdot  x_2))   \mu(dg)  \right|   \notag\\
& \leq \left|  \int_{E}  e^{s \sigma(g,  x_1)} \varphi(g \cdot x_1) \overset{\cdot} \mu(g) dg
-   \int_{E}  e^{s \sigma(g,  x_2)} \varphi(g \cdot  x_2))   \overset{\cdot} \mu(g) dg  \right| + \frac{\ee}{4} \notag\\
& \leq \left|  \int_{E}  e^{s \sigma(g,  x_1)} \varphi(g \cdot x_1) \overset{\cdot} \mu_E(g) dg
-   \int_{E}  e^{s \sigma(g,  x_2)} \varphi(g \cdot  x_2))   \overset{\cdot} \mu_E(g) dg  \right| + \frac{\ee}{2}
=: J + \frac{\ee}{2}. 
\end{align}
Note that 
there is a continuous map: $x \in  \bb P(V) \mapsto k(x) \in SO(d, \bb R)$ such that $x= \bb R  k (x) e_1$. 
Moreover, the set $E$ is invariant under the map $l: g \mapsto g l$ for any $l \in SO(d, \bb R)$. 
Hence, for any $x_1, x_2 \in \bb P(V)$, with $k_1 = k(x_1)$ and $k_2=k(x_2)$, 
\begin{align}\label{Density_SP001-aa}
J & =  \left| \int_{E}   e^{s \log \|g k_1 e_1\|} \varphi( \bb R g k_1 e_1 )  \overset{\cdot} \mu_E(g) dg 
-  \int_{E}   e^{s \log \|g k_2 e_1\|}  \varphi( \bb R g k_2 e_1 )   \overset{\cdot} \mu_E(g) dg   \right|    \notag\\
& =  \left| \int_{E}   e^{s \log \|g e_1\|} \varphi( \bb R g e_1 )  \overset{\cdot} \mu_E(g k_1^{-1}) dg 
-  \int_{E}   e^{s \log \|g e_1\|} \varphi( \bb R g e_1 )   \overset{\cdot} \mu_E(g k_2^{-1}) dg   \right|    \notag\\
& \leq  \int_{E}   e^{s \log \|g e_1\|}  |\varphi( \bb R g e_1 )| 
\left| \overset{\cdot} \mu_E(g k_1^{-1}) - \overset{\cdot} \mu_E(g k_2^{-1})  \right| dg   \notag\\
&  \leq   \int_{E}   e^{ - s \log \| g^{-1}\| } 
\left| \overset{\cdot} \mu_E(g k_1^{-1}) - \overset{\cdot} \mu_E(g k_2^{-1})  \right| dg. 
\end{align}
Since the map $x \mapsto k(x)$ and the function $\overset{\cdot} \mu_E$ are continuous on a compact set, 
hence uniformly continuous, by Lebesgue dominated convergence theorem,
there exists $\delta >0$ such that, for any $x_1, x_2 \in \bb P(V)$ with $\bf d (x_1, x_2) \leq \delta$, 
the last integral in \eqref{Density_SP001-aa} is bounded by $\ee/2$, for any $\varphi$ satisfying $\| \varphi \|_{\scr B} \leq 1$. 
This shows that the set  of functions $A$ is equicontinuous when $\overset{\cdot} \mu \in L^1$. 
\end{proof}

Before proving Proposition \ref{prop:Ps}, we need one auxiliary lemma. 	
Recall that $(g_i)_{i \ge 1}$ denotes a sequence of i.i.d. random elements of $\bb G$ with law $\mu$. 

\begin{lemma}\label{lem:continuous-density}
Under condition \ref{Condi-density}, the law of $g_2 g_1$ has a continuous density. 
\end{lemma}

\begin{proof}
By condition \ref{Condi-density}, the density $\dot{\mu}$ of $\mu$ is continuous or bounded. 
In the first case, there is nothing to prove. 
If  $\dot{\mu}$  is bounded, then $\dot{\mu}  \in L^1 \cap L^\infty$ w.r.t.~the Haar measure on $\bb G$. Let $C = \norm{\dot{\mu}}_\infty$.
	
The density of $g_2 g_1$ is given by
$$ \dot{\mu}^{(*2)}(g)  = \int_{\bb G} \dot{\mu}(gh^{-1}) \dot{\mu}(h) dh.$$
Given $\epsilon >0$, let $\dot{\mu}_\epsilon$ be a continuous function with compact support such that 
$\norm{\dot{\mu}-\dot{\mu}_\epsilon}_1 < \frac\epsilon{3C}$. 
Since $\dot{\mu}_\epsilon$ is uniformly continuous, there is $\delta>0$ 
such that $|\dot{\mu}_\epsilon(gh^{-1})-\dot{\mu}_\epsilon(g'h^{-1})| < \frac\epsilon3$ whenever $\| g - g' \| \leq \delta$. 
Hence, for all such $g$ and $g'$, 
\begin{align*}
& |\dot{\mu}^{(*2)}(g) - \dot{\mu}^{(*2)}(g')| 
\leq \int_{\bb G} |\dot{\mu}(gh^{-1})  -  \dot{\mu}(g'h^{-1})| \dot{\mu}(h) dh \\
& \leq \int_{\bb G} |\dot{\mu}(gh^{-1})  -  \dot{\mu}_\epsilon(gh^{-1})| \dot{\mu}(h) dh + \int_{\bb G} |\dot{\mu}_\epsilon(gh^{-1})  -  \dot{\mu}_\epsilon(g'h^{-1})| \dot{\mu}(h) dh + \int_{\bb G} |\dot{\mu}_\epsilon(g'h^{-1})  -  \dot{\mu}(g'h^{-1})| \dot{\mu}(h) dh  
\notag\\
& \leq C \norm{\dot{\mu}-\dot{\mu}_\epsilon}_1 + \int_{\bb G} \frac{\epsilon}{3} \mu(dh) + C \norm{\dot{\mu}-\dot{\mu}_\epsilon}_1  < \frac\epsilon3 + \frac\epsilon3 + \frac\epsilon3 = \epsilon,  
\end{align*}
concluding the proof. 
\end{proof}

Now we state the spectral gap property of the operator $P_s$ for $s \in I_{\mu}$:

\begin{proposition}\label{prop:Ps}
Assume conditions \ref{Condi-density} and \ref{Condi-density-invariant}. 
\begin{enumerate}
\item
There is a unique dominant eigenvalue $\kappa(s)>0$ of $P_s$; any other eigenvalue $\lambda$ of $P_s$ is strictly smaller in absolute value, i.e., $|\lambda| < \kappa(s)$. The eigenspace corresponding to $\kappa(s)$ is one-dimensional. 

\item
There is a unique probability measure $\nu_s$ satisfying $P_s \nu_s = \kappa(s) \nu_s$; it has a strictly positive density with respect to the uniform probability measure on $\bb P(V)$. 

\item
There is a unique function $r_s$ satisfying $P_s r_s = \kappa(s) r_s$ and $\nu_s(r_s)=1$. 
The function $r_s$ is strictly positive on $\bb P(V)$. 

\item
The function $s \mapsto \kappa(s)$ is analytic on $I_{\mu}^\circ$.
\end{enumerate}
\end{proposition}

This is well-known \cite{GL16,BM16} for $s>0$; we provide the corresponding arguments for $s<0$ in Section \ref{sec-Prelim} below. Note that  the unique probability measure $\nu$ which is invariant for the action of $\mu$ on $\bb P(V)$
equals $\nu_0$.

\begin{proof}[Proof of Proposition \ref{prop:Ps}] 
To prove the announced properties of $P_s$, we make use of a minorization property, 
see Eq. \eqref{eq:minorizationa} below. 
It holds subject to condition \ref{Condi-density-invariant} and the fact that, 
due to Lemma \ref{lem:continuous-density}, 
we may assume that $\bb P(G_n \in \cdot)$ has an absolutely continuous component 
with respect to Lebesgue measure on $\bb R^{d^2}$, 
and further there is an open set on which the density is bounded from below by a constant. 
These properties allow us to make use of \cite[Lemma 2.1]{AM12} and its extension \cite[Lemma 4.1]{BDMM13}. 
These lemmata give that there exist $n \in \bb N$, a constant $c_0>0$, 
a compact subset $D \subset \bb G$ and a probability measure $\rho$ 
with a strictly positive continuous density w.r.t. Lebesgue measure on $\bb P(V)$ such that
\begin{align} \label{eq:minorizationa}
 \bb P(G_n \cdot x \in \cdot) 
 \geq \bb P(G_n \cdot x \in \cdot, G_n \in D) 
\geq c_0 \rho(\cdot) \qquad \text{for all } x \in \bb P(V).
\end{align}
By the compactness of $D$, we can infer that for any nonnegative function $\varphi \in \scr B$ 
and for some constant $C>0$,
\begin{align}\label{eq:minorization}
P_s^n \varphi(x) = \bb E \big[ e^{-s \sigma (G_n, x)} \varphi(G_n \cdot x) \big] 
 \geq C \bb E \big[ \varphi(G_n \cdot x) \big] 
 \geq c_0 C \int_{\bb P(V)} \varphi(y) \rho(dy) 
\end{align}
for all $x \in \bb P(V)$. 
It follows that the operator $P_s^n$ is strictly positive in the sense 
that for any nonnegative function $\varphi \in \scr{B}$ with $\varphi \neq 0$, 
$P_s^n \varphi(x)>0$ for all $x \in \bb P(V)$. 
By Theorems 9 and 12 in \cite{K59}, $\kappa(s)^n$ is the unique eigenvalue of maximal modulus of $P_s^n$ 
and it is algebraically simple. 
By Lemma \ref{lem:compact}, the spectrum of $P_s$ is pure point, 
so all other eigenvalues are strictly smaller than $\kappa(s)$. 
Lemma 5.4 in \cite{AM12} gives that the corresponding eigenfunction $r_s$ is positive on $\bb P(V)$. 
Since any eigenfunction of $P_s$ is also an eigenfunction of $P_s^n$, 
it follows that the eigenspace of $P_s$ corresponding to $\kappa(s)$ is one-dimensional as well.
	
Turning to $\nu_s$, the minorization property \eqref{eq:minorization} gives that $\kappa(s) \nu_s \geq c_0 C \rho$, hence its support equals $\bb P(V)$ and it has a component with a continuous density. In fact, $\nu_s$ has to be absolutely continuous, since applying $P_s$ will smooth any singular component of $\nu_s$.
	
The final assertion about analyticity of $\kappa(s)$ will be proved in the subsequent Lemma \ref{Lem_spectral_gap}.
\end{proof}

Now we are ready to prove a perturbation theorem for the operator $P_s$, showing in particular that $\kappa(s)$ is an analytic function of $s$ with nice properties.
Denote by $\mathscr{L(B, B)}$ 
the set of all bounded linear operators from $\mathscr{B}$ to $\mathscr{B}$
equipped with the operator norm
$\left\| \cdot \right\|_{\mathscr{B} \to \mathscr{B}}$.

\begin{lemma} \label{Lem_spectral_gap}
Under conditions \ref{Condi-density} and \ref{Condi-density-invariant}, we have for all $s \in I_\mu$ 
\begin{align}
P^n_{s} = \kappa(s)^{n} \Pi_{s} + N^{n}_{s},  
\quad  n \geq 1, \label{perturb001_aa}
\end{align}
where the function $s \mapsto \kappa(s): I_{\mu}^{\circ} \to \bb R$ is analytic, 
the mappings $s \mapsto \Pi_{s}:  I_{\mu}^{\circ} \to \mathscr{L(B, B)}$
and $s \mapsto N_{s}: I_{\mu}^{\circ} \to \mathscr{L(B, B)}$ are analytic
in the strong operator sense, 
$\Pi_{s}$ is a rank-one projection with 
$\Pi_{s}(\varphi)(x) = \nu_{s}(\varphi r_s)$ for any $\varphi \in \mathscr{B}$ and $x\in \bb P(V)$,
$\Pi_{s} N_{s} = N_{s} \Pi_{s} = 0$. 
Moreover, for any fixed integer $k \geq 0$, 
there exist constants $c>0$ and $0< a < 1$ such that for any compact set $K \subset I_{\mu}$,  
\begin{align}\label{OperatorNbound_aa} 
\sup_{s \in K}
\Big\| \frac{d^{k}}{ds^{k}} N^{n}_{s} \Big\|_{\mathscr{B}\rightarrow\mathscr{B}} 
\leq c a^n, \quad  n\geq 1.  
\end{align}
\end{lemma}

\begin{proof}
For any fixed $s \in I_\mu$, the decomposition \eqref{perturb001_aa} is just a reformulation 
of the spectral gap property of $P_s$, proved in Proposition \ref{prop:Ps}.  
The analyticity of the functions $\kappa(s)$, $\Pi_s$, $N_s$ as well as the bound \eqref{OperatorNbound_aa} 
will follow from the perturbation theorem \cite[Theorem III.8]{HL01}. 
We extend the definition of $P_s$ to complex-valued arguments $z$ with $\Re z \in I_\mu^\circ$ by setting
$$P_z \varphi(x) = \int_{\bb G}  e^{z \sigma(g, x)} \varphi(g \cdot x) \mu(dg)$$
for any $\varphi \in \scr B$ and $x \in \bb P(V)$. 
To apply the perturbation theorem, we have to prove that the mapping $z \mapsto P_z$ is weakly holomorphic. We repeat the short proof of \cite[Lemma 4.10]{BDMM13} to show that it remains true for $s<0$.

Let $\gamma$ be a closed path in the domain $\{z \in \bb C \, : \, \Re z \in I_\mu^\circ\}$. We show that for any $\varphi \in \scr{B}$ and any finite measure $\eta$ on $\bb P(V)$, it holds $\int_\gamma \int_{\bb P(V)} P_z \varphi \, d\eta dz =0$. In fact, 
\begin{align*}
\int_\gamma \int_{\bb P(V)} P_z \varphi \, d\eta dz 
 = \int_\gamma \int_{\bb P(V)} \int_{\bb G} e^{z \sigma(g,x)} \varphi(g \cdot x) \mu(dg) \ \eta(dx) \ dz 
 = \int_{\bb P(V)} \int_{\bb G} \bigg( \int_\gamma e^{z \sigma(g,x)} dz \bigg) \varphi(g \cdot x) \mu(dg) \ \eta(dx). 
\end{align*}
The innermost integral is zero since $\sigma(g,x)>0$ always 
and hence the mapping $z \mapsto e^{z \sigma(g,x)}$ is holomorphic. 
The change in the order of integration is allowed since condition \ref{Condi-density} provides a general upper bound. 
The property $\int_\gamma \int_{\bb P(V)} P_z \varphi \, d\eta dz =0$ 
together with the fact that $z \mapsto \int_{\bb P(V)} P_z \varphi d\eta$ 
is continuous shows that $P_z$ is weakly holomorphic. 
By Dunford's Theorem (see e.g. \cite[Theorem V.3.1]{Y80}), it is then strongly holomorphic.

Now we can apply \cite[Theorem III.8]{HL01} for a fixed parameter $s \in I_\mu^\circ$ to obtain the analyticity of the mappings in an open ball around $s$. Since we can do so for each $s \in I_\mu^\circ$, we obtain analyticity on the whole $I_\mu^\circ$; while the bound \eqref{OperatorNbound_aa} extends to compact subset by taking the maximal bound of a finite union of open sets.
\end{proof}

For any $s \in  I_{\mu}$ and $\varphi \in \scr B$, 
let 
\begin{align}\label{def-operator-Qs}
Q_s \varphi(x) = \frac{1}{\kappa(s) r_s(x)} P_s (\varphi r_s) (x),  \quad  x \in \bb P(V). 
\end{align} 
We have the following corollary of Proposition \ref{prop:Ps}:

\begin{corollary}\label{cor:Qs_spectral_gap}
For any $s \in I_\mu$, the operator $Q_s$ is compact and has the unique 
and algebraically simple dominant eigenvalue $1$, 
the corresponding eigenfunctions are constant and there is a unique $Q_s$-invariant probability measure given by 
$\pi_s(\varphi):= \nu_s(\varphi r_s)$ for any $\varphi \in \scr B$. 
It has a strictly positive density with respect to the uniform probability measure on $\bb P(V)$.
\end{corollary}

\begin{remark}\label{Rem-absolute-con}
By Corollary \ref{cor:Qs_spectral_gap}, for any $s \in I_{\mu}$, the measures $\nu$, $\nu_s$ and $\pi_s$ are all absolutely continuous with each other. 
\end{remark}

%%%%%%%%%%%%%%%%%%%%%%%%%%%%%%%%%%%%%%%%%%%%%%%%%%%%%
\subsection{A change of measure formula}
Recall that $(g_i)_{i \geq 1}$ denotes a sequence of i.i.d.\
random elements on $\bb G$ with the law $\mu$ and $G_n := g_n \ldots g_1$, $n\geq 1$,
their left product.
With any starting point $X_0=x = \bb R v  \in \bb P(V)$ and $S_0 = 0$, denote for $n \geq 1$, 
\begin{align}\label{Def_Xn}
X_n: = g_n \cdot X_{n-1} = \bb R G_n v  \quad \mbox{and} \quad   S_n: = \sum_{k = 1}^n \sigma(g_k, X_{k-1}),
\end{align}
where the cocycle $\sigma(\cdot, \cdot)$ is defined in \eqref{Def-dot-cocycle}. 
Then the sequence $(X_n, S_n)_{n \geq 0}$ constitutes a Markov random walk on $\bb P(V) \times \bb R$. 
Denote by $\bb P_x$ the probability measure on the canonical space  $(\bb P(V)\times \bb R)^{\bb N}$ 
induced by the Markov chain $(X_n, S_n)_{n \geq 0}$ with starting point $(X_0, S_0) = (x, 0)$. 
Let $\bb E_x$ be the corresponding expectation. 
For any $s \in I_{\mu}$, let $\kappa(s)$ and $r_s$ be the eigenvalue  and the eigenfunction  given by Proposition \ref{prop:Ps}. 
Since $\sigma(\cdot, \cdot)$ is a cocycle, 
one can check that the family of kernels  
\begin{align}\label{Def-kernel-qns}
q_{n}^{s}(x, G_n) : = \frac{ 1 }{\kappa(s)^{n}}  e^{s \sigma(G_n, x)}  \frac{r_{s}(G_n \cdot x)}{r_{s}(x)},  \quad  n\geq 1, 
\end{align}
satisfies the property: for any $x \in \bb P(V)$ and $n, m \geq 1$, 
\begin{align} \label{cocycle01}
q_{n}^{s}(x,  G_n)  q_{m}^{s}(G_n \cdot x, g_{n+m} \ldots g_{n+1} ) = q_{n+m}^{s}(x, G_{n+m}).
\end{align}
By \eqref{Def-Ps} and Proposition \ref{prop:Ps}, we have that
\begin{align}\label{def-Qsn}
\mathbb{Q}_{s,n}^x(dg_1, \ldots, dg_n) : = q_{n}^{s}(x,g_{n}\dots g_{1})\mu(dg_1)\dots\mu(dg_n),  \quad  n\geq 1, 
\end{align}
is a sequence of probability measures, which, by \eqref{cocycle01}, forms a projective system on $\bb G^{\bb N}$. 
Therefore, by the Kolmogorov extension theorem,
there is a unique probability measure  $\mathbb{Q}_s^x$ on $\bb G^{\bb N}$
with marginals $\mathbb{Q}_{s,n}^x$. 
Denote by $\mathbb{E}_{\mathbb{Q}_s^x}$ the corresponding expectation.
All over the paper we use the convention that under the measure $\bb Q_s^x$,
the Markov chain $(X_n)_{n \geq 0}$ defined by \eqref{Def_Xn} starts with the point $x \in \bb P(V)$.

By the definition of $\mathbb{Q}_s^x$, for any bounded measurable function $h: (\bb P(V) \times \mathbb R)^{n} \to \bb R$, 
the following change of measure formula holds: under condition \ref{Condi-density}, for any $s \in I_{\mu}$,
\begin{align} \label{Formu_ChangeMea}
 \frac{ 1 }{ \kappa(s)^{n} r_{s}(x) }
 \mathbb{E}_x \Big[  r_{s}(X_n)  e^{s S_n} 
 h\big( X_1, S_1, \dots, X_n, S_n \big) \Big]   
   = \mathbb{E}_{\mathbb{Q}_{s}^{x}} 
 \Big[ h \big( X_1, S_1, \dots, X_n, S_n  \big) \Big].  
\end{align}
Set, for $s \in I_{\mu}$, 
\begin{align}\label{def-Lambda-s-q}
\Lambda(s) = \log \kappa(s)  \quad \mbox{and}  \quad
q = \Lambda'(s) = \frac{\kappa'(s)}{\kappa(s)}. 
\end{align}
Following \cite{GL16},  one can verify that under conditions \ref{Condi-density} and \ref{Condi-density-invariant}, 
the strong law of large numbers holds: 
$\lim_{n\to\infty} \frac{S_n}{n} = q$, $\mathbb{Q}_{s}^{x}$-almost surely, for any $s \in I_{\mu}$.

%%%%%%%%%%%%%%%%%%%%%%%%%%%%%%%%%%%%%%%%%%%%%%

\subsection{Spectral gap property of perturbed operators}\label{Sec-SP-perturbed}

For $s \in I_{\mu}$ and $t \in \bb R$, 
define a family of perturbed operators $Q_{s, i t}$ as follows: 
with $q = \Lambda'(s)$, for any $\varphi \in \mathscr{B}$, 
\begin{align} \label{operator Rsz}
Q_{s, i t} \varphi(x) 
 = \mathbb{E}_{\mathbb{Q}_{s}^{x}} \left[ e^{ i t ( S_1 - q ) } \varphi( X_1 ) \right], 
   \quad  x \in \bb P(V). 
\end{align}
It follows from the cocycle property \eqref{cocycle01} that 
$Q^n_{s, i t}\varphi(x) = \mathbb{E}_{\mathbb{Q}_{s}^{x}} \left[e^{ i t ( S_n - nq) } \varphi( X_n ) \right]$ 
for $x \in \bb P(V).$ 
The following result shows that  $Q_{s, i t}$ has spectral gap properties.

\begin{lemma} \label{Lem_Perturbation}
Let $s \in I_{\mu}^\circ$. 
Under conditions \ref{Condi-density} and \ref{Condi-density-invariant}, 
there exists $\delta > 0$ such that for any $t \in (-\delta, \delta)$,
\begin{align}
Q^n_{s, i t} = \lambda^{n}_{s, i t} \Pi_{s, i t} + N^{n}_{s, i t},  
\quad  n \geq 1, \label{perturb001}
\end{align}
where the mappings $t \mapsto \Pi_{s, i t}: (-\delta, \delta) \to \mathscr{L(B, B)}$
and $z \mapsto N_{s, i t}: (-\delta, \delta) \to \mathscr{L(B, B)}$ are analytic
in the strong operator sense, 
$\Pi_{s, i t}$ is a rank-one projection with 
$\Pi_{s,0}(\varphi)(x) = \pi_{s}(\varphi)$ for any $\varphi \in \mathscr{B}$ and $x\in \bb P(V)$,
$\Pi_{s, i t} N_{s, i t} = N_{s, i t} \Pi_{s, i t} = 0$ 
and
\begin{align}\label{relationlamkappa001}
\lambda_{s, i t} = e^{- i t q} \frac{ \kappa(s + i t) }{ \kappa(s)}.  
\end{align}
Moreover, for any fixed integer $k \geq 0$, 
there exist constants $c>0$ and $0< a < 1$ such that
\begin{align}
\sup_{|t| < \delta}
\Big\| \frac{d^{k}}{dz^{k}} N^{n}_{s, i t} \Big\|_{\mathscr{B}\rightarrow\mathscr{B}} 
\leq c a^n, \quad  n\geq 1.  \label{OperatorNbound}
\end{align}
\end{lemma}

\begin{proof}
Using the fact that $Q_s$ has a spectral gap by Corollary \ref{cor:Qs_spectral_gap}, 
the proof can be performed in the same way as that in \cite[Corollary 6.3]{BM16}
and \cite[Proposition 3.3]{XGL20a}, upon replacing $\scr{B}_\epsilon$ there by $\scr{B}$. 
We therefore omit the details.
\end{proof}

Let $\Gamma_\mu$ be the closed semigroup of $\bb G$ generated by the support of the measure $\mu$. A matrix $g$ with an algebraically simple dominant eigenvalue (that exceeds all others in absolute value) is called {\em proximal}. 
The measure $\mu$ is said to satisfy condition (i-p) if
\begin{enumerate}
\item 
There is no finite union $\scr{W} = \bigcup_{i=1}^n W_i$ of subspaces $\{0\} \subsetneq W_i \subsetneq \bb R^d$ 
which is $\Gamma_\mu$ -invariant, i.e. $g \scr{W} \subset \scr{W}$ for all $g \in \Gamma_\mu$ (strong irreducibility).
\item 
$\Gamma_\mu$ contains a proximal matrix (proximality).	
\end{enumerate} 

Further, we say that $\mu$ is {\em arithmetic}, if there is $t>0$ together with $\theta \in [0, 2\pi)$ 
and a continuous function 
$\vartheta: \bb P(V) \to \bb R$ such that for all $g \in \Gamma_\mu$ 
and all $x$ in the support of the invariant measure $\nu=\nu_0$, it holds that 
$$ \exp(i 2 \pi  t  \sigma(g, x) - i \theta + i (\vartheta(g \cdot x) - \vartheta(x))) = 1.$$
In other words, $\sigma(g, x)$ is contained in $t \bb Z$ 
up to a shift that may depend on $g$ and $x$ through the function $\vartheta$.

If no such $t>0$ exists, then $\mu$ is said to be {\em non-arithmetic}.

\begin{lemma}\label{Lem-ip}
Under assumption \ref{Condi-density}, the measure $\mu$ satisfies condition (i-p). Moreover, $\mu$ is non-arithmetic.
\end{lemma}

\begin{proof}
	By Lemma \ref{lem:continuous-density}, the density $\dot{\mu}^{(*2)}$ is continuous, hence $\Gamma_\mu$ contains an open set. Then the result follows by \cite[Proposition IV.2.3 and Remark IV.2.4]{BL85}. Note that the proof of Proposition IV.2.3 proceeds by showing the existence of a proximal element. 
	
	The validity of condition (i-p) for $\mu$ then implies that $\mu$ is non-arithmetic, see \cite[Prop. 4.6]{GU05} or the discussion in \cite[Section 2.5]{BM16}.
\end{proof}

As a consequence of $\mu$ being non-arithmetic, we obtain that the function $\log \kappa$ is {\em strictly} convex.

\begin{lemma}\label{lem:strict_convex}
	Assume \ref{Condi-density} and \ref{Condi-density-invariant}. Then the  function $s \mapsto \Lambda(s) := \log \kappa(s)$ is strictly convex on $I_\mu^\circ$.
\end{lemma}

\begin{proof}
Using the consequences of the perturbation result (Lemma \ref{Lem_Perturbation}), 
it is shown in \cite[Corollary 7.3]{BM16} that $\Lambda''(s)=\kappa''(s)/\kappa(s) - (\kappa'(s)/\kappa(s))^2$
 is equal to a quantity denoted by $\sigma_s^2$ (which can be interpreted as the variance of $S_n^x$ 
 under $\bb Q_s^x$); and \cite[Lemma 7.2]{BM16} shows that $\sigma_s^2=0$ 
 can only occur if $\mu$ would be arithmetic. 
 Hence, by Lemma \ref{Lem-ip}, $\Lambda''(s)=\sigma_s^2 >0$ and thus $\Lambda$ is strictly convex.
\end{proof}

We will also make use of the following property.

\begin{lemma} \label{Lem_StrongNonLattice}
Let $s \in I_{\mu}^\circ$. 
Assume conditions \ref{Condi-density} and \ref{Condi-density-invariant}. 
Then for any compact set $K \subset \bb R \setminus \{0\}$, 
there exist constants $c, c_K >0$ such that 
for any $n\geq 1$ and $\varphi\in \mathscr{B}$, 
\begin{align*}
\sup_{t \in K}   \| Q^{n}_{s, i t} \varphi \|_{\scr B}
\leq  c e^{- n c_K }  \|\varphi\|_{\scr B}.
\end{align*}
\end{lemma}

\begin{proof}
This is proved in \cite[Lemma 6.4]{BM16}, 
using that the spectral radius of $Q_{s,it}$ is strictly smaller than one for each $t \neq 0$, 
a property which is shown in \cite[Theorem 5.1]{BM16} 
as a consequence of $\mu$ being non-arithmetic. 
The quoted results are for positive $s$ and $\varphi$ being H\"older continuous, but the proofs carry over to our setting. 
\end{proof}

Similarly to \eqref{operator Rsz}, 
we define a family of dual perturbed operators $Q_{s, i t}^*$: for $s \in I_{\mu}$, $q = \Lambda'(s)$,
$t \in \bb R$ and $\varphi \in \mathscr{B}$, 
\begin{align*} 
Q_{s, i t}^* \varphi(x) 
 = \mathbb{E}_{ \bb Q_{s}^{x, *} } \left[ e^{ i t ( S_1 - q ) } \varphi( X_1 ) \right], 
   \quad  x \in \bb P(V). 
\end{align*}
It follows from  \eqref{cocycle01} that 
$(Q^{*}_{s, i t})^n \varphi(x) = \mathbb{E}_{ \bb Q_{s}^{x, *} } \left[e^{ i t ( S_n - nq) } \varphi( X_n ) \right]$
 for $x \in \bb P(V)$. 
The following results show that  $Q_{s, i t}^*$ has spectral gap properties,
which are similar to those for the operator $Q_{s, i t}$, see Lemmas \ref{Lem_Perturbation} and \ref{Lem_StrongNonLattice}.

\begin{lemma} \label{Lem_Perturbation_bis}
Let $s \in I_{\mu}$. 
Under conditions \ref{Condi-density} and \ref{Condi-density-invariant}, 
there exists $\delta > 0$ such that for any $t \in (-\delta, \delta)$,
\begin{align*}
(Q^*_{s, i t})^n = \lambda^{n}_{s, i t} \Pi^*_{s, i t} + (N^*_{s, i t})^n,  \quad  n \geq 1,  
\end{align*}
where the mappings $t \mapsto \Pi_{s, i t}^* : (-\delta, \delta) \to \mathscr{L(B,B)}$
and $z \mapsto N_{s, i t}^* : (-\delta, \delta) \to \mathscr{L(B, B)}$ are analytic
in the strong operator sense, 
$\Pi_{s, i t}^*$ is a rank-one projection with 
$\Pi_{s,0}^* (\varphi)(x) = \pi_{s}(\varphi)$ for any $\varphi \in \mathscr{B}$ and $x\in \bb P(V)$,
$\Pi_{s, i t}^* N_{s, i t}^* = N_{s, i t}^* \Pi_{s, i t}^* = 0$ 
and $\lambda_{s, i t} = e^{- i t q} \frac{ \kappa(s + i t) }{ \kappa(s)}$. 
Moreover, for any fixed integer $k \geq 0$, 
there exist constants $c>0$ and $0< a < 1$ such that
$\sup_{|t| < \delta}
\| \frac{d^{k}}{dt^{k}} (N^*_{s, i t})^n \|_{\mathscr{B}\rightarrow\mathscr{B}} 
\leq c a^n$ for any $n \geq 1$. 
\end{lemma}

\begin{lemma}\label{dd-Lem_StrongNonLattice}
Let $s \in I_{\mu}$. 
Assume conditions \ref{Condi-density} and \ref{Condi-density-invariant}. 
Then, for any compact set $K \subset \bb R \setminus \{0\}$, 
there exist constants $c, c_K >0$ such that 
$\sup_{t \in K} \|(Q^{*}_{s, i t})^n \varphi \|_{\mathscr{B}} 
\leq c e^{- n c_K}  \|\varphi\|_{\mathscr{B}}$
for any $n\geq 1$ and $\varphi\in \mathscr{B}$. 
\end{lemma}

The proof of Lemmas \ref{Lem_Perturbation_bis} and \ref{dd-Lem_StrongNonLattice} 
can be done using the techniques
from \cite{HL01, BM16} in the same way as Lemmas \ref{Lem_Perturbation} and \ref{Lem_StrongNonLattice}.

%%%%%%%%%%%%%%%%%%%%%%%%%%%%%%%%%%%%%%%%%%%%%%%%%%%
\subsection{The many-to-one formula}
In this section we recall the many-to-one formula which has been established in \cite[Lemma 4.2]{Men16}
for the study of fixed points of multivariate smoothing transforms.
Recall that $X_u^x$ and $S_u^x$ are defined by \eqref{Def_Xnxu_Snxu}. 
Recall also that the function $\mathfrak m$ is defined by \eqref{Def-m-s} 
and that for a node $u \in \bb T$, $u|k$ is the restriction of $u$ to its first $k$ components, $1 \leq k \leq |u|$.
\begin{lemma}[The many-to-one formula]\label{Lem-many-to-one}
Assume condition \ref{Condi-density}. 
Then, for any  $s \in I_{\mu}$ and $x \in \bb P(V)$,  
$n \geq 1$ and any bounded measurable function $h: (\bb P(V) \times \bb R)^n \to \bb R$, 
\begin{align}\label{Formula_many_to_one}
  \bb E  \bigg[ \sum_{|u| = n}  
h\left( X^x_{u|1}, S^x_{u|1}, \ldots, X^x_u, S^x_u \right) \bigg] 
 = r_{s}(x)  \mathfrak m(s)^n  
 \bb E_{\bb Q_{s}^x} \bigg[ \frac{1}{r_{s} (X_n)} e^{-s S_n} h \big(X_1, S_1, \ldots, X_n, S_n \big) \bigg].  
\end{align}
\end{lemma}

This formula allows us to reduce the study of the branching product of random matrices to that of the ordinary 
product of random matrices but under the changed probability measure $\bb Q_{s}^x$.

%%%%%%%%%%%%%%%%%%%%%%%%%%%%%%%%%%%%%%%%%%%%%%%%%%%%%
%%%%%%%%%%%%%%%%%%%%%%%%%%%%%%%%%%%%%%%%%%%%%%%%%%%%%
\section{Law of large numbers for the extremal position} \label{sec-LLN}

In this section, we provide the proofs for Theorem \ref{Thm_LLN-001} and Lemma \ref{lem:transform}.
We start with the following strong law of large numbers
for the counting measure of the branching random walk on the group $\bb G$, inspired by \cite{Big79, BGL20}. More precisely, 
for any Borel set $A \subseteq \bb P(V)$, let $Z_n^x(A):=\sum_{|u|=n} \mathds{1}_{A}(X_u^x)$ denote the number of particles in the $n$-th generation with direction in $A$. Note that the total number of particles in the $n$-th generation, $Z_n:=\sum_{|u|=n} 1$ equals $Z_n^x(\bb P(V))$ for any $x \in \bb P(V)$. Writing $m:=\mathfrak{m}(0)=\bb E N$, consider further the normalisation 
$$ W_n^x(A):= \frac{Z_n^x(A)}{m^n}, \qquad W_n:= \frac{Z_n}{m^n}.$$
Thus, $W_n$ is the martingale in the Galton-Watson process, which is known (see e.g. \cite{LPP95})  to converge to a limit $W$ that is strictly positive on the set of survival $\scr S$, $\bb P_x$-a.s. The condition for the convergence is implied by our assumption \ref{Condi_N}.

\begin{proposition}\label{Prop-SLLN-direction}
Assume conditions \ref{Condi_N}, \ref{Condi-density} and \ref{Condi-density-invariant}. 
Then for all $x \in \bb P(V)$ and Borel set $A \subseteq \bb P(V)$ with $\nu(\partial A)=0$, 
$$ \lim_{n \to \infty} W_n^x(A) = \nu(A)W \quad \text{almost surely.}$$
In particular, if $\nu(A) > 0$, then $\liminf_{n \to \infty} Z_n^x(A)>0$ almost surely on the set of survival $\scr S$. 
\end{proposition}

\begin{proof} 
We introduce a parameter $k= [n^{\eta}]$,  where $\eta \in (0,1)$ is a fixed constant. 	
We first decompose the difference $W_n^x(A)- \nu(A)W$ as follows:
\begin{align*}
W_n^x(A) - \nu(A)W = \big(W_n^x(A) - \nu(A)W_{k}\big) + \nu(A)(W_{k} -W). 
\end{align*}
Observe that $\lim_{n \to \infty}\nu(A)(W_{k}-W)=0$ almost surely due to the convergence of the martingale $W_k$
and the property that $k \to \infty$ as $n \to \infty$. 
To show that the first term converges to $0$, we decompose further at generation $k$:
\begin{align}
& W_n^x(A) - \nu(A)W_k  \notag\\
& =  \frac{1}{m^k} 
\sum_{|u|=k} \left( \big[ W_{n-k}^{X_u^x}(A) \big]_u - \bb E \Big[ \big[ W_{n-k}^{X_u^x}(A) \big]_u \ \Big| \ \mathscr{F}_k \Big] \right) 
+ \frac{1}{m^k} \sum_{|u|=k} \left(\bb E \Big[ \big[ W_{n-k}^{X_u^x}(A) \big]_u \ \Big| \ \mathscr{F}_k \Big] - \nu(A)  \right)  \notag\\
& = : C_n + D_n 
\end{align}
Considering $D_n$ first, it follows from the many-to one lemma \ref{Lem-many-to-one} that, with $y = X_u^x$, 
$$ \bb E \Big[ \big[ W_{n-k}^{y}(A) \big]_u \ \Big| \ \mathscr{F}_k \Big]  
= \frac{1}{m^{n-k}}\bb E \Big[ \sum_{|v|={n-k}} \mathds{1}_{A}(X_v^{y}) \Big] 
= r_0\big(y\big) \bb E_{y} \bigg[ \mathds{1}_{A}(X_{n-k})\bigg] = P_0^{n-k} \mathds{1}_A(y).$$
Note here that the eigenfunction $r_0$  corresponding to $P_0$ is just the constant function $1$.  
Upon fixing $\epsilon>0$, let $\varphi \in \scr{B}$ with $\norm{\varphi}_{\scr B}\leq 1$ and $\norm{\varphi-\mathds{1}_A}_{\scr B}< \epsilon/3$. Then by Proposition \ref{prop:Ps}, there is $n_0$ such that $\norm{P_0^m \varphi - \nu(\varphi)}_{\scr{B}} < \epsilon/3$ whenever $m>n_0$ and thus
\begin{align*}
& \abs{P_0^{n-k} \mathds{1}_A(X_u^x) - \nu(A)}  \notag\\
&\leq \abs{P_0^{n-k} \mathds{1}_A(X_u^x) - P_0^{n-k} \varphi(X_u^x)} + \abs{P_0^{n-k} \varphi(X_u^x) - \int \varphi(x) \nu(dx)} + \int \abs{\varphi(x)-\mathds{1}_A(x)} \nu(dx)  \notag\\
& \leq \norm{\varphi-\mathds{1}_A}_{\scr B} + \norm{P_0^{n-k} \varphi - \nu(\varphi)}_{\scr{B}} + \norm{\varphi-\mathds{1}_A}_{\scr B}  \leq \epsilon/3+\epsilon/3+\epsilon/3 = \epsilon
\end{align*}
whenever $n-k> n_0$. 
Since this estimate is independent of the value of $X_u^x$, we conclude that whenever $n-k>n_0$, %it holds
$$ \abs{D_n} \leq \frac{1}{m^k} \sum_{|u|=k} \left|\bb P_0^{n-k} \mathds{1}_A(X_u^x) - \nu(A)  \right| \leq \frac{1}{m^k} \sum_{|u|=k} \epsilon  = \epsilon W_k. $$
Since $\epsilon$ can be chosen arbitrarily small (and $W_k \to W$), we have that $|D_n| \to 0$ as $n \to \infty$, almost surely.
Turning to $C_n$, let us abbreviate $\xi_u := \big[ W_{n-k}^{X_u^x}(A) \big]_u - \bb E \Big[ \big[ W_{n-k}^{X_u^x}(A) \big]_u \ \Big| \ \mathscr{F}_k \Big] $ and observe that $(\xi_u)_{|u|=k}$ are independent conditioned on $\mathscr{F}_k$, and $\bb E [\xi_u \, |\, \mathscr{F}_k]=0$. Further, we have the bound
\begin{equation}\label{eq:boundxiv}
	 |\xi_u| \leq [W_{n-k}]_u + \bb E W_{n-k} = [W_{n-k}]_u +1. 
\end{equation}
Recall that $W_n$ is a mean one martingale that is uniformly integrable under condition \ref{Condi_N}.
 From this, one could use the theory of triangular arrays of independent random variables 
 (see e.g.  \cite{Gut92}) to deduce the convergence in probability to $0$.

To deduce a.s. convergence, one can proceed in the same way as in Steps 1 and 2 of the proof of Lemma 3.1 in \cite{BGL20}. 
The idea of that argument is to mimic Etemadi's  proof  of the strong law of large numbers (\cite{Ete81}), i.e., to first truncate the random variables and then use a Borel-Cantelli argument, where a uniform bound on the law of the $\xi_u$ is used. Namely, by \eqref{eq:boundxiv}, it holds that $\bb P(|\xi_u|>t) \leq \bb P(W^*>t)$ for all $t \geq 0$, where $W_* := \sup_{n \in \bb N} W_n$.
By using our assumption \ref{Condi_N}, it is proved in \cite[Theorem 1.2]{LL15} that $\bb E (1 + W^*) \log^a(1 + W^*) < \infty$ for every $a \geq 1$. 
Note that in \cite{BGL20}, a weaker moment condition on $N$ is employed (see \cite[condition C1]{BGL20}), which makes it necessary there to choose $\eta$ in a particular range. 
\end{proof}

%%%%%%%%%%%%%%%%%%%%%%%%%%%%%%%%%%%%%%%%%%%%%%%%%%
%%%%%%%%%%%%%%%%%%%%%%%%%%%%%%%%%%%%%%%%%%%%%%%%%%
\begin{proof}[Proof of Theorem \ref{Thm_LLN-001}]

We only show how to prove \eqref{eq_LLN-001}, the proof of \eqref{eq_LLN-001mini} being similar. 
Let
\begin{align}\label{def:gamma}
\gamma =  \inf \left\{ \mathfrak M' (s): s \in I_{\mu}^+, \    s\mathfrak M'(s)  >  \mathfrak M(s) \right\}.
%\eta_+ =  \inf \left\{ \Lambda'(s): s \in I_{\mu}^+, \    e^{-s \Lambda'(s)}  \mathfrak m(s) < 1 \right\}
\end{align}
%and we want to show that $\eta_+ = \gamma_+$. 
Note that the function  $s \mapsto \mathfrak M(s) - s \mathfrak M'(s)$ 
is strictly decreasing on $I_\mu^+$, as its derivative equals $-s \mathfrak M''(s) = -s (\log \kappa(s))''$ and $\log \kappa$ is strictly convex 
(cf.\  Lemma \ref{lem:strict_convex}). 
By assumption, 
there is $s^* \in (I_\mu^+)^\circ$ such that $s^*\Mf'(s^*)= \Mf(s^*)$, hence $\mathfrak M'(s^*)=\gamma$ and 
we conclude that further $\gamma =\sup \{ \mathfrak M'(s): s \in I_{\mu}^+, \  s\mathfrak M'(s)  < \mathfrak M(s) \}$. 

In the sequel, we first prove that for any $\epsilon >0$ and large $n$, no particle is above the threshold $n (\gamma + \epsilon)$; 
then, by a similar argument, we show that there is a positive number of particles below the threshold $n (\gamma - \epsilon)$.

Using the many-to-one formula \eqref{Formula_many_to_one}, 
we get that for any $s \in (I_{\mu}^+)^{\circ}$, with $q = \mathfrak M'(s)$, 
\begin{align*} 
\bb E \Big[ \sum_{|u| = n}  
    \mathds 1 _{\{ S^x_u > n q  \}}   \Big]
    \leq  e^{-snq}  \bb E \Big[  \sum_{ |u| = n }  
 e^{s S^x_u }  \Big]   
   =  e^{-snq}  r_{s}(x)  \mathfrak m(s)^n  
 \bb E_{\bb Q_{s}^x} \Big[ \frac{1}{r_{s} (X_n)}  \Big]   
  \leq  c e^{-snq}  \mathfrak m(s)^n, 
\end{align*} 
where in the last inequality 
we used the fact that the eigenfunction $r_s$ is 
bounded from below and above by strictly positive constants (cf.\ Lemma \ref{Lem_spectral_gap}). 
This implies that for any Borel set $A \subseteq \bb P(V)$, 
\begin{align*} 
\bb P \Big( \sum_{|u| = n}   \mathds 1_{ \{ X^x_u  \in A, \  S^x_u > n q  \}}  > 0  \Big)
  =   \bb P \Big( \sum_{|u| = n}   \mathds 1_{ \{ X^x_u  \in A, \  S^x_u  >  n q  \}}  \geq 1  \Big)   
 \leq  \bb E \Big[ \sum_{|u| = n}   \mathds 1_{ \{ X^x_u  \in A,  \  S^x_u  >  n q  \}}   \Big]     
   \leq  c e^{-snq}  \mathfrak m(s)^n. 
\end{align*}
If $e^{-sq}  \mathfrak m(s) < 1$ (equivalently $s\mathfrak M'(s)  >  \mathfrak M(s)$), 
then by Borel-Cantelli's lemma we get 
$\sum_{|u| = n}   \mathds 1_{ \{ X^x_u  \in A, \  S^x_u> n q  \}} = 0$ for all but finitely many $n$. 
Hence, for any $\ee >0$, it holds that 
$\sum_{|u| = n}   \mathds 1_{ \{ X^x_u  \in A, \  S^x_u > n (\gamma + \ee)  \}} = 0$ for all but finitely many $n$. 
Since $\ee >0$ can be arbitrary small, this implies that 
\begin{align}\label{LLN-Max-Upper}
\limsup_{n \to \infty} \frac{M_n^x(A)}{n} \leq  \gamma. 
\end{align}

On the other hand, under conditions \ref{Condi_N},  \ref{Condi-density}, 
\ref{Condi-density-invariant} and \ref{Condition_Extra}, 
a large deviation result has been established in \cite[Theorem 2.6]{BGL20}. 
The result in \cite{BGL20} is obtained under the stronger assumption that the matrices $(G_u)_{|u|=1}$ 
are independent identically distributed and independent of $N$, but the inspection of the
proof shows that it still holds under the assumptions of our paper, 
the additional tools being the definition of $\mu$ in \eqref{Def-mu} 
and the many-to-one formula in Lemma \ref{Lem-many-to-one}. 
According to these results,   
for any $s \in (I_{\mu}^+)^{\circ}$ such that $e^{-sq}  \mathfrak m(s) > 1$ 
(equivalently $s\mathfrak M'(s)  <  \mathfrak M(s)$), 
 and any Borel set $A \subseteq \bb P(V)$ satisfying $\nu(A) > 0$ and $\nu(\partial A) = 0$
 (noting that $\nu_s(A) > 0$ and $\nu_s(\partial A) = 0$ is equivalent to saying that $\nu(A) > 0$ and $\nu(\partial A) = 0$ 
 under \ref{Condi-density} and \ref{Condi-density-invariant}), 
we have $\bb P$-a.s.
\begin{align*} 
\lim_{n\to\infty} \sqrt{2\pi n} \sigma_{s} e^{n \Lambda^*(q)} 
\frac{\sum_{|u| = n} \mathds 1_{ \{ X_u^x  \in A, \  S_u^x \geq n q   \}} }{(\bb E N )^n} = \frac{1}{s} W_s^x \frac{r_s(x)}{\nu_s(r_s)} \nu_s(A) ,
\end{align*} 
where $\sigma_{s} = \sqrt{\Lambda''(s)} = \sqrt{\mathfrak M''(s)} >0$, $q=\Lambda' (s)= \mathfrak M'(s)$, 
$\Lambda^*(q)= s q-\Lambda(s)$ and $W_s^x$ is a positive random variable on the survival event $\scr S$. 
This can be rewritten as: $\bb P$-a.s. on the survival event $\scr S$
\begin{align*}  
\lim_{n\to\infty} \sqrt{2\pi n} \sigma_{s} e^{n ( \Lambda^*(q) - \log (\bb E N)) } 
\sum_{|u| = n} \mathds 1_{ \{ X_u^x  \in A, \  S_u^x \geq n q   \}}  = \frac{1}{s} W_s^x \frac{r_s(x)}{\nu_s(r_s)} \nu_s(A)  >0.
\end{align*}
We choose $q>\gamma$, 
which is equivalent to 
\begin{align*} 
\log (\bb E N) - \Lambda^*(q) = \log (\bb E N) - sq + \Lambda(s) = \mathfrak M(s) - sq > 0. 
\end{align*}
Then the number of nodes $u$ at generation $n$ such that $\sigma( G_u, x)$ is above the level $nq$ and that $G_u \cdot x \in A$  explodes as $n \to \infty$ $\bb P$-a.s. on the survival event $\scr S$.   
It follows that 
\begin{align}\label{LLN-Max-Lower}
\liminf_{n \to \infty} \frac{M_n^x(A)}{n} \geq  \gamma
\end{align}
on the survival event $\scr S$.

Combining \eqref{LLN-Max-Upper} and \eqref{LLN-Max-Lower} concludes the proof of \eqref{eq_LLN-001}. 
\end{proof}

\begin{remark}\label{rem:linear speed extremal position}
If $s^*$ does not exist, we can still obtain $ \lim_{n \to \infty} \frac{M_n^x(A)}{n}  = \eta$, 
$\bb P$-almost surely on $\scr{S}$, 
with the limit being defined by $\eta =\inf_{s \in I_{\mu}^+} s^{-1}\mathfrak M(s)$.
\end{remark}

\begin{proof}[Proof of Remark \ref{rem:linear speed extremal position}]
If $s^*$ does not exist,  then it holds for all $s \in I_\mu^+$ that $s \Mf'(s) >\Mf(s)$. Observe that the derivative of $h(s):=s^{-1}  \mathfrak M(s)$ for $s>0$ equals  
$h'(s) = s^{-2}\big( s\mathfrak M'(s) -\mathfrak M(s) \big)$
and thus $h(s)$ is decreasing on $I_\mu^+$. 
Consequently, $\eta=\gamma$ with $\gamma$ as defined in \eqref{def:gamma}. 
Thus we can still deduce the lower bound \eqref{LLN-Max-Lower}. 
To obtain the upper bound \eqref{LLN-Max-Upper}, one can proceed as follows.
For $s \in I_\mu^+$ it holds $\kappa(s)=\lim_{n \to \infty} \big(\bb E \norm{G_n}^s \big)^{\frac1n}$, 
see e.g. \cite[Proposition 2.1]{BM16}. 
Let $Z_n := \sup_{|u|=n} \log \norm{G_u}$. 
Then for any $s \in I_\mu^+$, any $n \geq 1$, by an application of Jensen's inequality,
\begin{align*}
\frac1n \bb E \big[ s Z_n \big] 
\leq \frac1n \log \bb E \big[ e^{sZ_n}\big] 
\leq \frac1n \log \bb E \big[ \sum_{|u|=n} \norm{G_u}^s \big] 
= \frac1n \log (\bb E N \kappa(s)) = \frac1n \log \mathfrak{m}(s). 
\end{align*}
Observe that due to submultiplicativity of the norm, and independence of generations, it holds
$Z_{n+k} \leq Z_n + \widetilde{Z}_k,$
where $\widetilde{Z}_k$ has the same law as $Z_k$ and is independent of $Z_n$.
This allows to use an extension of the subadditive ergodic theorem provided in \cite{Lig85} to infer that 
$\lim_{n \to \infty} \frac{Z_n}{n} = \alpha$ almost surely and in $L^1$. By the above calculation, $\alpha \leq \gamma$. 
Finally, $$ \limsup_{n \to \infty} \frac{M_n^x(A)}{n} \leq \limsup_{n\to\infty} \frac{Z_n}{n} = \alpha \leq \gamma,$$
which ends the proof. 
\end{proof}

We conclude this section with the proof of Lemma \ref{lem:transform}, which shows how to transform a branching random walk into one that is in the boundary case. 
Recall that we supply  quantities  corresponding to the original process $(\widetilde{G}_u)_{u \in \bb T}$ 
before the transformation with a tilde, 
and the transformed process % in terms of matrices
 is denoted by 
$G_u := \exp\big(- \widetilde{\Mf}'(s^*)\big) \widetilde{G}_u$ for $u \in \bb T$.

\begin{proof}[Proof of Lemma \ref{lem:transform}]
By assumption, there is $s^*$ such that $s^* \widetilde{\Mf}'(s^*) = \widetilde{\Mf}(s^*)$. 
Without loss of generality, we can assume that $\widetilde{\Mf}(s^*) \neq 0$, 
since otherwise we would already be in the boundary case. 
Then necessarily also $s^*$ and $\widetilde{\Mf}'(s^*)$ are nonzero.
Observe that $$S_u^x = \sigma(G_u, x) = \sigma (\widetilde{G}_u, x) -  \widetilde{\Mf}'(s^*) = \widetilde{S}_u^x -  \widetilde{\Mf}'(s^*),$$
while $X_u^x = \widetilde{X}_u^x$.
Therefore, it holds that, for any $s \in I_\mu$, $\varphi \in \scr B$ and $x \in \bb P(V)$, 
\begin{align*}
P_s \varphi(x)  
& = \frac1{\bb E(N)} \bb E \bigg( \sum_{|u|=1} e^{ s\big(\widetilde{S}_u^x -  \widetilde{\Mf}'(s^*)\big) } \varphi(\widetilde{X}_u^x)  \bigg)   \notag\\
& =  \frac{ \exp\left( -  \widetilde{\Mf}'(s^*) \right) }{\bb E(N)} \bb E \bigg( \sum_{|u|=1} e^{ s \widetilde{S}_u^x  } \varphi(\widetilde{X}_u^x)  \bigg)
=  \exp\left( -  \widetilde{\Mf}'(s^*) \right) \widetilde{P}_s \varphi(x). 
\end{align*}
In other words, $P_s$ is just a scalar multiple of $\widetilde{P}_s$. 
In particular, it has the same eigenfunctions and eigenmeasures, and
$$\kappa(s)  = \exp\left( -  \widetilde{\Mf}'(s^*) \right) \widetilde{\kappa}(s).$$
Recalling $\Mf(s) =\log \mathfrak{m}(s)  = \Lambda(s)+ \log \bb E N = \log \kappa(s) + \log \bb E N$ 
(see \eqref{Def-m-s}), it follows that
$$ \Mf(s) = \log \kappa(s) + \log \bb E N 
 = - s  \widetilde{\Mf}'(s^*) + \log \widetilde{\kappa}(s) + \log \bb E N = - s  \widetilde{\Mf}'(s^*) + \widetilde{\Mf}(s),$$	
and further $\Mf'(s) = - \widetilde{\Mf}'(s^*) +  \widetilde{\Mf}'(s)$. 
Hence, taking $s = s^*$, 
we get $\Mf(s^*) = - s^* \widetilde{\Mf}'(s^*) + \widetilde{\Mf}(s^*)= - \widetilde{\Mf}(s^*) + \widetilde{\Mf}(s^*) =0$ 
and $\Mf'(s^*)=0$.
This means that $\mathfrak m (s^*)  = 1$ and $\mathfrak m'(s^*) = 0$, 
so that $s^*=\alpha$ for the transformed process.
\end{proof}

%%%%%%%%%%%%%%%%%%%%%%%%%%%%%%%%%%%%%%%%%%%%%%%%%%
%%%%%%%%%%%%%%%%%%%%%%%%%%%%%%%%%%%%%%%%%%%%%%%%%%
\section{Duality and conditioned integral limit theorems}

In this section we make use of assumptions \ref{Condi-density} and \ref{Condi-density-invariant} to establish 
duality identities and state a series of conditioned limit theorems for products of random matrices under a change of measure.
Note that all these results are stated for products of random matrices, independent of the context of branching random walks.

%%%%%%%%%%%%%%%%%%%%%%%%%%%%%%%%%%%%%%%%%%%%%%%%%%%%%%%%%%
%%%%%%%%%%%%%%%%%%%%%%%%%%%%%%%%%%%%%%%%%%%%%%%%%%%%%%%%%%
\subsection{Duality} \label{Sec-dual-Markov}

Recall that $\scr B$ is the Banach space of real-valued continuous functions 
on $\bb P(V)$ endowed with the supremum norm $\| \cdot \|_{\scr B}$. 
Recall also that $Q_s$ is defined by \eqref{def-operator-Qs}. 
Under assumptions \ref{Condi-density} and \ref{Condi-density-invariant},   
on the projective space $\bb P(V)$ there exists a unique invariant probability measure $\pi_s$ of the Markov operator $Q_s$. 
This is proved in \cite[Section 4]{BM16} for $s \in I_{\mu}^+$, and in Corollary \ref{cor:Qs_spectral_gap} for $s \in I_{\mu}^-$. 
Moreover, by the same Corollary \ref{cor:Qs_spectral_gap}, 
the measure $\pi_s$ is absolutely continuous with respect to 
the uniform probability measure on $\bb P(V)$ (denoted by $dx$),
i.e.\ $\pi_s(dx) = \overset\cdot \pi_s(x) dx$, 
where the density function $\overset\cdot \pi_s$ is strictly positive on the projective space $\bb P(V)$. 
 Recall that $dx$ is invariant under the action of the orthogonal group $O(d)$. 
For any $s \in I_{\mu}$ and $\varphi \in \scr B$, define the dual operator $Q_s^*$ as follows: 
\begin{align} \label{Def_Qs_star}
Q_s^*  \varphi (x) 
= \int_{\mathbb G}   \varphi(g \cdot x)   
\frac{r_s(x) e^{-(s+d)\sigma(g, x)}  }{\kappa(s) r_s(g \cdot x)}  
    \frac{\overset\cdot \pi_s(g \cdot x)}{\overset\cdot \pi_s(x)}  | \textup{det}(g)|   \check \mu(dg),  \quad  x \in \bb P(V), 
\end{align}
where $\check \mu$ is the image of the measure $\mu$ by the map $g \mapsto g^{-1}$. 
It can be verified that, under \ref{Condi-density} and \ref{Condi-density-invariant}, the operator $Q_s^*$ is well defined.
The following result shows that $Q_s^*$ is indeed the dual Markov operator of $Q_s$.

\begin{lemma}
Assume conditions \ref{Condi-density} and \ref{Condi-density-invariant}. 
Then, for any $s \in I_{\mu}$ and any $\varphi, \psi \in \scr B$, 
we have 
\begin{align} \label{defofthedualoper003}
\int_{\bb P(V)} \varphi(x) Q_s \psi(x) \pi_s(dx) 
 = \int_{\bb P(V)}  \psi(x) Q_s^*  \varphi (x) \pi_s(dx). 
\end{align}
\end{lemma}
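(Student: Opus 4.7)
The plan is a direct computation: expand the left-hand side from the definitions of $Q_s$ and $\pi_s = \dot\pi_s(x)\, dx$, apply Fubini, perform a change of variables on $\bb P(V)$ to liberate the argument of $\psi$, and finally replace $g$ by $g^{-1}$ in the outer integral, which converts $\mu$ to $\check\mu$. Unpacking the definitions, the left-hand side becomes
\begin{align*}
\int_{\bb P(V)} \int_{\bb G} \varphi(x)\,\frac{e^{s\sigma(g,x)}\psi(g\cdot x)\, r_s(g\cdot x)}{\kappa(s)\, r_s(x)}\, \mu(dg)\, \dot\pi_s(x)\, dx.
\end{align*}

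The key analytic input is the Jacobian of the projective action of $\bb G$ with respect to the $O(d)$-invariant measure $dx$. A classical computation (pulling back from the sphere) gives, for any $g\in\bb G$ and any integrable $f$ on $\bb P(V)$,
\begin{align*}
\int_{\bb P(V)} f(g\cdot x)\, dx = \int_{\bb P(V)} f(y)\, |\det g^{-1}|\, e^{-d\sigma(g^{-1},y)}\, dy,
\end{align*}
which uses the cocycle identity $\sigma(g, g^{-1}\cdot y) = -\sigma(g^{-1}, y)$. Applying Fubini and performing the substitution $y = g\cdot x$ inside the $x$-integral, the expression above turns into
\begin{align*}
\int_{\bb G} \int_{\bb P(V)} \psi(y)\, \varphi(g^{-1}\cdot y)\, \frac{r_s(y)\, e^{-(s+d)\sigma(g^{-1},y)}}{\kappa(s)\, r_s(g^{-1}\cdot y)}\, \dot\pi_s(g^{-1}\cdot y)\, |\det g^{-1}|\, dy\, \mu(dg).
\end{align*}

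Next I would exchange integration again (permissible since $\varphi,\psi\in\scr B$ are bounded, $r_s$ is bounded above and below by strictly positive constants, and the moment in \eqref{Moment001} together with \ref{Condi-density-invariant} dominates the remaining factors) and change variables $h = g^{-1}$, so that $\mu(dg)$ becomes $\check\mu(dh)$ by the very definition of $\check\mu$. Multiplying and dividing by $\dot\pi_s(y)$ — which is legitimate because $\dot\pi_s>0$ everywhere by \ref{Condi-density-invariant} — rewrites the inner integral as exactly $Q_s^*\varphi(y)$ from \eqref{Def_Qs_star}, and the outer factor $\dot\pi_s(y)\,dy$ recomposes to $\pi_s(dy)$. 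One arrives at the right-hand side.

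The only step requiring care is the Jacobian formula for the projective action with respect to $dx$; this is standard but must be invoked explicitly, and I would either cite Bougerol–Lacroix or include a short derivation via the spherical parametrization. Everything else is bookkeeping: the cocycle identity $\sigma(g,g^{-1}\cdot y)=-\sigma(g^{-1},y)$, the Fubini exchanges, and the bijection $g\leftrightarrow g^{-1}$ converting $\mu$ into $\check\mu$.
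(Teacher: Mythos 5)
Your proposal is correct and follows essentially the same route as the paper's proof: both expand $Q_s$ and $\pi_s = \dot\pi_s(x)\,dx$, invoke the change-of-variable (Jacobian) formula for the projective action (the paper's \eqref{Change-of-variable-manifold}), use the cocycle identity $\sigma(g,g^{-1}\cdot y)=-\sigma(g^{-1},y)$, and convert $\mu$ to $\check\mu$ via $g\mapsto g^{-1}$ before recognizing $Q_s^*\varphi$. The only cosmetic difference is that the paper first derives the identity for a general test function $F(x,g,g\cdot x)$ (its \eqref{basic-dial-001}--\eqref{basic-dial-002}) and then specializes to $F(x,g,x')=\varphi(x)\psi(x')$, whereas you work with $\varphi,\psi$ directly.
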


\begin{proof}
Since $\pi_s(dx) = \overset\cdot \pi_s(x) dx$,  by the definition of $\mathbb Q_s^x$, 
we have that for any bounded measurable function $F: \bb P(V)\times  \bb G \times \bb P(V)  \to \bb R$, 
\begin{align}\label{basic-dial-001}
I: = \int_{\bb P(V)}  \mathbb E_{\mathbb Q_s^x} F(x,g_1,X_1) \pi_s(dx)   
= \int_{\mathbb G} \int_{\bb P(V)}  
\frac{r_s(g \cdot x)e^{s\sigma(g, x)}}{\kappa(s) r_s(x)}  F(x, g, g \cdot x) 
  \overset\cdot \pi_s(x) dx   \mu(dg).
\end{align}
Notice that for any bounded measurable function $\varphi: \bb P(V) \to \bb R$ and any $g \in \bb G$, we have
\begin{align}\label{Change-of-variable-manifold}
\int_{\bb P(V)} \varphi(x) dx = \int_{\bb P(V)} \varphi(g \cdot x)  |\textup{det}(g)|  e^{-d \sigma(g, x)} dx. 
\end{align}
Applying this formula to the integral in \eqref{basic-dial-001}, we obtain that for any $g \in \bb G$, 
\begin{align*}
& \int_{\bb P(V)}  \frac{r_s(g \cdot x)e^{s\sigma(g, x)}}{ r_s(x)}  F(x, g, g \cdot x)  \overset\cdot \pi_s(x) dx   \notag\\
& =   \int_{\bb P(V)}  \frac{r_s(x)  e^{s\sigma(g, g^{-1} \cdot x)}}{ r_s(g^{-1} \cdot x)}  
 F(g^{-1} \cdot x, g, x)  \overset\cdot \pi_s(g^{-1} \cdot x)  | \textup{det}(g^{-1})|  e^{-d \sigma(g^{-1}, x)} dx.  
\end{align*}
Therefore, 
\begin{align*}
I = \int_{\mathbb G} \int_{\bb P(V)}   
   \frac{r_s(x)e^{s\sigma(g, g^{-1} \cdot x)}}{\kappa(s) r_s(g^{-1} \cdot x)}  F(g^{-1} \cdot x, g, x) 
  \frac{\overset\cdot \pi_s(g^{-1} \cdot x)}{\overset\cdot \pi_s(x)}  | \textup{det}(g^{-1})|  e^{-d \sigma(g^{-1}, x)}   \pi_s(dx)   \mu(dg). 
\end{align*}
Since $\sigma(g, g^{-1} \cdot x) = - \sigma(g^{-1},x)$
and $\check \mu$ is the image of $\mu$ by $g \mapsto g^{-1}$, 
by a change of variable and Fubini's theorem, we obtain
\begin{align}\label{basic-dial-002}
I =  \int_{\bb P(V)} \int_{\mathbb G}   \frac{r_s(x)e^{-(s+d)\sigma(g,x)}}{\kappa(s) r_s(g \cdot x)}  F(g \cdot x,g^{-1},x) 
  \frac{\overset\cdot \pi_s(g \cdot x)}{\overset\cdot \pi_s(x)}  | \textup{det}(g)|   \check  \mu(dg)  \pi_s(dx).
\end{align}
In particular, taking $F(x,g,x')=\vphi(x) \psi( x')$ for $x,x'\in \bb P(V)$, 
where $\vphi$ and $\psi$  are bounded measurable functions from $\bb P(V)$ to $\bb R$, 
using the definition of $Q_s^*$ we get that
\begin{align*} 
 \int_{\bb P(V)}  \vphi(x)  Q_s \psi(x)  \pi_s(dx)  
&  =\int_{\bb P(V)}  \vphi(x)   \mathbb E_{\mathbb Q_s^x} \psi(X_1) \pi_s(dx)   \nonumber \\
&=  \int_{\bb P(V)} \psi(x) \int_{\mathbb G}   \frac{r_s(x) e^{-(s+d)\sigma(g,x)}}{\kappa(s) r_s(g \cdot x)}  
  \varphi(g \cdot x)  \frac{\overset\cdot \pi_s(g \cdot x)}{\overset\cdot \pi_s(x)}  | \textup{det}(g)|   \check  \mu(dg)  \pi_s(dx) \notag\\
& = \int_{\bb P(V)}  \psi(x) Q_s^*  \varphi (x) \pi_s(dx), 
\end{align*}
which ends the proof of the lemma.  
\end{proof}

Similarly to \eqref{Def-kernel-qns} and \eqref{cocycle01}, 
by the fact that $\sigma(\cdot, \cdot)$ is a cocycle, 
one can verify that for any $s \in I_{\mu}$, the family of kernels
\begin{align}\label{def-dual-kernel}
q_n^{s,*}(x, G_n) =  \frac{1}{ \kappa(s)^n }   e^{- (s+d) \sigma(G_n, x)}    \frac{r_s(x) }{ r_s(G_n \cdot x)}  
\frac{\overset\cdot \pi_s(G_n \cdot x)}{\overset\cdot \pi_s(x)}  |\textup{det}(G_n)|,   \quad  n \geq 1, 
\end{align}
satisfies the following property: for any $x \in \bb P(V)$, 
\begin{align} \label{cocycle002}
q_{n}^{s,*}(x,  G_n)  q_{m}^{s,*}(G_n \cdot x, g_{n+m} \ldots g_{n+1} ) = q_{n+m}^{s,*}(x, G_{n+m}). 
\end{align}
Recalling that $\check \mu$ is the image of $\mu$ by the map $g \mapsto g^{-1}$, 
similarly to \eqref{def-Qsn}, one can verify that 
\begin{align}\label{def-Qsn-star}
\mathbb{Q}_{s,n}^{x,*}(dg_1, \ldots, dg_n) : = q_n^{s,*}(x, G_n) \check \mu(dg_1) \ldots \check \mu(dg_n),  \quad  n \geq 1,
\end{align}
is a sequence of probability measures, 
which, by \eqref{cocycle002}, forms a projective system on $\Omega^* = \bb G^{\bb N}$. 
Hence, by the Kolmogorov extension theorem,
there exists a unique probability measure  $\mathbb{Q}_s^{x,*}$ on $\bb G^{\bb N}$
with marginals $\mathbb{Q}_{s,n}^{x,*}$. 
Denote by $\mathbb{E}_{\mathbb{Q}_s^{x,*}}$ the corresponding expectation.

For any $s \in I_{\mu}$, consider the probability space $(\Omega^*, \mathscr B(\Omega^*), \mathbb{Q}_s^{x,*})$. 
Let $g_1^*,g_2^*,\ldots$ be coordinate maps of $\Omega^*$, i.e. $g_k(\omega)=\omega_k$, 
where $\omega \in \Omega^*$ and 
$\omega_k$ is the $k$-th coordinate of $\omega$.  
The dual Markov chain $(X_n^*)_{n\geq 0}$ with starting point $X_0 \in \bb P(V)$ 
is defined on the space $(\Omega^*, \mathbb{Q}_s^{x,*})$ by setting 
\begin{align}\label{Def-dual-MC}
X_n^* = (g_n^*\ldots g_1^*) \cdot X_0,  \quad  n \geq 1. 
\end{align} 
For any starting point $X_0 = x \in \bb P(V)$ and any measurable set $A \subseteq \bb P(V)$, 
the transition probability of $(X_n^*)_{n\geq 0}$ is defined by $\bb Q_s^*(x,A) := Q_s^* \mathds 1_{A}(x)$, 
where $Q_s^*$ is defined by \eqref{Def_Qs_star}. 
Hence, for any bounded measurable function $\varphi$ on $\bb P(V)$, 
\begin{align*} 
\bb E_{\mathbb{Q}_s^{x,*}} \varphi (X_n^*) = (Q_s^*)^n  \varphi (x),  \quad  x \in \bb P(V), 
\end{align*}
where 
\begin{align*} 
(Q_s^*)^n  \varphi (x) 
=   \int_{\mathbb G}   \varphi(G_n \cdot x)  \frac{ r_s(x) e^{-(s+d) \sigma(G_n, x)} }{\kappa(s)^n  r_s(G_n \cdot x)} 
\frac{\overset\cdot \pi_s(G_n \cdot x)}{\overset\cdot \pi_s(x)}  |\textup{det}(G_n)|  \check \mu(dg_1) \ldots \check \mu(dg_n).
\end{align*}

\begin{lemma}\label{lemmaReversed001}
Assume conditions \ref{Condi-density} and \ref{Condi-density-invariant}.  
Then, for any $s \in I_{\mu}$, $n \geq 1$
and any measurable function $F: \bb P(V)\times  (\bb G \times \bb P(V))^{n}  \to \bb R_+$, 
we have  
\begin{align*} 
 \int_{\bb P(V)}  \mathbb E_{\mathbb Q_s^x} F(x,g_1,X_1,\ldots,g_n,X_n) \pi_s(dx)  
=  \int_{\bb P(V)} \mathbb E_{\mathbb{Q}_s^{z,*}} F(X_n^*,(g_n^*)^{-1},\ldots,X_1^*, (g_1^*)^{-1},z)  \pi_s(dz).
\end{align*}
\end{lemma}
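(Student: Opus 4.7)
The plan is to prove the identity by a direct global change of variables, with the base case $n = 1$ already contained in the derivation of \eqref{basic-dial-002}. For general $n$ I would first unfold both sides using the explicit densities $q_n^s$ and $q_n^{s,*}$ from \eqref{def-Qsn}--\eqref{def-Qsn-star} together with $\pi_s(dx) = \overset\cdot \pi_s(x)\, dx$, obtaining
\[
\text{LHS} = \int_{\bb P(V)} \overset\cdot\pi_s(x) \int_{\bb G^n} F(x, g_1, g_1\!\cdot\! x, \ldots, G_n\!\cdot\! x)\, q_n^s(x, G_n)\, \mu(dg_1)\cdots\mu(dg_n)\, dx
\]
and the analogous expansion of the RHS against $q_n^{s,*}$ and $\check\mu^{\otimes n}$; the goal is then to match the two integrands after a transformation.

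The key step is to simultaneously substitute $z = G_n \cdot x$ on the $\bb P(V)$-variable and reindex the matrices by $g_k^* = g_{n+1-k}^{-1}$. The matrix relabeling turns $\mu^{\otimes n}$ into $\check\mu^{\otimes n}$ by definition of $\check\mu$, and yields $G_n^{-1} = g_n^* \cdots g_1^*$. The projective substitution, via one application of \eqref{Change-of-variable-manifold} with $g = G_n^{-1}$, produces the Jacobian $|\det(G_n^{-1})|\,e^{-d\sigma(G_n^{-1}, z)}$; combined with the cocycle identity $\sigma(G_n, G_n^{-1}\!\cdot\! z) = -\sigma(G_n^{-1}, z)$ applied to the exponential in $q_n^s$, this converts the factor $e^{s\sigma(G_n, x)}$ into $e^{-(s+d)\sigma(G_n^{-1}, z)}$. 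The ratio $r_s(G_n\!\cdot\! x)/r_s(x)$ becomes $r_s(z)/r_s(G_n^{-1}\!\cdot\! z)$, while the density $\overset\cdot\pi_s(x)$ turns into $\overset\cdot\pi_s(G_n^{-1}\!\cdot\! z)$, and together with $\overset\cdot\pi_s(z)$ in the background measure these assemble into the dual density ratio $\overset\cdot\pi_s(G_n^{-1}\!\cdot\! z)/\overset\cdot\pi_s(z)$ appearing in \eqref{def-dual-kernel}. Finally, the intermediate projective points $g_k\cdots g_1\!\cdot\! x$ transform into $g_{k+1}^{-1}\cdots g_n^{-1}\!\cdot\! z = X_{n-k}^*$, so the argument of $F$ becomes precisely $F(X_n^*, (g_n^*)^{-1}, X_{n-1}^*, \ldots, X_1^*, (g_1^*)^{-1}, z)$, matching the RHS.

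The main obstacle is the bookkeeping: one must carefully combine the three independent transformation rules (matrix inversion $\mu \leftrightarrow \check\mu$, the projective Jacobian $|\det(g)|\,e^{-d\sigma(g,\cdot)}$, and the cocycle identity for $\sigma$) and verify that the factors in $q_n^s$ telescope exactly into those of $q_n^{s,*}$. Positivity of $\overset\cdot\pi_s$ granted by \ref{Condi-density-invariant} ensures the density ratio in $q_n^{s,*}$ is well-defined throughout. If the direct verification for general $n$ appears unwieldy, an equivalent approach is induction on $n$: the base $n=1$ is \eqref{basic-dial-002}, and for the inductive step one exploits the Markov property of the changed measure (derived from the cocycle relations \eqref{cocycle01} and \eqref{cocycle002}) together with the $Q_s$- and $Q_s^*$-invariance of $\pi_s$ (a consequence of \eqref{defofthedualoper003}) to reduce a general $n$ to an $(n-1)$-step identity combined with the one-step duality.
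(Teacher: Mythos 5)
Your proposal is correct, and your primary route (the single global change of variables $z = G_n\cdot x$ together with the matrix reindexing $g_k^* = g_{n+1-k}^{-1}$) is genuinely different from the paper's proof. The paper proceeds by working out the cases $n=1$ and $n=2$ explicitly — the $n=2$ case is obtained by inserting the one-step duality \eqref{basic-dial-001_bis} inside a conditional expectation, then doing a change of variable on the outermost matrix and appealing to \eqref{cocycle002} and an exchangeability argument — and then asserts the general case follows similarly. Your approach is cleaner: unfolding both sides against the explicit kernels $q_n^s$ and $q_n^{s,*}$ and matching integrands after one simultaneous substitution handles all $n$ uniformly, avoids the somewhat delicate claim about the law of $(g_1^*,g_2^*)$ coinciding with that of $(g_2^*,g_1^*)$ that appears in the paper's $n=2$ step, and makes transparent why exactly the factors $|\det(\cdot)|\,e^{-d\sigma(\cdot,\cdot)}$, the cocycle sign flip, and the density ratios telescope into $q_n^{s,*}$. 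Each of your bookkeeping claims checks out: $e^{s\sigma(G_n,x)}$ becomes $e^{-s\sigma(G_n^{-1},z)}$ by the cocycle relation, the Jacobian from \eqref{Change-of-variable-manifold} supplies the extra $e^{-d\sigma(G_n^{-1},z)}|\det(G_n^{-1})|$, the eigenfunction ratio flips, and $\overset\cdot\pi_s(x)$ together with the $\overset\cdot\pi_s(z)$ from $\pi_s(dz)$ reassembles into $\overset\cdot\pi_s(G_n^{-1}\cdot z)$, which is precisely what the product $\overset\cdot\pi_s(z)\cdot\frac{\overset\cdot\pi_s(G_n^*\cdot z)}{\overset\cdot\pi_s(z)}$ in the dual kernel gives. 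Non-negativity of $F$ justifies the Fubini step that lets you carry out the $\bb P(V)$-substitution with $G_n$ held fixed before reindexing the matrices. What the direct approach buys is a one-shot proof with no induction and no implicit "similarly"; what the paper's approach buys is closer alignment with the Markov/cocycle machinery that is reused elsewhere (e.g. in Lemma \ref{Lem_Duality}). Your fallback induction, using $\pi_s$-invariance of $Q_s$ and $Q_s^*$, is essentially the paper's route and would also work.
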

\begin{proof}
The assertion of the lemma for $n=1$ follows from 
the identities \eqref{basic-dial-001} and \eqref{basic-dial-002}, and the definition of the measure $\mathbb{Q}_s^{z,*}$.  
The case $n=2$ is obtained from the case $n=1$ by the following calculations:
\begin{align*} 
 J: & =\int_{\bb P(V)}  \mathbb E_{\mathbb Q_s^x} F(x, g_1, X_1, g_2, X_2) \pi_s(dx)   \notag\\
&=  \int_{\mathbb G} \int_{\bb P(V)}  
\int_{\bb G}  F(x, g_1, X_1, g_2, g_2 \cdot X_1)    
\frac{r_s( g_2 \cdot X_1) e^{s\sigma( g_2,  X_1)} }{\kappa(s) r_s(X_1)}  \bb Q_{s,1}^x(dg_1)   \pi_s(dx)  \mu(dg_2),
\end{align*}
where $\bb Q_{s,1}^x$ is a probability measure defined by \eqref{def-Qsn}.
From \eqref{basic-dial-001} and \eqref{basic-dial-002}, and the definition of $\mathbb Q_s^{z,*}$,  
we see that for any measurable function $H: \bb P(V)\times  \bb G \times \bb P(V)  \to \bb R_+$, 
\begin{align} \label{basic-dial-001_bis}
\int_{\bb P(V)}  \mathbb E_{\mathbb Q_s^x} H(x,g_1, g_1 \cdot x) \pi_s(dx)   
=  \int_{\bb P(V)}  \mathbb E_{\mathbb Q_s^{z,*}} H(g_1^* \cdot z, (g_1^*)^{-1}, z) \pi_s(dz).  
\end{align}
Using \eqref{basic-dial-001_bis}, we get that for any fixed $g_2 \in \bb G$, 
\begin{align*}
& \int_{\bb P(V)}  \int_{\bb G}
F(x, g_1, X_1, g_2, g_2 \cdot X_1)    \frac{r_s( g_2 \cdot X_1) e^{s\sigma( g_2,  X_1)} }{\kappa(s) r_s(X_1)} 
\bb Q_{s,1}^x(dg_1)  \pi_s(dx)  \notag\\
& =   \int_{\bb P(V)}  \int_{\bb G}
F( g_1^* \cdot z, (g_1^*)^{-1},  z, g_2, g_2 \cdot z)    \frac{r_s( g_2 \cdot z) e^{s\sigma( g_2,  z)} }{\kappa(s) r_s(z)} 
\bb Q_{s,1}^{z,*}(dg_1)  \pi_s(dz), 
\end{align*}
where $\bb Q_{s,1}^{x,*}$ is a probability measure defined by \eqref{def-Qsn-star}. 
By a change of variable $z = g_2^{-1} \cdot z'$ and applying \eqref{Change-of-variable-manifold}, we obtain
\begin{align*} 
J & = \int_{\mathbb G}  
\int_{\bb P(V)}   \int_{\bb G}
 F( g_1^* \cdot z, (g_1^*)^{-1},  z, g_2, g_2 \cdot z)    \frac{r_s( g_2 \cdot z) e^{s\sigma( g_2,  z)} }{\kappa(s) r_s(z)} 
 \bb Q_{s,1}^{z,*}(dg_1)   \overset \cdot \pi_s(z) dz \mu(dg_2)  \notag\\
& = \int_{\mathbb G}  \int_{\bb P(V)}  \int_{\bb G} 
F( g_1^* \cdot (g_2^{-1} \cdot z), (g_1^*)^{-1},  g_2^{-1} \cdot z, g_2, z)  
  \frac{r_s( z) e^{s\sigma( g_2,  g_2^{-1} \cdot z)} }{\kappa(s) r_s(g_2^{-1} \cdot z)}    \notag\\
& \qquad\qquad\qquad\qquad\qquad \times      
   |\textup{det}(g_2^{-1})|  e^{-d \sigma(g_2^{-1}, z)}  \bb Q_{s,1}^{g_2^{-1} \cdot z,*}(dg_1)  
   \overset \cdot \pi_s(g_2^{-1} \cdot z)  dz \mu(dg_2).  
 \end{align*}
Using the fact that $\sigma(g, g^{-1} \cdot z) = - \sigma(g^{-1}, z)$ and passing to the measure $\check \mu$, we get
 \begin{align*}
J  & = \int_{\mathbb G}  
\int_{\bb P(V)}  \int_{\bb G}   F( g_1^*  g_2 \cdot z, (g_1^*)^{-1},  g_2 \cdot z, g_2^{-1}, z)    
  \frac{r_s(z) e^{ - (s+d)\sigma(g_2,  z)} }{\kappa(s) r_s(g_2 \cdot z)}  
   \bb Q_{s,1}^{g_2 \cdot z,*}(dg_1) 
     \overset \cdot \pi_s( g_2 \cdot z)  |\textup{det}(g_2)| dz  \check\mu(dg_2)   \notag\\
& = \int_{\mathbb G}  
\int_{\bb P(V)}   
\int_{\bb G}  F( g_1^* g_2 \cdot z, (g_1^*)^{-1},  g_2 \cdot z, g_2^{-1}, z)    
   \frac{r_s(z) e^{ - (s+d)\sigma(g_2,  z)} }{\kappa(s) r_s(g_2 \cdot z)} 
 \bb Q_{s,1}^{g_2 \cdot z,*}(dg_1) 
   \frac{\overset \cdot \pi_s( g_2 \cdot z)}{ \overset \cdot \pi_s(z) }  |\textup{det}(g_2)|  \pi_s(dz)  \check\mu(dg_2)   \notag\\
&= \int_{\bb P(V)} \int_{\mathbb G}  \int_{\bb G} 
F(g_1^* g_2 \cdot z,  (g_1^*)^{-1} , g_2 \cdot z, g_2^{-1} ,z)   \bb Q_{s,1}^{g_2 \cdot z,*}(dg_1) 
\bb Q_{s,1}^{z,*}(dg_2) \pi_s(dz),
\end{align*}
where in the last equality we used \eqref{def-dual-kernel} and \eqref{def-Qsn-star}. 
Using \eqref{cocycle002} and the fact that the law of $(g_1^*, g_2^*)$
coincides with that of $(g_2^*, g_1^*)$ under the measure $\mathbb{Q}_s^{z,*}$, 
we obtain
\begin{align*} 
J&=\int_{\bb P(V)} \mathbb E_{\mathbb{Q}_s^{z,*}} 
F(X_2^*, (g_2^*)^{-1}, X_1^*,  (g_1^*)^{-1} ,z)  \pi_s(dz), 
\end{align*}
which concludes the proof of the case $n =2$.  The case of $n \geq 3$ is proved similarly.
\end{proof}

From Lemma \ref{lemmaReversed001} we immediately get the following assertion:  

\begin{lemma}\label{lemRev000}
Assume conditions \ref{Condi-density} and \ref{Condi-density-invariant}. 
Then, for any $s \in I_{\mu}$ and any measurable function $F: ( \bb P(V))^{n+1}  \to \bb R_+$, 
%we have  
\begin{align*} %\label{}
\int_{\bb P(V)}  \mathbb E_{\mathbb Q_s^x} F(x,X_1,\ldots,X_n) \pi_s(dx)  
=  \int_{\bb P(V)} \mathbb E_{\mathbb{Q}_s^{z,*}} F(X_n^*,\ldots,X_1^*,z)  \pi_s(dz).
\end{align*}
\end{lemma}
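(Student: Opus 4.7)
The plan is to derive Lemma \ref{lemRev000} as an immediate specialization of Lemma \ref{lemmaReversed001}. The latter provides an identity for functions depending on the full trajectory $(x, g_1, X_1, \ldots, g_n, X_n)$; the former is the same identity restricted to functions that ignore the group-valued coordinates. So no new analytic work is required.

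Concretely, given a measurable function $F \colon (\bb P(V))^{n+1} \to \bb R_+$, I would introduce the auxiliary function
\begin{align*}
\tilde F \colon \bb P(V) \times (\bb G \times \bb P(V))^n & \to \bb R_+, \\
\tilde F(x, g_1, x_1, g_2, x_2, \ldots, g_n, x_n) & := F(x, x_1, \ldots, x_n),
\end{align*}
which is measurable and nonnegative, hence permitted as an input to Lemma \ref{lemmaReversed001}. Plugging $\tilde F$ into that lemma, the left-hand side reduces to $\int_{\bb P(V)} \mathbb E_{\mathbb Q_s^x} F(x, X_1, \ldots, X_n) \pi_s(dx)$ since the $g_i$-slots of $\tilde F$ are dummies, while the right-hand side becomes $\int_{\bb P(V)} \mathbb E_{\mathbb Q_s^{z,*}} F(X_n^*, X_{n-1}^*, \ldots, X_1^*, z) \pi_s(dz)$ for the same reason, because the slots formerly occupied by $(g_n^*)^{-1}, \ldots, (g_1^*)^{-1}$ are ignored by $\tilde F$. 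This yields exactly the claimed identity.

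The only point requiring attention is the bookkeeping of the positional arguments: in Lemma \ref{lemmaReversed001} the projective coordinate slots of $F$ are, in order, $x, X_1, X_2, \ldots, X_n$ on the left and $X_n^*, X_{n-1}^*, \ldots, X_1^*, z$ on the right, so the reversal of the projective trajectory appearing in the statement of Lemma \ref{lemRev000} is built into the ancestor lemma. Hence there is no real obstacle; the argument is one line of substitution once Lemma \ref{lemmaReversed001} is available. If one wished to avoid invoking Lemma \ref{lemmaReversed001}, one could alternatively give a direct proof by induction on $n$, using the dual kernel identity from equations \eqref{basic-dial-001}--\eqref{basic-dial-002} together with the cocycle property \eqref{cocycle002} and the symmetry of the joint law of $(g_1^*, \ldots, g_n^*)$ under $\mathbb Q_s^{z,*}$, but this merely reproduces the argument already used to prove Lemma \ref{lemmaReversed001}.
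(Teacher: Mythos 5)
Your proof is correct and matches the paper exactly: the paper introduces Lemma \ref{lemRev000} with the phrase ``From Lemma \ref{lemmaReversed001} we immediately get the following assertion,'' i.e.\ it is obtained by the same specialization you describe, feeding into Lemma \ref{lemmaReversed001} a function that ignores the group-valued slots. Your bookkeeping of the reversed order of the projective coordinates is also correct.
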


Recall that the Markov random walk $(X_n, S_n)$ is defined by \eqref{Def_Xn}. 
We consider the first time when the random walk $(y-S_n)_{n\geq 1}$
starting at the point $y \in \mathbb{R}_+$ becomes negative:
\begin{align}\label{Def-tau-y}
\tau_y = \inf \{ k \geq 1: y - S_k < 0\}. 
\end{align}
Now we introduce the dual random walk $(S_n^*)_{n\geq 0}$ by setting $S_0^* =0$ and for any $n\geq 1$,
\begin{align}\label{def-dual-randomwalk-Sn}
S_n^* =  \sum_{k=1}^n \sigma(g_k^*, X_{k-1}^*), 
\end{align}
where $(X_n^*)_{n \geq 0}$ is defined by \eqref{Def-dual-MC}.  
The first time when the random walk $(y-S_n^*)_{n\geq 1}$
starting at the point $y \in \mathbb{R}_+$ becomes negative is defined by
\begin{align*}
\tau_y^{*} = \inf \{ k \geq 1: y - S_k^* < 0\}. 
\end{align*}

In its turn Lemma \ref{lemRev000} implies a duality property of the Markov chains 
$(X_n, S_n)_{n\geq 0}$ and $(X_n^*, S_n^*)_{n\geq 0}$ conditioned to stay positive.

\begin{lemma}\label{Lem_Duality}
Assume conditions \ref{Condi-density} and \ref{Condi-density-invariant}. 
Then, for any $n \geq 1$,  $s \in  I_{\mu}$
and any measurable functions $\varphi, \psi: \bb P(V) \times \bb R \to \bb R_+$, 
we have  
\begin{align}\label{duality-formular-abc}
&\int_{\bb P(V)} \int_{\mathbb{R}} \varphi\left(x, y\right) 
\mathbb{E}_{ \mathbb{Q}_s^{x}} \Big[ \psi \left( X_n,  y - S_{n} \right); \tau_{y} > n - 1 \Big] dy \pi_s(dx) \notag\\
&\qquad\qquad = \int_{\bb P(V)}  \int_{\mathbb{R}} 
   \psi \left(z, t\right) 
   \mathbb{E}_{ \mathbb{Q}_s^{z,*}} \Big[ \varphi\left( X_n^*,  t - S_{n}^* \right); \tau_{t}^* > n - 1 \Big] dt \pi_s(dz). 
\end{align}
In particular, for any $n \geq 1$, $s \in  I_{\mu}$
and any measurable functions $\varphi, \psi: \bb P(V) \times \bb R_+ \to \bb R_+$, 
we have 
\begin{align}\label{duality-formular-particu-abc}
&\int_{\bb P(V)} \int_{\mathbb{R}_{+}} \varphi\left(x, y\right) 
\mathbb{E}_{ \mathbb{Q}_s^{x}} \Big[ \psi \left( X_n,  y - S_{n} \right); \tau_{y} > n \Big] dy \pi_s(dx)  \notag\\
&\qquad\qquad = \int_{\bb P(V)}  \int_{\mathbb{R}_{+}} 
   \psi \left(z, t\right) 
   \mathbb{E}_{ \mathbb{Q}_s^{z,*}} \Big[ \varphi\left( X_n^*,  t - S_{n}^* \right); \tau_{t}^* > n \Big] dt \pi_s(dz). 
\end{align}
\end{lemma}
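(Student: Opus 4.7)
The plan is to deduce this space–time duality from the purely spatial duality of Lemma~\ref{lemmaReversed001} (which gives a time-reversal identity at the level of the sequence of matrices and directions). The key idea is to encode the walk values $S_k$ and the stopping time $\tau_y$ as measurable functions of the data $(x,g_1,X_1,\dots,g_n,X_n)$, treat the real variable $y$ as a parameter, apply Lemma~\ref{lemmaReversed001}, and then recognize what the cocycle identity $\sigma(g^{-1},g\cdot x)=-\sigma(g,x)$ does to the sums $S_k$.

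First, I would fix $y\in\bb R$ and define, for non-negative $\vphi,\psi$,
\[
F_y(x,g_1,X_1,\dots,g_n,X_n)\;=\;\vphi(x,y)\,\psi(X_n,y-S_n)\prod_{k=1}^{n-1}\mathds 1_{\{y-S_k\geq 0\}},
\]
where $S_k=\sigma(g_1,x)+\sigma(g_2,X_1)+\dots+\sigma(g_k,X_{k-1})$. Using Tonelli to move the integral $\int dy$ outside, Lemma~\ref{lemmaReversed001} applied to $F_y$ turns the left-hand side of \eqref{duality-formular-abc} into
\[
\int_{\bb R}\!\int_{\bb P(V)}\mathbb E_{\bb Q_s^{z,*}}F_y\bigl(X_n^*,(g_n^*)^{-1},X_{n-1}^*,\dots,(g_1^*)^{-1},z\bigr)\,\pi_s(dz)\,dy.
\]

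The core algebraic step is to evaluate the $S_k$ appearing in $F_y$ once we have substituted the reversed data. With $(X_i^*)_{i\ge 0}$ defined as in \eqref{Def-dual-MC}, the identity $\sigma((g_j^*)^{-1},X_j^*)=-\sigma(g_j^*,X_{j-1}^*)$ gives, after re-indexing,
\[
\text{new }S_k\;=\;\sum_{i=1}^{k}\sigma\bigl((g_{n-i+1}^*)^{-1},X_{n-i+1}^*\bigr)\;=\;-(S_n^*-S_{n-k}^*),
\]
so new $S_n=-S_n^*$ and the indicator $\mathds 1_{\{y-\text{new }S_k\geq 0\}}$ becomes $\mathds 1_{\{y+S_n^*\geq S_{n-k}^*\}}$. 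Now move the $dy$-integral back inside the expectation (legitimate since $S_n^*$ is measurable and everything is non-negative) and perform the deterministic translation $t=y+S_n^*$. The argument of $\psi$ becomes $(z,t)$, the argument of $\vphi$ becomes $(X_n^*,t-S_n^*)$, and the product of indicators for $k=1,\dots,n-1$ becomes exactly $\prod_{j=1}^{n-1}\mathds 1_{\{t-S_j^*\geq 0\}}=\mathds 1_{\{\tau_t^*>n-1\}}$, which yields \eqref{duality-formular-abc}.

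The second identity \eqref{duality-formular-particu-abc} is obtained by the same scheme with two cosmetic modifications: one includes the extra indicator $\mathds 1_{\{y-S_n\geq 0\}}$ (to upgrade $\tau_y>n-1$ to $\tau_y>n$) and restricts $y$ to $\bb R_+$, i.e.\ inserts $\mathds 1_{\{y\geq 0\}}$. After reversal and the shift $t=y+S_n^*$, the first indicator becomes $\mathds 1_{\{t\geq 0\}}$ (i.e.\ $t\in\bb R_+$) while the constraint $y\geq 0$ becomes $\mathds 1_{\{t-S_n^*\geq 0\}}$, which together with the previous $n-1$ indicators gives precisely $\{\tau_t^*>n\}$.

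The main obstacle is bookkeeping: the time-reversal swaps the roles of ``initial point'' $y$ and ``endpoint'' $y-S_n$, so one must carefully track how the cocycle identity redistributes the partial sums, and verify that the original ballot event $\{\tau_y>n-1\}$ (respectively $\{\tau_y>n\}$ together with $y\geq 0$) translates exactly into $\{\tau_t^*>n-1\}$ (resp.\ $\{\tau_t^*>n\}$ with $t\geq 0$). Everything else, including the use of Tonelli and of the deterministic shift $t=y+S_n^*$, is routine given the non-negativity of $\vphi$ and $\psi$ and the measurability of $S_n^*$.
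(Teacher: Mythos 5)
Your proof is correct and follows essentially the same path as the paper: both reduce \eqref{duality-formular-abc} to Lemma~\ref{lemmaReversed001} by encoding the ballot event and the endpoint functions as a single nonnegative function of $(x,g_1,X_1,\dots,g_n,X_n)$ with $y$ as a parameter, apply the spatial reversal, use $\sigma((g_k^*)^{-1},X_k^*)=-\sigma(g_k^*,X_{k-1}^*)$ to find that the reversed partial sums are $-(S_n^*-S_{n-k}^*)$, and then shift $t=y+S_n^*$. The only cosmetic difference is in deriving \eqref{duality-formular-particu-abc}: you re-run the argument with extra indicators, whereas the paper simply substitutes $\varphi\mathds 1_{\{\cdot\ge 0\}}$, $\psi\mathds 1_{\{\cdot\ge 0\}}$ into the already-proved \eqref{duality-formular-abc}; both are fine.
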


\begin{proof}
Consider the function $\Psi$ defined as follows: for $x_0, x_n \in \bb P(V)$ and $y_0, y_1, \ldots, y_n \in \bb R$, 
\begin{align*} 
\Psi(x_0,y_0,y_1,\ldots, y_{n-1} ,y_n,x_n) := \vphi(x_0,y_0) \psi(x_n,y_n) \mathds 1_{ \{ y_1\geq 0,\ldots, y_{n-1} \geq 0 \} }. 
\end{align*}
By the definition of $\Psi$, it follows that 
\begin{align*} 
J: &=\int_{\bb P(V)} \int_{\mathbb{R}} \varphi\left(x, y\right) 
\mathbb{E}_{ \mathbb{Q}_s^{x}} \Big[ \psi \left( X_n,  y - S_{n} \right); \tau_{y} > n - 1 \Big]   dy \pi_s(dx) \\
&=\int_{\bb P(V)}  \int_{\mathbb{R}}  \mathbb E_{\mathbb Q_s^x} 
\Psi \Big( x, y, y - \sigma(g_1,x), 
  \ldots, y - \sigma(g_1,x) - \ldots - \sigma(g_n,X_{n-1}), X_n  \Big)  dy \pi_s(dx).
\end{align*}
Applying Lemma \ref{lemmaReversed001}, we obtain
\begin{align*} 
J =\int_{\bb P(V)} \int_{\mathbb{R}}  \mathbb E_{\mathbb{Q}_s^{z,*}} 
\Psi \bigg( X_n^*, y, y - \sigma((g_n^*)^{-1}, X_n^*), y - \sum_{k=n}^{n-1} \sigma((g_k^*)^{-1},X_k^*), 
\ldots, y - \sum_{k=n}^1 \sigma((g_k^*)^{-1}, X_k^*), z  \bigg) dy \pi_s(dz).
\end{align*}
By a change of variable 
$y = t + \sum_{k=n}^1 \sigma((g_k^*)^{-1},X_k^*)$, 
we see that
\begin{align*} 
J  = \int_{\bb P(V)} \int_{\mathbb{R}}  \mathbb E_{\mathbb{Q}_s^{z,*}} 
\Psi \bigg( X_n^*, t + \sum_{k=1}^{n} \sigma((g_k^*)^{-1}, X_k^*) , t + \sum_{k=1}^{n-1} \sigma((g_k^*)^{-1},X_k^*), 
\ldots, t + \sigma((g_1^*)^{-1}, X_{1}^* ), t , z  \bigg) dt \pi_s(dz).
\end{align*}
Since $\sigma((g_k^*)^{-1},X_k^*) = -\sigma(g_k^*,X_{k-1}^*)$
for any $1 \leq k \leq n$ with $X_0^* = z$ under the measure $\mathbb{Q}_s^{z,*}$,  we get
\begin{align*} 
J & =\int_{\bb P(V)} \int_{\mathbb{R}}  \mathbb E_{\mathbb{Q}_s^{z,*}} 
\Psi \bigg( X_n^*, t - \sum_{k=1}^{n} \sigma(g_k^*,X_{k-1}^*) ,t - \sum_{k=1}^{n-1} \sigma(g_k^*,X_{k-1}^*), 
\ldots, t -  \sigma(g_1^*, z ), t , z  \bigg) dt \pi_s(dz) \\
& = \int_{\bb P(V)}  \int_{\mathbb{R}} 
   \psi \left(z, t\right) \mathbb{E}_{ \mathbb{Q}_s^{z,*}} \Big[ \varphi\left( X_n^*,  t - S_{n}^* \right); \tau_{t}^* > n - 1 \Big] dt \pi_s(dz), 
\end{align*}
which finishes the proof of \eqref{duality-formular-abc}.  
Identity \eqref{duality-formular-particu-abc} follows from \eqref{duality-formular-abc}
by taking $\varphi\left(x, y\right)  = \varphi_1\left(x, y\right) \mathds 1_{\{y \geq 0\}}$
and $\psi\left(x, y\right)  = \psi_1\left(x, y\right) \mathds 1_{\{y \geq 0\}}$.  
\end{proof}

%%%%%%%%%%%%%%%%%%%%%%%%%%%%%%%%%%%%%%%%%%%%%%%%%%%%%

\subsection{Conditioned integral limit theorems}  
In this section we state several conditioned integral limit theorems,
which will play important roles in Section \ref{Sec-CLLT} to establish the conditioned local limit theorems
for products of random matrices.  
In the following result we give the existence of the harmonic function $V_{s}$ under the changed measure $\bb Q_s^x$,
and states some of its properties.

\begin{proposition}\label{Prop-harmonic}
Assume condition \ref{Condi-density} and $\kappa'(s) = 0$ for some $s \in  I_{\mu}$. 
\begin{enumerate}
\item
For any $x \in \bb P(V)$ and $y \geq 0$, the following limit exists: 
\begin{align*}
\lim_{n \to \infty}
\mathbb{E}_{\bb Q_{s}^x} \left( y - S_n; \,   \tau_y  >  n \right)  =: V_{s}(x, y). 
\end{align*}

\item
For any $x \in \bb P(V)$, 
the function $V_{s}(x, \cdot)$ is increasing on $\bb R_+$ and
there exist constants $c_1, c_2 >0$ such that for all $x \in \bb P(V)$ and $y \geq 0$, 
\begin{align*}
0 \vee (y-c_1) < V_{s}(x, y) \leq c_2(1 + y). 
\end{align*}
Moreover, $\inf_{x \in \bb P(V), y \geq 0} V_s(x, y) >0$ and $\lim_{y \to \infty} \frac{V_{s}(x,y)}{y} = 1$. 

\item
The function $V_{s}$ is harmonic, i.e., for any $x \in \bb P(V)$ and $y \geq 0$, 
\begin{align*}
\mathbb{E}_{\bb Q_{s}^x} \left( V_{s}(X_1, y - S_1); \,   \tau_y  >  1 \right) 
 = V_{s}(x, y). 
\end{align*}
\end{enumerate}

\end{proposition}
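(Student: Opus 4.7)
The plan is to adapt the construction of harmonic functions for random walks conditioned to stay positive (in the spirit of Tanaka and Bertoin) to the Markov random walk setting, following the strategy of Grama--Le~Page--Peign\'e \cite{GLP17}. The driving idea is that the hypothesis $\kappa'(s)=0$ gives $\lim_n S_n/n=0$ under $\bb Q_s^x$, so $y-S_n$ is an ``almost martingale'' and the stopping argument from the i.i.d.\ case carries over after a coboundary correction.

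\textbf{Step 1 (Existence of the limit).} Using the spectral gap of the Markov operator $Q_s$ on $\scr B$ (Lemma \ref{Lem_spectral_gap}), I would solve the Poisson equation $(I-Q_s)\theta_s = \psi_s$, where $\psi_s(x)=\int_{\bb G}\sigma(g,x)\frac{r_s(g\cdot x)e^{s\sigma(g,x)}}{\kappa(s)r_s(x)}\mu(dg)$ is the drift function (which has zero $\pi_s$-integral precisely because $\kappa'(s)=0$, by differentiating $\nu_s P_s=\kappa(s)\nu_s$). This produces a bounded continuous function $\theta_s$ on $\bb P(V)$ such that
\[
S_n \;=\; \theta_s(x)-\theta_s(X_n)+M_n^{(s)},
\]
where $M_n^{(s)}$ is a $\bb Q_s^x$-martingale with bounded increments in $L^2$ (using the moment bound in \ref{Condi-density}). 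Writing $\phi_n(x,y):=\mathbb E_{\bb Q_s^x}[(y-S_n);\tau_y>n]$, I would compare $\phi_n$ to the corresponding quantity for $M_n^{(s)}$ and invoke the Kolmogorov--Doney type estimate $\bb Q_s^x(\tau_y>n)=O((1+y)n^{-1/2})$, together with $\mathbb E_{\bb Q_s^x}[|S_{n+1}-S_n|;\tau_y>n]=O(n^{-1/2})$, to show the increments $|\phi_{n+1}-\phi_n|$ are summable. This yields the limit $V_s(x,y)$.

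\textbf{Step 2 (Qualitative properties).} Monotonicity of $V_s(x,\cdot)$ holds already at level $\phi_n$: for $0\le y\le y'$, the inclusion $\{\tau_y>n\}\subseteq\{\tau_{y'}>n\}$ and the pointwise bound $0\le y-S_n\le y'-S_n$ on the smaller set give $\phi_n(x,y)\le\phi_n(x,y')$. The upper bound $V_s(x,y)\le c_2(1+y)$ follows by applying Doob's $L^2$ inequality to $M_n^{(s)}$ and absorbing $\|\theta_s\|_\infty$. For the lower bound $V_s(x,y)>(y-c_1)^+$ and the uniform positivity $\inf_{x,y}V_s(x,y)>0$, I would use the assumption $\inf_{x\in\bb P(V)}\mu\{g:\sigma(g,x)>c_0\}>0$ from \ref{Condi-density}: one concatenates $\lceil (c_1+y)/c_0\rceil$ such ``big jumps up'' and then relies on the classical fact that a zero-mean random walk started high enough survives forever with strictly positive probability (transferred to our walk via the cohomology decomposition). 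Finally, $V_s(x,y)/y\to 1$ as $y\to\infty$ follows by dividing by $y$ inside the expectation and applying dominated convergence, using the uniform bound $V_s(x,y)\le c_2(1+y)$ and $\bb Q_s^x(\tau_y>n)\to 1$ as $y\to\infty$ for each fixed $n$.

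\textbf{Step 3 (Harmonicity).} The Markov property at time $1$ gives the identity $\phi_{n+1}(x,y)=\mathbb E_{\bb Q_s^x}[\phi_n(X_1,y-S_1);\tau_y>1]$ for every $n$. Passing to the limit $n\to\infty$ inside the expectation is justified by dominated convergence using the upper bound $\phi_n(X_1,y-S_1)\le c_2(1+y+|\sigma(g_1,x)|)$, which is $\bb Q_s^x$-integrable by the moment hypothesis \eqref{Moment001}. The result is the harmonic identity displayed in part (3).

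The main obstacle is Step 1. In the classical i.i.d.\ setting, $(y-S_{n\wedge\tau_y})$ is a genuine nonnegative martingale and convergence of $\phi_n$ is a one-line optional-stopping argument; here the walk is Markov and only has asymptotic drift zero, so one must first peel off the coboundary $\theta_s(X_n)-\theta_s(x)$ and then manage the overshoot $y-S_{\tau_y}$, whose tails depend on the large-deviation behaviour of the cocycle and hence on the full strength of the spectral-gap estimates for $P_s$.
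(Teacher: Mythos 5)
Your plan is exactly the route the paper takes: the authors dispose of Proposition~\ref{Prop-harmonic} by citing the analogous statements in Grama--Lauvergnat--Le~Page \cite[Theorems~2.2, 2.4, 2.5]{GLL18} and Grama--Le~Page--Peign\'e \cite[Proposition~5.12]{GLP17}, and they explain that the single new ingredient needed is the martingale approximation $S_n = M_n + O(1)$ obtained by solving the cohomological equation under the spectral gap of $Q_s$, which is precisely your Step~1 and corresponds to Lemma~\ref{Martingale approx} in Section~5.2 of the paper. So your decomposition, the use of $\bb Q_s^x(\tau_y>n)=O((1+y)n^{-1/2})$, and the dominated-convergence passage for harmonicity all match.

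One detail in Step~2 does not hold up as stated: proving $\lim_{y\to\infty} V_s(x,y)/y=1$ by ``dividing by $y$ and applying dominated convergence, using $\bb Q_s^x(\tau_y>n)\to1$ for each fixed $n$'' is an illicit interchange of the $n\to\infty$ and $y\to\infty$ limits --- the bound $\phi_n(x,y)/y\to1$ you get is for fixed $n$, not after passing $n\to\infty$, and nothing you wrote down makes this uniform. The argument actually used in GLP17 is to show the stronger statement $V_s(x,y)=y+O(1)$ uniformly in $x$: apply optional stopping to the martingale $M_{n\wedge\tau_y}$ to get $\mathbb E_{\bb Q_s^x}[y - S_{n\wedge\tau_y}] = y + O(1)$, and then control the overshoot term $\mathbb E_{\bb Q_s^x}[|y - S_{\tau_y}|;\tau_y\le n]$ uniformly in $y$ via the moment bound in \ref{Condi-density}. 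Your own bound $0\vee(y-c_1)<V_s(x,y)\le c_2(1+y)$ is not sharp enough to give a ratio limit of $1$ (it only yields $\limsup V_s/y\le c_2$), so this refinement is genuinely necessary, not cosmetic.
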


The assertion $\inf_{x \in \bb P(V), y \geq 0} V_s(x, y) >0$ is not explicitly stated  in \cite{GLP17}, 
but it can be found in the proof of Proposition 5.12 of \cite{GLP17}.

The following result gives a uniform upper bound for $\bb Q_{s}^x (\tau_y > n)$. 

\begin{theorem}\label{Th_Asymptotic_ExitTime}
Assume condition \ref{Condi-density} and $\kappa'(s) = 0$ for some $s \in  I_{\mu}$. 
Then
\begin{align*}
\limsup_{n \to \infty} n^{1/2} 
\sup_{x \in \bb P(V)} \sup_{y \geq 0} \frac{1}{y+1} \bb Q_{s}^x (\tau_y > n) < \infty. 
\end{align*}
\end{theorem}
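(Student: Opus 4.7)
The plan is to split the survival event $\{\tau_y>n\}$ according to the terminal value $y-S_n$ and to use the harmonic function $V_s$ of Proposition \ref{Prop-harmonic} to handle the two parts by different arguments. By Proposition \ref{Prop-harmonic}(3), $V_s(x,y)=\mathbb{E}_{\bb Q_s^x}[V_s(X_n,y-S_n);\tau_y>n]$, so combined with the pointwise bounds $(y'-c_1)_+\le V_s(x',y')\le c_2(1+y')$ from Proposition \ref{Prop-harmonic}(2), Markov's inequality on the event $\{y-S_n\ge \sqrt{n}+c_1\}$ immediately yields
\begin{align*}
\bb Q_s^x(\tau_y>n,\ y-S_n\ge \sqrt{n}+c_1)\le\frac{V_s(x,y)}{\sqrt{n}}\le\frac{c_2(1+y)}{\sqrt{n}},
\end{align*}
which is already of the claimed order.

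The harder part is the complementary event $\{\tau_y>n,\ 0\le y-S_n<\sqrt{n}+c_1\}$, which forces the walk both to survive $n$ steps and to end inside a strip of width $O(\sqrt{n})$. I would apply the Markov property at time $m=\lfloor n/2\rfloor$ to express its probability as
\begin{align*}
\mathbb{E}_{\bb Q_s^x}\bigl[\mathds 1_{\tau_y>m}\,\Phi(X_m,y-S_m)\bigr],
\end{align*}
where
\begin{align*}
\Phi(\tilde x,\tilde y):=\bb Q_s^{\tilde x}\bigl(\tau_{\tilde y}>n-m,\ 0\le \tilde y-S_{n-m}<\sqrt{n}+c_1\bigr),
\end{align*}
and then establish the key estimate $\Phi(\tilde x,\tilde y)\le C(1+\tilde y)/\sqrt{n}$ uniformly in $\tilde x,\tilde y$. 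Plugging this in and using the elementary consequence $\mathbb{E}_{\bb Q_s^x}[1+y-S_m;\tau_y>m]\le C(1+y)$ of the harmonic identity together with the lower bound $\inf V_s>0$ would then complete the proof.

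The principal obstacle is the uniform bound on $\Phi$. To obtain it I would dominate $\Phi(\tilde x,\tilde y)\le \bb Q_s^{\tilde x}(S_{n-m}\in[\tilde y-\sqrt{n}-c_1,\tilde y])$ and invoke a Gaussian-type density upper bound for the centered random walk $(S_k)$ under $\bb Q_s^{\tilde x}$; such a bound is available because the assumption $\kappa'(s)=0$ forces the Lyapunov exponent under the twisted measure to vanish, and it follows from the integral limit theorems for products of random matrices of \cite{GLP17}, made uniform in the starting projective direction through the spectral-gap property of $P_s$ granted by condition \ref{Condi-density} and the density statement of condition \ref{Condi-density-invariant}. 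For $\tilde y$ of order $\sqrt n$ this directly yields $O(1/\sqrt n)$; for small $\tilde y$ a second application of the Step 1 harmonic argument on the time window $[0,n-m]$, after transferring to the dual walk via Lemma \ref{Lem_Duality} to uniformize in the starting direction, supplies the remaining estimate. It is precisely at this step that the present density-based framework becomes indispensable, and it is the technical heart of the theorem.
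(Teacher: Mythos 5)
Your Step 1 is sound: on $\{\tau_y>n,\ y-S_n\ge\sqrt n+c_1\}$ one has $V_s(X_n,y-S_n)>(y-S_n)-c_1\ge\sqrt n$ by Proposition \ref{Prop-harmonic}(2), and iterating the harmonic identity of Proposition \ref{Prop-harmonic}(3) gives $\bb Q_s^x(\tau_y>n,\ y-S_n\ge\sqrt n+c_1)\le V_s(x,y)/\sqrt n\le c_2(1+y)/\sqrt n$.

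The gap is in Step 2, and it is a logical one, not a technical one. After the Markov property at $m=\lfloor n/2\rfloor$ the complementary event is reduced to controlling $\Phi(\tilde x,\tilde y)=\bb Q_s^{\tilde x}(\tau_{\tilde y}>n-m,\ 0\le\tilde y-S_{n-m}<\sqrt n+c_1)$, and the estimate you need is $\Phi(\tilde x,\tilde y)\le C(1+\tilde y)/\sqrt n$. Since $\Phi(\tilde x,\tilde y)\le\bb Q_s^{\tilde x}(\tau_{\tilde y}>n-m)$ and $n-m$ is of order $n$, this is precisely the statement of the theorem on half the time horizon; nothing has been reduced, so some genuinely new input is required, and the devices you propose do not furnish it. Dropping the survival constraint and applying a Gaussian-type density bound for $S_{n-m}$ gives $\bb Q_s^{\tilde x}\bigl(S_{n-m}\in[\tilde y-\sqrt n-c_1,\tilde y]\bigr)\le C(\sqrt n+c_1)/\sqrt{n-m}=O(1)$, because the window has length of order $\sqrt n$; this is $O(1)$, not $O(1/\sqrt n)$, and the same is true when $\tilde y$ is of order $\sqrt n$ (an interval of length $\sqrt n$ at distance $O(\sqrt n)$ from the origin still carries $\Theta(1)$ Gaussian mass). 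For small $\tilde y$, ``a second application of the Step 1 harmonic argument on $[0,n-m]$'' does not apply: Step 1 only controls the event where the terminal value exceeds $\sqrt n$, which is the opposite of the regime where $\Phi$ is hard. And Lemma \ref{Lem_Duality} is an \emph{integrated} identity against $\pi_s(dx)\,dy$; it cannot, on its own, produce the pointwise bound on $\Phi(\tilde x,\tilde y)$ you need inside the conditional expectation $\mathbb E_{\bb Q_s^x}[\,\cdot\,;\tau_y>m]$.

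For comparison, the paper does not prove this theorem from scratch: right after Theorem \ref{Theor-IntegrLimTh} it states that the proof is carried out as in \cite{GLL18}, relying on the martingale approximation of $S_n$ supplied by the spectral gap together with a coupling/functional-limit argument for the walk conditioned to stay positive (the Denisov--Wachtel type machinery also used in \cite{GLP17}). That is what fills the hole at your Step 2: the bound $\sup_x\bb Q_s^x(\tau_y>n)\le C(1+y)/\sqrt n$ for bounded $y$ cannot be extracted from the harmonic function and an unconditioned local estimate alone, since any such route simply reproduces the theorem for a smaller $n$.
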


In the following we formulate a conditional integral limit theorem for the random walk $(y - S_n)$
under the changed measure $\bb Q_{s}^x$. 
Denote $\sigma_s = \sqrt{\Lambda''(s)}$ and let $\Phi^+(t) = (1 - e^{-t^2/2}) \mathds 1_{\{ t \geq 0\}} $ 
be the Rayleigh distribution function on $\bb R$. 

\begin{theorem}\label{Theor-IntegrLimTh} 
Assume condition \ref{Condi-density} and $\kappa'(s) = 0$ for some $s \in  I_{\mu}$. 
Let $(\alpha_n)_{n \geq 1}$ be any sequence of positive numbers satisfying $\lim_{n \to \infty} \alpha_n = 0$. 
Then, for any $\ee >0$, 
there exists a constant $c_{\ee} > 0$ such that for any $n\geq 1$, $x \in \bb P(V)$  
and $y \in [0, \alpha_n \sqrt{n}]$, 
\begin{align*} 
\sup_{t\in \bb R}  \left| \bb Q_{s}^x \left( \frac{y - S_{n}}{\sigma_{s} \sqrt{n}} \leq t, \,  \tau_y>n\right)
   -  \frac{2V_{s}(x, y)}{\sigma_{s} \sqrt{2\pi n}} \Phi^+(t)\right|  
\leq c_{\ee}  \frac{1 + y}{\sqrt{n}} (\alpha_n + n^{-\ee}). 
\end{align*}
\end{theorem}

The proof of Proposition \ref{Prop-harmonic} and Theorems \ref{Th_Asymptotic_ExitTime}, \ref{Theor-IntegrLimTh}
 can be performed in the same way as the corresponding assertions in \cite[Theorems 2.2, 2.4 and 2.5]{GLL18}.
The key point is that the pair $(X_n, S_n)$ is a Markov chain under the changed measure $\bb Q_{s}^x$
and the spectral gap properties hold for the corresponding perturbed operator which allows us to obtain a martingale approximation for the Markov walk $S_n$. 
The rate of convergence in Theorem \ref{Theor-IntegrLimTh}  
can be obtained by using the techniques similar to that in \cite{GX21}.

Next we formulate similar results for the dual Markov random walk $(X_n^*, S_n^*)$. 
They are easy consequences of Theorems \ref{Th_Asymptotic_ExitTime} and \ref{Theor-IntegrLimTh} due to 
the fact that the dual Markov random walk has spectral gap properties (see Section \ref{subsect:spectral gap} below). 
For any $x \in \bb P(V)$ and $y \geq 0$, we define 
$V_{s}^*(x, y) := \lim_{n \to \infty} \mathbb{E}_{\bb Q_{s}^{x, *}} ( y - S_n^*; \,   \tau_y^*  >  n )$.
Then $V_s^*$ satisfies similar properties as $V_s$ stated in Proposition \ref{Prop-harmonic}.

\begin{theorem}\label{Th_Asymptotic_ExitTime_dual}
Assume conditions \ref{Condi-density}, \ref{Condi-density-invariant},
and $\kappa'(s) = 0$ for some $s \in  I_{\mu}$. 
Then
\begin{align*}
\limsup_{n \to \infty} n^{1/2} 
\sup_{x \in \bb P(V)} \sup_{y \geq 0} \frac{1}{y + 1} (\bb Q_{s}^{x,*}) (\tau_{y}^* > n) < \infty. 
\end{align*}
\end{theorem}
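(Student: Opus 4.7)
The plan is to observe that the dual Markov random walk $(X_n^*, S_n^*)$ under $\bb Q_s^{x,*}$ satisfies the same structural hypotheses that underlie the proof of Theorem \ref{Th_Asymptotic_ExitTime}, so that the same scheme carries over. Concretely, I need to verify three ingredients: (i) the dual transfer operator $Q_s^*$ has a spectral gap on $\scr B$ with simple leading eigenvalue $1$ and invariant probability $\pi_s$; (ii) the moment bounds on the dual increments $\sigma(g_k^*, X_{k-1}^*)$ needed for a martingale approximation of $S_n^*$; and (iii) an averaged centering condition, namely $\int_{\bb P(V)} \mathbb E_{\bb Q_s^{x,*}}[S_1^*]\, \pi_s(dx) = 0$.

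For (i), I would invoke the adjoint identity \eqref{defofthedualoper003}: since $Q_s^*$ is the $L^2(\pi_s)$-adjoint of $Q_s$, its spectrum on $L^2(\pi_s)$ coincides with that of $Q_s$, which is known to have a spectral gap (this is one of the key inputs in the proof of Theorem \ref{Th_Asymptotic_ExitTime}, cf.\ \cite{GLL18}). The spectral gap then transfers from $L^2(\pi_s)$ to the sup-norm space $\scr B$ using the explicit formula \eqref{def-dual-kernel} for the dual kernel together with \ref{Condi-density-invariant}, which ensures that $\overset\cdot\pi_s$ is bounded above and bounded away from zero on $\bb P(V)$. For (ii), the moment condition \eqref{Moment001} in \ref{Condi-density} is specifically formulated to cover the factor $\|g\|^d|\det(g^{-1})|$ present in the dual density, so the needed integrability holds on both sides. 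For (iii), starting from the original centering $\int_{\bb P(V)} \mathbb E_{\bb Q_s^x}[S_1]\, \pi_s(dx) = 0$ (which is equivalent to $\kappa'(s) = 0$), Lemma \ref{lemRev000} applied to a suitable increment function yields the dual centering condition.

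With (i)--(iii) in hand, the argument from \cite{GLL18} used to establish Theorem \ref{Th_Asymptotic_ExitTime} applies verbatim to $(X_n^*, S_n^*)$: a martingale approximation of $S_n^*$ via the Poisson equation for $Q_s^*$ reduces the analysis of $\tau_y^*$ to the exit time of a centered, square-integrable random walk from the positive half-line, to which the classical Caravenna-type estimate gives the bound
\begin{align*}
\bb Q_s^{x,*}(\tau_y^* > n) \leq C \, \frac{y + 1}{\sqrt n}
\end{align*}
uniformly in $x \in \bb P(V)$ and $y \geq 0$, which is the claimed statement.

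The main obstacle is not conceptual but bookkeeping: one must verify that every constant in the spectral gap, the martingale approximation, and the Caravenna bound is uniform in the starting point $x$. This reduces to checking uniformity of the constants in the spectral analysis of $Q_s^*$, which itself follows from the uniform two-sided bound $0 < \inf_x \overset\cdot\pi_s(x) \leq \sup_x \overset\cdot\pi_s(x) < \infty$ guaranteed by \ref{Condi-density-invariant} together with the continuity of $r_s$ and $\overset\cdot\pi_s$ on the compact space $\bb P(V)$.
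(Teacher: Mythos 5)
Your overall strategy matches the paper's: the paper presents Theorem \ref{Th_Asymptotic_ExitTime_dual} as an ``easy consequence'' of Theorem \ref{Th_Asymptotic_ExitTime}, obtained by verifying that the dual Markov random walk $(X_n^*, S_n^*)$ has the same structural inputs (spectral gap on $\scr B$, martingale approximation, centering) so that the argument from \cite{GLL18} applies verbatim. Your three ingredients (i)--(iii) are exactly these inputs, and you rightly point to the factor $\|g\|^d |\det(g^{-1})|$ in \eqref{Moment001} as the reason the dual kernel is integrable. One remark on (i): deducing the $\scr B$-spectral gap from the $L^2(\pi_s)$-adjoint relation is not automatic --- $L^2$ mixing does not transfer to sup-norm mixing without additional work, and in any case what the GLL18 machinery needs is the full perturbed family $(Q_{s,it}^*)_{t}$ to be analytic with a gap on $\scr B$, which the $L^2$-adjoint relation says nothing about. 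The paper's route (Lemma \ref{Lem_Perturbation_bis}) is to establish quasi-compactness of $Q_s^*$ on $\scr B$ directly, via the same Arzela--Ascoli argument as in Lemma \ref{Lem_spectral_gap}, using that $\check\mu$ still has a density (inversion is smooth) and that \ref{Condi-density-invariant} makes $\overset\cdot\pi_s$ continuous, bounded, and bounded away from zero. You essentially gesture at these ingredients when you invoke the explicit dual kernel formula, so this is a presentational imprecision rather than a missing idea; but you should replace the $L^2$-adjoint step with the direct quasi-compactness argument. Finally, the closing ``bookkeeping'' concern about uniformity in $x$ is a non-issue once you work on $\scr B$: the operator bounds in Lemmas \ref{Lem_Perturbation_bis} and \ref{dd-Lem_StrongNonLattice} are already sup-norm bounds, and $\bb P(V)$ is compact.
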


\begin{theorem}\label{Theor-IntegrLimTh-reversed} 
Assume conditions \ref{Condi-density}, \ref{Condi-density-invariant},
and $\kappa'(s) = 0$ for some $s \in  I_{\mu}$. 
Let $(\alpha_n)_{n \geq 1}$ be any sequence of positive numbers satisfying $\lim_{n \to \infty} \alpha_n = 0$. 
Then, for any $\ee >0$, there exists a  constant $c_{\ee} > 0$ such that for any $n\geq 1$, $x \in \bb P(V)$  
and $y \in [0, \alpha_n \sqrt{n}]$, 
\begin{align*} 
\sup_{t\in \bb R}  \left| \bb Q_{s}^{x, *} \left( \frac{y - S_{n}^* }{\sigma_{s} \sqrt{n}} \leq t, \,  \tau_y^* > n \right)
   -  \frac{2V^*_{s}(x, y)}{\sigma_{s} \sqrt{2\pi n}} \Phi^+(t)\right|  
\leq c_{\ee}  \frac{1 + y}{\sqrt{n}} (\alpha_n + n^{-\ee}). 
\end{align*}
\end{theorem}

%%%%%%%%%%%%%%%%%%%%%%%%%%%%%%%%%%%%%%%%%%%%%%%%%%%%%
%%%%%%%%%%%%%%%%%%%%%%%%%%%%%%%%%%%%%%%%%%%%%%%%%%%%%
\section{Martingale approximation and local limit theorems}

%%%%%%%%%%%%%%%%%%%%%%%%%%%%%%%%%%%%%%%%%%%
%%%%%%%%%%%%%%%%%%%%%%%%%%%%%%%%%%%%%%%%%%%
\subsection{Martingale approximation}

We shall use the strategy of Gordin \cite{Gor69} 
to construct a martingale approximation for the Markov walk $S_n$ under the changed measure $\bb Q_s^x$.
Assume that $q = \Lambda'(s) = 0$ (or equivalently $\kappa'(s) = 0$) for some $s \in I_{\mu}$. 
Denote 
\begin{align*}
\bar{\sigma}(x): = \bb E_{\bb Q_s^x} S_1 = \int_{\bb G} \sigma(g,x) \bb Q_{s,1}^x(dg),  \quad x \in \bb P(V), 
\end{align*}
where $\bb Q_{s,1}^x$ is defined by \eqref{def-Qsn-star}. 
One can verify that $\bar{\sigma} \in \mathscr B$ and the equation 
\begin{align}\label{Cohomo_Equa}
\bar{\sigma}(x) = \theta(x) - Q_s \theta(x),  \quad x \in \bb P(V),  
\end{align}
has a (unique) solution given by 
\begin{align}\label{Solution_Poisson}
\theta(x) = \bar{\sigma}(x) + \sum_{n=1}^{\infty} Q_s^n \bar{\sigma} (x),  \quad x \in \bb P(V),  
\end{align}
where $Q_s$ is defined by \eqref{def-operator-Qs}. 
To show this, note that, using the spectral gap properties of $Q_s$ (by Lemma \ref{Lem_Perturbation} with $t = 0$),
\begin{align}
Q_s^n \bar{\sigma} (x) = \pi_s(\bar{\sigma}) + N_{s,0}^n \bar{\sigma}(x). 
\end{align}
In addition,
\begin{align}\label{Formular_q}
\Lambda'(s) = \int_{\bb P(V)} \int_{\bb G} \sigma(g,x) \bb Q_{s,1}^x(dg) \pi_s(dx).  
\end{align}
Since $\pi_s(\bar{\sigma}) = 0$ (by \eqref{Formular_q} and the assumption $q = 0$)    
and $|N_{s,0}^n \bar{\sigma}(x)| \leq Ce^{-cn}$, we get $|Q_s^n \bar{\sigma} (x)| \leq Ce^{-cn}$. 
Therefore, the function $\theta$ in \eqref{Solution_Poisson} is well defined
and satisfies the equation \eqref{Cohomo_Equa}.

Let $\mathscr{F}_{0}$ be the trivial $\sigma $-algebra 
and $\mathscr{F}_{n} = \sigma \{ g_{k}: 1 \leq k \leq n \} $ for $n\geq 1.$ 
For any $g \in \bb G$ and $x \in \bb P(V)$, let
\begin{align}\label{Marting_app_01}
\sigma_0(g, x) = \sigma(g, x) - \theta(x) + \theta(g \cdot x). 
\end{align}
Then $\sigma_0$ satisfies the cocycle property $\sigma_0(g_2 g_1, x) = \sigma_0(g_2, g_1 \cdot x) + \sigma_0(g_1, x)$
for any $g_2, g_1 \in \bb G$ and $x \in \bb P(V)$. 
Define 
\begin{equation}
M_{0} = 0 \quad  \text{and} \quad 
M_{n} = \sum_{k=1}^{n} \sigma_0(g_k, X_{k-1}),  \quad n\geq 1.  \label{mart001}
\end{equation}
By the Markov property, we have $\mathbb{E}_{\bb Q_s^x} (M_k |\mathscr{F}_{k-1} ) = M_{k-1}$ 
and hence $\left( M_{n},\mathscr{F}_{n}\right) _{n\geq 0}$ is a $0$ mean $\mathbb{Q}_s^x$-martingale.
The following lemma shows that the difference $S_{n}-M_{n}$ is bounded.

\begin{lemma} \label{Martingale approx} 
Assume conditions \ref{Condi-density}, \ref{Condi-density-invariant}, 
and $\kappa'(s) = 0$ for some $s \in  I_{\mu}$.  
Then, there exists a constant $c >0$ such that for any $x\in \bb P(V)$, 
\begin{align}
& \sup_{n\geq 0} \left| S_{n}-M_{n} \right| \leq c,  \quad  \mathbb Q_s^x\mbox{-a.s.}  \label{martingale01}\\
& \sup_{n\geq 0} \left| S_{n}^* - M_{n}^* \right| \leq c,  \quad  \mathbb Q_s^{x,*}\mbox{-a.s.}  \label{martingale02}
\end{align}
\end{lemma}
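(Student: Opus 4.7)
The plan is to reduce both claims to a telescoping identity that follows from the cohomological equation \eqref{Cohomo_Equa}, and then to invoke the spectral gap estimates already established in Lemma \ref{Lem_Perturbation} and Lemma \ref{Lem_Perturbation_bis} to bound the corrector $\theta$ uniformly. Concretely, substituting \eqref{Marting_app_01} into \eqref{mart001} yields
\begin{align*}
M_n = \sum_{k=1}^{n} \bigl[\sigma(g_k, X_{k-1}) - \theta(X_{k-1}) + \theta(X_k)\bigr] = S_n + \theta(X_n) - \theta(X_0),
\end{align*}
so that under $\bb Q_s^x$ one has the pointwise identity $S_n - M_n = \theta(x) - \theta(X_n)$. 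Hence \eqref{martingale01} is equivalent to the uniform boundedness of $\theta$, and yields the explicit constant $c = 2 \|\theta\|_{\scr B}$.

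To establish this boundedness, I would first check that $\bar{\sigma} \in \scr B$. Boundedness of $\bar{\sigma}$ follows by writing $\bar{\sigma}(x)$ as an integral against the density $q_1^{s}(x,g)$ from \eqref{Def-kernel-qns}, using the two-sided positive bounds on $r_s$ together with the moment condition \eqref{Moment001} (which controls $\int |\sigma(g,x)| e^{s\sigma(g,x)} \mu(dg)$ uniformly in $x$), while continuity on $\bb P(V)$ is obtained from dominated convergence. Lemma \ref{Lem_Perturbation} at $t=0$ then yields $Q_s^n \bar{\sigma} = \pi_s(\bar{\sigma}) + N_{s,0}^n \bar{\sigma}$ with $\|N_{s,0}^n\|_{\scr B \to \scr B} \leq c a^n$ for some $a \in (0,1)$. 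The hypothesis $\kappa'(s) = 0$ combined with the identity \eqref{Formular_q} forces $\pi_s(\bar{\sigma}) = \Lambda'(s) = 0$, so that the Neumann-type series \eqref{Solution_Poisson} converges geometrically in $\scr B$ and produces a bounded continuous $\theta$. This proves \eqref{martingale01}.

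For the dual assertion \eqref{martingale02}, I would run the exact same scheme on the dual chain. Setting $\bar{\sigma}^{*}(x) = \bb E_{\bb Q_s^{x,*}} S_1^{*}$, the analogous Poisson equation $\bar{\sigma}^{*} = \theta^{*} - Q_s^{*} \theta^{*}$ admits the formal solution $\theta^{*} = \bar{\sigma}^{*} + \sum_{n \geq 1} (Q_s^{*})^n \bar{\sigma}^{*}$, and Lemma \ref{Lem_Perturbation_bis} delivers the geometric bound $\|(Q_s^{*})^n \bar{\sigma}^{*} - \pi_s(\bar{\sigma}^{*})\|_{\scr B} \leq c a^n$. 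The only non-automatic point is to verify $\pi_s(\bar{\sigma}^{*}) = 0$. For this I would apply Lemma \ref{lemmaReversed001} with $n=1$ to the function $F(x_0, g_1, x_1) = \sigma(g_1, x_0)$: the left-hand side of the duality identity equals $\pi_s(\bar{\sigma}) = 0$, while on the right-hand side the cocycle relation $\sigma(g^{-1}, g \cdot z) = -\sigma(g, z)$ turns $\sigma((g_1^{*})^{-1}, X_1^{*})$ into $-S_1^{*}$, yielding $-\pi_s(\bar{\sigma}^{*})$. Once $\pi_s(\bar{\sigma}^{*}) = 0$ is known, the same telescoping $M_n^{*} = S_n^{*} + \theta^{*}(X_n^{*}) - \theta^{*}(X_0^{*})$ closes the argument with $c = 2\|\theta^{*}\|_{\scr B}$.

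The main obstacle, if any, is the verification that the drift of the dual walk vanishes. A direct computation through the complicated dual kernel \eqref{def-dual-kernel} involving $e^{-(s+d)\sigma}$ and $|\textup{det}(g)|$ would be clumsy; the duality identity of Section \ref{Sec-dual-Markov} bypasses this by transferring the vanishing drift from the original chain to its dual in one stroke. Every other ingredient — continuity and boundedness of $\bar{\sigma}$ and $\bar{\sigma}^{*}$, the Neumann series, and the telescoping step — is essentially routine given the spectral gap framework already in place.
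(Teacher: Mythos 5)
Your proof is correct and follows essentially the same approach as the paper: substitute the cohomological decomposition \eqref{Marting_app_01} into \eqref{mart001}, telescope to get $M_n = S_n + \theta(X_n) - \theta(X_0)$, and conclude from the uniform boundedness of $\theta$ established via the spectral-gap Neumann series \eqref{Solution_Poisson}. The paper dispatches the dual assertion \eqref{martingale02} with the single phrase ``the proof is similar,'' whereas you have usefully spelled out the one non-trivial point hidden in that phrase, namely that the dual drift vanishes, $\pi_s(\bar{\sigma}^*) = 0$, which you derive cleanly from Lemma \ref{lemmaReversed001} applied to $F(x_0,g_1,x_1)=\sigma(g_1,x_0)$ together with the cocycle identity $\sigma(g^{-1}, g\cdot z) = -\sigma(g,z)$ — an argument consistent with, and more explicit than, what the paper takes for granted.
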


\begin{proof}
By \eqref{Marting_app_01}, we have
\begin{align*}
\sum_{k=1}^{n} \sigma_0(g_k, X_{k-1}) 
= \sum_{k=1}^{n} \sigma(g_k, X_{k-1}) - \theta(X_0) + \theta(g_n \!\cdot\! X_{n-1}),  
\end{align*}
where $X_0 = x$. 
Taking into account \eqref{mart001}, we get $M_n = S_n - \theta(X_0) + \theta(g_n \!\cdot\! X_{n-1}).$ 
Since the function $\theta \in \mathscr B$ is bounded, we get \eqref{martingale01}. 
The proof of \eqref{martingale02} is similar. 
\end{proof}

The following result is a consequence of Burkholder's inequality. 

\begin{lemma}\label{Lp boud martingales} 
Assume conditions \ref{Condi-density}, \ref{Condi-density-invariant}, 
and $\kappa'(s) = 0$ for some $s \in  I_{\mu}$.  
Then, for any $p>2$, we have 
\begin{align*}
& \sup_{n \geq 1} {1\over { n^{p/2} }}\sup_{x \in \bb P(V)}   \mathbb{E}_{\bb Q_s^x} (|M_n|^{p}) < +\infty,  \notag\\
& \sup_{n \geq 1} {1\over { n^{p/2} }}\sup_{x \in \bb P(V)}   \mathbb{E}_{\bb Q_s^{x,*} } (|M_n^*|^{p}) < +\infty. 
\end{align*} 
\end{lemma}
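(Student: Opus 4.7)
My plan is to invoke Burkholder's inequality for the martingale $(M_n,\mathscr{F}_n)_{n \geq 0}$ under $\bb Q_s^x$, and then reduce the claim to a uniform-in-$x$ $L^p$-bound for a single martingale difference. With $d_k := M_k - M_{k-1} = \sigma_0(g_k, X_{k-1})$, Burkholder's inequality yields a constant $c_p > 0$ with $\mathbb{E}_{\bb Q_s^x}|M_n|^p \leq c_p\,\mathbb{E}_{\bb Q_s^x}\big(\sum_{k=1}^n d_k^2\big)^{p/2}$. Applying Jensen's inequality to the convex function $t \mapsto t^{p/2}$ (using $p/2 \geq 1$) then bounds the right-hand side by $c_p\, n^{p/2-1}\sum_{k=1}^n \mathbb{E}_{\bb Q_s^x}|d_k|^p$, so it suffices to establish $C_0 := \sup_{k \geq 1} \sup_{x \in \bb P(V)} \mathbb{E}_{\bb Q_s^x}|d_k|^p < \infty$.

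The Markov property of $(X_n)$ under $\bb Q_s^x$ reduces this to a bound on $\sup_{x \in \bb P(V)} \mathbb{E}_{\bb Q_s^x}|\sigma_0(g_1, x)|^p$. Using \eqref{Marting_app_01} and the fact that $\theta \in \scr B$ is bounded (its boundedness coming from the geometric decay of $N_{s,0}^n\bar\sigma$ supplied by Lemma \ref{Lem_Perturbation} at $t = 0$, together with $\pi_s(\bar\sigma) = 0$ since $\Lambda'(s) = 0$), this is in turn reduced to showing $\sup_x \mathbb{E}_{\bb Q_s^x}|\sigma(g_1, x)|^p < \infty$. Unfolding the change of measure $\bb Q_{s,1}^x$ from \eqref{def-Qsn} and using that the eigenfunction $r_s$ is continuous, hence bounded above and below by strictly positive constants on the compact space $\bb P(V)$, reduces this further to a uniform bound for $\int_{\bb G}|\sigma(g, x)|^p e^{s\sigma(g, x)}\mu(dg)$.

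The step I expect to be the main obstacle is controlling this last integral uniformly in $x$; this is where the interior property of the moment strip becomes crucial. Since $\kappa'(s) = 0$ forces $s$ to lie in the interior of $I_\mu^+ \cup I_\mu^-$, one can pick $\delta > 0$ so that $s + \delta$ (or $s - \delta$) is still in the strip. The elementary inequality $|t|^p e^{st} \leq C(p,\delta)\big(e^{(s+\delta)t} + e^{(s-\delta)t}\big)$ applied with $t = \sigma(g, x)$, combined with $|\sigma(g, x)| \leq \max\{\log\|g\|, \log\|g^{-1}\|\}$, reduces the integrability to the finiteness of $\int_{\bb G}\big(\|g\|^{s+\delta} + \|g^{-1}\|^{-(s - \delta)}\big)\mu(dg)$ guaranteed by \ref{Condi-density}. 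The dominating expression is $x$-independent, so the bound is uniform, and combining it with the previous steps gives $\mathbb{E}_{\bb Q_s^x}|M_n|^p \leq c_p\, n^{p/2 - 1}\cdot n \cdot C_0 = O(n^{p/2})$ as claimed. The dual bound for $M_n^*$ under $\bb Q_s^{x,*}$ follows from the same argument verbatim, with Lemma \ref{Lem_Perturbation_bis} providing the bounded dual Poisson solution $\theta^*$ and the dual change-of-measure formula \eqref{def-Qsn-star} replacing \eqref{def-Qsn}; the one-step integrability is identical because the scalar cocycle $\sigma$ governs both sides.
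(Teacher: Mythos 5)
Your proposal follows the paper's proof essentially verbatim: Burkholder's inequality followed by H\"older/Jensen to reduce to a uniform one-step moment $\sup_{x}\mathbb{E}_{\bb Q_s^x}|\sigma_0(g_1,x)|^p$, which is then controlled by the boundedness of $\theta$ and the moment condition \ref{Condi-density}. You are merely more explicit where the paper is terse (the paper compresses the last reduction into a single line $\mathbb{E}_{\bb Q_s^x}(\log^p N(g_1))\le c\,\bb E[\|g_1\|^s\log^p N(g_1)]$), and your displayed dominating integral $\int(\|g\|^{s+\delta}+\|g^{-1}\|^{-(s-\delta)})\mu(dg)$ is slightly imprecise about cases in sign of $s\pm\delta$, but the underlying estimate and its reliance on \eqref{Moment001} are correct and match the paper.
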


\begin{proof}
Denote $\xi_k = \sigma_0(g_k, X_{k-1})$ for $1 \leq k \leq n$. By Burkholder's inequality and H\"{o}lder's inequality, we get
\begin{align*}
\mathbb{E}_{\bb Q_s^x} (|M_n|^p) 
 \leq  c_p \mathbb{E}_{\bb Q_s^x} \left| \sum_{k=1}^{n} \xi_k^2 \right|^{p/2}  
 \leq  c_p  n^{p/2-1} \mathbb{E}_{\bb Q_s^x} \sum_{k=1}^{n} \left\vert \xi _{k}\right\vert ^{p}
  \leq  c_p  n^{p/2}  \sup_{1\leq k\leq n} \mathbb{E}_{\bb Q_s^x} (\left| \xi_{k} \right|^{p}). 
\end{align*}
Since there exists a constant $c>0$ such that for all $x \in \bb P(V)$, 
\begin{align*}
\sup_{1\leq k\leq n} \mathbb{E}_{\bb Q_s^x} (\left| \xi_{k} \right|^{p})
\leq \mathbb{E}_{\bb Q_s^x} (\log^p N(g_1))
\leq c \bb E \left[ \|g_1\|^s \log^p N(g_1) \right], 
\end{align*} 
the first inequality follows.
The second inequality can be proved in the same way. 
\end{proof}

%%%%%%%%%%%%%%%%%%%%%%%%%%%%%%%%%%%%%
%%%%%%%%%%%%%%%%%%%%%%%%%%%%%%%%%%%%%
\subsection{Non-asymptotic local limit theorems}

In the following we establish effective local limit theorems 
for products of random matrices under a change of measure $\bb Q_s^x$.
Our results are non-asymptotic, i.e.\ they are written in the form of precise 
upper and lower bounds which hold for any fixed $n\geq 1$.
Besides, we  consider a general target function $h$ on the pair $(X_n, S_n)$ 
and this plays a crucial role for establishing conditioned local limit theorems in Section \ref{Sect-CLLT}. 
The main difficulty is to give the explicit dependence of the remainder terms on the target function $h$. 
The following lemma is taken from \cite[Lemma 5.3]{GQX21}. 

\begin{lemma}[\cite{GQX21}]\label{Lem_Measurability}
Let $h$ be a real-valued function on $\bb P(V) \times \bb R$ such that
\begin{enumerate}
\item[(1)]   For any $t \in \bb R$, the function $x \mapsto h(x, t)$ is continuous on $\bb P(V)$.  
\item[(2)]  For any $x \in \bb P(V)$, the function $t \mapsto h(x, t)$ is measurable on $\bb R$.
 \end{enumerate}
Then, the function $(x, t) \mapsto h(x,t)$ is measurable on $\bb P(V) \times \bb R$ and 
the function $t \mapsto \| h (\cdot, t) \|_{ \mathscr B }$ is measurable on $\bb R$. 
Moreover, if the integral $\int_{\bb R} \| h (\cdot, t) \|_{ \mathscr B } dt$ is finite, 
we define the partial Fourier transform $\widehat h$ of $h$ by setting for any $x \in \bb P(V)$ and $u \in \bb R$,
\begin{align}\label{def-Fourier-h}
\widehat h(x, u) = \int_{\bb R} e^{-itu} h (x, t) dt.
\end{align}
This is a continuous function on $\bb P(V) \times \bb R$. 
In addition, for every $u \in \bb R$, 
the function $x \mapsto \widehat h(x, u)$ is continuous
and $\| \widehat h (\cdot, u) \|_{ \mathscr B } \leq \int_{\bb R} \| h (\cdot, t) \|_{ \mathscr B } dt$. 
\end{lemma}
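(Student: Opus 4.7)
The plan is to handle the four assertions in turn, exploiting the fact that $\bb P(V)$ is a compact separable metric space, so a Carath\'{e}odory function on $\bb P(V)\times \bb R$ is automatically jointly measurable.

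First I would prove joint measurability of $(x,t)\mapsto h(x,t)$ by the standard approximation argument for Carath\'{e}odory functions. Fix a countable dense subset $\{x_k\}_{k\geq 1}$ of $\bb P(V)$ and, for each $n\geq 1$, partition $\bb P(V)$ into finitely many Borel sets $A_{n,1},\dots,A_{n,k_n}$ of diameter at most $1/n$, choosing a representative $x_{n,i}\in A_{n,i}$. Set $h_n(x,t)=\sum_{i} h(x_{n,i},t)\mathds 1_{A_{n,i}}(x)$. Each $h_n$ is a finite sum of products of a Borel function of $x$ with a measurable function of $t$, hence jointly measurable. By the continuity assumption (1), $h_n(x,t)\to h(x,t)$ pointwise, so $h$ is jointly measurable.

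For the measurability of $t\mapsto \|h(\cdot,t)\|_{\scr B}$, the continuity in $x$ combined with separability gives $\|h(\cdot,t)\|_{\scr B}=\sup_{x\in \bb P(V)}|h(x,t)|=\sup_{k\geq 1}|h(x_k,t)|$, a countable supremum of measurable functions of $t$, hence measurable. Once the integrability hypothesis $\int_{\bb R}\|h(\cdot,t)\|_{\scr B}\, dt<\infty$ is in force, the Fourier integral $\widehat h(x,u)=\int_{\bb R}e^{-itu}h(x,t)\,dt$ is well defined for every $(x,u)$, and the bound $\|\widehat h(\cdot,u)\|_{\scr B}\leq \int_{\bb R}\|h(\cdot,t)\|_{\scr B}\,dt$ follows at once by pulling the modulus into the integral and using $|h(x,t)|\leq \|h(\cdot,t)\|_{\scr B}$.

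The remaining point is joint continuity of $\widehat h$ on $\bb P(V)\times \bb R$. Given $(x_n,u_n)\to (x,u)$, for each fixed $t$ continuity of $e^{-itu}$ in $u$ and of $h(\cdot,t)$ in $x$ yield $e^{-itu_n}h(x_n,t)\to e^{-itu}h(x,t)$, while the integrable majorant $\|h(\cdot,t)\|_{\scr B}$ allows dominated convergence and hence $\widehat h(x_n,u_n)\to \widehat h(x,u)$. Continuity of $x\mapsto \widehat h(x,u)$ for fixed $u$ is the special case $u_n\equiv u$.

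I do not anticipate a genuine obstacle: the lemma is a soft measure-theoretic statement and the only mild subtlety is producing a jointly measurable approximation $h_n\to h$ so that the supremum defining the $\scr B$-norm can be realized over a countable set; the compactness and separability of $\bb P(V)$ take care of this without any probabilistic input. The dominated convergence step for $\widehat h$ is the one place where the integrability hypothesis on $\|h(\cdot,t)\|_{\scr B}$ is used in an essential way.
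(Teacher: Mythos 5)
Your argument is correct, and each of the four assertions is established by a clean, standard measure-theoretic route: a Carath\'{e}odory-function approximation for joint measurability, separability of $\bb P(V)$ for measurability of the sup-norm in $t$, and dominated convergence (with integrable majorant $t\mapsto\|h(\cdot,t)\|_{\scr B}$) for joint continuity of $\widehat h$, which in particular yields continuity in $x$ for each fixed $u$ and the stated norm bound by pulling the modulus inside the integral. Note, however, that there is nothing in this paper to compare against: Lemma \ref{Lem_Measurability} is imported verbatim from \cite{GQX21} and is stated without proof here, so your task was in effect to supply a proof of a cited auxiliary result, and what you wrote does that correctly. A small stylistic remark: the countable dense set $\{x_k\}$ you introduce in the first paragraph is not used there (the finite partitions of small diameter suffice for the approximation $h_n\to h$); it only enters in the second paragraph, where $\|h(\cdot,t)\|_{\scr B}=\sup_k|h(x_k,t)|$ realizes the sup-norm as a countable supremum. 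Also, to be scrupulous, one should observe that $h(\cdot,t)\in\scr B$ for each $t$ because $\bb P(V)$ is compact, so continuity forces boundedness, which is why the sup-norm is finite and the inequality $|h(x,t)|\leq\|h(\cdot,t)\|_{\scr B}$ can serve as a dominating function.
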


We denote by $\mathscr H$ the set of real-valued functions on $\bb P(V) \times \bb R$ 
such that conditions (1) and (2) of Lemma \ref{Lem_Measurability} hold and 
the integral $\int_{\bb R} \| h (\cdot, t) \|_{\mathscr B} dt$ is finite. 
For any compact set $K\subset \bb R $, 
denote by $\mathscr H_{K}$ the set of functions $h \in \mathscr H$
such that the Fourier transform $\widehat h(x,\cdot)$ has a support contained in $K$  for any $x \in \bb P(V)$. 
Below, for $h \in \mathscr H$,  we use the notation
\begin{align*} 
\| h \|_{\mathscr H} =  \int_{\bb R} \| h(\cdot, u) \|_{\mathscr B}   du,
\qquad
\| h \|_{\pi_s \otimes \Leb} =  \int_{ \bb P(V) \times \bb R}  | h(x, u) |  \pi_s(dx) du. 
\end{align*}
Let $\phi(y) = \frac{1}{\sqrt{2 \pi}} e^{- y^2 /2}$, $y \in \bb R$ be the standard normal density function. 

\begin{theorem}\label{Theor-Loc-Improved}
Assume condition \ref{Condi-density} and $\kappa'(s) = 0$ for some $s \in  I_{\mu}$. 
Let $K\subset \bb R$ be a compact set of $\bb R$. 
Then there exists a constant $c_K >0$ such that for any $h\in \mathscr H_{K}$, 
$n\geq 1$, $x \in \bb P(V)$ and $y\in \mathbb{R}$, 
\begin{align}\label{LLT_bb}
\left| \sigma_s \sqrt{n} \, \EQ h \left(X_n, y - S_n \right)  - \int_{\bb P(V) \times \bb R} 
 h\left( x', y'\right) \phi \left( \frac{y-y'}{\sigma_s \sqrt{n}}\right)  \pi_s(dx')  dy'  \right|  
 \leq  \frac{c_K}{\sqrt{n}}  \|h\|_{\scr H}.   
\end{align}
\end{theorem}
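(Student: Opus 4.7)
The plan is to use Fourier inversion together with the spectral gap properties of the perturbed operator $Q_{s,-it}$ to express both sides as contour-type integrals in $t$ and compare them directly. Since $h \in \mathscr H_K$, the partial Fourier transform $\widehat h(x,\cdot)$ is continuous with support in $K$ (hence integrable), so Fourier inversion gives $h(x,t) = \frac{1}{2\pi}\int_{\bb R} e^{itu}\widehat h(x,u)\,du$. Substituting this into $\EQ h(X_n, y - S_n)$, using Fubini, and noting that $q = \Lambda'(s)=0$ so that $\EQ[e^{-itS_n}\varphi(X_n)] = Q_{s,-it}^n\varphi(x)$, I would obtain
\begin{equation*}
\sigma_s\sqrt{n}\, \EQ h(X_n, y-S_n) = \frac{\sigma_s\sqrt{n}}{2\pi}\int_K e^{ity} Q_{s,-it}^n \widehat h(\cdot, t)(x)\,dt.
\end{equation*}
On the other hand, Fourier inversion of the Gaussian density $\phi((y-y')/(\sigma_s\sqrt{n}))$ combined with Fubini identifies the integral on the right-hand side of \eqref{LLT_bb} with $\frac{\sigma_s\sqrt{n}}{2\pi}\int_K e^{ity} e^{-n\sigma_s^2 t^2/2}\pi_s(\widehat h(\cdot,t))\,dt$. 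The task reduces to bounding $\frac{\sigma_s\sqrt{n}}{2\pi}\int_K e^{ity}\Delta_n(t,x)\,dt$, where $\Delta_n(t,x) := Q_{s,-it}^n\widehat h(\cdot,t)(x) - e^{-n\sigma_s^2 t^2/2}\pi_s(\widehat h(\cdot,t))$.

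Next, I would split the integration in $t$ into two regions, using the $\delta>0$ from Lemma \ref{Lem_Perturbation}. On $\{|t|\geq \delta\}\cap K$, Lemma \ref{Lem_StrongNonLattice} gives $\|Q_{s,-it}^n\widehat h(\cdot,t)\|_{\scr B}\leq c e^{-nc_K}\|\widehat h(\cdot,t)\|_{\scr B}$ and the Gaussian factor satisfies $e^{-n\sigma_s^2 t^2/2}\leq e^{-n\sigma_s^2\delta^2/2}$. Since $\|\widehat h(\cdot, t)\|_{\scr B}\leq \|h\|_{\scr H}$ for every $t$ and $K$ is bounded, this part contributes $O(\sqrt{n}\, e^{-cn}\|h\|_{\scr H})$, which is absorbed into $c_K n^{-1/2}\|h\|_{\scr H}$.

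On $\{|t|<\delta\}$, I would apply the spectral expansion from Lemma \ref{Lem_Perturbation} and decompose
\begin{align*}
\Delta_n(t,x) &= \bigl[\lambda_{s,-it}^n - e^{-n\sigma_s^2 t^2/2}\bigr]\Pi_{s,-it}\widehat h(\cdot,t)(x) \\
&\quad + e^{-n\sigma_s^2 t^2/2}\bigl[\Pi_{s,-it}\widehat h(\cdot,t)(x) - \pi_s(\widehat h(\cdot,t))\bigr] + N_{s,-it}^n\widehat h(\cdot,t)(x).
\end{align*}
Using $q=0$, the eigenvalue admits the Taylor expansion $\lambda_{s,-it} = 1 - \frac{\sigma_s^2}{2}t^2 + O(|t|^3)$, and both $|\lambda_{s,-it}|$ and $e^{-\sigma_s^2 t^2/2}$ are bounded by $e^{-ct^2}$ on $|t|<\delta$. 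The elementary inequality $|a^n - b^n|\leq n\max(|a|,|b|)^{n-1}|a-b|$ then gives $|\lambda_{s,-it}^n - e^{-n\sigma_s^2 t^2/2}|\leq Cn|t|^3 e^{-cnt^2}$. By analyticity of $t\mapsto \Pi_{s,-it}$ at $0$ and the identity $\Pi_{s,0}\varphi = \pi_s(\varphi)$, one has $\|\Pi_{s,-it}\widehat h(\cdot,t) - \pi_s(\widehat h(\cdot,t))\|_{\scr B}\leq C|t|\,\|\widehat h(\cdot,t)\|_{\scr B}$. The third term is controlled by \eqref{OperatorNbound}: $\|N_{s,-it}^n\widehat h(\cdot,t)\|_{\scr B}\leq Ca^n\|\widehat h(\cdot,t)\|_{\scr B}$. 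Pulling the sup $\|\widehat h(\cdot,t)\|_{\scr B}\leq \|h\|_{\scr H}$ out and using the standard Gaussian moment estimates $\int_{\bb R}|t|^k e^{-cnt^2}\,dt = O(n^{-(k+1)/2})$, the three contributions after multiplying by $\sigma_s\sqrt{n}$ are of order $\sqrt{n}\cdot n\cdot n^{-2}$, $\sqrt{n}\cdot n^{-1}$, and $\sqrt{n}\cdot a^n$ respectively, each bounded by $c_K n^{-1/2}\|h\|_{\scr H}$.

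The main obstacle is the comparison $|\lambda_{s,-it}^n - e^{-n\sigma_s^2 t^2/2}|\leq Cn|t|^3 e^{-cnt^2}$ uniformly over the entire range $|t|<\delta$, rather than only over a shrinking window such as $|t|\leq n^{-1/3}$. This requires the quadratic lower bound $-\log|\lambda_{s,-it}|\geq ct^2$ on $|t|<\delta$ (a consequence of the Taylor expansion combined with choosing $\delta$ sufficiently small) so that both $\lambda_{s,-it}^n$ and $e^{-n\sigma_s^2 t^2/2}$ share the same exponential envelope. Once this uniformity is secured, tracking that all constants depend on $h$ only through $\|h\|_{\scr H}$ — and are uniform in $x\in\bb P(V)$ and $y\in\bb R$ (the latter being automatic since $|e^{ity}|=1$) — is routine.
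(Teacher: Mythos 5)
Your proposal reproduces the paper's argument: Fourier inversion reduces the statement to a characteristic-function comparison, the spectral expansion of $Q_{s,-it}^n$ from Lemma \ref{Lem_Perturbation} splits the error into an eigenvalue-difference term, a projector-difference term, and a residual $N^n$ term, and Lemma \ref{Lem_StrongNonLattice} kills the region away from $t=0$; the only cosmetic difference is that the paper first substitutes $u = t\sqrt{n}$ and works with $|u|\lessgtr\delta\sqrt{n}$ while you stay in the $t$-variable with $|t|\lessgtr\delta$. The uniform envelope $|\lambda_{s,-it}|\leq e^{-ct^2}$ on $|t|<\delta$ that you flag as the crucial point is exactly what the paper uses (in the form $|\lambda_{s,-iu/\sqrt n}^n-\widehat\phi(u)|\leq c n^{-1/2}e^{-u^2/4}$), so there is no gap.
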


\begin{proof}
For the sake of ease in exposition, 
we assume that $\sigma_s = 1.$ 
By the Fourier inversion formula, with the notation \eqref{def-Fourier-h} it holds that  
\begin{align*}
h( x, y) = \frac{1}{2\pi } \int_{\mathbb{R}}  e^{i ty}
\widehat{h}(x, t) dt,  \quad  x \in \bb P(V),  \   y\in \mathbb{R}.
\end{align*} 
By Fubini's theorem, this implies that for any $x \in \bb P(V)$ and $y\in \mathbb{R}$, 
\begin{align*}
\EQ h(X_n, y - S_{n} )  
= \frac{1}{2 \pi} \int_{\bb R} e^{i ty} \EQ \left[ e^{- i t S_n } \widehat{h} (X_n, t) \right] dt.
\end{align*}
By a change of variable $t=\frac{u}{\sqrt{n}}$, we obtain
\begin{align}\label{locI-start}
  \sqrt{n} \, \EQ h(X_n, y - S_{n} )   
&  = \frac{1}{2\pi }\int_{ \mathbb{R} } e^{ i u \frac{y}{\sqrt{n}} } 
\EQ \left[ e^{ - i u \frac{S_n}{\sqrt{n}} } \widehat{h} \left( X_n, \frac{u}{\sqrt{n}} \right) \right] du  \notag \\
& = \frac{1}{2\pi }\int_{\mathbb{R}}  e^{i u \frac{y}{\sqrt{n}}}   \widehat{\phi}(u)
\left[  \int_{\bb P(V)}  \widehat{h}\left(x', \frac{u}{\sqrt{n}}\right)  \pi_s(dx') \right] du  + I(x,y),
\end{align}
where $\widehat{\phi}(u) = e^{- u^2/2}$ and
\begin{equation*}
I(x,y) = \frac{1}{2\pi }\int_{\mathbb{R}} e^{i u \frac{y}{\sqrt{n}}} J(x,u) du,   \quad  x \in \bb P(V),  \   y\in \mathbb{R}, 
\end{equation*}
with 
\begin{align*}
J(x,u) = \EQ \left[ e^{ - i u \frac{S_n}{\sqrt{n}} } \widehat{h} \left( X_n, \frac{u}{\sqrt{n}} \right) \right]
 -  \widehat{\phi}(u) \int_{\bb P(V)}
\widehat{h}\left(x', \frac{u}{\sqrt{n}}\right)  \pi_s(dx'). 
\end{align*}
For the first term in \eqref{locI-start},  since $\sqrt{n} \widehat{\phi} (\sqrt{n} \, \cdot)$ is the Fourier transform of $\phi(\frac{\cdot}{\sqrt{n}})$. 
using the change of variable $u' = \frac{u}{\sqrt{n}}$ and the Fourier inversion formula,  we get
\begin{align}
 \frac{1}{2\pi }\int_{\mathbb{R}}  e^{i u \frac{y}{\sqrt{n}}} 
\widehat{h}\left(x', \frac{u}{\sqrt{n}}\right) \widehat{\phi}(u) du 
& = \frac{1}{2\pi }\int_{\mathbb{R}}  e^{i u' y} 
\widehat{h}\left(x', u'\right) \sqrt{n}  \widehat{\phi}( \sqrt{n} u') du'  \notag\\
& = \int_{\mathbb{R}} h\left( x', y'\right) \phi \left( \frac{y-y'}{\sqrt{n}}\right) dy'.  \label{locI-002}
\end{align}
Therefore, to establish \eqref{LLT_bb}, it remains to prove that there exists a constant $c_K >0$ 
such that for any $h\in \mathscr H_{K}$, 
$n\geq 1$, $x \in \bb P(V)$ and $y\in \mathbb{R}$, 
\begin{align} 
|I(x,y)| \leq  \frac{c_K}{\sqrt{n}} \|h\|_{\scr H}.   \label{locI-final}
\end{align}
Now we are going to show \eqref{locI-final}. 
Let $\delta >0$ be a sufficiently small constant. We decompose the integral $I(x,y)$ into three parts: 
$I(x,y)=I_{1}+I_{2}+I_{3},$ where
\begin{align*}
I_{1} & = \frac{1}{2\pi }\int_{\left\vert u\right\vert \leq \delta \sqrt{n}}
 e^{i u \frac{y}{\sqrt{n}}} J(x, u) du,   \notag\\
I_{2} & = -\frac{1}{2\pi } \int_{|u| >\delta \sqrt{n} }   e^{i u \frac{y}{\sqrt{n}}}   \widehat{\phi}(u)
 \left[ \int_{\bb P(V)} \widehat{h}\left(x', \frac{u}{\sqrt{n}} \right)   \pi_s(dx')  \right]  du,   \notag\\
I_{3} & =\frac{1}{2\pi } \int_{\left\vert u\right\vert >\delta \sqrt{n} }
  e^{i u \frac{y}{\sqrt{n}}}  
\EQ \left[ e^{- i u \frac{S_n }{\sqrt{n}} } \widehat{h} \left( X_n, \frac{u}{\sqrt{n}} \right) \right] du.
\end{align*}

\textit{Bound of $I_1.$}
By Lemma \ref{Lem_Perturbation}, we have that for any $\left\vert u\right\vert \leq \delta \sqrt{n}$, 
\begin{align*}
 J(x,u) & = Q^n_{s, \frac{- i u}{\sqrt{n}}} \widehat{h} \left(\cdot, \frac{u}{\sqrt{n}} \right)(x)
    - \widehat{\phi}(u)  \int_{\bb P(V)}
      \widehat{h}\left(x', \frac{u}{\sqrt{n}}\right)  \pi_s(dx')   \notag\\
& = J_1(x, u) + J_2(x, u),
\end{align*}
where 
\begin{align*}
& J_1(x, u) = \lambda^{n}_{s, \frac{-i u}{\sqrt{n}} } \Pi_{s, \frac{-i u}{\sqrt{n}} }  
     \widehat{h} \left(\cdot, \frac{u}{\sqrt{n}} \right)(x) 
   -  \widehat{\phi}(u) \int_{\bb P(V)}
      \widehat{h}\left(x', \frac{u}{\sqrt{n}}\right)  \pi_s(dx'),   \notag\\
& J_2(x, u) =  N^{n}_{s, \frac{-i u}{\sqrt{n}} } \widehat{h} \left(\cdot, \frac{u}{\sqrt{n}} \right)(x). 
\end{align*}
For the first term $J_1(x, u)$, since $\Pi_{s,0} (\varphi) = \pi_s(\varphi)$, we have 
\begin{align*}
J_1(x, u) 
& =  \left( \lambda^{n}_{s, \frac{-i u}{\sqrt{n}} } - \widehat{\phi}(u) \right) 
  \Pi_{s, \frac{-i u}{\sqrt{n}} }  
     \widehat{h} \left(\cdot, \frac{u}{\sqrt{n}} \right)(x)  
 + \widehat{\phi}(u) \left( \Pi_{s, \frac{-i u}{\sqrt{n}} } - \Pi_{s, 0} \right) 
   \widehat{h} \left(\cdot, \frac{u}{\sqrt{n}} \right)(x)   \notag\\
& =: K_1 + K_2. 
\end{align*}
For $K_1$, since $\widehat{\phi}(u) = e^{- u^2/2}$, 
using \eqref{relationlamkappa001} and following the proof of (4.11) in \cite{GLL20}, one can verify that 
there exists a constant $c>0$ such that for all $|u| \leq \delta \sqrt{n}$ and $n \geq 1$, 
\begin{align*}
\left| \lambda^{n}_{s, \frac{-i u}{\sqrt{n}} } - \widehat{\phi}(u) \right|
\leq \frac{c}{\sqrt{n}} e^{- \frac{u^2}{4}}. 
\end{align*}
By Lemma \ref{Lem_Perturbation}, the mapping 
$t \mapsto \Pi_{s, i t}: (-\delta, \delta) \to \mathscr{L(B,B)}$ is analytic, hence
there exists a constant $c>0$ such that
\begin{align*}
\sup_{x \in \bb P(V)} \sup_{|u| \leq \delta \sqrt{n}}
\left| \Pi_{s, \frac{-i u}{\sqrt{n}} } \widehat{h} \left(\cdot, \frac{u}{\sqrt{n}} \right)(x) \right|
 \leq  \sup_{|y'| \leq \delta }  \left\| \Pi_{s,iy'} \right\|_{\mathscr{B}\rightarrow\mathscr{B}}
    \left\| \widehat{h} \left(\cdot, y' \right) \right\|_{ \mathscr{B} }  
 \leq  c   \|h\|_{\scr H}.  
\end{align*}
It follows that $K_1 \leq \frac{c}{\sqrt{n}} e^{- \frac{u^2}{4}} \|h\|_{\scr H}$. 
For $K_2$, by Lemma \ref{Lem_Perturbation}, we get
$K_2 \leq c \frac{|u|}{\sqrt{n}}  e^{- \frac{u^2}{2}} \|h\|_{\scr H} \leq  \frac{c}{\sqrt{n}} e^{- \frac{u^2}{4}}  \|h\|_{\scr H}$ 
and hence there exists a constant $c>0$ such that for any $x \in \bb P(V)$ and $\left\vert u\right\vert \leq \delta \sqrt{n}$, 
\begin{align}
J_1(x, u) \leq \frac{c}{\sqrt{n}} e^{- \frac{u^2}{4}}  \|h\|_{\scr H}.   \label{Bound_J1_aa}
\end{align}
For $J_2(x, u)$, 
using \eqref{OperatorNbound} we get that there exist constants $c, c'>0$ such that
for any $x \in \bb P(V)$ and $\left\vert u\right\vert \leq \delta \sqrt{n}$, 
\begin{align}
J_2(x, u) 
\leq   c e^{-c'n}   \sup_{|y'| \leq \delta }   \left\| \widehat{h} \left(\cdot, y' \right) \right\|_{ \mathscr{B} }  
\leq c e^{-c'n} \|h\|_{\scr H}.    \label{Bound_J2_aa}
\end{align}
Therefore, combining \eqref{Bound_J1_aa} and \eqref{Bound_J2_aa}, 
we obtain the upper bound for $I_1$:
\begin{align}
|I_{1}|  \leq \frac{1}{2\pi }\int_{\left\vert u\right\vert \leq \delta \sqrt{n}} |J(x, u)| du
& \leq  c \left( \frac{1}{\sqrt{n}} +  e^{-c'n} \right)  \|h\|_{\scr H}    
 \leq  \frac{c}{\sqrt{n}} \|h\|_{\scr H}.   \label{I_1 final}
\end{align}

\textit{Bound of $I_{2}.$} 
Since $h\in \mathscr H_{K}$, we have
\begin{align}
\left\vert I_{2}\right\vert 
& \leq \frac{1}{2\pi } \int_{\left\vert u\right\vert >\delta \sqrt{n}}  \widehat{\phi}(u)
 \int_{\bb P(V) } 
\left\vert \widehat{h}\left(x', \frac{u}{\sqrt{n}}\right) \right\vert 
  \pi_s(dx') du  
 \leq  c e^{-\delta^{2}  n/4}   \|h\|_{\scr H}.   \label{locI2-final}
\end{align}

\textit{Bound of $I_{3}.$}
Since $h\in \mathscr H_{K}$, the Fourier transform $\widehat h(x,\cdot)$ has a support contained in $K$  for any $x \in \bb P(V)$. 
Applying Lemma \ref{Lem_StrongNonLattice}, we get that there exist constants $c_K, c'_K > 0$ such that
\begin{align}
\left\vert I_{3}\right\vert 
 \leq c_K e^{-n c'_K} \|h\|_{\scr H}.   \label{locI3-final}   
\end{align}

Collecting the bounds (\ref{I_1 final}), (\ref{locI2-final}) and (\ref{locI3-final}), we obtain \eqref{locI-final} and thus 
conclude the proof of \eqref{LLT_bb}. 
\end{proof}

Now we give an extension of Theorem \ref{Theor-Loc-Improved} for functions $h$ with
non-integrable Fourier transforms. 
For any $\ee >0$ and any nonnegative measurable function $h \in \scr H$, we denote by $h_{\ee}$ a measurable function such that 
for any $x \in \bb P(V)$ and $t \in \bb R$, 
it holds that $h(x, t) \leq h_{\ee}(x, t +v)$ for all $|v| \leq \ee$. 
In this case we simply write $h \leq_{\ee} h_{\ee}$ or $h_{\ee} \geq_{\ee} h$.
Similarly, we denote by $h_{-\ee}$ a measurable function such that $h(x, t) \geq h_{-\ee}(x, t +v)$ 
for any $x \in \bb P(V)$, $t \in \bb R$ and $|v| \leq \ee$,
and we write $h_{-\ee} \leq_{\ee} h$ or $h \geq_{\ee} h_{-\ee}$.

In the proofs we make use of the following smoothing inequality 
(cf.\  \cite[Lemma 5.2]{GLL20} or \cite[Lemma 2.4]{GX21}). 
Denote by $\rho$ the nonnegative density function on $\bb R$, 
which is the Fourier transform of the function $(1 - |t|) \mathds 1_{\{ |t| \leq 1\}}$ for $t \in \bb R$. 
Set $\rho_{\ee}(u) = \frac{1}{\ee} \rho ( \frac{u}{\ee} )$ for $u \in \bb R$ and $\ee >0$.  
\begin{lemma}\label{smoothing-lemma-001}
Let $\ee \in (0, \frac{1}{8})$. Let $h: \bb R \to \bb R_+$ 
 be an integrable function and let $h_{-\ee}$ and $h_{\ee}$ be any measurable functions 
 such that $h_{-\ee} \leq_{\ee} h \leq_{\ee} h_{\ee}$. 
Then for any $u \in \bb R$,
\begin{align*}
h(u) \leq   (1 + 4 \ee)  h_{\ee} * \rho_{\ee^2} (u),  \quad  
h(u)  \geq h_{-\ee} * \rho_{\ee^2} (u) - \int_{|v| > \ee} h_{-\ee} \left( u- v \right) \rho_{\ee^2} (v) d v.
\end{align*}
\end{lemma}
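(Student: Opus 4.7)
The plan is to exploit the two elementary pointwise inequalities that fall out of the definitions of $h_{\ee}$ and $h_{-\ee}$: by setting $t=u$ and $v'=-v$ in the defining conditions, one gets, for every $|v|\le\ee$,
$$
h(u)\le h_{\ee}(u-v)\quad\text{and}\quad h_{-\ee}(u-v)\le h(u).
$$
Both conclusions will follow by splitting the convolution integral at $|v|=\ee$ and using these estimates on the central piece.

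For the lower bound, decompose
$$
h_{-\ee}\ast\rho_{\ee^2}(u)=\int_{|v|\le\ee} h_{-\ee}(u-v)\rho_{\ee^2}(v)\,dv+\int_{|v|>\ee} h_{-\ee}(u-v)\rho_{\ee^2}(v)\,dv.
$$
On the central region $h_{-\ee}(u-v)\le h(u)$, and since $\rho_{\ee^2}$ is a probability density, the first integral is at most $h(u)$. Rearranging gives the claimed inequality; this part is essentially algebraic.

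For the upper bound, decompose $h_{\ee}\ast\rho_{\ee^2}(u)$ in the same way, drop the non-negative tail integral, and use $h_{\ee}(u-v)\ge h(u)$ on $|v|\le\ee$ to obtain
$$
h_{\ee}\ast\rho_{\ee^2}(u)\ge h(u)\int_{|v|\le\ee}\rho_{\ee^2}(v)\,dv.
$$
The only thing left to check is that $\int_{|v|\le\ee}\rho_{\ee^2}(v)\,dv\ge (1+4\ee)^{-1}$ whenever $\ee\in(0,\tfrac18)$. After the scaling $v=\ee^2u$ this reduces to bounding $\int_{|u|>1/\ee}\rho(u)\,du$. Since $\rho$ is the Fourier transform of the triangular function $(1-|t|)\mathds 1_{|t|\le 1}$, one has the explicit formula $\rho(x)=\tfrac{2\sin^2(x/2)}{\pi x^2}\le \tfrac{2}{\pi x^2}$, which gives a tail mass of at most $4\ee/\pi$. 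A one-line computation then shows $1-4\ee/\pi\ge(1+4\ee)^{-1}$ for $\ee<\tfrac18$, because $\pi-1>\tfrac12\ge 4\ee$.

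There is no genuine obstacle here: the lemma is the standard ``mollification absorbs a short-range perturbation'' device, and the hypothesis $\ee<\tfrac18$ is used only to convert the sharper constant $(1-4\ee/\pi)^{-1}$ into the cleaner $1+4\ee$. The one quantitative input is the tail bound on $\rho$, which relies only on its known Fejér-type closed form; everything else is convolution bookkeeping.
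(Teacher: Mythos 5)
The paper does not actually prove Lemma \ref{smoothing-lemma-001}; it simply cites it (to \cite{GLL20,GX21}), so there is no in-paper argument to compare against. Your proof is correct and is the standard one: split each convolution at $|v|=\ee$, use the pointwise bounds $h(u)\le h_\ee(u-v)$ and $h_{-\ee}(u-v)\le h(u)$ for $|v|\le\ee$ (which indeed follow from $h_{-\ee}\le_\ee h\le_\ee h_\ee$ by taking $t=u$ and replacing $v$ by $-v$), then drop the tail for the upper bound and subtract it off exactly for the lower. The one quantitative input is also handled correctly: under the paper's convention $\widehat h(u)=\int e^{-itu}h(t)\,dt$, the density $\rho$ is the normalized Fej\'er kernel $\rho(x)=\frac{2\sin^2(x/2)}{\pi x^2}\le \frac{2}{\pi x^2}$ (the paper's literal phrase ``the Fourier transform of $(1-|t|)\mathds 1_{|t|\le 1}$'' would have mass $2\pi$; your normalized formula is the intended one, consistent with $\rho$ being a probability density whose Fourier transform is the triangle), so $\int_{|v|>\ee}\rho_{\ee^2}(v)\,dv=\int_{|t|>1/\ee}\rho(t)\,dt\le 4\ee/\pi$, and $1-4\ee/\pi\ge(1+4\ee)^{-1}$ is equivalent to $1+4\ee\le\pi$, which holds for $\ee<\tfrac18$.

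Two small points worth making explicit. First, dropping the tail integral in the upper bound uses $h_\ee\ge 0$; this is true but should be said, and it follows by taking $v=0$ in the defining inequality ($h_\ee(s)\ge h(s)\ge 0$). Second, in the lower bound you don't even need to invoke that $\rho_{\ee^2}$ has total mass $1$: the identity
\begin{align*}
h_{-\ee}*\rho_{\ee^2}(u)-\int_{|v|>\ee}h_{-\ee}(u-v)\rho_{\ee^2}(v)\,dv
=\int_{|v|\le\ee}h_{-\ee}(u-v)\rho_{\ee^2}(v)\,dv
\le h(u)\int_{|v|\le\ee}\rho_{\ee^2}(v)\,dv\le h(u)
\end{align*}
only uses $\int_{|v|\le\ee}\rho_{\ee^2}(v)\,dv\le 1$, which is immediate. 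These are cosmetic; the proof is sound.
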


Below, for any $h \in \mathscr H$, we use the notation 
\begin{align*}
h * \rho_{\ee^2} (x,t) = \int_{\bb R} h (x, t -v) \rho_{\ee^2}(v) dv,  
\quad  x \in \bb P(V),  \  t \in \bb R.  
\end{align*}

\begin{theorem} \label{Theorem Loc bounds}
Assume condition \ref{Condi-density} and $\kappa'(s) = 0$ for some $s \in  I_{\mu}$. 
Then, for any $\ee \in (0, \frac{1}{8})$, 
there exist constants $c, c_{\ee} >0$ such that for any nonnegative function $h$ and any function $h_{\ee} \in \mathscr H$, 
$n\geq 1$, $x \in \bb P(V)$ and $y \in \bb R$, 
\begin{align}
 \mathbb{E}_{\mathbb{Q}_s^x} h\left(X_n, y - S_{n} \right)   
 \leq  \frac{1}{\sigma_s \sqrt{n}}  \int_{\bb P(V) \times \bb R}    h_{\ee} \left( x', y'\right) 
  \phi \left( \frac{y-y'}{\sigma_s \sqrt{n}}\right)  \pi_s(dx')  dy'   
  +   \frac{c\ee}{\sqrt{n}} \| h_{\ee} \|_{\pi_s \otimes \Leb}  
     +  \frac{c_{\ee}}{ n }  \| h_{\ee} \|_{\mathscr H}  \label{LLT_Upper_aa}
\end{align}
and 
\begin{align}
 \mathbb{E}_{\mathbb{Q}_s^x} h\left(X_n, y - S_{n}  \right)   
 \geq  \frac{1}{\sigma_s \sqrt{n}}  \int_{\bb P(V) \times \bb R}    h_{-\ee} \left( x', y'\right) 
  \phi \left( \frac{y-y'}{\sigma_s \sqrt{n}}\right) \pi_s(dx')  dy'    
 -   \frac{c\ee}{\sqrt{n}} \| h \|_{\pi_s \otimes \Leb}  
     -  \frac{c_{\ee}}{ n }  \| h \|_{\mathscr H} . \label{LLT_Lower_aa}
\end{align}
\end{theorem}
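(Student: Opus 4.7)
The strategy is to reduce both bounds to Theorem \ref{Theor-Loc-Improved} via the smoothing inequality of Lemma \ref{smoothing-lemma-001}. The crucial observation is that the partial Fourier transform of $\rho_{\ee^2}$ in the second variable equals $(1 - |\ee^2 t|)\mathds 1_{\{|t| \leq 1/\ee^2\}}$, so that the convolution $h_\ee * \rho_{\ee^2}$ lies in the class $\mathscr H_K$ with $K = [-1/\ee^2, 1/\ee^2]$. Hence Theorem \ref{Theor-Loc-Improved} applies with a constant $c_\ee := c_K$ depending only on $\ee$, producing a local limit approximation whose remainder is of order $c_\ee/n$ times $\|h_\ee * \rho_{\ee^2}\|_{\mathscr H} \leq \|h_\ee\|_{\mathscr H}$, using that $\rho_{\ee^2}$ is a probability density.

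For the upper bound \eqref{LLT_Upper_aa}, I would first apply Lemma \ref{smoothing-lemma-001} to write $h \leq (1+4\ee)(h_\ee * \rho_{\ee^2})$ and then invoke Theorem \ref{Theor-Loc-Improved} to obtain
\begin{equation*}
\mathbb{E}_{\mathbb{Q}_s^x}(h_\ee * \rho_{\ee^2})(X_n, y - S_n) = \frac{1}{\sigma_s \sqrt n} \int (h_\ee * \rho_{\ee^2})(x', y') \phi\Bigl(\frac{y-y'}{\sigma_s \sqrt n}\Bigr) \pi_s(dx') dy' + O\Bigl(\frac{c_\ee}{n}\Bigr) \|h_\ee\|_{\mathscr H}.
\end{equation*}
By Fubini the Gaussian integral rewrites as $\int h_\ee(x', y') (\widetilde \phi_n * \rho_{\ee^2})(y-y') \pi_s(dx') dy'$, where $\widetilde \phi_n(u) = \frac{1}{\sigma_s \sqrt n} \phi(u/(\sigma_s \sqrt n))$. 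I would then compare $\widetilde \phi_n * \rho_{\ee^2}$ with $\widetilde \phi_n$ by splitting the convolution into the region $|v| \leq \ee$, where Lipschitz continuity of $\widetilde \phi_n$ (with derivative of order $1/n$) yields an error $O(\ee/n)$, and the tail $|v| > \ee$, where $\|\widetilde \phi_n\|_\infty \leq c/\sqrt n$ combined with $\int_{|v|>\ee}\rho_{\ee^2}(v)\,dv \leq c\ee$ (a consequence of the $1/u^2$ decay of the Fejér-type kernel $\rho$) yields $O(\ee/\sqrt n)$. Integrating against $h_\ee$ produces the error $\frac{c\ee}{\sqrt n}\|h_\ee\|_{\pi_s \otimes \Leb}$; the extra factor $(1+4\ee)$ is absorbed in the same way.

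For the lower bound \eqref{LLT_Lower_aa} I would argue analogously, using the second half of Lemma \ref{smoothing-lemma-001} with $h_{-\ee} * \rho_{\ee^2}$ in place of $h_\ee * \rho_{\ee^2}$; the norm bounds $\|h_{-\ee}\|_{\mathscr H} \leq \|h\|_{\mathscr H}$ and $\|h_{-\ee}\|_{\pi_s \otimes \Leb} \leq \|h\|_{\pi_s \otimes \Leb}$ follow from $h_{-\ee} \leq h$ pointwise. The additional tail term $\mathbb{E}_{\mathbb{Q}_s^x} \int_{|v|>\ee} h_{-\ee}(X_n, y - S_n - v) \rho_{\ee^2}(v) dv$ is controlled by bounding $h_{-\ee} \leq h$ pointwise and then invoking the already-established upper bound \eqref{LLT_Upper_aa} with the trivial choice of majorant equal to $h$ itself; the $L^\infty$ bound $c/\sqrt n$ on the Gaussian kernel combined with $\int_{|v|>\ee}\rho_{\ee^2} = O(\ee)$ yields the required $\frac{c\ee}{\sqrt n}\|h\|_{\pi_s \otimes \Leb}$ and $\frac{c_\ee}{n}\|h\|_{\mathscr H}$ contributions.

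The main technical obstacle is the comparison of $\widetilde \phi_n * \rho_{\ee^2}$ with $\widetilde \phi_n$. Because $\rho$ is only of polynomial decay and not compactly supported, a pure Lipschitz argument fails: $\int |v| \rho_{\ee^2}(v)\,dv$ is infinite. One must therefore combine a Lipschitz estimate on $\{|v| \leq \ee\}$ with a plain $L^\infty$ estimate on the tail, carefully balancing the $1/\ee^2$-peak of $\rho_{\ee^2}$ at the origin against its $1/v^2$ decay, in order to recover precisely the $O(\ee/\sqrt n)$ rate announced in the statement. A secondary subtlety is the bookkeeping of which error terms carry the absolute constant $c$ versus the $\ee$-dependent constant $c_\ee$, especially when bootstrapping the upper bound into the lower-bound tail estimate.
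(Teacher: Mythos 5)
Your proposal is correct and follows essentially the same argument as the paper: apply Lemma \ref{smoothing-lemma-001} to pass to $h_\ee * \rho_{\ee^2}$, note the resulting compact Fourier support so Theorem \ref{Theor-Loc-Improved} applies, and then compare the smoothed Gaussian $\widetilde\phi_n * \rho_{\ee^2}$ with $\widetilde\phi_n$ via the same inner/tail split of the convolution, finally bootstrapping the established upper bound to control the residual term in the lower bound. The paper packages the inner-region estimate through a function $\psi(t) = \sup_{|v|\leq\ee}\phi_{\sigma_s\sqrt n}(t+v)$ and a Taylor expansion rather than a direct Lipschitz bound, but this is only a cosmetic difference from your version.
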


\begin{proof}
We first prove the upper bound \eqref{LLT_Upper_aa}. 
By Lemma \ref{smoothing-lemma-001},  we have $h \leq  (1 + 4 \ee) h_{\ee} * \rho_{\ee^2}$
and hence
\begin{align}  \label{LLT_Inequ_Smooth}
 \mathbb{E}_{\mathbb{Q}_s^x} h\left(X_n, y - S_{n} \right) 
\leq   (1 + 4 \ee)  \EQ h_{\ee} * \rho_{\ee^2} \left( X_n, y - S_{n}  \right).   
\end{align}
Since the support of the function
$\widehat{h_{\ee} * \rho_{\ee^2}} (x, \cdot) = \widehat{h}_{\ee} (x, \cdot) \widehat{\rho}_{\ee^2}(\cdot)$
is included in $[- \frac{1}{\ee^2}, \frac{1}{\ee^2}]$ for any $x \in \bb P(V)$, 
by Theorem \ref{Theor-Loc-Improved}, there exists  $c_{\ee} >0$ 
such that for all $n\geq 1$, $x \in \bb P(V)$ and $y \in \bb R$, 
\begin{align}\label{LLT_Inequa_aa}
  \EQ h_{\ee} * \rho_{\ee^2} \left(X_n, y - S_n  \right)  
& \leq \frac{1 + 4 \ee}{\sigma_s \sqrt{n}}  \int_{\bb P(V) \times \bb R}   h_{\ee} * \rho_{\ee^2} \left( x', y'\right) 
 \phi \left( \frac{y' - y}{\sigma_s \sqrt{n}}\right) \pi_s(dx')  dy'  +   \frac{c_{\ee}}{ n }   \|  h_{\ee} \|_{\scr H}   \notag\\
& = \frac{1 + 4 \ee}{\sigma_s \sqrt{n}}  \int_{\bb P(V) \times \bb R}   h_{\ee} * \rho_{\ee^2} \left( x, u + y \right) 
 \phi \left( \frac{u}{\sigma_s \sqrt{n}}\right) \pi_s(dx)  du  +   \frac{c_{\ee}}{ n }   \|  h_{\ee} \|_{\scr H}.      
\end{align}
By a change of variable and Fubini's theorem, we have for any $x \in \bb P(V)$, 
\begin{align} \label{LLT_B_Fubini_aa}
\frac{1}{ \sigma_s \sqrt{n} } 
\int_{\mathbb{R}}  h_{\ee} * \rho_{\ee^2} \left(x, u + y \right)  \phi \left(\frac{u}{ \sigma_s \sqrt{n} } \right)    du   
=   \int_{\bb R }  h_{\ee}(x, t + y) \phi_{\sigma_s \sqrt{n}} * \rho_{\ee^2} \left( t \right)  dt, 
\end{align}
where $\phi_{\sigma_s \sqrt{n}}(t) = \frac{1}{ \sigma_s \sqrt{2 \pi n}} e^{- \frac{t^2}{2 \sigma_s^2 n}}$, $t \in \bb R$. 
For brevity, denote $\psi(t)=\sup_{|v|\leq\ee} \phi_{\sigma_s \sqrt{n}} (t +v)$, $t \in \bb R$. 
Using the second inequality in Lemma \ref{smoothing-lemma-001}, we have 
\begin{align*} 
&  \int_{\bb R }   h_{\ee}(x, t + y)  \phi_{\sigma_s \sqrt{n}} * \rho_{\ee^2} \left( t \right)  dt  \notag\\
& \leq    \int_{\mathbb{R}}   h_{\ee}(x, t + y)  \psi \left( t \right)    dt    
   +   \int_{\mathbb{R}}   h_{\ee}(x, t + y)  \left[ \int_{\abs{v} \geq \ee} 
        \phi_{\sigma_s \sqrt{n}} \left( t - v \right) \rho_{\ee^2} (v) d v  \right]   dt   
=: J_1+J_2.  
\end{align*}
For $J_1$, by Taylor's expansion and the fact that the function $\phi'$ is bounded on $\bb R$,
we derive that
\begin{align} \label{LLT_B_J1} 
J_1 \leq   \frac{1}{\sigma_s \sqrt{n}}   \int_{\bb R}  h_{\ee}(x, t + y)  \phi \left( \frac{t}{\sigma_s \sqrt{n}} \right)    dt  
     +  \frac{c \ee}{\sqrt{n}}  \int_{\bb R}   h_{\ee} (x, t)  dt.  
\end{align}
For $J_2$, since $\phi_{\sigma_s \sqrt{n}} \leq \frac{c}{\sqrt{n}}$ and $\int_{|v| \geq \ee} \rho_{\ee^2} (v) d v \leq c \ee$, we get
\begin{align}\label{LLT_B_J2}
J_2 \leq   \frac{c}{\sqrt{n}} \int_{\mathbb{R}}  h_{\ee}(x, t + y)  \left[ \int_{|v| \geq \ee} \rho_{\ee^2} (v) d v \right]    dt  
      \leq  \frac{ c \ee}{\sqrt{n}}   \int_{\bb R}   h_{\ee}(x, t)  dt. 
\end{align}
Putting together \eqref{LLT_Inequ_Smooth}, 
\eqref{LLT_Inequa_aa}, \eqref{LLT_B_Fubini_aa}, \eqref{LLT_B_J1} and \eqref{LLT_B_J2}, 
we get \eqref{LLT_Upper_aa}. 

We next prove the lower bound \eqref{LLT_Lower_aa}. 
Since $h \geq_{\ee} h_{-\ee}$, 
using the second inequality in Lemma \ref{smoothing-lemma-001}, 
we get
\begin{align}  \label{LLT_Inequ_Smooth-002}
 \mathbb{E}_{\mathbb{Q}_s^x} h\left(X_n, y - S_{n} \right) 
 \geq  \mathbb{E}_{\mathbb{Q}_s^x} h_{-\ee} * \rho_{\ee^2} \left(X_n, y - S_{n} \right)   
   -    \int_{|v| \geq \ee}
  \mathbb{E}_{\mathbb{Q}_s^x}  h_{-\ee} \left(X_n, y - S_{n} - v \right) \rho_{\ee^2} (v) dv.  
\end{align}
For the first term,  by Theorem \ref{Theor-Loc-Improved}, 
there exists  $c_{\ee} >0$ 
such that for all $n\geq 1$, $x \in \bb P(V)$ and $y \in \bb R$, 
\begin{align}
 \mathbb{E}_{\mathbb{Q}_s^x} h_{-\ee} * \rho_{\ee^2} \left(X_n, y - S_{n} \right) 
& \geq \frac{1}{\sigma_s \sqrt{n}}  \int_{\bb P(V) \times \bb R}   h_{-\ee} * \rho_{\ee^2} \left( x', y'\right) 
 \phi \left( \frac{y' - y}{\sigma_s \sqrt{n}}\right) \pi_s(dx')  dy'  -  \frac{c_{\ee}}{ n }   \|  h_{-\ee} \|_{\scr H}   \notag\\
& =  \frac{1}{\sigma_s \sqrt{n}}  \int_{\bb P(V) \times \bb R} 
h_{-\ee} * \rho_{\ee^2}  \left(x, u + y \right) 
\phi \left(\frac{u}{\sigma_s \sqrt{n}} \right)    \pi_s(dx) du  
  -   \frac{c_{\ee}}{ n }   \|  h_{-\ee} \|_{\scr H}.    \label{LLT_Inequa_aa-002}
\end{align}
In the same way as in \eqref{LLT_B_Fubini_aa}, we have
\begin{align} \label{LLT_B_Fubini_bb}
\frac{1}{ \sigma_s \sqrt{n} }
\int_{\mathbb{R}}    h_{-\ee} * \rho_{\ee^2} \left(x, u + y \right)    \phi \left(\frac{u}{\sigma_s \sqrt{n}} \right)  du 
=  \int_{\bb R }  h_{-\ee} (x, t + y)  \phi_{\sigma_s \sqrt{n}} * \rho_{\ee^2} (t)  dt.
\end{align}
Using the first inequality in Lemma \ref{smoothing-lemma-001}, 
we have $\phi_{\sigma_s \sqrt{n}} *\rho_{\ee^2} (t) \geq (1 - c \ee) \psi(t)$, for $t\in \bb R$, 
where $\psi(t) = \inf_{|v| \leq \ee} \phi_{\sigma_s \sqrt{n}} (t + v)$. 
Proceeding in the same way as in \eqref{LLT_B_J1} and \eqref{LLT_B_J2}, 
we obtain that 
\begin{align}\label{LLT_B_Lower_Term1}
  \int_{\bb R }  h_{-\ee} (x, t + y)  \phi_{\sigma_s \sqrt{n}} * \rho_{\ee^2} (t)  dt  
 \geq  \frac{1}{\sigma_s \sqrt{n}}  \int_{\bb R}     h_{-\ee} \left(x, t + y \right)  \phi \left(\frac{t}{ \sigma_s \sqrt{n} } \right) dt
  - \frac{c \ee}{\sqrt{n}}  \int_{\bb R}   h_{-\ee} (x, t) dt.
\end{align}
Therefore, combining \eqref{LLT_Inequa_aa-002}, \eqref{LLT_B_Fubini_bb} and \eqref{LLT_B_Lower_Term1}, we get
\begin{align}\label{CLLT-First-term-lower}
 \mathbb{E}_{\mathbb{Q}_s^x} h_{-\ee} * \rho_{\ee^2} \left(X_n, y - S_{n} \right) 
& \geq  \frac{1}{\sigma_s \sqrt{n}}  \int_{\bb P(V) \times \bb R}     h_{-\ee} \left(x, t + y \right)  
     \phi \left(\frac{t}{ \sigma_s \sqrt{n} } \right)  \pi_s(dx) dt     \notag\\
& \quad  - \frac{c \ee}{\sqrt{n}}  \| h_{-\ee} \|_{\pi_s \otimes \Leb} 
   -   \frac{c_{\ee}}{ n }   \|  h_{-\ee} \|_{\scr H}. 
\end{align}
For the second term on the right hand side of \eqref{LLT_Inequ_Smooth-002}, 
using the upper bound \eqref{LLT_Upper_aa} and the fact that $h_{-\ee} \leq_{\ee} h$ and $\phi \leq 1$, 
we get that there exist constants $c, c_{\ee} >0$ such that for any $v \in \bb R$ and $n \geq 1$, 
\begin{align*}
 \mathbb{E}_{\mathbb{Q}_s^x}  h_{-\ee} \left(X_n, y - S_{n} - v \right)   
& \leq    \frac{1}{\sigma_s \sqrt{n}}  \int_{\bb P(V) \times \bb R}    h \left( x, t \right) 
  \phi \left( \frac{y - v -t}{\sigma_s \sqrt{n}}\right)  \pi_s(dx)  dt 
     +   \frac{c\ee}{\sqrt{n}} \| h \|_{\pi_s \otimes \Leb}  
     +  \frac{c_{\ee}}{ n }  \| h \|_{\mathscr H}    \notag\\
& \leq   \frac{c}{\sqrt{n}} \| h \|_{\pi_s \otimes \Leb}   +  \frac{c_{\ee}}{ n }  \| h \|_{\mathscr H}.  
\end{align*}
This, together with the fact that $\int_{|v| \geq \ee} \rho_{\ee^2} (v) d v \leq c \ee$, implies
\begin{align} \label{LLTlowbnd-004}
 \int_{|v| \geq \ee}
  \mathbb{E}_{\mathbb{Q}_s^x}  h_{-\ee} \left(X_n, y - S_{n} - v \right) \rho_{\ee^2} (v) dv 
  \leq    \frac{c \ee}{\sqrt{n}} \| h \|_{\pi_s \otimes \Leb}   +  \frac{c_{\ee}}{ n }  \| h \|_{\mathscr H}.  
\end{align}
Substituting \eqref{CLLT-First-term-lower} and \eqref{LLTlowbnd-004} into  \eqref{LLT_Inequ_Smooth-002} 
and using the fact that $\| h_{-\ee} \|_{\pi_s \otimes \Leb} \leq  \| h \|_{\pi_s \otimes \Leb}$ and
$ \|  h_{-\ee} \|_{\scr H} \leq  \|  h \|_{\scr H}$,  
we obtain the lower bound \eqref{LLT_Lower_aa}. 
\end{proof}

%%%%%%%%%%%%%%%%%%%%%%%%%%%%%%%%%%%%%%%%%%%%%%%%%%%
%%%%%%%%%%%%%%%%%%%%%%%%%%%%%%%%%%%%%%%%%%%%%%%%%%%
\section{Conditioned local limit theorems}\label{Sec-CLLT}

%%%%%%%%%%%%%%%%%%%%%%%%%%%%%%%%%%%%%%%%%%%%%%%%%%%
%%%%%%%%%%%%%%%%%%%%%%%%%%%%%%%%%%%%%%%%%%%%%%%%%%%
\subsection{Bounds in the conditioned local limit theorems}

The next lemma shows that the $\| \cdot \|_{\pi_s \otimes \Leb}$ norm of the 
probability $\bb Q_s^x (y - S_n \in [a,b], \, \tau_y > n)$ is of order $n^{-1/2}$. 
This turns out to be one of the key points in the sequel.
The proof is based upon the duality lemma (Lemma \ref{Lem_Duality}) and
the bound for the exit time $\tau_y^*$ for the dual random walk $S_n^*$ (Theorem \ref{Th_Asymptotic_ExitTime_dual}).
\begin{lemma}\label{lemma Qng integr}
Assume conditions \ref{Condi-density}, \ref{Condi-density-invariant},
and $\kappa'(s) = 0$ for some $s \in  I_{\mu}$. 
Then, there exists a constant $c>0$ such that for any $n \geq 1$ and $0 \leq a < b < \infty$, 
\begin{equation*}
 \int_{\bb P(V) \times \bb{R}_+} 
 \bb Q_s^x (y - S_n \in [a,b], \, \tau_y > n) \pi_s(dx) dy 
\leq \frac{c}{\sqrt{n}} (b-a)(b + a+1).
\end{equation*}
\end{lemma}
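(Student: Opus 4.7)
The plan is to apply the duality identity \eqref{duality-formular-particu-abc} from Lemma \ref{Lem_Duality} with carefully chosen functions, and then invoke the uniform estimate on the exit time of the dual random walk from Theorem \ref{Th_Asymptotic_ExitTime_dual}.

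First, I would set $\varphi(x,y) \equiv 1$ and $\psi(x,t) = \mathds 1_{[a,b]}(t)$ in \eqref{duality-formular-particu-abc}. Since neither function actually depends on the $\bb P(V)$-coordinate, this is legitimate, and the left-hand side of \eqref{duality-formular-particu-abc} becomes exactly the quantity we want to bound, namely
\begin{equation*}
I_n(a,b) := \int_{\bb P(V) \times \bb R_+} \bb Q_s^x\bigl(y - S_n \in [a,b],\, \tau_y > n\bigr)\, \pi_s(dx)\, dy.
\end{equation*}
On the right-hand side, since $\varphi \equiv 1$ and $\psi(z,t) = \mathds 1_{[a,b]}(t)$, the formula collapses to
\begin{equation*}
I_n(a,b) = \int_{\bb P(V)} \int_a^b \bb Q_s^{z,*}(\tau_t^* > n)\, dt\, \pi_s(dz).
\end{equation*}

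Next, I would apply Theorem \ref{Th_Asymptotic_ExitTime_dual}, which furnishes a constant $c>0$ with $\bb Q_s^{z,*}(\tau_t^* > n) \leq \frac{c(t+1)}{\sqrt{n}}$ uniformly in $z \in \bb P(V)$ and $t \geq 0$, for all $n$ sufficiently large (and trivially for the remaining finitely many $n$ by absorbing constants). Using that $\pi_s$ is a probability measure and integrating in $t$,
\begin{equation*}
I_n(a,b) \leq \frac{c}{\sqrt{n}} \int_a^b (t+1)\, dt = \frac{c}{2\sqrt{n}}(b-a)(a+b+2),
\end{equation*}
which is bounded by $\frac{c'}{\sqrt n}(b-a)(a+b+1)$ for a suitable $c'>0$, giving the claim.

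There is no real obstacle here, as the two main ingredients have already been established. The only subtlety to check is that the duality identity in Lemma \ref{Lem_Duality} applies to the indicator $\mathds 1_{[a,b]}$, which is a nonnegative bounded measurable function on $\bb P(V) \times \bb R_+$ and hence lies within the scope of the lemma. Once that is noted, the bound follows by a direct computation as above, with the factor $\sqrt{n}^{-1}$ coming entirely from the uniform exit-time estimate of Theorem \ref{Th_Asymptotic_ExitTime_dual} for the dual walk, and the quadratic factor $(b-a)(a+b+1)$ arising from integrating $t+1$ over $[a,b]$.
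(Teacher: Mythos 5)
Your proof is correct and follows exactly the paper's own argument: apply the duality identity of Lemma~\ref{Lem_Duality} to convert the left-hand side into $\int_{\bb P(V)\times\bb R_+}\mathds 1_{[a,b]}(z)\,\bb Q_s^{z,*}(\tau_z^*>n)\,\pi_s(dz)\,dz$, then invoke the uniform exit-time bound of Theorem~\ref{Th_Asymptotic_ExitTime_dual} and integrate $1+z$ over $[a,b]$. You are in fact slightly more careful than the paper in two places: noting explicitly that the limsup statement of Theorem~\ref{Th_Asymptotic_ExitTime_dual} must be upgraded to a bound valid for all $n\geq 1$ by absorbing finitely many exceptional cases into the constant, and observing that $\int_a^b(1+t)\,dt=(b-a)(a+b+2)/2\leq(b-a)(a+b+1)$ uses $a\geq0$.
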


\begin{proof}
Using the duality lemma (Lemma \ref{Lem_Duality}) and Fubini's theorem, 
we get that for $n \geq 1$, 
\begin{align*}
J := \int_{\bb P(V) \times \bb{R}_+} 
 \bb Q_s^x (y - S_n \in [a,b], \, \tau_y > n)  \pi_s(dx) dy   \
   =  \int_{\bb P(V) \times \bb{R}_+}   \mathds 1_{[a,b]} (z)
 \mathbb{Q}_s^{x,*} \left( \tau _{z}^{\ast }>n\right) \pi_s(dx) dz. 
\end{align*}
By Theorem \ref{Th_Asymptotic_ExitTime_dual}, there exists a constant $c$ such that
$\mathbb{Q}_s^{x,*} \left( \tau _{z}^{\ast }>n\right) \leq c\frac{1+z}{\sqrt{n}}$ 
 for any $x \in \bb P(V)$, $z\geq 0$ and $n \geq 1$. 
Therefore,
\begin{align*}
J  \leq \frac{c}{\sqrt{n}} \int_{\bb P(V) \times \bb{R}_+} 
\mathds 1_{[a,b]} (z) (1+z)  \pi_s(dx) dz 
 =  \frac{c}{\sqrt{n}} (b-a)(b+a+1), 
\end{align*}
which ends the proof of the lemma. 
\end{proof}

\begin{lemma}\label{Lem_Integral_LLT}
Assume condition \ref{Condi-density} and $\kappa'(s) = 0$ for some $s \in  I_{\mu}$. 
Then, for any $\ee \in [0, \frac{1}{2})$, there exists a constant $c_{\ee} >0$ such that for any $n \geq 2$ 
and $- \sqrt{n} \log^{1-\ee} n \leq a < b \leq \sqrt{n} \log^{1-\ee} n$,
\begin{align*}
J: = \int_{\bb R} \sup_{x \in \bb P(V)}  \mathbb{Q}_s^x \left(  y - S_n  \in [a, b]  \right) dy
\leq  c_{\ee}  (b - a + 1) \log^{1-\ee} n. 
\end{align*}
\end{lemma}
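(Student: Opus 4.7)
The overall plan is to split the $y$-integration at scale $Y := 2\sqrt{n}\log^{1-\ee}n$ and bound the central and tail parts by different tools:
\[
\int_{\bb R} \sup_{x \in \bb P(V)} \bb Q_s^x\left( y - S_n \in [a,b] \right) dy = I_1 + I_2,
\]
where $I_1$ is the integral over $\{|y|\leq Y\}$ and $I_2$ over $\{|y|>Y\}$. The central part $I_1$ will be controlled by the non-asymptotic local limit theorem (Theorem \ref{Theorem Loc bounds}), while $I_2$ will be estimated by a Chernoff argument built on the spectral gap of $P_s$.

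For $I_1$ I apply the upper bound \eqref{LLT_Upper_aa} with $h(x,t) = \mathds 1_{[a,b]}(t)$ and the dominating function $h_{\ee_0}(x,t) = \mathds 1_{[a-\ee_0, b+\ee_0]}(t)$ for some fixed $\ee_0 \in (0, 1/8)$. Since neither $h$ nor $h_{\ee_0}$ depends on $x$, and the right-hand side of \eqref{LLT_Upper_aa} involves only $x$-independent integrals, the supremum in $x$ passes through at no cost. Using $\phi \leq 1/\sqrt{2\pi}$, $\pi_s(\bb P(V)) = 1$, and $\|h_{\ee_0}\|_{\scr H} \leq b - a + 2\ee_0$, each of the three terms is at most $c(b-a+1)/\sqrt{n}$, yielding $\sup_x \bb Q_s^x(y - S_n \in [a,b]) \leq c(b-a+1)/\sqrt{n}$ uniformly in $y$; integrating over $|y|\leq Y$ gives $I_1 \leq 4c(b-a+1)\log^{1-\ee}n$.

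For $I_2$ I use that the hypothesis $\max(|a|,|b|) \leq \sqrt{n}\log^{1-\ee}n = Y/2$ forces $|S_n| \geq |y|/2$ on the event $\{y - S_n \in [a,b]\}$ when $|y|>Y$. To bound $\sup_x \bb Q_s^x(|S_n|\geq t)$ I apply Chernoff's inequality, computing $\bb E_{\bb Q_s^x} e^{\lambda S_n} = P_{s+\lambda}^n r_s(x)/[\kappa(s)^n r_s(x)]$ via \eqref{Formu_ChangeMea}. The analytic extension of the spectral gap in Lemma \ref{Lem_spectral_gap} to a small real neighborhood of $s$, combined with the Taylor expansion $\log\kappa(s+\lambda) - \log\kappa(s) = \tfrac{\sigma_s^2}{2}\lambda^2 + O(\lambda^3)$ (where the assumption $\Lambda'(s) = \kappa'(s)/\kappa(s) = 0$ kills the linear term), yields $\sup_x \bb E_{\bb Q_s^x} e^{\lambda S_n} \leq c\, e^{c'n\lambda^2}$ for $|\lambda|\leq\delta$. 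Optimizing in $\lambda$ then produces the subgaussian bound $\sup_x\bb Q_s^x(|S_n|\geq t) \leq c\, e^{-c''t^2/n}$ for $t \leq \delta' n$, together with an exponential tail for $t > \delta' n$. The dominant contribution to $I_2$ is $c\sqrt{n}\,e^{-4c''\log^{2(1-\ee)}n}$, which is $o(1)$ precisely because $2(1-\ee)>1$, i.e.\ $\ee<1/2$. Hence $I_2 = O(1) \leq c_\ee(b-a+1)\log^{1-\ee}n$ for $n \geq 2$, and summing the two pieces completes the proof.

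The main obstacle is producing the subgaussian estimate $\sup_x \bb E_{\bb Q_s^x} e^{\lambda S_n} \leq c\, e^{c'n\lambda^2}$ with a constant independent of $x$. This is the real-parameter analogue of Lemma \ref{Lem_Perturbation} (which handles imaginary perturbations $Q_{s, it}$), and rests on the fact that the spectral decomposition of $P_s$ furnished by Lemma \ref{Lem_spectral_gap} extends analytically to a complex neighborhood as soon as $s$ lies in the interior of $I_\mu^+ \cup I_\mu^-$ — which is implicit in the hypothesis $\kappa'(s)=0$ together with condition \ref{Condi-density}. The threshold $\ee < 1/2$ in the statement enters exactly through the requirement $2(1-\ee) > 1$ needed to make the Gaussian tail outweigh the $\sqrt{n}$ prefactor generated by the change of variables $y = \sqrt{n}\, u$.
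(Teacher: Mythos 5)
Your proof is correct and arrives at the same bound, but takes a somewhat different route for the tail than the paper does. Your central piece $I_1$ is handled exactly as in the paper (the local limit theorem, Theorem~\ref{Theorem Loc bounds}, applied uniformly in $y$ and integrated over the window of length $4\sqrt{n}\log^{1-\ee}n$). For the tail, however, the paper splits $\{|y|>2\sqrt{n}\log^{1-\ee}n\}$ further at $|y|=n^2$: on the intermediate range it invokes the known moderate deviation asymptotics for $S_n$ from \cite{XGL19a} (bound $\bb Q_s^x(|S_n|\geq \sqrt{n}\log^{1-\ee}n) \leq c e^{-c'\log^{2-2\ee}n}$ as a black box), and on $\{|y|>n^2\}$ it uses a single-parameter exponential Markov bound $\bb Q_s^x(S_n\geq y/2)\leq e^{-\delta y/2}\bb E_{\bb Q_s^x}e^{\delta S_n}$ with $\bb E_{\bb Q_s^x}e^{\delta S_n}\leq c e^{c'n}$. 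You instead rebuild the moderate deviation estimate from scratch as a subgaussian Chernoff bound, optimizing $\lambda$ in $\bb E_{\bb Q_s^x}e^{\lambda S_n}$ over the real neighborhood where the analytic continuation of $\kappa$ is available, using $\Lambda'(s)=0$ to kill the linear term in the Taylor expansion. Both approaches give the required $e^{-c\log^{2-2\ee}n}$ decay on the intermediate range (which is precisely where $\ee<1/2$ enters) and both need a cruder exponential estimate in the far tail, but the paper is shorter because it cites \cite{XGL19a} rather than rederiving the subgaussian bound, while yours is more self-contained. One small caveat you do implicitly rely on but should flag: to extend $P_{s+\lambda}$ to real $\lambda$ near $0$ one needs $s$ in the \emph{interior} of $I_\mu^+\cup I_\mu^-$, which the hypothesis of the lemma does not literally state (it says only $s\in I_\mu^+\cup I_\mu^-$); however the paper's own bound of $J_3$ makes exactly the same implicit use, so you are in good company.
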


\begin{proof}
We first decompose the integral into three parts: $J = J_1 + J_2 + J_3$, where 
\begin{align*}
& J_1 = \int_{|y| \leq 2 \sqrt{n} \log^{1-\ee} n} 
      \sup_{x \in \bb P(V)} \mathbb{Q}_s^x \left(  y - S_n  \in [a, b] \right) dy,  \notag\\
& J_2 = \int_{2 \sqrt{n} \log^{1-\ee} n < |y| \leq n^2} 
      \sup_{x \in \bb P(V)} \mathbb{Q}_s^x \left(  y - S_n  \in [a, b] \right) dy,   \notag\\
& J_3 = \int_{|y| > n^2}  \sup_{x \in \bb P(V)}  \mathbb{Q}_s^x \left(  y - S_n  \in [a, b] \right) dy. 
\end{align*}

\textit{Bound of $J_1$.}
By the local limit theorem \eqref{LLT_Upper_aa}, 
there exists a constant $c >0$ such that for any $a < b$ and $n \geq 1$, 
\begin{align}
J_1 &  \leq  
  4  \log^{1-\ee} n \sup_{x\in \bb P(V)} \sup_{y\in \mathbb{R}} 
    \sqrt{n} \,  \mathbb{Q}_s^x \left(  y - S_n  \in [a, b] \right)  \notag\\
& \leq  4  \log^{1-\ee} n \left[  (b - a + c \ee) 
    +   \frac{c_{\ee}}{\sqrt{n}}  (b - a + c \ee)  \right]  \notag\\
& \leq   c (b - a + 1)  \log^{1-\ee} n.    \label{Pf_dual_Bound_I1}
\end{align}

\textit{Bound of $J_2$.}
When $y \in [2 \sqrt{n} \log^{1-\ee} n, n^2]$ and $b \leq  \sqrt{n} \log^{1-\ee} n$, 
there exist constant $c, c' >0$ such that for any $x \in \bb P(V)$ and $y \in [2 \sqrt{n} \log^{1-\ee} n, n^2]$, 
\begin{align}\label{Inequa-Moderate}
\mathbb{Q}_s^x \left(  y - S_n  \in [a, b] \right)
&  \leq  \mathbb{Q}_s^x \left( S_n  \geq  \sqrt{n} \log^{1-\ee} n \right)
\leq c e^{- c' \log^{2-2\ee} n}, 
\end{align}
where in the last inequality we used the upper tail moderate deviation asymptotic for $S_n$ 
under the changed measure $\mathbb{Q}_s^x$ (cf. \cite{XGL19a}). 
Since $\ee \in [0, \frac{1}{2})$, it follows that
\begin{align}
 \int_{2 \sqrt{n} \log^{1-\ee} n}^{n^2} \sup_{x \in \bb P(V)} \mathbb{Q}_s^x \left(  y - S_n \in [a, b] \right) dy   
 \leq c  e^{- c' \log^{2-2\ee} n}.  \label{Upper_I2_hh01}
\end{align}

When $y \in [-n^2, - 2 \sqrt{n} \log^{1-\ee} n]$ and $a \geq - \sqrt{n} \log^{1-\ee} n$, 
there exist constant $c, c' >0$ such that for any $x \in \bb P(V)$ and $y \in [-n^2, -2 \sqrt{n} \log^{1-\ee} n]$, 
\begin{align*}
\mathbb{Q}_s^x \left(  y - S_n  \in [a, b] \right)
&  \leq  \mathbb{Q}_s^x \left( S_n  \leq - \sqrt{n} \log^{1-\ee} n \right) 
\leq c e^{- c' \log^{2-2\ee} n}, 
\end{align*}
where in the last inequality we used 
the lower tail moderate deviation asymptotic for $S_n$ under the changed measure $\mathbb{Q}_s^x$ (cf. \cite{XGL19a}). 
It follows that
\begin{align}
\int_{-n^2}^{- 2 \sqrt{n} \log^{1-\ee} n}  \sup_{x \in \bb P(V)}  \mathbb{Q}_s^x \left(  y - S_n  \in [a, b] \right) dy
 \leq  c e^{- c' \log^{2-2\ee} n}.  \label{Upper_I2_hh02}
\end{align}
Combining \eqref{Upper_I2_hh01} and \eqref{Upper_I2_hh02},  
there exist constants $c, c' >0$ such that for any 
 $-  \sqrt{n} \log^{1-\ee} n \leq a < b \leq  \sqrt{n} \log^{1-\ee} n$, 
\begin{align}
J_2 \leq c e^{- c' \log^{2-2\ee} n}.  \label{Pf_dual_Bound_I2}
\end{align}

\textit{Bound of $J_3$.}
Since $y > n^2$ and $b \leq \sqrt{n} \log^{1-\ee} n$, 
by the Markov inequality, we have for sufficiently small $\delta >0$, 
\begin{align*}
\mathbb{Q}_s^x \left(  y - S_n \in [a, b] \right)
 \leq  \mathbb{Q}_s^x \left(  S_n \geq  \frac{y}{2} \right) 
 \leq  e^{- \frac{\delta}{2} y}  \bb E_{\bb Q_s^x}  e^{\delta  S_n}. 
\end{align*}
Using \eqref{Formu_ChangeMea}, we get that for $s \in I_{\mu}^+$, 
\begin{align*}
 \sup_{x \in \bb P(V)} \bb E_{\bb Q_s^x}  e^{\delta S_n}
& =  \sup_{x \in \bb P(V)}  \frac{1}{ \kappa(s)^{n} r_{s}(x) } \mathbb{E}_x \Big[  r_{s}(X_n)  e^{(s + \delta) S_n}  \Big]  \notag\\
& \leq c e^{n \Lambda(s)} \sup_{x \in \bb P(V)} \bb E_x \left( e^{(s+\delta) S_n} \right) 
 \leq  c  e^{n \Lambda(s) }  \left[ \bb E \left( e^{(s+\delta) \log \|g_1\|} \right)  \right]^n
 \leq  c  e^{c'n}. 
\end{align*}
Similarly, for $s \in I_{\mu}^-$,  it also holds that $\sup_{x \in \bb P(V)} \bb E_{\bb Q_s^x}  e^{\delta S_n} \leq c  e^{c'n}$
by using the fact that 
$\sup_{x \in \bb P(V)} \bb E_x ( e^{(s+\delta) S_n} ) \leq [ \bb E ( e^{-(s+\delta) \log \|g_1^{-1}\|} )  ]^n$.   
Hence there exists a constant $\delta >0$ such that for any $s \in  I_{\mu}$,  
\begin{align*}
\int_{n^2}^{\infty}  \sup_{x \in \bb P(V)}  \mathbb{Q}_s^x \left(  y - S_n  \in [a, b] \right) dy
\leq  e^{- \frac{\delta}{2} n^2}  \sup_{x \in \bb P(V)}  \bb E_{\bb Q_s^x}  e^{\delta S_n} 
\leq e^{- \frac{\delta}{4} n^2}.  
\end{align*}
In the same way, we can show that 
\begin{align*}
\int_{-\infty}^{- n^2}  \sup_{x \in \bb P(V)} \mathbb{Q}_s^x \left(  y - S_n  \in [a, b] \right) dy
\leq  e^{- \frac{\delta}{4} n^2}. 
\end{align*}
Therefore, there exists a constant $\delta > 0$ such that for any 
 $- \sqrt{n} \log^{1-\ee} n \leq a < b \leq \sqrt{n} \log^{1-\ee} n$, 
\begin{align}
J_3 \leq 2 e^{- \frac{\delta}{4} n^2}. \label{Pf_dual_Bound_I3}
\end{align}
Putting together \eqref{Pf_dual_Bound_I1}, \eqref{Pf_dual_Bound_I2} and \eqref{Pf_dual_Bound_I3}
concludes the proof of the lemma. 
\end{proof}

Below we state two lemmata based upon each other and providing successively improved bounds of the integral
\begin{align}\label{DefInIntegral}
I_n : = \int_{\bb R} \sup_{x \in \bb P(V)}  \mathbb{Q}_s^x \big(  y - S_n  \in [a, b],  \,  \tau_y > n \big) dy. 
\end{align}
The estimate of $I_n$ is one of the difficult points of the paper.

\begin{lemma}\label{Lem_Integral_LLTnew}
Assume conditions \ref{Condi-density}, \ref{Condi-density-invariant} 
and $\kappa'(s) = 0$ for some $s \in  I_{\mu}$. 
Then, for any $\ee \in [0, \frac{1}{2})$, there exists a constant $c_{\ee} >0$ such that for any $n \geq 2$ 
and $- \sqrt{n} \log^{1-\ee} n \leq a < b \leq \sqrt{n} \log^{1-\ee} n$,
\begin{align*}
& I_n \leq c_{\ee} \frac{ \log^{2 - 2\ee} n }{\sqrt{n}} (b-a+ 1) (b + a +1). 
\end{align*}
\end{lemma}
\begin{proof}
In view of \eqref{DefInIntegral}, we split the integral $I_n$ into two parts: 
\begin{align}
I_n  & = \int_{|y| \leq 2 \sqrt{n} \log^{1-\ee} n } 
  \sup_{x \in \bb P(V)}    \mathbb{Q}_s^x \left(  y - S_n \in [a, b], \tau_y >n \right) dy \notag \\
& \quad + \int_{|y| > 2 \sqrt{n} \log^{1-\ee} n } 
  \sup_{x \in \bb P(V)}    \mathbb{Q}_s^x \left(  y - S_n \in [a, b], \tau_y >n \right) dy  \notag \\
& =: I_{n,1} + I_{n,2}.   \label{Decom-In-Lem2}
\end{align}
For the first term $I_{n,1}$, we use  the Markov property to get that for any
$m = [\frac{n}{2}]$ and $k = n - m$, 
\begin{align} \label{Qsx-bound-00a}
 \mathbb{Q}_s^x \left(  y - S_n \in [a, b] , \tau_y >n\right)  
& =  \int_{\bb P(V) \times \bb R_+} \mathbb{Q}_s^{x'} \left(  y' - S_m  \in [a, b] , \tau_{y'} >m\right)  
 \mathbb{Q}_s^x \left( X_k \in dx', y - S_k \in dy' , \tau_y >k\right) \notag\\
& \leq  \int_{\bb P(V) \times \bb R_+} h (x', y')   
\mathbb{Q}_s^x \left( X_k \in dx', y - S_k \in dy'\right) =: J_n(x, y),
\end{align}
where we denoted 
\begin{equation}\label{Def-h-xy-00a}
h \left(x', y'\right) =
\begin{cases}
\mathbb{Q}_s^{x'} \left(  y' - S_m  \in [a, b] , \tau_{y'} > m \right), \quad & x' \in \bb P(V), \,  y' > 0 \\
0,  \quad  & x' \in \bb P(V),  \,   y' \leq 0. 
\end{cases}
\end{equation}
By the local limit theorem (\eqref{LLT_Upper_aa} of Theorem  \ref{Theorem Loc bounds}), we get 
\begin{align}
\sup_{x \in \bb P(V)}  J_n(x, y)
\leq \frac{1}{\sigma_s \sqrt{k}}  \int_{\bb P(V) \times \bb{R}}   h_{\ee} \left( x', y'\right) 
  \phi \left( \frac{y-y'}{\sigma_s \sqrt{k}}\right)  \pi_s(dx')  dy'    
 +  \frac{c\ee}{\sqrt{k}} \| h_{\ee} \|_{\pi_s \otimes \Leb}  
     +  \frac{c_{\ee}}{ k }  \| h_{\ee} \|_{\mathscr H},    \label{LLT_Upper_aa-xxxx}
\end{align}
where we choose 
\begin{align}\label{Pf_Inequa_dd02}
h_{\ee}(x', y') = 
\begin{cases}
\mathbb{Q}_s^{x'} \left( y' - S_{m} \in [a - \ee, b + \ee], \tau_{y' + \ee} > m \right), \quad & x' \in \bb P(V), \,  y' > - \ee \\
0,  \quad  & x' \in \bb P(V),  \,   y' \leq - \ee. 
\end{cases}
\end{align}
Since $\phi \leq 1$ and $h_{\ee} (x', y') = 0$ when $y' \leq -\ee$, 
we get an upper bound for the first term in the right hand side of \eqref{LLT_Upper_aa-xxxx}: 
\begin{align}\label{Bound_h_2ee_008}
 \int_{\bb P(V) \times \bb R}   h_{\ee} \left( x', y'\right) 
  \phi \left( \frac{y-y'}{\sigma_s \sqrt{n}}\right)  \pi_s(dx')  dy'  
&  \leq  \int_{\bb P(V)} \int_{- \ee}^{\infty}   h_{\ee} \left( x', y'\right)  \pi_s(dx')  dy' 
   =  \int_{\bb P(V) \times \bb R_+}   h_{\ee} \left( x', t - \ee \right)  \pi_s(dx')  dt   \nonumber\\
& =  \int_{\bb P(V) \times \bb R_+}
     \mathbb{Q}_s^{x'} \left( t - S_{m} \in [a, b + 2 \ee], \tau_{t} > m \right) \pi_s(dx')  dt  \nonumber \\
& \leq \frac{c}{\sqrt{m}} (b-a + 1) (b + a +1), 
\end{align}
where in the last inequality we used Lemma \ref{lemma Qng integr}. 

For the second term in the right hand side of \eqref{LLT_Upper_aa-xxxx}, 
we proceed in the same way as for the first one to get that 
\begin{align}\label{Bound_h_2ee_008-second}
\frac{c\ee}{\sqrt{k}} \| h_{\ee} \|_{\pi_s \otimes \Leb}  
& \leq  \frac{c\ee}{\sqrt{km}} (b-a+  \ee) (b + a +1). 
\end{align}

For the third term in the right hand side of \eqref{LLT_Upper_aa-xxxx}, 
by \eqref{Pf_Inequa_dd02} and Lemma \ref{Lem_Integral_LLT}, 
there exists a constant $c >0$ such that for all $n \geq 2$ 
and $- \sqrt{n} \log^{1-\ee} n \leq a < b \leq \sqrt{n} \log^{1-\ee} n$,
\begin{align}\label{Bound_h_2ee_009}
\| h_{\ee} \|_{\mathscr H}
& = \int_{- \ee}^{\infty} \sup_{x' \in \bb P(V)}  
 \mathbb{Q}_s^{x'} \left( y' - S_m \in [a - \ee, b + \ee], \, \tau_{y' + \ee} > m  \right) dy'  \notag\\
& =   \int_{\bb R_+} \sup_{x' \in \bb P(V)}  \mathbb{Q}_s^{x'} \left( t - S_m \in [a, b + 2\ee], \, \tau_{t} > m  \right) dt  \nonumber\\ 
& \leq  c_{\ee} (b - a +  1) \log^{1-\ee} n,
\end{align}
where we used the fact that $m = [\frac{n}{2}]$ and the inequality in Lemma \ref{Lem_Integral_LLT} still holds
when $a$ and $b$ are replaced by their constant multiples. 
Substituting \eqref{Bound_h_2ee_008}, \eqref{Bound_h_2ee_008-second} and \eqref{Bound_h_2ee_009} into \eqref{LLT_Upper_aa-xxxx},
and recalling that $m = [\frac{n}{2}]$ and $k = n - m$, one has 
\begin{align*}
\sup_{x \in \bb P(V)}  J_n(x, y) 
& \leq  \frac{c}{n} (b-a+ 1) (b + a +1)   +  \frac{c \ee}{n} (b-a+ 1) (b + a +1)  
   +  c_{\ee}  \frac{\log^{1-\ee} n}{n}  (b - a + 1)   \\
& \leq   c_{\ee}  \frac{\log^{1-\ee} n}{n}  (b - a + 1) (b + a +1),  
\end{align*}
from which we get
\begin{align}
 I_{n,1} 
 \leq  \int_{|y| \leq 2 \sqrt{n} \log^{1-\ee} n } \sup_{x \in \bb P(V)}  J_n(x, y)  dy   
\leq  c_{\ee}  \frac{ \log^{2 - 2\ee} n }{\sqrt{n}}  (b - a + 1) (b + a +1).   \label{Bound_y_nlogn_01}
\end{align}
It was shown in the proof of Lemma \ref{Lem_Integral_LLT} (cf.\ \eqref{Pf_dual_Bound_I2} and \eqref{Pf_dual_Bound_I3}) that 
there exist constants $c, c' >0$ such that
for any $-  \sqrt{n} \log^{1-\ee} n \leq a < b \leq   \sqrt{n} \log^{1-\ee} n$, 
\begin{align}
I_{n,2} 
\leq  \int_{|y| > 2 \sqrt{n} \log^{1-\ee} n } 
  \sup_{x \in \bb P(V)}    \mathbb{Q}_s^x \left(  y - S_n \in [a, b] \right) dy 
\leq  c e^{- c' \log^{2-2\ee} n}.   \label{Bound_y_nlogn_02}
\end{align}
Putting together \eqref{Bound_y_nlogn_01} and \eqref{Bound_y_nlogn_02}, 
we conclude the proof of the lemma. 
\end{proof}

The convergence rate in Lemma \ref{Lem_Integral_LLTnew} can be improved  by repeating the same proof. 
Recall that $I_n$ is given by \eqref{DefInIntegral}.  

\begin{lemma}\label{Lem_Integral_LLTnew_new2}
Assume conditions \ref{Condi-density}, \ref{Condi-density-invariant} 
and $\kappa'(s) = 0$ for some $s \in  I_{\mu}$. 
Then, for any $\ee \in [0, \frac{1}{2})$, there exists a constant $c_{\ee} >0$ such that for any $n \geq 2$ 
and $- \sqrt{n} \log^{1-\ee} n \leq a < b \leq \sqrt{n} \log^{1-\ee} n$,  
\begin{align*}
 I_n \leq c_{\ee} \frac{ \log^{1- \ee} n }{\sqrt{n}} (b - a + 1) (b + a +1). 
\end{align*}
\end{lemma}

\begin{proof}
We repeat the same proof as in Lemma \ref{Lem_Integral_LLTnew}.
The only difference is that in \eqref{Bound_h_2ee_009},
we apply Lemma \ref{Lem_Integral_LLTnew}
instead of Lemma \ref{Lem_Integral_LLT}  to get that
there exists a constant $c >0$ such that for all $m \geq 2$ 
and $-  \sqrt{m} \log^{1-\ee'} m \leq  a < b \leq \sqrt{m} \log^{1-\ee'} m$ with $\ee' \in (0, \ee)$,
\begin{align}\label{Bound_h_2ee_009-Lem3}
\| h_{\ee} \|_{\mathscr H}
& =   \int_{\bb R} \sup_{x' \in \bb P(V)}  \mathbb{Q}_s^{x'} \left( y' - S_m \in [a - \ee, b + \ee], \, \tau_{y'} > m  \right) dy'  \nonumber\\ 
& \leq  c_{\ee} \frac{\log^{2 - 2\ee'} m}{\sqrt{m}}  (b - a +  1) (b + a + 1). 
\end{align}
Substituting \eqref{Bound_h_2ee_008}, \eqref{Bound_h_2ee_008-second} and \eqref{Bound_h_2ee_009-Lem3} 
into \eqref{LLT_Upper_aa-xxxx},
and taking into account that $\ee' \in (0, \ee)$, $m = [\frac{n}{2}]$ and $k = n - m$, one has 
for any $n \geq 2$ and $- \sqrt{n} \log^{1-\ee} n \leq a < b \leq \sqrt{n} \log^{1-\ee} n$,  
\begin{align*}
\sup_{x \in \bb P(V)}  J_n(x) 
& \leq  \frac{c}{n} (b-a+  1) (b + a +1)   +  \frac{c \ee}{n} (b-a+ 1) (b + a +1)  
   +  c_{\ee}  \frac{\log^{2 -2 \ee'} n}{n^{3/2}}  (b - a + 1)   \notag\\
& \leq     \frac{ c }{n}  (b - a + 1) (b + a +1). 
\end{align*}
Therefore, recalling that $I_{n,1}$ is defined by \eqref{Decom-In-Lem2}, and using the first inequality in \eqref{Bound_y_nlogn_01}, 
we get that for any $n \geq 2$ and $- \sqrt{n} \log^{1-\ee} n \leq a < b \leq \sqrt{n} \log^{1-\ee} n$,  
\begin{align*}
 I_{n,1} 
 \leq  \int_{|y| \leq 2 \sqrt{n} \log^{1-\ee} n } \sup_{x \in \bb P(V)}  J_n(x)   dy   
\leq  c  \frac{ \log^{1 - \ee} n }{\sqrt{n}}  (b - a + 1) (b + a +1).  
\end{align*}
Combining this with \eqref{Decom-In-Lem2} and \eqref{Bound_y_nlogn_02} ends the proof of the lemma. 
\end{proof}

Using Lemma \ref{Lem_Integral_LLTnew_new2}, we can now establish an upper bound for the conditioned local probability 
$\mathbb{Q}_s^x  ( y - S_{n}\in [a,b],  \,  \tau_y > n )$. 
Note that this bound does not depend on the starting point $y \geq 0$ and the convergence rate is $O(\frac{1}{n})$. 

\begin{lemma} \label{lemma 2n}
Assume conditions \ref{Condi-density}, \ref{Condi-density-invariant} 
and $\kappa'(s) = 0$ for some $s \in  I_{\mu}$. 
Then, there exists a constant $c >0$ such that for any $x \in \bb P(V)$, $y \geq 0$, 
 $n \geq 1$ and $0 \leq  a < b \leq  \sqrt{n} \log n$,
\begin{align*}
\mathbb{Q}_s^x  \left( y - S_{n}\in [a,b],  \,  \tau_y > n \right)  
 \leq  \frac{c}{n} (b-a + 1) \left( b + a+ 1 \right).  
\end{align*}
\end{lemma}

\begin{proof} 
Let $k= [n/2]$ and $m=n-k.$ 
It is shown in \eqref{Qsx-bound-00a} that 
\begin{align*} 
 \mathbb{Q}_s^x \left(  y - S_n \in [a, b] , \tau_y >n\right)  
 \leq  \int_{\bb P(V) \times \bb R_+} h (x', y')   
\mathbb{Q}_s^x \left( X_k \in dx', y - S_k \in dy'\right) =: J_n(x,y),
\end{align*}
where $h$ is defined by \eqref{Def-h-xy-00a}. 
It is shown in \eqref{LLT_Upper_aa-xxxx} and \eqref{Pf_Inequa_dd02} that 
\begin{align*} 
\sup_{x \in \bb P(V)}  J_n(x,y)
& \leq \frac{1}{\sigma_s \sqrt{k}}  \int_{\bb P(V) \times \bb R}   h_{\ee} \left( x', y'\right) 
  \phi \left( \frac{y-y'}{\sigma_s \sqrt{k}}\right) dy' \pi_s(dx')   
 +  \frac{c\ee}{\sqrt{k}} \| h_{\ee} \|_{\pi_s \otimes \Leb}  
     +  \frac{c_{\ee}}{ k }  \| h_{\ee} \|_{\mathscr H}   \notag\\
&  =:  J_{n,1}(y) + J_{n,2} + J_{n,3},   
\end{align*}
where 
\begin{align}\label{Pf_Inequa_dd02bis}
h_{\ee}(x', y') 
& = \mathbb{Q}_s^{x'} \left( y' - S_{m} \in [a - \ee, b + \ee], \tau_{y' + \ee} > m \right). 
\end{align}
By \eqref{Bound_h_2ee_008} and \eqref{Bound_h_2ee_008-second}, we have 
\begin{align}\label{Bound-h-Jn1-2}
& J_{n,1}(y) \leq  \frac{c}{n} (b-a + 1) (b + a +1),  \quad  J_{n,2} \leq  \frac{c\ee}{n} (b-a + \ee) (b + a +1). 
\end{align}
For $J_{n,3}$, 
using \eqref{Pf_Inequa_dd02bis} and Lemma \ref{Lem_Integral_LLTnew_new2}, 
we get that for any $\ee \in [0, \frac{1}{2})$, 
there exists a constant $c_{\ee} >0$ such that for all $m \geq 1$ 
and $0 \leq a < b \leq \sqrt{n} \log^{1-\ee} n$,
\begin{align}\label{Bound-h-Jn3}
J_{n,3}
 =  \frac{c_{\ee}}{ k }
 \int_{\bb R} \sup_{x' \in \bb P(V)}  \mathbb{Q}_s^{x'} \left( y' - S_m \in [a - \ee, b + \ee], \, \tau_{y'} > m  \right) dy'  
 \leq  c_{\ee} \frac{\log^{1-\ee} n}{n^{3/2}}  (b - a +  \ee) (b+a +1). 
\end{align}
Putting together \eqref{Bound-h-Jn1-2} and \eqref{Bound-h-Jn3} concludes the proof of the lemma. 
\end{proof}

From Lemma \ref{lemma 2n}, 
we further investigate that the convergence rate for the conditioned local probability 
$\mathbb{Q}_s^x  ( y - S_{n}\in [a,b],  \,  \tau_y > n )$
can be improved to $O(\frac{1}{n^{3/2}})$, whereas the upper bound depends on the starting point $y$.

\begin{lemma} \label{lemma 3n}
Assume conditions \ref{Condi-density}, \ref{Condi-density-invariant} 
and $\kappa'(s) = 0$ for some $s \in  I_{\mu}$. 
Then, there exists a constant $c >0$ such that for any $x \in \bb P(V)$, $y \geq 0$, 
$n \geq 1$ and $0 \leq a < b \leq \sqrt{n} \log n$,
\begin{align*} 
 \mathbb{Q}_s^x \left( y - S_{n}\in [a,b],  \tau _{y}>n\right)   
& \leq  \frac{c}{n^{3/2}} (1+y) (b-a + 1) ( b+a+ 1). 
\end{align*}
\end{lemma}

\begin{proof}
As in \eqref{Qsx-bound-00a},  we use  the Markov property to get that for any
$m = [\frac{n}{2}]$ and $k = n - m$, 
\begin{align} \label{3n-lemma 002}
 \mathbb{Q}_s^x \left(  y - S_n \in [a, b] , \tau_y >n\right) 
 =  \int_{\bb P(V) \times \bb R_+} \mathbb{Q}_s^{x'} \left(  y' - S_m  \in [a, b] , \tau_{y'} >m\right)  
 \mathbb{Q}_s^x \left( X_k \in dx', y - S_k \in dy' , \tau_y >k\right). 
\end{align}
By Lemma \ref{lemma 2n}, there exists a constant $c >0$ such that for any $x' \in \bb P(V)$, $y' \geq 0$, 
$m \geq 1$ and $0 \leq a < b \leq  \sqrt{n} \log n$,
\begin{align}
\mathbb{Q}_s^{x'} \left(  y' - S_m  \in [a, b] , \tau_{y'} >m\right) 
 \leq  \frac{c}{n} (b-a + 1) ( b+a+ 1),  \label{inv003_bb}
\end{align}
where we used the fact that $m = [\frac{n}{2}]$ 
and the inequality in Lemma \ref{lemma 2n} still holds when $b$ is replaced by its constant multiple. 
By Theorem \ref{Th_Asymptotic_ExitTime}, there exist constants $c, c' >0$ such that for any $x \in \bb P(V)$ and $y\geq 0$, 
\begin{align}
\mathbb{Q}_s^x \left( \tau_{y} > k\right) 
  \leq  c \frac{1+ y}{\sqrt{k}} = c' \frac{1+ y}{\sqrt{n}}. \label{3N_007}
\end{align}
Combining \eqref{3n-lemma 002}, \eqref{inv003_bb} and  \eqref{3N_007} concludes the proof of the lemma. 
\end{proof}

The following assertion is a combination of Lemmas \ref{lemma 2n} and  \ref{lemma 3n}. 

\begin{lemma}\label{Lem_CondiLLT_cc}
Assume conditions \ref{Condi-density}, \ref{Condi-density-invariant} 
and $\kappa'(s) = 0$ for some $s \in  I_{\mu}$. 
Then, there exists a constant $c >0$ such that for any $x \in \bb P(V)$, $y \geq 0$, 
$n \geq 1$ and $0 \leq  a < b \leq \sqrt{n} \log n$,
\begin{align*}
 \mathbb{Q}_s^x  \left( y - S_{n}\in [a,b], \,  \tau _{y}>n\right)  
 \leq  c \frac{ (1+y) \wedge n^{1/2} }{n^{3/2}} (b-a + 1) ( b+a+ 1). 
\end{align*}
\end{lemma}

%%%%%%%%%%%%%%%%%%%%%%%%%%%%%%%%%%%%%%%%%%%%%%%%%
%%%%%%%%%%%%%%%%%%%%%%%%%%%%%%%%%%%%%%%%%%%%%%%%%
\subsection{Effective conditioned local limit theorems}\label{Sect-CLLT}

A central point to establish Theorems \ref{Thm_In_Pro} and \ref{Thm_In_Pro_min}
is a conditioned local limit theorem for random walks on the general linear group $\bb G$.
For sums of i.i.d.\  real-valued random variables, conditioned local limit theorems 
has been well-known in the literature based on the Wiener-Hopf factorization and the duality argument. 
For Markov chains, the obtention of such exact asymptotics turns out to be much more complicated 
and has been recently done in \cite{GLL20} in the particular case when the Markov chain has a finite state space. 
For random walks on groups, the problem is still open. 
In this section we shall establish such kind of results under the additional assumption that 
the matrix law $\mu$ admits a density with respect to the Haar measure on $\mathbb G$.  

To state the effective conditioned local limit theorem, we need to introduce the Rayleigh density function
$\phi^+(t) =  t e^{-t^2/2}$,  $t \in \bb R_+$.  
Below we assume that the function $h$ is supported on $\bb P(V) \times [0, \infty)$,
$h_{\ee}$ on $\bb P(V) \times [-\ee, \infty)$
and $h_{-\ee}$ on $\bb P(V) \times [\ee, \infty)$. 

\begin{theorem} \label{t-A 001}
Assume conditions \ref{Condi-density}, \ref{Condi-density-invariant} 
and $\kappa'(s) = 0$ for some $s \in  I_{\mu}$. 
Let $(\alpha_n)_{n \geq 1}$ be any sequence of positive numbers satisfying $\lim_{n \to \infty} \alpha_n = 0$. 
Then, there exist constants $c, c_{\ee} >0$ 
such that for any $\ee \in (0,\frac{1}{8})$, $x \in \bb P(V)$, $y \in [0, \alpha_n \sqrt{n}]$, $n \geq 1$, 
$h \in \mathscr H$ and $h_{\ee} \in \mathscr H$ satisfying $h \leq_{\ee} h_{\ee}$, 
\begin{align} \label{eqt-A 002_Upper}
 \bb E_{\bb Q_s^x} \big[ h(X_n, y - S_n); \,  \tau_{y} > n  \big]   
& \leq  \frac{2V_s(x, y)}{ n \sigma_s^2 \sqrt{2\pi } }  
 \int_{\bb P(V) \times \bb R_+}  h_{\ee} (x,t)  \phi^+\left(\frac{t}{\sigma_s \sqrt{n}} \right) \pi_s(dx)  dt     \notag\\
& \quad  +  \left( c \ee^{1/4} + c_{\ee} \alpha_n + c_{\ee} n^{-\ee} \right)  \frac{ 1 + y }{n}    \| h_{\ee} \|_{\pi_s \otimes \Leb}
    +   \frac{c_{\ee} (1 + y)}{ n^{3/2} }  \| h_{\ee} \|_{\mathscr H}  
\end{align}
and any $h, h_{-\ee}, h_{\ee} \in \mathscr H$ satisfying $h_{-\ee} \leq_{\ee}  h \leq_{\ee} h_{\ee}$, 
\begin{align} \label{eqt-A 002}
 \bb E_{\bb Q_s^x} \big[ h(X_n, y - S_n); \,  \tau_{y} > n  \big]     
&  \geq  \frac{2V_s(x, y)}{ n \sigma_s^2 \sqrt{2\pi } }  
 \int_{\bb P(V) \times \bb R_+}  h_{-\ee} (x,t)  \phi^+\left(\frac{t}{\sigma_s \sqrt{n}} \right) \pi_s(dx) dt      \notag\\
& \quad  -  \left( c \ee^{1/12} + c_{\ee} \alpha_n +  c_{\ee} n^{-\ee} \right)  
\frac{  1 + y }{n}    \| h_{\ee} \|_{\pi_s \otimes \Leb}
    -   \frac{c_{\ee} (1 + y)}{ n^{3/2} }  \| h_{\ee} \|_{\mathscr H}   \notag\\
& \quad    -    \frac{c_{\ee} (1 + y) }{ n }  \sup_{x \in \bb P(V)} \sup_{y \in \bb R  } h_{\ee} (x, y). 
\end{align}
\end{theorem}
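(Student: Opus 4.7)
The plan is to combine the non-asymptotic local limit theorem (Theorem \ref{Theorem Loc bounds}) with the conditioned integral limit theorem (Theorem \ref{Theor-IntegrLimTh}) via a time-splitting argument, extracting the $V_s(x, y)$ factor from the conditioned behaviour of the bulk of the walk. First I would smooth the target $h$ using the density $\rho_{\ee^2}$ of Lemma \ref{smoothing-lemma-001}, dominating $h$ above by $(1+c\ee)\, h_\ee * \rho_{\ee^2}$ for the upper bound, and below by $h_{-\ee} * \rho_{\ee^2}$ minus a boundary correction for the lower bound. This replaces $h$ by a function whose partial Fourier transform is compactly supported, at the cost of $\ee$-losses proportional to $\| h_\ee \|_{\pi_s \otimes \Leb}$.

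Next I would split $n = k + m$ with $k$ of order $n$ and $m = m_n \to \infty$, $m_n/n \to 0$. By the Markov property of $(X_n, S_n)$ at time $k$,
\begin{align*}
\bb E_{\bb Q_s^x}\big[h(X_n, y - S_n); \tau_y > n\big]
= \bb E_{\bb Q_s^x}\big[ G(X_k, y - S_k); \tau_y > k\big],
\end{align*}
where $G(x', y') = \bb E_{\bb Q_s^{x'}}\big[h(X_m, y' - S_m);\, \tau_{y'} > m\big]$. Since the typical value of $y - S_k$ conditioned on $\tau_y > k$ is of order $\sigma_s\sqrt{k} \sim \sigma_s\sqrt{n}$, dominating the scale $\sigma_s\sqrt{m}$ of the inner walk, the inner positivity constraint is almost always satisfied. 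Using Theorem \ref{Th_Asymptotic_ExitTime} to estimate the exit probability for small $y'/(\sigma_s\sqrt{m})$, I would approximate $G(x', y')$ by $\widetilde G(x', y') = \bb E_{\bb Q_s^{x'}}\big[h(X_m, y' - S_m)\big]$ and apply Theorem \ref{Theorem Loc bounds} to obtain
\begin{align*}
\widetilde G(x', y') \approx \frac{1}{\sigma_s \sqrt{m}} \int_{\bb P(V) \times \bb R} h(z, t)\, \phi\!\left(\tfrac{y' - t}{\sigma_s \sqrt{m}}\right) \pi_s(dz)\, dt,
\end{align*}
with errors controlled by $\| h_\ee \|_{\pi_s \otimes \Leb}$ and $\| h_\ee \|_{\mathscr H}$.

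Substituting this Gaussian kernel back into the outer expectation and applying Theorem \ref{Theor-IntegrLimTh} to evaluate the integral against the distribution of $(y - S_k)$ restricted to $\{\tau_y > k\}$ (whose scaled CDF is $\tfrac{2V_s(x,y)}{\sigma_s\sqrt{2\pi k}}\Phi^+(\cdot)$, yielding the Rayleigh density $\tfrac{2V_s(x,y)}{\sigma_s^2 k\sqrt{2\pi}}\phi^+(\cdot/(\sigma_s\sqrt{k}))$), I would obtain the main term: for $m \ll k$, the convolution of the narrow Gaussian at scale $\sigma_s\sqrt{m}$ against the Rayleigh density at the larger scale $\sigma_s\sqrt{k}$ is approximately $\phi^+(t/(\sigma_s\sqrt{k}))$, and $k = n - m \approx n$ up to a negligible $O(m/n)$ correction, which reproduces the desired factor $\tfrac{2V_s(x,y)}{\sigma_s^2 n\sqrt{2\pi}}\phi^+(t/(\sigma_s\sqrt{n}))$. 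The accumulated errors from the two limit theorems produce the $\alpha_n + n^{-\ee}$ terms, the Fourier-tail of Theorem \ref{Theorem Loc bounds} gives the $\| h_\ee \|_{\mathscr H}/n^{3/2}$ contribution, and the smoothing yields the $c\ee^{1/4}$ loss in the upper bound. The lower bound is more delicate: the smoothed function $h_{-\ee}$ is supported on $\bb P(V) \times [\ee, \infty)$, so the correction term $\int_{|v|>\ee} h_{-\ee}(\cdot - v)\rho_{\ee^2}(v)\,dv$ from Lemma \ref{smoothing-lemma-001} must be estimated via the conditioned local bound in Lemma \ref{Lem_CondiLLT_cc}, producing the additional $\sup|h_\ee|$ contribution and degrading the smoothing rate to $\ee^{1/12}$. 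The main obstacle is balancing the scales $k$ and $m$ so that the inner positivity constraint drops out cleanly while the outer integral limit theorem remains effective, together with controlling the approximation errors uniformly in $y \in [0, \alpha_n\sqrt{n}]$; the asymmetry between the two smoothing rates is driven by the handling of near-boundary paths in the lower bound, where $\phi^+$ vanishes linearly and the Gaussian approximation from Theorem \ref{Theorem Loc bounds} is most strained.
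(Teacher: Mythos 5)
Your high-level plan --- split at time $k = n-m$, smooth $h$, apply the non-asymptotic local limit theorem (Theorem \ref{Theorem Loc bounds}) to the inner $m$-step block and the conditioned integral theorem (Theorem \ref{Theor-IntegrLimTh}) to the outer conditioned block, and use the Gaussian--Rayleigh convolution estimate (Lemma \ref{t-Aux lemma}) to recognise $\phi^+$ --- matches the paper's architecture for the upper bound, and you correctly identify how the factor $2V_s(x,y)\phi^+$ emerges. However, there are two genuine gaps.

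First, the scaling of the inner block is wrong. You take $m = m_n$ with $m_n \to \infty$ and $m_n/n \to 0$, whereas the paper takes $m = [\sqrt{\ee}\,n]$, a fixed small fraction of $n$ depending on $\ee$ but not on $n$. With your choice, the error $\frac{c_\ee}{m}\|h_\ee\|_{\scr H}$ coming from the inner application of Theorem \ref{Theorem Loc bounds}, after being multiplied by the outer constraint $\bb Q_s^x(\tau_y>k) \le c(1+y)/\sqrt{k}$ from Theorem \ref{Th_Asymptotic_ExitTime}, becomes $\frac{c_\ee(1+y)}{m\sqrt{n}}\|h_\ee\|_{\scr H}$, which is strictly worse than the target $\frac{c_\ee(1+y)}{n^{3/2}}\|h_\ee\|_{\scr H}$ whenever $m = o(n)$. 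The $\ee^{1/4}$ loss and the $n^{-3/2}$ error are achieved simultaneously precisely because $m/n$ is a fixed multiple of $\sqrt{\ee}$.

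Second --- and this is the larger gap --- your treatment of the lower bound is missing the main argument. After splitting at time $k$, the term you must control is $I_{n,2} = \int \bb E_{\bb Q_s^{x'}}\big[h(X_m, y'-S_m);\, \tau_{y'} \le m\big]\, \bb Q_s^x\big(X_k \in dx', \, y-S_k \in dy',\, \tau_y > k\big)$, i.e.\ the contribution of trajectories that stay positive up to time $k$ but cross the boundary during the last $m$ steps. You attribute the extra $\sup_{x,y}h_\ee$ term and the degradation from $\ee^{1/4}$ to $\ee^{1/12}$ to the smoothing correction $\int_{|v|>\ee}h_{-\ee}(\cdot-v)\rho_{\ee^2}(v)\,dv$, but that correction is already absorbed into Theorem \ref{Theorem Loc bounds} and does not produce these new terms. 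In the paper they arise from estimating $I_{n,2}$: the integration in $y'$ is split at the threshold $\ee^{1/6}\sqrt{n}$; the small-$y'$ part is handled because the Rayleigh density vanishes linearly near the origin; and the large-$y'$ part requires applying the already-established upper bound \eqref{eqt-A 002_Upper} to the indicator-restricted inner expectation, combined with the duality formula (Lemma \ref{Lem_Duality}), the martingale approximation (Lemma \ref{Martingale approx}), and the Fuk/Haeusler martingale inequality (Lemma \ref{Lem_Fuk}) to show that a walk starting far above the line is very unlikely to dip below and still end near the level $y'$. None of this machinery appears in your sketch, and without it the lower bound at the stated precision cannot be obtained.
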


Let $\phi(x) = \frac{1}{\sqrt{2 \pi} } e^{- \frac{x^2}{2}}$, $x \in \bb R$ be the standard normal density function. 
Let 
\begin{align*}
\phi_{v} (x) = \frac{1}{\sqrt{2 \pi v} } e^{- \frac{x^2}{2 v}},  \quad 
\phi^+_{v}(x)=\frac{x}{v} e^{-\frac{x^2}{2 v}} \mathds 1_{\mathbb R_+} (x), 
 \quad x \in \bb R,  
\end{align*}
be the normal density of variance $v > 0$ and the Rayleigh density with scale parameter $\sqrt{v}$, respectively. 
Clearly we have $\phi = \phi_1$ and $\phi^+ = \phi^+_{1}$. 
The following lemma (cf.\  \cite[Lemma 3.3]{GX21}), which will be used in the proof of Theorem \ref{t-A 001}, 
shows that when $v$ is small the convolution $\phi_{v} * \phi^+_{1-v}$ 
behaves like the Rayleigh density. 
\begin{lemma}[\cite{GX21}] \label{t-Aux lemma}
For any $v \in (0,1/2]$ and $t\in \bb R$, it holds that
\begin{align*} %\label{}
 - |t| e^{- \frac{t^2}{2}} \mathds 1_{\{t < 0\}} 
\leq  \phi_{v} * \phi^+_{1- v}(t) - \sqrt{1-v} \phi^+(t)
\leq   \sqrt{v}  e^{ -\frac{t^2}{2v} } +  |t| e^{- \frac{t^2}{2}} \mathds 1_{\{t < 0\}}. 
\end{align*}
\end{lemma}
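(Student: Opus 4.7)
The plan is to compute the convolution $\phi_v * \phi^+_{1-v}(t)$ in closed form by completing the square in the Gaussian integral, and then to read off both bounds directly from the resulting identity. Starting from
\begin{align*}
\phi_v * \phi^+_{1-v}(t) = \int_0^\infty \frac{1}{\sqrt{2\pi v}}\, e^{-(t-s)^2/(2v)} \cdot \frac{s}{1-v}\, e^{-s^2/(2(1-v))}\, ds,
\end{align*}
the exponent rewrites as $-(s-(1-v)t)^2/(2v(1-v)) - t^2/2$. Performing the change of variable $u = (s-(1-v)t)/\sqrt{v(1-v)}$ and using the standard primitives $\int_{x_0}^\infty e^{-u^2/2}\, du = \sqrt{2\pi}\,\Phi(-x_0)$ and $\int_{x_0}^\infty u\, e^{-u^2/2}\, du = e^{-x_0^2/2}$, together with the algebraic identity $t^2/2 + t^2(1-v)/(2v) = t^2/(2v)$, I obtain the exact closed-form identity
\begin{align*}
\phi_v * \phi^+_{1-v}(t) = \sqrt{1-v}\, t\, e^{-t^2/2}\, \Phi(\tau) + \frac{\sqrt{v}}{\sqrt{2\pi}}\, e^{-t^2/(2v)},\qquad \tau := t\sqrt{(1-v)/v},
\end{align*}
where $\Phi$ denotes the standard normal distribution function.

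For the upper bound, in the case $t \geq 0$ the crude estimates $\Phi(\tau) \leq 1$ and $1/\sqrt{2\pi} \leq 1$ immediately yield $\phi_v * \phi^+_{1-v}(t) \leq \sqrt{1-v}\, \phi^+(t) + \sqrt{v}\, e^{-t^2/(2v)}$. For $t < 0$, the first summand in the closed form is non-positive, hence $\phi_v * \phi^+_{1-v}(t) \leq \frac{\sqrt{v}}{\sqrt{2\pi}}\, e^{-t^2/(2v)}$, which is dominated by the claimed right-hand side.

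For the lower bound, the case $t < 0$ is immediate because $\phi^+(t) = 0$ and the convolution of non-negative densities is non-negative. For $t \geq 0$, writing $\Phi(\tau) = 1 - (1 - \Phi(\tau))$ reduces the desired inequality $\sqrt{1-v}\,\phi^+(t) \leq \phi_v * \phi^+_{1-v}(t)$ to
\begin{align*}
\sqrt{1-v}\, t\, e^{-t^2/2}\, (1-\Phi(\tau)) \leq \frac{\sqrt{v}}{\sqrt{2\pi}}\, e^{-t^2/(2v)}.
\end{align*}
This is precisely Mills' ratio inequality $1-\Phi(\tau) \leq \phi(\tau)/\tau$ valid for $\tau > 0$: substituting $\tau = t\sqrt{(1-v)/v}$ on the right-hand side and simplifying again via $t^2/2 + \tau^2/2 = t^2/(2v)$ produces exactly the required estimate.

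The argument is essentially a direct Gaussian computation, so the only difficulty is keeping the algebra transparent; there is no conceptual obstacle. I note in passing that the restriction $v \in (0, 1/2]$ plays no role in the proof, and that the term $|t|\, e^{-t^2/2}\, \mathds 1_{\{t<0\}}$ appearing on the right-hand side of the upper bound is not needed, since the closed-form calculation actually yields the stronger estimate $\phi_v * \phi^+_{1-v}(t) \leq \frac{\sqrt{v}}{\sqrt{2\pi}}\, e^{-t^2/(2v)}$ for $t < 0$; its inclusion in the statement is presumably a matter of convenience for later applications.
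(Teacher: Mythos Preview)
Your proof is correct and complete. The paper itself does not prove this lemma but simply cites it from \cite{GX21}, so there is no ``paper's proof'' to compare against; your direct Gaussian computation leading to the closed form
\[
\phi_v * \phi^+_{1-v}(t) = \sqrt{1-v}\, t\, e^{-t^2/2}\, \Phi\bigl(t\sqrt{(1-v)/v}\bigr) + \frac{\sqrt{v}}{\sqrt{2\pi}}\, e^{-t^2/(2v)}
\]
followed by $\Phi\leq 1$ for the upper bound and Mills' ratio for the lower bound is clean and self-contained. Your side remarks are also accurate: the restriction $v\in(0,1/2]$ is never used, and the extra term $|t|e^{-t^2/2}\mathds 1_{\{t<0\}}$ in the upper bound is indeed redundant given your closed form.
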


To establish Theorem \ref{t-A 001},  we also need the following inequality of Haeusler \cite[Lemma 1]{Hae84}, 
which is a generalisation of Fuk's inequality for martingales. 

\begin{lemma}[\cite{Hae84}]\label{Lem_Fuk}
Let $\xi_1, \ldots, \xi_n$ be a martingale difference sequence with respect to the non-decreasing 
$\sigma$-fields $\mathscr F_0, \mathscr F_1, \ldots, \mathscr F_n$.
Then, for any $u, v, w > 0$ and $n \geq 1$,
\begin{align*}
\bb P \left(  \max_{1 \leq k \leq n} \left| \sum_{i=1}^k \xi_i \right| \geq u \right)
\leq  \sum_{i=1}^n \bb P \left( |\xi_i|  > v \right)
  + 2 \bb P \left(  \sum_{i=1}^n \bb E \left(  \xi_i^2  \Big|  \mathscr F_{i-1} \right) > w \right)   
  + 2 e^{ \frac{u}{v} ( 1 - \log \frac{uv}{w} ) }.  
\end{align*}
\end{lemma}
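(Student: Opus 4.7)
The plan is to follow the standard truncation-plus-exponential-supermartingale scheme that underlies Fuk's original inequality, adapted to the martingale setting along the lines of Haeusler's original proof.

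First I would truncate each increment by writing $\xi_i = \xi_i' + \xi_i''$, where $\xi_i' = \xi_i \mathds 1_{\{|\xi_i| \leq v\}}$, and introduce the centered bounded martingale differences $\tilde\xi_i = \xi_i' - \bb E(\xi_i' | \mathscr F_{i-1})$, which satisfy $|\tilde\xi_i| \leq 2v$ and $\bb E(\tilde\xi_i^2 | \mathscr F_{i-1}) \leq \bb E(\xi_i^2 | \mathscr F_{i-1})$. A union bound over $\bigcup_i \{|\xi_i| > v\}$ immediately produces the first term on the right-hand side, and on the complementary event the partial sums $\sum_{i \leq k} \xi_i$ coincide with $\tilde S_k := \sum_{i \leq k} \tilde\xi_i$ up to the centering drift $-\sum_{i \leq k} \bb E(\xi_i'' | \mathscr F_{i-1})$, which is controlled by $V_n/v$ (with $V_k := \sum_{i \leq k} \bb E(\xi_i^2 | \mathscr F_{i-1})$) and therefore is absorbed into $u$ via a mild reduction.

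Next I would apply Doob's maximal inequality to the exponential supermartingale $M_k = \exp(\lambda \tilde S_k - \psi(\lambda v) V_k)$, where $\psi(t) = (e^{2t} - 1 - 2t)/(2v^2)$; the supermartingale property follows from the elementary bound $\bb E(e^{\lambda \tilde\xi_i} | \mathscr F_{i-1}) \leq \exp(\psi(\lambda v) \bb E(\xi_i^2 | \mathscr F_{i-1}))$ valid for $|\tilde\xi_i| \leq 2v$. Stopping at $\tau = \inf\{k : V_{k+1} > w\}$ confines the compensator to the range $V_{k \wedge \tau} \leq w$ at the cost of the event $\{V_n > w\}$, which feeds the second probability on the right-hand side. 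Doob's inequality then yields $\bb P(\max_k \tilde S_{k \wedge \tau} \geq u) \leq \exp(-\lambda u + \psi(\lambda v) w)$, and the symmetric argument applied to $-\tilde\xi_i$ produces the factor $2$ both in front of the exponential and in front of the quadratic-variation probability.

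Finally, I would optimize by taking $\lambda = \frac{1}{v}\log\frac{uv}{w}$ (meaningful in the nontrivial regime $uv > w$; the complementary regime makes the inequality trivial since the right-hand side already exceeds $1$). With this choice the exponent reduces, after retaining the dominant contribution, to $-\frac{u}{v}\bigl(\log\frac{uv}{w} - 1\bigr)$, yielding exactly the Fuk form $2\exp\bigl(\frac{u}{v}(1 - \log\frac{uv}{w})\bigr)$. The main obstacle is the delicate calibration of constants: the truncation level $v$ used to define $\tilde\xi_i$, the form of the coefficient $\psi$ in the exponential bound, and the control of the centering drift must be coordinated so that the compensator contribution is absorbed into $w$ and the drift correction into $u$ without degrading either of the two probability terms; this is precisely what produces the clean Fuk exponent $\frac{u}{v}(1 - \log\frac{uv}{w})$ rather than a weaker Bennett- or Bernstein-type form.
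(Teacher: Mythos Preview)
The paper does not prove this lemma; it is quoted verbatim from Haeusler \cite{Hae84} (Lemma~1 there) and used as a black box in the proof of Theorem~\ref{t-A 001}. Your sketch follows the standard truncation/exponential-supermartingale scheme that underlies Haeusler's original argument, so in spirit you are reproducing the cited proof rather than offering an alternative.

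One technical point deserves care. Your re-centering step produces a drift $\sum_{i\leq k}\bb E(\xi_i'\mid\mathscr F_{i-1})$ that you bound by $V_n/v\leq w/v$ and then ``absorb into $u$''. But in the regime $uv>w$ this drift can be as large as $u$ itself, and subtracting it from $u$ before optimising over $\lambda$ does not obviously yield the clean exponent $\tfrac{u}{v}(1-\log\tfrac{uv}{w})$ with no extra multiplicative constant. Haeusler's actual proof avoids this issue by not re-centering: he works with a stopping time $\sigma=\min\{k:|\xi_k|>v\ \text{or}\ V_k>w\}$ and applies the one-sided exponential bound $e^{\lambda x}\leq 1+\lambda x+x^2(e^{\lambda v}-1-\lambda v)/v^2$ (valid for all $x\leq v$, with no lower bound on $x$) directly to the stopped martingale, so the martingale-difference property is preserved without introducing a drift. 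Your route can be made to work, but the bookkeeping of constants is more delicate than you indicate; the stopping-time formulation is cleaner and is what produces the stated exponent without loss.
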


We first give a proof of the upper bound \eqref{eqt-A 002_Upper} of Theorem \ref{t-A 001}. 

\begin{proof}[Proof of \eqref{eqt-A 002_Upper}] 
It suffices to prove \eqref{eqt-A 002_Upper} for large enough $n>n_0(\ee)$, where $n_0(\ee)$ depends on $\ee$,
since otherwise the bound becomes trivial.
For any $\ee\in (0,\frac{1}{8})$, let $\delta = \sqrt{\ee}$, 
$m=\left[ \delta n \right]$ and $k = n-m.$ 
Then, for $n$ sufficiently large, we have 
$\frac{1}{2}\delta \leq \frac{\left[ \delta n\right]}{n}  \leq \frac{m}{k} \leq  \frac{\delta}{1-\delta}$.  
Using the Markov property of the pair $(X_n, S_n)$, 
we get that for any $n \geq 1$, $x \in \bb P(V)$ and $y \in \mathbb R_+$, 
\begin{align}\label{JJJJJ-1111-000}
 I_n(x, y)
:&=  \bb E_{\bb Q_s^x} \left[ h(X_n, y - S_n); \,  \tau_y > n  \right] \notag \\ 
&= \int_{\bb P(V) \times \bb R_+}  \bb E_{\bb Q_s^{x'}} \left[ h(X_m, y' - S_m); \,  \tau_{y'}>n  \right]  
     \bb Q_s^x \left(X_k \in dx', \, y - S_{k} \in dy',  \,  \tau_{y}>k\right)  \notag\\
& \leq  \int_{\bb P(V) \times \bb R_+} \bb E_{\bb Q_s^{x'}} \left[ h(X_m, y' - S_m)  \right] 
   \bb Q_s^x \left(X_k \in dx', \, y - S_{k} \in dy',  \,  \tau_{y}>k\right). 
\end{align}
By the local limit theorem (\eqref{LLT_Upper_aa} of Theorem \ref{Theorem Loc bounds}), 
for any $\ee \in (0, \frac{1}{8})$, there exist constants $c, c_{\ee} >0$ such that
for any $m \geq 1$, $x' \in \bb P(V)$ and $y' \in \bb R_+$, 
\begin{align} \label{JJJJJ-1111-001}
\bb E_{\bb Q_s^{x'}} \left[ h(X_m, y' - S_m)  \right] 
 \leq   H_m(y')
   + \frac{c\ee}{\sqrt{m}} \| h_{\ee} \|_{\pi_s \otimes \Leb}  
     +  \frac{c_{\ee}}{ m}  \| h_{\ee} \|_{\mathscr H}, 
\end{align}
where $h \leq_{\ee} h_{\ee}$ and 
\begin{align}\label{JJJ005}
H_m(y') =  \int_{\bb P(V) \times \bb R}  h_{\ee} \left( x, y\right) 
   \frac{1}{\sigma_s \sqrt{m}} \phi \left( \frac{y'-y}{\sigma_s \sqrt{m}}\right) \pi_s(dx) dy. 
\end{align}
Substituting \eqref{JJJJJ-1111-001} into \eqref{JJJJJ-1111-000},
 and using Theorem \ref{Th_Asymptotic_ExitTime}, we get that for any $x \in \bb P(V)$ and $y\in \bb R_+$,
\begin{align} \label{JJJ004}
   I_n(x, y)  
& \leq  J_n(x, y)   
  + \left( \frac{c\ee}{\sqrt{m}} \| h_{\ee} \|_{\pi_s \otimes \Leb}  
     +  \frac{c_{\ee}}{ m }  \| h_{\ee} \|_{\mathscr H} \right)
   \bb Q_s^x \left(\tau_{y} > k \right)  \notag\\
&  \leq  J_n(x, y)   
  + \left( \frac{c\ee}{\sqrt{mk}} \| h_{\ee} \|_{\pi_s \otimes \Leb}  
     +  \frac{c_{\ee}}{ m\sqrt{k} }  \| h_{\ee} \|_{\mathscr H} \right) (1 + y), 
\end{align}
where 
\begin{align}\label{Def-Jnxy-ab}
J_n(x, y)  =  
  \int_{\mathbb{R}_{+}}  H_m(y')  \bb Q_s^x \left(y - S_{k} \in dy', \, \tau_{y} > k \right). 
\end{align}
By a change of variable, for any $x \in \bb P(V)$ and $y\in \bb R_+$,
\begin{align*}
J_n(x, y)  
=  \int_{\mathbb{R}_{+}}  F_m(t)  
  \bb Q_s^x \left(\frac{y - S_{k}}{ \sigma_s \sqrt{k} } \in dt, \, \tau_{y} > k \right), 
\end{align*}
where $F_m(t) = H_m(\sigma_s \sqrt{k} t)$, $t \in \bb R$. 
Since the function $t \mapsto F_m(t)$ is differentiable on $\bb R$ and vanishes as $t \to \pm \infty$, 
using integration by parts, it follows that for any $x \in \bb P(V)$ and $y \in \bb R_+$,
\begin{align*} 
J_n(x, y)
& =  \int_{\mathbb{R}_{+}}  F'_m(t) \ \bb Q_s^x \left(\frac{y - S_{k}}{\sigma_s \sqrt{k}} > t, \tau_{y}>k\right) dt.
 \end{align*}
 It is easy to verify that, with $\beta_k : = \alpha_n/(1-\sqrt{\ee})$, we have
 $[0, \alpha_n \sqrt{n}] \subseteq [0, \beta_k \sqrt{k}].$ 
Applying the conditioned integral limit theorem 
(Theorem \ref{Theor-IntegrLimTh}), 
and using the fact that $\frac{1}{\sqrt{k}} (\beta_k + k^{-\ee}) \leq \frac{c}{ \sqrt{n} } (\alpha_n + n^{-\ee})$, 
we get that there exists a constant $c_{\ee} >0$ such that for any $x \in \bb P(V)$ and $y \in [0, \alpha_n \sqrt{n}]$,  
\begin{align} \label{ApplCondLT-002}
J_n(x, y)
 \leq  \frac{2V_s(x, y)}{\sigma_s \sqrt{2\pi k} }  \int_{\mathbb{R}_{+}}  F'_m(t) ( 1 - \Phi^+(t) ) dt  
 +  c_{\ee}  \frac{1 + y}{\sqrt{n}} (\alpha_n + n^{-\ee}) \int_{\mathbb{R}_{+}}  | F'_m(t) | dt.
\end{align}
Since $F_m(t) = H_m(\sigma_s \sqrt{k} t)$, by \eqref{JJJ005} and a change of variable, it holds that 
\begin{align}\label{Def-Fmt}
F_m(t) = \int_{\bb P(V) \times \bb R}   h_{\ee} \left( x, \sigma_s \sqrt{k} y\right) 
    \phi \Big( \frac{t-y}{ \sqrt{m/k}} \Big) \pi_s(dx)   \frac{dy}{ \sqrt{m/k} }. 
\end{align}
Again by a change of variable and Fubini's theorem, we get
\begin{align} \label{JJJ-20001}
 \int_{\mathbb{R}_{+}}  | F'_m(t) | dt  
& \leq  \int_{\mathbb{R}_{+}}  
 \left[ \int_{\bb P(V) \times \bb R}
 h_{\ee} \left(x, \sigma_s \sqrt{k} y \right)    
 \left| \phi' \Big( \frac{t - y}{\sqrt{m/k}} \Big) \right| \pi_s(dx)  \frac{dy}{\sqrt{m/k}}  \right] \frac{dt}{\sqrt{m/k}}  \notag\\ 
& =   \int_{\mathbb{R}_{+}}  
 \left[ \int_{\bb P(V) \times \bb R}  h_{\ee} (x, \sigma_s \sqrt{m} y')  \left| \phi'\left( t'-y' \right) \right|  \pi_s(dx) dy'  \right]  dt'  \notag\\ 
& \leq c  \int_{\bb P(V) \times \bb R}  h_{\ee}(x, \sigma_s \sqrt{m} y')  \pi_s(dx) dy'   \notag \\ 
&  =  \frac{c}{\sqrt{m}} \int_{\bb P(V) \times \bb R}  h_{\ee}(x, y)  \pi_s(dx) dy 
 = \frac{c}{\sqrt{m}} \| h_{\ee} \|_{\pi_s \otimes \Leb}. 
\end{align}
For the first term in the right hand side of \eqref{ApplCondLT-002},  
by the definition of $F_m$ (cf.\ \eqref{Def-Fmt}), using integration by parts, a change of variable and Fubini's theorem,
we deduce that, with $\delta_n = \frac{m}{n}$, 
\begin{align}\label{Bound-Fmt-phit00}
& \int_{\mathbb{R}_{+}}  F'_m(t) (1- \Phi^+ (t)) dt
 =  \int_{\mathbb{R}_{+}}  F_m(t) \phi^+ (t) dt    \notag\\
&= \int_{\bb R _{+}}  \left[   \int_{\bb P(V) \times \bb R}  h_{\ee} \left( x, \sigma_s \sqrt{k} y\right) 
    \phi \left( \frac{t-y}{ \sqrt{m/k}}\right) \pi_s(dx)   \frac{dy}{ \sqrt{m/k} }  \right]  \phi^+(t) dt  \notag\\
&= \int_{\bb R _{+}}  \left[  \int_{\bb P(V) \times \bb R}  h_{\ee} \left( x, \sigma_s \sqrt{n} y' \right) 
    \frac{1}{ \sqrt{m/k} } \phi \left( \frac{t'-y'}{ \sqrt{m/n}}\right) \pi_s(dx)   \frac{dy'}{ \sqrt{k/n} }  \right]   
       \phi^+ \left( \frac{t'}{\sqrt{k/n}} \right) \frac{dt'}{\sqrt{k/n}}  \notag\\
& =  \int_{\bb P(V) \times \bb R}  h_{\ee} \left( x, \sigma_s \sqrt{n} y' \right)   
   \phi_{\delta_n}*\phi^+_{1-\delta_n}(y')  \pi_s(dx)   dy'   \notag\\
& = \frac{1}{ \sigma_s \sqrt{n}}
 \int_{\bb P(V) \times \bb R}  h_{\ee} (x,t) \phi_{\delta_n}*\phi^+_{1-\delta_n}\left(\frac{t}{ \sigma_s \sqrt{n}} \right) \pi_s(dx) dt. 
\end{align}
Using Lemma \ref{t-Aux lemma} with $v = \delta_n$ 
and recalling that $\delta_n = \frac{m}{n}$ and $1-\delta_n=\frac{k}{n}$, we have, for any $t \in \bb R$, 
\begin{align*}
\phi_{\delta_n}*\phi^+_{1-\delta_n}\left(\frac{t}{ \sigma_s \sqrt{n}} \right)
\leq  \sqrt{\frac{k}{n}}  \phi^+ \left(\frac{t}{ \sigma_s \sqrt{n}} \right) +  \sqrt{\frac{m}{n}}  
   + \frac{|t|}{ \sigma_s \sqrt{n} }  \mathds 1_{\{t<0\}}. 
\end{align*}
Implementing this bound into \eqref{Bound-Fmt-phit00} and using the fact that $\phi^+(u) = 0$ for $u \leq 0$, we get  
\begin{align}\label{Bound-Fmt-phit}
 \int_{\mathbb{R}_{+}}  F'_m(t) (1- \Phi^+ (t)) dt  
& \leq \frac{\sqrt{k}}{\sigma_s  n}  \int_{\bb P(V) \times \bb R_+}  
   h_{\ee} (x,t)  \phi^+\left(\frac{t}{\sigma_s \sqrt{n}} \right) \pi_s(dx) dt  \notag\\ 
 & \qquad  +  \frac{ \sqrt{m}}{\sigma_s  n}  \| h_{\ee} \|_{\pi_s \otimes \Leb}
   + \frac{c}{n} \int_{\bb P(V) \times \bb R} h_{\ee} (x,t) |t| \mathds 1_{\{t<0\}}  \pi_s(dx) dt   \notag\\
& \leq   \frac{\sqrt{k}}{\sigma_s  n}  \int_{\bb P(V) \times \bb R_+}  
h_{\ee} (x,t)  \phi^+\left(\frac{t}{\sigma_s \sqrt{n}} \right) \pi_s(dx) dt
    +  \frac{c \ee^{1/4}}{n}  \| h_{\ee} \|_{\pi_s \otimes \Leb},
\end{align}
where in the last inequality we used the fact that $m= [ \ee^{1/2} n]$ 
and the function $h_{\ee}$ is supported on $\bb P(V) \times [- \ee, \infty)$. 
Combining \eqref{ApplCondLT-002},  \eqref{JJJ-20001} and \eqref{Bound-Fmt-phit}, 
and using the fact that $V_s(x, y) \leq c(1 + y)$, we derive that
\begin{align}\label{Upper-Jnxy}
J_n(x, y)
& \leq  \frac{2V_s(x, y)}{ n \sigma_s^2 \sqrt{2\pi } }  
 \int_{\bb P(V) \times \bb R_+}  h_{\ee} (x,t)  \phi^+\left(\frac{t}{\sigma_s \sqrt{n}} \right)  \pi_s(dx) dt    \notag\\
& \quad  +  \frac{c \ee^{1/4} (1 + y)}{n} \| h_{\ee} \|_{\pi_s \otimes \Leb}
    +    \frac{c_{\ee} (1 + y)}{n} (\alpha_n + n^{-\ee}) \| h_{\ee} \|_{\pi_s \otimes \Leb}.
\end{align}
Substituting this into \eqref{JJJ004} ends the proof of the upper bound \eqref{eqt-A 002_Upper}.  
\end{proof}

We next establish the lower bound \eqref{eqt-A 002} of Theorem \ref{t-A 001}. 
\begin{proof}[Proof of \eqref{eqt-A 002}]
Let us keep the notation used in the proof of the upper bound \eqref{eqt-A 002_Upper}. 
By the Markov property of the pair $(X_n, S_n)$, we get
\begin{align} \label{staring point for the lower-bound-001} 
 I_n(x, y)
:=  \bb E_{\bb Q_s^x} \left[ h(X_n, y - S_n); \,  \tau_y > n  \right]  
 = I_{n,1}(x, y) - I_{n,2}(x, y),
\end{align}
where 
\begin{align}
I_{n,1}(x, y) 
& =  \int_{\bb P(V) \times \bb{R}_+}  \bb E_{\bb Q_s^{x'}} \left[ h(X_m, y' - S_m)  \right]  
  \bb Q_s^x \left(X_k \in dx', \, y - S_{k} \in dy',  \,  \tau_{y}>k\right),    \label{In1xy-lowerbound}\\
I_{n,2}(x, y)
& =  \int_{\bb P(V) \times \bb{R}_+} \bb E_{\bb Q_s^{x'}} \left[ h(X_m, y' - S_m);  \, \tau_{y'} \leq m   \right]   
   \bb Q_s^x \left(X_k \in dx', \, y - S_{k} \in dy',  \,  \tau_{y}>k\right).  \label{In2xy-lowerbound}
\end{align}

\textit{Lower bound of $I_{n,1}(x, y)$.}
By the local limit theorem (\eqref{LLT_Lower_aa} of Theorem \ref{Theorem Loc bounds}),
there exist constants $c, c_{\ee} >0$ such that
for any $m \geq 1$, $x' \in \bb P(V)$ and $y' \in \bb R_+$, 
\begin{align}\label{Pf_SmallStarting_Firstthm}
\bb E_{\bb Q_s^{x'}} \left[ h(X_m, y' - S_m)  \right] 
 \geq  H_m^-(y')  -   \frac{c\ee}{\sqrt{m}} \| h \|_{\pi_s \otimes \Leb}  
     -  \frac{c_{\ee}}{ m }  \| h \|_{\mathscr H}, 
\end{align}
where
\begin{align*}
H_m^-(y') =  \int_{\bb P(V) \times \bb R}  h_{-\ee} \left( x, y\right) 
   \frac{1}{\sigma_s \sqrt{m}} \phi \left( \frac{y'-y}{\sigma_s \sqrt{m}}\right) \pi_s(dx) dy. 
\end{align*}
Following the proof of the upper bound of $J_n(x, y)$ (cf.\ \eqref{Def-Jnxy-ab} and \eqref{Upper-Jnxy})
and using the lower bound in Lemma \ref{t-Aux lemma} instead of the upper one, 
one has, uniformly in $x \in \bb P(V)$ and $y \in [0, \alpha_n \sqrt{n}]$, 
\begin{align}\label{Lowerbound-Jn-xy}
J_n^-(x, y) : & =  
  \int_{\mathbb{R}_{+}}  H_m^-(y')  \bb Q_s^x \left(y - S_{k} \in dy', \, \tau_{y} > k \right)  \notag\\
 &  \geq  \frac{2V_s(x, y)}{ n \sigma_s^2 \sqrt{2\pi } }  
 \int_{\bb P(V) \times \bb R_+}  h_{-\ee} (x,t)  \phi^+\left(\frac{t}{\sigma_s \sqrt{n}} \right)  \pi_s(dx) dt    \notag\\
& \quad  -  \frac{c \ee^{1/4} (1 + y)}{n} \| h_{-\ee} \|_{\pi_s \otimes \Leb}
        -    \frac{c_{\ee} (1 + y)}{n} (\alpha_n + n^{-\ee}) \| h_{-\ee} \|_{\pi_s \otimes \Leb}.
\end{align}
Substituting \eqref{Pf_SmallStarting_Firstthm} into \eqref{In1xy-lowerbound}, using \eqref{Lowerbound-Jn-xy},
the bound $\bb Q_s^x ( \tau_y > k ) \leq \frac{c(1 + y)}{\sqrt{n}}$ 
and the fact that $\| h_{-\ee} \|_{\pi_s \otimes \Leb} \leq  \| h \|_{\pi_s \otimes \Leb}$ and
$ \|  h_{-\ee} \|_{\scr H} \leq  \|  h \|_{\scr H}$, we get 
\begin{align}\label{Lower_In_lll}
 I_{n,1}(x, y)  
&  \geq  \frac{2V_s(x, y)}{ n \sigma_s^2 \sqrt{2\pi } }  
 \int_{\bb P(V) \times \bb R_+}  h_{-\ee} (x,t)  \phi^+\left(\frac{t}{\sigma_s \sqrt{n}} \right) \pi_s(dx) dt     \notag\\
& \quad  -  \left( c \ee^{1/4} + c_{\ee} \alpha_n +  c_{\ee} n^{-\ee} \right)  \frac{ 1 + y }{n}    \| h \|_{\pi_s \otimes \Leb}
    -   \frac{c_{\ee} (1 + y)}{ n^{3/2} }  \| h \|_{\mathscr H}. 
\end{align}

\textit{Upper bound of $I_{n,2}(x, y)$.}
According to whether the value of $y'$ in the integral is less or greater than $\ee^{1/6} \sqrt{n}$,
we decompose $I_{n,2}(x, y)$ into two terms: 
\begin{align} \label{KKK-111-001}
I_{n,2}(x, y) & = J_{n,1}(x,y) + J_{n,2}(x,y),
\end{align}
where
\begin{align*} 
J_{n,1}(x,y) & = \int_{0}^{ \ee^{1/6}\sqrt{n}} 
\int_{\bb P(V)} \bb E_{\bb Q_s^{x'}} \left[ h(X_m, y' - S_m);  \, \tau_{y'} \leq m \right]   
  \bb Q_s^x \left(X_k \in dx', \, y - S_{k} \in dy',  \,  \tau_{y}>k\right),  \\
J_{n,2}(x,y) &=\int_{\ee^{1/6}\sqrt{n}}^\infty 
\int_{\bb P(V)} \bb E_{\bb Q_s^{x'}} \left[ h(X_m, y' - S_m);  \, \tau_{y'} \leq m  \right]   
  \bb Q_s^x \left(X_k \in dx', \, y - S_{k} \in dy',  \,  \tau_{y}>k\right).
\end{align*}
For $J_{n,1}(x,y)$, 
since $\bb E_{\bb Q_s^{x'}} [ h(X_m, y' - S_m);  \, \tau_{y'} \leq m ] \leq \bb E_{\bb Q_s^{x'}} \left[ h(X_m, y' - S_m) \right]$, 
using Theorem \ref{Theorem Loc bounds} 
and proceeding in the same way as in \eqref{JJJJJ-1111-001} and \eqref{JJJ004}, 
we get 
\begin{align}\label{CaraOrder12BoundK1}
J_{n,1}(x,y) 
\leq   K_{n}(x,y)    +   \frac{c\ee^{3/4} (1 + y)}{ n } \| h_{\ee} \|_{\pi_s \otimes \Leb}  
     +  \frac{c_{\ee} (1 + y)}{ n^{3/2} }  \| h_{\ee} \|_{\mathscr H},   
\end{align}
where 
\begin{align*}
K_{n}(x,y) =  \int_{0}^{\ee^{1/6}\sqrt{ n}}  H_m(y')  \bb Q_s^x \left( y - S_{k} \in dy', \, \tau_{y} > k \right)
\end{align*}
and $H_m$ is defined by \eqref{JJJ005}. 
By integration by parts,  it follows that
\begin{align}\label{CaraOrder12BoundK1aa}
K_{n}(x,y)
& =  \int_{0}^{\infty}  H_m(y') 
   \bb Q_s^x \left( y - S_k \in dy',  \,  y - S_k \leq \ee^{1/6}\sqrt{ n},  \,  \tau_y > k  \right)    \notag\\
& =  \int_{0}^{\infty}    H_m'(y')   \bb Q_s^x  \left( y - S_k \in [0, \ee^{1/6}\sqrt{ n}],  \,  y - S_k > y',  \,  \tau_y > k \right)  dy'   \notag\\
& =  \int_{0}^{ \ee^{1/6}\sqrt{ n }  }  
H_m' (t)  \bb Q_s^x  \left( \frac{y - S_k}{\sigma \sqrt{k}} 
    \in \left[ \frac{t}{\sigma_s \sqrt{k}}, \frac{\ee^{1/6} \sqrt{n}}{\sigma_s \sqrt{k}}  \right], 
  \tau_y > k \right)  dt.  
\end{align}
Similarly to the proof of \eqref{ApplCondLT-002}, applying the conditioned integral limit theorem (Theorem \ref{Theor-IntegrLimTh}), 
we derive that uniformly in $t \in \bb R_+$, $x \in \bb P(V)$ and $y \in [0, \alpha_n \sqrt{n}]$, 
\begin{align*}
 \Bigg|  \bb Q_s^x  \left( \frac{y - S_k}{\sigma_s \sqrt{k}} 
    \in \left[ \frac{t}{\sigma_s \sqrt{k}}, \frac{\ee^{1/6} \sqrt{n}}{\sigma_s \sqrt{k}}  \right], 
  \tau_y > k \right)
  -  \frac{2V_s(x,y)}{ \sigma_s \sqrt{2\pi k}}  
    \left[ \Phi^+ \left( \frac{\ee^{1/6} \sqrt{n}}{\sigma_s \sqrt{k}}  \right)   - \Phi^+ \left( \frac{t}{\sigma_s \sqrt{k}} \right)  \right]   \Bigg|  
    \leq  c_{\ee} \left(  \alpha_n  + n^{-\ee} \right) \frac{1 + y}{\sqrt{n}}.
\end{align*}
By the definition of $H_m$ (cf.\ \eqref{JJJ005}) and the fact that $\phi'$ is bounded, it holds that
\begin{align*}
\int_{0}^{ \ee^{1/6}\sqrt{ n }  }   |H_m' (t)| dt 
\leq  \ee^{1/6}\sqrt{ n }  \sup_{t \in \bb R}  |H_m' (t)|  
\leq \frac{c \ee^{1/6} \sqrt{n} }{m}  \| h_{\ee} \|_{\pi_s \otimes \Leb}
\leq \frac{c_{\ee} }{\sqrt{n}}  \| h_{\ee} \|_{\pi_s \otimes \Leb}. 
\end{align*}
Therefore, 
\begin{align}\label{Bound-Ju8}
K_{n}(x,y)
 \leq   \frac{2V_s(x,y)}{ \sigma_s \sqrt{2\pi k}}      \int_{0}^{ \ee^{1/6}\sqrt{ n }  }  H_m' (t)
\left[ \Phi^+ \left( \frac{\ee^{1/6} \sqrt{n}}{\sigma_s \sqrt{k}}  \right)   - \Phi^+ \left( \frac{t}{\sigma_s \sqrt{k}} \right)  \right] dt
  + c_{\ee} \left(  \alpha_n  + n^{-\ee} \right) \frac{1 + y}{n} \| h_{\ee} \|_{\pi_s \otimes \Leb}. 
\end{align}
Using integration by parts and the fact that $H_m(0) \geq 0$
and $\sup_{t \in \bb R} H_m (t) \leq  \frac{1}{ \sigma_s \sqrt{m} } \| h_{\ee} \|_{\pi_s \otimes \Leb}$ (cf.\ \eqref{JJJ005}),  we get
\begin{align}\label{Bound-Ju9}
& \int_{0}^{ \ee^{1/6}\sqrt{ n }  }  H_m' (t)
\left[ \Phi^+ \left( \frac{\ee^{1/6} \sqrt{n}}{\sigma_s \sqrt{k}}  \right)   - \Phi^+ \left( \frac{t}{\sigma_s \sqrt{k}} \right)  \right] dt  \notag\\
& = \frac{1}{\sigma_s \sqrt{k}} \int_{0}^{ \ee^{1/6}\sqrt{ n }  }  H_m \left(t\right) \phi^+ \left( \frac{t}{\sigma_s \sqrt{k}} \right)  dt
   -  H_m(0) \Phi^+ \left( \frac{\ee^{1/6} \sqrt{n}}{\sigma_s \sqrt{k}}  \right)  
     \notag\\
& \leq  \frac{1}{ \sigma_s^2 \sqrt{m k} } \| h_{\ee} \|_{\pi_s \otimes \Leb}
  \int_{0}^{ \ee^{1/6}\sqrt{ n }  }   \phi^+ \left( \frac{t}{\sigma_s \sqrt{k}} \right)  dt =: A_n. 
\end{align} 
By a change of variable and the fact that $\phi^+ (u) \leq u$ for $u \geq 0$, it follows that 
\begin{align*} 
A_n  =  \frac{1}{ \sigma_s \sqrt{m} } \| h_{\ee} \|_{\pi_s \otimes \Leb}
  \int_{0}^{ \frac{\ee^{1/6}}{\sigma_s}\sqrt{ n/k }  }  \phi^+ \left( u \right)  du   
 \leq  \frac{c \ee^{1/3}}{ \sqrt{m} } \| h_{\ee} \|_{\pi_s \otimes \Leb}
  \leq  \frac{c \ee^{1/12}}{ \sqrt{n} } \| h_{\ee} \|_{\pi_s \otimes \Leb}.  
\end{align*}
This, together with \eqref{Bound-Ju8} and \eqref{Bound-Ju9}, implies that
\begin{align*}
K_{n}(x,y) 
& \leq  c \ee^{1/12} \frac{ V_s(x, y)}{ n }   \| h_{\ee} \|_{\pi_s \otimes \Leb}
  +   c_{\ee} \left(  \alpha_n  + n^{-\ee} \right) \frac{ 1 + y}{n}  \| h_{\ee} \|_{\pi_s \otimes \Leb}  \notag\\
& \leq  \left( c \ee^{1/12} + c_{\ee} \alpha_n +  c_{\ee}  n^{-\ee}  \right)  \frac{ 1 + y}{n}  \| h_{\ee} \|_{\pi_s \otimes \Leb}. 
\end{align*}
Combining this with \eqref{CaraOrder12BoundK1}, we obtain 
\begin{align}\label{K1-final bound}
J_{n,1}(x,y)  
\leq   \left( c \ee^{1/12} + c_{\ee} \alpha_n +  c_{\ee}  n^{-\ee}  \right)  \frac{ 1 + y}{n}  \| h_{\ee} \|_{\pi_s \otimes \Leb}  
  +  \frac{c_{\ee} (1 + y)}{n^{3/2}}  \| h_{\ee} \|_{\mathscr H}.  
\end{align}

Now we proceed to give an upper bound for $J_{n,2}(x,y)$, which can be rewritten as
\begin{align}  
J_{n,2}(x,y) = \int_{\bb P(V) \times \bb R} 
  L(x', y')  \bb Q_s^x \left(X_k \in dx', \, y - S_{k} \in dy',  \,  \tau_{y}>k\right),  \label{K2-b01c-001}
\end{align}
where, for $x' \in \bb P(V)$ and $y' \in \bb R$, 
\begin{align} \label{Bytheduality-001}
L(x', y') : =  \mathds 1_{\{ y' > \ee^{1/6}\sqrt{n}  \}}  \bb E_{\bb Q_s^{x'}} \left[ h(X_m, y' - S_m); \, \tau_{y'} \leq m  \right]. 
\end{align}
Since $h(x', \cdot)$ is integrable on $\bb R$, 
by Fubini's theorem and a change of variable, one can check that the function $y' \mapsto L(x', y')$ is integrable on $\bb R$, 
for any $x' \in \bb P(V)$.  
Denote for any $x' \in \bb P(V)$ and $y' \in \bb R$, 
\begin{align} \label{DefM88}
L_{\ee}(x', y') :=  
\mathds 1_{\{ y' + \ee > \ee^{1/6}\sqrt{n}  \}}
\bb E_{\bb Q_s^{x'}} \left[ h_{\ee}(X_m, y' - S_m); \, \tau_{y' - \ee} \leq m  \right].   
\end{align}
Then we have $L \leq_{\ee} L_{\ee}$.  
Using the upper bound \eqref{eqt-A 002_Upper} of  Theorem \ref{t-A 001} 
and the fact that $\left\|  L_{\ee} \right\|_{\pi_s \otimes \Leb}  \leq  \left\|  h_{\ee} \right\|_{\pi_s \otimes \Leb}$,  we obtain that 
uniformly in $x \in \bb P(V)$ and $y \in [0, \alpha_n \sqrt{n}]$, 
\begin{align}\label{eqt-A 001_Lower}
J_{n,2}(x,y)
& \leq  \frac{2V_s(x, y)}{ k \sigma_s^2 \sqrt{2\pi } }  
 \int_{\bb P(V) \times \bb R_+}  L_{\ee} (x', y')  \phi^+\left(\frac{y'}{\sigma_s \sqrt{k}} \right)  \pi_s(dx')  dy'     \notag\\
& \quad  +  \left( c \ee^{1/4} + c_{\ee} \alpha_n + c_{\ee}  n^{-\ee}   \right) \frac{  (1 + y)}{n}    \| h_{\ee} \|_{\pi_s \otimes \Leb}
    +   \frac{c_{\ee} (1 + y)}{ n^{3/2} }  \| L_{\ee} \|_{\mathscr H}. 
\end{align}
For the first term, in view of \eqref{DefM88}, we use the duality formula (Lemma \ref{Lem_Duality}) 
to derive that 
\begin{align}\label{CLLT-J1-6}
&  \int_{\bb P(V) \times \bb R_+}  L_{\ee} (x, y)  \phi^+\left(\frac{y}{\sigma_s \sqrt{k}} \right) \pi_s(dx) dy     \notag\\
& =   \int_{\bb P(V) \times \bb{R}_+}  
\bb E_{\bb Q_s^{x}} \left[ h_{\ee}(X_m, y - S_m); \, \tau_{y - \ee} \leq m  \right]   
   \mathds 1_{\{ y + \ee > \ee^{1/6}\sqrt{n}  \}}
 \phi^+ \left( \frac{y}{ \sigma_s \sqrt{k}} \right)  \pi_s(dx) dy    \notag\\
& =    \int_{\bb P(V) \times \bb{R}_+}  
\bb E_{\bb Q_s^{x}} \left[ h_{\ee}(X_m, y + \ee - S_m); \, \tau_{y} \leq m  \right]    
  \mathds 1_{\{ y + 2\ee > \ee^{1/6}\sqrt{n}  \}}
 \phi^+ \left( \frac{y + \ee}{ \sigma_s \sqrt{k}} \right) \pi_s(dx) dy    \notag\\
& =    \int_{\bb P(V) \times \bb{R}_+} 
\bb E_{\bb Q_s^{x, *}}   \left[ \phi^+ \left( \frac{z - S_m^* + \ee}{ \sigma_s \sqrt{k}} \right)
  \mathds 1_{\{ z - S_m^* + 2\ee > \ee^{1/6}\sqrt{n}  \}}; \, \tau_{z}^*  \leq m  \right]    h_{\ee}(x, z) \pi_s(dx) dz,   
\end{align}
where $(S_m^*)$ is the dual random walk defined by \eqref{def-dual-randomwalk-Sn}.  
Since $\phi^+$ is bounded by $1$, by the martingale approximation (Lemma \ref{Martingale approx})
and the fact that $n$ is large enough so that $\ee^{1/6}\sqrt{n} \geq c$ for some constant $c>0$, we get 
\begin{align}\label{Proba_Holder}
 \bb E_{\bb Q_s^{x, *}}   \left[ \phi^+ \left( \frac{z - S_m^* + \ee}{ \sigma_s \sqrt{k}} \right)
  \mathds 1_{\{ z - S_m^* + 2\ee > \ee^{1/6}\sqrt{n}  \}}; \, \tau_{z}^*  \leq m  \right]  
 & \leq  \bb Q_s^{x, *} \left( z - S_m^* + 2\ee > \ee^{1/6}\sqrt{n},  \,   \min_{1 \leq j \leq m} (z - S_j^*) < 0  \right)   \notag\\
 & \leq   \bb Q_s^{x, *} \left( z - S_m^* -  \min_{1 \leq j \leq m} (z - S_j^*)   >  \ee^{1/6}\sqrt{n} - 2\ee  \right)   \notag\\
  & \leq   \bb Q_s^{x, *} \left( \max_{1 \leq j \leq m} (S_j^* - S_m^*)   >  \ee^{1/6}\sqrt{n} - 2\ee  \right)   \notag\\
 & \leq   \bb Q_s^{x, *} \left( \max_{1 \leq j \leq m} |M_j^*|  >  \frac{1}{2} \ee^{1/6}\sqrt{n}  \right). 
\end{align}
Using Lemma \ref{Lem_Fuk}
with $n$ replaced by $m= [ \ee^{1/2} n]$,
$u = v = \frac{1}{2} \ee^{1/6}\sqrt{ n}$ and $w = \ee^{5/12} n$, 
Markov's inequality and the fact that $\sup_{1 \leq i \leq m} \bb E_{ \bb Q_s^{x, *} } (\xi_i^2) \leq c$
for some constant $c>0$, we obtain
\begin{align}\label{Proba_002}
\bb Q_s^{x, *} \left( \max_{1 \leq j \leq m} |M_j^*|  > \frac{1}{2} \ee^{1/6}\sqrt{n}  \right)   
& \leq  \sum_{i=1}^m  \bb Q_s^{x, *}  \left( |\xi_i^*|  >  \frac{1}{2} \ee^{1/6}\sqrt{ n}  \right)   
  + 2  \bb Q_s^{x, *} \left(  \sum_{i=1}^m \bb E_{ \bb Q_s^{x, *} } 
  \left(  \xi_i^2  \Big|  \mathscr F_{i-1} \right) >  \ee^{5/12} n  \right) 
        +  c \ee^{1/12}   \notag\\
& \leq  4 \frac{m}{\ee^{1/3} n} \sup_{1 \leq i \leq m} \bb E_{ \bb Q_s^{x, *} } (\xi_i^2)
  + 2 \frac{m}{ \ee^{5/12} n }  \sup_{1 \leq i \leq m} \bb E_{ \bb Q_s^{x, *} } (\xi_i^2) 
   + c \ee^{1/12}   
  \leq   c \ee^{1/12}.  
\end{align}
Combining \eqref{CLLT-J1-6}, \eqref{Proba_Holder} and \eqref{Proba_002}, we get 
\begin{align}\label{Bound_J1}
\int_{\bb P(V) \times \bb R_+}  L_{\ee} (x, y)  \phi^+\left(\frac{y}{\sigma_s \sqrt{k}} \right) \pi_s(dx) dy 
 \leq   c \ee^{1/12}  \| h_{\ee} \|_{\pi_s \otimes \Leb}.
\end{align}
For the last term in \eqref{eqt-A 001_Lower},
by the definition of $L_{\ee}$ (cf.\ \eqref{DefM88}) and $\tau_y$ (cf.\ \eqref{Def-tau-y}), we have 
\begin{align*}
\| L_{\ee} \|_{\mathscr H} 
& =   \int_{\bb R}  \mathds 1_{\{ y' + \ee > \ee^{1/6}\sqrt{n}  \}}  
\sup_{x' \in \bb P(V)}  \bb E_{\bb Q_s^{x'}} \left[ h_{\ee}(X_m, y' - S_m); \, \tau_{y' - \ee} \leq m  \right] dy'  \notag\\
& \leq  \sup_{x \in \bb P(V)} \sup_{y \in \bb R  } h_{\ee} (x, y)
 \int_{ \frac{1}{2} \ee^{1/6}\sqrt{n} }^{\infty} \sup_{x' \in \bb P(V)} 
\bb Q_s^{x'}  \left( \tau_{y' - \ee} \leq m  \right) dy'   \notag\\
& \leq  2 \sup_{x \in \bb P(V)} \sup_{y \in \bb R  } h_{\ee} (x, y)
  \int_{ \frac{1}{4} \ee^{1/6}\sqrt{n} }^{\infty} \sup_{x' \in \bb P(V)} 
\bb Q_s^{x'}  \left( \max_{1 \leq j \leq m} |S_j| \geq  t   \right) dt. 
\end{align*}
By the martingale approximation (Lemma \ref{Martingale approx}),
Doob's martingale maximal inequality and Lemma \ref{Lp boud martingales},
we get that for $t \geq \frac{1}{4} \ee^{1/6}\sqrt{n}$ and $\delta>0$,  
\begin{align*}
& \sup_{x' \in \bb P(V)} 
\bb Q_s^{x'}  \left( \max_{1 \leq j \leq m} |S_j| \geq t   \right)
 \leq   \sup_{x' \in \bb P(V)} 
\bb Q_s^{x'}  \left( \max_{1 \leq j \leq m} |M_j| \geq  \frac{t}{2}   \right)   \notag\\
& \leq    \frac{c}{ t^{2 + \delta} }  \sup_{x' \in \bb P(V)} \bb E_{ \bb Q_s^{x'} }  (M_m^{2+\delta})   
 \leq  \frac{c' m^{(2 + \delta)/2} }{ t^{2 + \delta} } \leq  \frac{c' (\ee^{1/2} n)^{1 + \frac{\delta}{2}} }{ t^{2 + \delta} }, 
\end{align*}
so that 
\begin{align}\label{Bound-third-Cara}
\| L_{\ee} \|_{\mathscr H}  
 \leq c'  (\ee^{1/2} n)^{1 + \frac{\delta}{2}}   \sup_{x \in \bb P(V)} \sup_{y \in \bb R  } h_{\ee} (x, y)
\int_{ \frac{1}{4} \ee^{1/6}\sqrt{n} }^{\infty}   \frac{1}{ t^{2 + \delta} } dt   
 \leq  c \ee^{1/3} \sqrt{n} \sup_{x \in \bb P(V)} \sup_{y \in \bb R  } h_{\ee} (x, y).  
\end{align}
Substituting \eqref{Bound_J1} and \eqref{Bound-third-Cara} into \eqref{eqt-A 001_Lower} gives
\begin{align}\label{BoundMt99}
J_{n,2}(x,y)
 \leq   \left( c \ee^{1/12} + c_{\ee} \alpha_n + c_{\ee}  n^{-\ee} \right)  \frac{ 1 + y }{n}    \| h_{\ee} \|_{\pi_s \otimes \Leb} 
    +   \frac{c_{\ee} (1 + y) }{ n }  \sup_{x \in \bb P(V)} \sup_{y \in \bb R  } h_{\ee} (x, y). 
\end{align}
From \eqref{KKK-111-001}, \eqref{K1-final bound} and \eqref{BoundMt99},
we get the upper bound for $I_{n,2}(x, y)$: 
\begin{align*}
I_{n,2}(x, y) 
 \leq  \left( c \ee^{1/12} + c_{\ee} \alpha_n +  c_{\ee}  n^{-\ee}  \right)  \frac{ 1 + y}{n}  \| h_{\ee} \|_{\pi_s \otimes \Leb}  
  +  \frac{c_{\ee} (1 + y)}{n^{3/2}}  \| h_{\ee} \|_{\mathscr H}  
  +   \frac{c_{\ee} (1 + y) }{ n }  \sup_{x \in \bb P(V)} \sup_{y \in \bb R  } h_{\ee} (x, y). 
\end{align*}
Combining this with \eqref{staring point for the lower-bound-001} and \eqref{Lower_In_lll}, 
and using the fact that $\| h \|_{\pi_s \otimes \Leb} \leq \| h_{\ee} \|_{\pi_s \otimes \Leb}$
and $\| h \|_{\mathscr H} \leq \| h_{\ee} \|_{\mathscr H}$,
we conclude the proof of the lower bound \eqref{eqt-A 002}. 
\end{proof}

From Theorem \ref{t-A 001}, we shall establish the following result, which plays a crucial role in proving Theorem \ref{Thm_In_Pro}.

\begin{proposition}\label{Lem_Condi_inf}
Assume conditions \ref{Condi-density}, \ref{Condi-density-invariant} 
and $\kappa'(s) = 0$ for some $s \in  I_{\mu}$.   
Let $(\alpha_n)_{n \geq 1}$ be any sequence of positive numbers satisfying $\lim_{n \to \infty} \alpha_n = 0$. 
Let $(a_n)_{n \geq 1}$ be any sequence of nonnegative numbers satisfying $\limsup_{n \to \infty} \frac{a_n}{\sqrt{n}} < \infty$. 
Then, for any $\Delta_0 > 0$ and $b \in \bb R_+$, there exists a constant $c >0$ such that for any
$x \in \bb P(V)$, $y \in [0, \alpha_n \sqrt{n}]$, $\Delta_n \in [\Delta_0, b \sqrt{n}]$
 and any measurable set $A \subseteq \bb P(V)$ satisfying $\pi_s(\partial A) = 0$,
\begin{align*}
J(x, y) : = \bb Q_s^x  \left( X_{2n} \in A, \,   
    \min_{n < j \leq 2n} (y -S_j) \geq  a_n,  \,   y -S_{2n} \in  [a_n, a_n + \Delta_n], \,  \tau_y > n  \right)   
  \geq \frac{c (1 + y) }{ n^{3/2}  }   \Delta_n^2 \pi_s(A).  
\end{align*}
\end{proposition}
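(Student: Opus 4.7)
My approach is to split the walk at time $n$ via the Markov property and to estimate each resulting factor separately. Setting $z := y' - a_n$ for $y' \geq a_n$, the Markov property gives
\begin{align*}
J(x, y) = \int F(x', y')\, \bb Q_s^x(X_n \in dx',\, y - S_n \in dy',\, \tau_y > n),
\end{align*}
where $F(x', y') := \bb Q_s^{x'}(X_n \in A,\, \tau_z > n,\, z - S_n \in [0, \Delta_n])$. I will restrict the outer integration to the event $E_n := \{y - S_n \in [a_n + \kappa_1 \sigma_s\sqrt{n},\, a_n + \kappa_2 \sigma_s\sqrt{n}]\}$ for constants $0 < \kappa_1 < \kappa_2$ chosen with $\kappa_1$ large compared with $b/\sigma_s$; on $E_n$ the inner starting height $z$ lies in $[\kappa_1 \sigma_s\sqrt{n}, \kappa_2 \sigma_s\sqrt{n}]$, which is the scale at which both factors admit Gaussian-type estimates.

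For the outer factor, Theorem \ref{Theor-IntegrLimTh} applies since $y \leq \alpha_n\sqrt{n}$. Because $a_n/\sqrt{n}$ is uniformly bounded, the normalized interval corresponding to $E_n$ stays within a compact subset of $(0, \infty)$, so the increment of $\Phi^+$ over it is bounded below by a positive constant and the error term is dominated by the main term (using $V_s(x,y) \gtrsim 1+y$ from Proposition \ref{Prop-harmonic}(2) and the fact that the error scales like $(1+y)\alpha_n/\sqrt{n}$). This yields
\begin{align*}
\bb Q_s^x(E_n,\, \tau_y > n) \geq \frac{c_0 V_s(x, y)}{\sqrt{n}}.
\end{align*}

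For the inner factor the target is $F(x', y') \geq c_1 \Delta_n^2 \pi_s(A)/n$, uniformly in $x' \in \bb P(V)$ and $z \in [\kappa_1 \sigma_s\sqrt{n}, \kappa_2\sigma_s\sqrt{n}]$. Writing
\begin{align*}
F(x', y') = \bb Q_s^{x'}(X_n \in A,\, z - S_n \in [0, \Delta_n]) - \bb Q_s^{x'}(X_n \in A,\, z - S_n \in [0, \Delta_n],\, \tau_z \leq n),
\end{align*}
I will apply the non-asymptotic local limit theorem (Theorem \ref{Theorem Loc bounds}) to each piece with target $h(x'', t) = \phi_A(x'')\mathds 1_{[0, \Delta_n]}(t)$, where $\phi_A \in \mathscr B$ is a continuous approximation of $\mathds 1_A$ with $\pi_s(\phi_A) \geq \tfrac{1}{2}\pi_s(A)$ (which exists because $\pi_s(\partial A) = 0$). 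The first piece equals $\pi_s(\phi_A)(\sigma_s\sqrt{n})^{-1} \int_0^{\Delta_n} \phi\big((z-t)/(\sigma_s\sqrt{n})\big)\, dt$ up to a lower-order error. For the second, the strong Markov property at $\tau_z$, combined with the same local limit theorem applied to the post-$\tau_z$ walk and uniform control on the overshoot distribution, produces a matching reflected expression $\pi_s(\phi_A)(\sigma_s\sqrt{n})^{-1} \int_0^{\Delta_n}\phi\big((z+t)/(\sigma_s\sqrt{n})\big)\,dt$ plus an error. Subtracting and invoking the elementary inequality
\begin{align*}
\phi\Big(\frac{z-t}{\sigma_s\sqrt{n}}\Big) - \phi\Big(\frac{z+t}{\sigma_s\sqrt{n}}\Big) \geq \frac{2t(z-t)}{\sigma_s^2 n}\,\phi\Big(\frac{z+t}{\sigma_s\sqrt{n}}\Big),
\end{align*}
valid for $0 < z - t < z + t$, together with $\Delta_n \leq z/2$ (ensured by the choice of $\kappa_1$) and a uniform lower bound on $\phi$ over the compact range $[\kappa_1, \kappa_2 + b/\sigma_s]$, produces the inner bound $F(x', y') \geq c_1 \Delta_n^2 \pi_s(A)/n$.

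Combining the outer and inner estimates yields
\begin{align*}
J(x, y) \geq \bb Q_s^x(E_n, \tau_y > n) \cdot \inf_{(x', y') \in E_n} F(x', y') \geq \frac{c V_s(x, y) \Delta_n^2 \pi_s(A)}{n^{3/2}},
\end{align*}
and the bound $V_s(x, y) \geq c'(1+y)$ from Proposition \ref{Prop-harmonic}(2) finishes the proof. The main obstacle lies in the inner bound when $z \asymp \sqrt{n}$: the conditioned local limit theorem (Theorem \ref{t-A 001}) requires starting heights $z \leq \alpha_n\sqrt{n}$ with $\alpha_n \to 0$ and is therefore not directly applicable. The reflection-type decomposition above is a discrete-walk analogue of the Brownian reflection principle; making it rigorous uniformly in the starting direction $x'$, and carefully tracking the error terms so that they are absorbed by the main term $\Delta_n^2 \pi_s(A)/n$ for every $\Delta_n \in [\Delta_0, b\sqrt{n}]$, is the crux of the proof.
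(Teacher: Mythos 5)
Your decomposition of $J(x,y)$ by the Markov property at time $n$ is correct, and the outer factor estimate via Theorem \ref{Theor-IntegrLimTh} together with $V_s(x,y)\gtrsim 1+y$ from Proposition \ref{Prop-harmonic}(2) is sound. The gap is in the inner factor $F(x',y')$ when $\Delta_n$ is of order $1$. You propose obtaining $F(x',y')\geq c\,\Delta_n^2\pi_s(A)/n$ by writing $F$ as a difference of two free-walk probabilities (the reflection decomposition) and applying the unconditional local limit theorem (Theorem \ref{Theorem Loc bounds}) to each piece. But that theorem carries an additive error of the form $c\,\ee\, \|h_\ee\|_{\pi_s\otimes\Leb}/\sqrt n + c_\ee\, \|h_\ee\|_{\scr H}/n$, and with your target function $h = \phi_A\,\mathds 1_{[0,\Delta_n]}$ both norms are $\lesssim\Delta_n$, so each piece comes with an error of size $\ee\Delta_n/\sqrt n$. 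The reflected main terms differ by only $\asymp\Delta_n^2 z/n^{3/2}\asymp\Delta_n^2/n$ when $z\asymp\sqrt n$, and for $\Delta_n=\Delta_0$ fixed this is much smaller than $\ee\Delta_0/\sqrt n$ for any fixed $\ee>0$. Letting $\ee$ shrink with $n$ to beat this is not permitted because the constant $c_\ee$ is uncontrolled. So the difference you want to extract drowns in the smoothing error of the two LLT applications. Independently of this, the reflection step itself is not free for the Markov walk: the second piece involves the joint law of the overshoot and the direction $X_{\tau_z}$ at the first passage, and turning this into a tight ``reflected'' main term uniformly in $x'\in\bb P(V)$ would require substantial additional input that neither Theorem \ref{Theorem Loc bounds} nor the cited lemmas supply.

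The paper proceeds differently. It applies the \emph{conditioned} local limit theorem, Theorem \ref{t-A 001}\eqref{eqt-A 002}, directly to the inner function $h$ (which already carries the constraint $\tau_y>n$), so no subtraction of comparable quantities occurs. The main term of \eqref{eqt-A 002} is then an integral of $h_{-\ee}\,\phi^+$ over $\pi_s\otimes\Leb$, and the duality lemma (Lemma \ref{Lem_Duality}) rewrites it as an expectation for the dual walk $(X_n^*,S_n^*)$ with a small starting height $z\in[0,\Delta_n-2\ee]$; this dual expectation is again of the form treated by Theorem \ref{t-A 001}, so the same conditioned LLT can be applied a second time, and the harmonic function $V_s^*$ delivers the factor $\Delta_n^2$ directly. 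The remainder bounds $J_2,J_3,J_4$ are controlled by the integral lemmas (Lemma \ref{Lem_Integral_LLTnew_new2}, Lemma \ref{Lem_CondiLLT_cc}) which give the small factors $\alpha_n$, $\log n/\sqrt n$, etc., multiplying $\Delta_n^2$. This bootstrap via duality is the idea your plan is missing and is what allows the error to be absorbed into the main term for all $\Delta_n\in[\Delta_0,b\sqrt n]$, including $\Delta_n=O(1)$.
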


\begin{proof}
By the Markov property of the pair $(X_n, S_n)$,  
we have for any $n \geq 1$, $x \in \bb P(V)$ and $y \in \bb R$,  
\begin{align*}  
J(x, y) 
 =  \int_{\bb P(V) \times \bb R_+} 
 h(x', y')  \bb Q_s^x  \left( X_n \in dx',  \  y -S_n \in dy',  \   \tau_y > n  \right)  
  =  \bb E_{\bb Q_s^x} \big[ h(X_n, y - S_n); \,  \tau_{y} > n  \big], 
\end{align*}
where for brevity we denote for any $x' \in \bb P(V)$ and $y' \geq 0$, 
\begin{align}\label{Def-hxy-abc}
h(x', y') 
= \bb Q_s^{x'}  \left( X_{n} \in A, \,   \min_{1 \leq j \leq n} (y' -S_j) \geq  a_n,  \,   y' - S_{n} \in  [a_n, a_n + \Delta_n]  \right). 
\end{align}
Applying the conditioned local limit theorem (cf.\ \eqref{eqt-A 002} of Theorem \ref{t-A 001}),
we get
\begin{align}\label{Lower-J1xy}
J(x, y)   \geq  J_1(x,y) - J_2(y) - J_3(y) - J_4(y),
\end{align}
where, with $h_{-\ee} \leq_{\ee}  h \leq_{\ee} h_{\ee}$, 
\begin{align*}
& J_1(x, y) =  \frac{2V_s(x, y)}{ n \sigma_s^2 \sqrt{2\pi } }  
 \int_{\bb P(V) \times \bb R_+}  h_{-\ee} (x', y')  \phi^+\left(\frac{y'}{\sigma_s \sqrt{n}} \right)  \pi_s(dx') dy',  \nonumber\\
& J_2(y) =  \left( c \ee^{1/12} + c_{\ee} \alpha_n +  c_{\ee} n^{-\ee} \right)  \frac{ 1 + y}{n} \| h_{\ee} \|_{\pi_s \otimes \Leb}, \nonumber\\
& J_3(y) =  \frac{c_{\ee} (1 + y)}{ n^{3/2} }  \| h_{\ee} \|_{\mathscr H},  \notag\\
& J_4(y) =  \frac{c_{\ee} (1 + y) }{ n }  
   \sup_{x \in \bb P(V)} \sup_{y \in \bb R  } h_{\ee} (x, y).  
\end{align*}
\textit{Lower bound of $J_1(x,y)$.} 
By the definition of $h$ (cf.\ \eqref{Def-hxy-abc}), we have
\begin{align*}
 h_{-\ee} (x', y')  
&  \geq  \bb Q_s^{x'}  \left( X_{n} \in A, \,    y' - S_{n} \in  [a_n + \ee, a_n + \Delta_n - \ee],  
     \,   \min_{1 \leq j \leq n} (y' -S_j) \geq  a_n + \ee \right)  \nonumber\\
& =  \bb Q_s^{x'}  \left( X_{n} \in A, \,   y' - a_n - \ee - S_{n} \in  [0,  \Delta_n - 2\ee],  
    \,  \tau_{y' - a_n - \ee} > n   \right).  
\end{align*}
Hence, by a change of variable $y' - a_n - \ee = t$ and the duality lemma (cf. Lemma \ref{Lem_Duality}), we get
\begin{align}\label{Pf_Bound_J1_ss}
& \int_{\bb P(V) \times \bb R_+}  h_{-\ee} (x', y')  \phi^+\left(\frac{y'}{\sigma_s \sqrt{n}} \right)  \pi_s(dx') dy'     \notag\\
& \geq  \int_{\bb P(V)} \int_{-a_n - \ee}^{\infty}  \bb Q_s^{x'}  \left( X_{n} \in A, \,   
    t - S_{n} \in  [0,  \Delta_n - 2\ee],   \,  \tau_t > n  \right)   
      \phi^+\left(\frac{t + a_n + \ee}{\sigma_s \sqrt{n}} \right)  dt \pi_s(dx')      \notag\\
& \geq  \int_{\bb P(V) \times \bb R_+}   \bb Q_s^{x'}  \left( X_{n} \in A, \,   
    t - S_{n} \in  [0,  \Delta_n - 2\ee],  \,  \tau_t > n  \right)  
      \phi^+\left(\frac{t + a_n + \ee}{\sigma_s \sqrt{n}} \right) dt  \pi_s(dx')   \notag\\
& =  \int_{A}  \int_0^{ \Delta_n - 2\ee} 
      \mathbb{E}_{ \mathbb{Q}_s^{x, *}} 
  \left[ \phi^+ \left( \frac{z - S_n^* + a_n + \ee}{\sigma_s \sqrt{n}} \right); \,  \tau_z^* > n \right]  dz \pi_s(dx).  
\end{align}
For brevity we denote for $z \geq 0$, 
\begin{align*}
\psi(z) = \phi^{+} \left( \frac{z + a_n + \ee}{\sigma_s \sqrt{n}} \right). 
\end{align*}
Applying again \eqref{eqt-A 002} of Theorem \ref{t-A 001} (for the dual random walk $S_n^*$), we obtain that for any $z \geq 0$, 
\begin{align}\label{Lower-bound-expectq}
 \mathbb{E}_{ \mathbb{Q}_s^{x,*}} 
  \left[ \phi^+ \left( \frac{z - S_n^* + a_n + \ee}{\sigma_s \sqrt{n}} \right); \,  \tau_z^* > n \right]   
 =   \mathbb{E}_{ \mathbb{Q}_s^{x,*}}  \left[ \psi \left( z - S_n^* \right); \,  \tau_z^* > n \right]  
 \geq  K_1(x, z) - K_2(z), 
\end{align}
where 
\begin{align*} 
& K_1(x, z) = \frac{2V_s^*(x, z)}{ n \sigma_s^2 \sqrt{2\pi } }  
  \int_{\mathbb R_+}  \psi_{-\ee} (t)  \phi^+\left(\frac{t}{\sigma_s \sqrt{n}} \right) dt   \notag\\
&  K_2(z) =  \left( c \ee^{1/4} + c_{\ee} \alpha_n +  c_{\ee} n^{-\ee} \right)  \frac{ 1 + z }{n}    \| \psi_{\ee} \|_{\pi_s \otimes \Leb}
     +   \frac{c_{\ee} (1 + z)}{ n^{3/2} }  \| \psi_{\ee} \|_{\mathscr H}   
        +   \frac{c_{\ee} (1 + z) }{ n }  \sup_{y \in \bb R  } \psi_{\ee} (y). 
\end{align*}
For the first term $K_1(x, z)$, since
\begin{align*}
 \int_{\bb R_+}  \psi_{-\ee} (t) \phi^{+} \left( \frac{t}{\sigma_s \sqrt{n}} \right) dt
& =  \int_{\bb R_+}  \phi^{+}_{-\ee} \left( \frac{t + a_n + \ee}{\sigma_s \sqrt{n}} \right) 
     \phi^{+} \left( \frac{t}{\sigma_s \sqrt{n}} \right) dt   \nonumber\\
& = \sigma_s \sqrt{n} \int_{\bb R_+} 
    \phi^+_{-\ee}\left(y + \frac{a_n + \ee}{\sqrt{n}}\right)  \phi^{+} \left(y\right)  dy  \nonumber\\
& \geq  c \sqrt{n}, 
\end{align*}
we get that there exists a constant $c'>0$ such that for any $x \in \bb P(V)$ and $z \in \bb R_+$, 
\begin{align}\label{Lower-bound-K1-xz}
K_1(x, z) \geq  \frac{c'}{\sqrt{n}} V_s^*(x, z).  
\end{align}
For the second term $K_2(z)$, since 
\begin{align*}
\| \psi_{\ee} \|_{\pi_s \otimes \Leb} 
= \| \psi_{\ee} \|_{\mathscr H}   
= \int_{\bb R}  \phi^{+}_{\ee} \left( \frac{z + a_n + \ee}{\sigma_s \sqrt{n}} \right)  dz
\leq c \sqrt{n}
\end{align*}
and $\sup_{y \in \bb R  } \psi_{\ee} (y) \leq 1$, 
we have 
\begin{align}\label{Lower-bound-K2-z}
K_2(z) \leq  \left( c \ee^{1/4} + c_{\ee} \alpha_n +  c_{\ee} n^{-\ee} \right)  \frac{1 + z}{ \sqrt{n} }. 
\end{align}
Substituting \eqref{Lower-bound-expectq}, \eqref{Lower-bound-K1-xz} and \eqref{Lower-bound-K2-z} into \eqref{Pf_Bound_J1_ss},
we get
\begin{align*} 
 \int_{\bb P(V) \times \bb R_+}  h_{-\ee} (x', y')  \phi^+\left(\frac{y'}{\sigma_s \sqrt{n}} \right)  \pi_s(dx')  dy'     
&  \geq  \frac{c}{\sqrt{n}}  \int_{A}  \int_0^{ \Delta_n - 2\ee}  V_s^*(x, z)  dz \pi_s(dx)   \notag\\
 & \geq   \frac{c'}{\sqrt{n}}  \int_{A}  \int_0^{ \Delta_n /2 }  z  dz \pi_s(dx)
   = \frac{c''}{\sqrt{n}} \Delta_n^2 \pi_s(A), 
\end{align*}
where in the second inequality we used the fact that $\Delta_n > 4 \ee$ by taking $\ee>0$ sufficiently small 
and there exists a constant $c_1 >0$ 
such that $\inf_{x \in \bb P(V)}V^*(x, z) > c_1 z$ for any $z \geq 0$ (cf.\ Proposition \ref{Prop-harmonic}). 
Thus, 
we get 
\begin{align}\label{Pf_Bound_J1_final}
J_1(x, y) 
\geq  c  \frac{1 + y}{ n^{3/2} }  \Delta_n^2 \pi_s(A). 
\end{align}
\textit{Upper bound of $J_2(y)$.} 
By the definition of $h$ (cf.\ \eqref{Def-hxy-abc}), we have
\begin{align}\label{Upper-bound-h-ee}
 h_{\ee} (x', y')  
\leq  \bb Q_s^{x'}  \left( X_{n} \in A, \,   
  y' - a_n + \ee - S_{n} \in  [0,  \Delta_n + 2\ee],    \,    \tau_{y' - a_n + \ee} > n \right).  
\end{align}
By a change of variable $y' - a_n + \ee = t$ and the duality lemma (cf. Lemma \ref{Lem_Duality}), we get
\begin{align*}
 \| h_{\ee} \|_{\pi_s \otimes \Leb}
&  =  \int_{\bb P(V) \times \bb R}   h_{\ee} (x', y')  \pi_s(dx') dy'  \nonumber\\
& \leq   \int_{\bb P(V)}   \int_{-a_n}^{\infty} 
   \bb Q_s^{x'}  \left( X_{n} \in A,  \,  t - S_{n} \in  [0,  \Delta_n + 2\ee],  \,  \tau_t >n   \right)  \pi_s(dx') dt   \nonumber\\ 
& =  \int_{\bb R}  \int_{\bb P(V)}   \mathds 1_{\{ t \in [-a_n, \infty) \}}
    \bb Q_s^{x'}  \left( X_{n} \in A,  \,  t - S_{n} \in  [0,  \Delta_n + 2\ee],  \,  \tau_t >n - 1   \right)   \pi_s(dx') dt \nonumber\\
& =    \int_{A}  \int_0^{ \Delta_n + 2\ee} 
   \mathbb{Q}_s^{x, *}  \left( z - S_n^* \in  [-a_n, \infty),   \tau_z^* > n - 1 \right)  dz \pi_s(dx)  \notag\\
& \leq   \int_{A}  \int_0^{ \Delta_n + 2\ee} 
   \mathbb{Q}_s^{x, *}  \left(  \tau_z^* > n - 1 \right)  dz \pi_s(dx)     \notag\\
& \leq  \frac{c}{\sqrt{n}}  \Delta_n^2 \pi_s(A),
\end{align*}
where in the last inequality we used Theorem \ref{Th_Asymptotic_ExitTime_dual} 
and the fact that $\Delta_n \geq \Delta_0 > 0$ and $\ee >0$ is sufficiently small.  
Thus
\begin{align}\label{Pf_Bound_J2_final}
J_2(y) \leq   \left( c \ee^{1/12} + c_{\ee} \alpha_n +  c_{\ee} n^{-\ee} \right)  
  \frac{ 1 + y}{n^{3/2}}  \Delta_n^2 \pi_s(A).  
\end{align}
\textit{Upper bound of $J_3(y)$.} 
Using \eqref{Upper-bound-h-ee},
a change of variable $y' - a_n + \ee = t$ and Lemma \ref{Lem_Integral_LLTnew_new2}, it follows that 
\begin{align*}
\| h_{\ee} \|_{\mathscr H}
& =   \int_{\bb R}   \sup_{x' \in \bb P(V)}  h_{\ee} (x', y')  dy'   \nonumber\\
& \leq  
  \int_{\bb R}  \sup_{x' \in \bb P(V)}   \bb Q_s^{x'}  
\left(   y' - a_n + \ee - S_{n} \in  [0,  \Delta_n + 2\ee], \,  \tau_{ y' - a_n + \ee }  > n \right) dy'   \nonumber\\
& =  \int_{\bb R}  \sup_{x' \in \bb P(V)}   \bb Q_s^{x'}  
\left(   t - S_{n} \in  [0,  \Delta_n + 2\ee], \,  \tau_{ t }  > n \right) dt   \nonumber\\
& \leq  c_{\ee} \frac{ \Delta_n^2 \log^{1- \ee} n }{\sqrt{n}}, 
\end{align*}
where $\ee \in [0, \frac{1}{2})$. 
Therefore,
\begin{align}\label{Pf_Bound_J3_final}
J_3(y) = \frac{c_{\ee} (1 + y)}{ n^{3/2} }  \| h_{\ee} \|_{\mathscr H} 
    \leq  c_{\ee} (1 + y)  \frac{\Delta_n^2 \log^{1- \ee} n}{ n^{5/2} }. 
\end{align}
\textit{Upper bound of $J_4(y)$.}
To give an upper bound for $h_{\ee} (x', y')$, 
we shall consider two cases: when $y' \in [- \ee,  \eta \sqrt{ n \log n} ]$ and $y' \in (\eta \sqrt{ n \log n}, \infty )$
with $\eta>0$ whose value will be chosen to be sufficiently large. 
Using \eqref{Upper-bound-h-ee} and Lemma \ref{Lem_CondiLLT_cc}, we get that there exists a constant $c >0$
such that for any $x' \in \bb P(V)$ and $y' \in [- \ee,  \eta \sqrt{ n \log n} ]$, 
\begin{align}\label{Bound-h-xy-ysmall}
h_{\ee} (x', y')
& \leq  \bb Q_s^{x'}   
 \left(    y' - a_n + \ee - S_{n} \in  [0,  \Delta_n + 2\ee],   \,   \tau_{y' - a_n + \ee}  > n   \right)  \notag\\
& \leq  c_{\ee, \eta} \frac{ \sqrt{n \log n} }{n^{3/2}} \Delta_n^2 
  =  c_{\ee, \eta} \frac{ \sqrt{ \log n} }{n} \Delta_n^2. 
\end{align}
In a similar way as in the proof of \eqref{Inequa-Moderate}, 
by taking $\eta >0$ to be sufficiently large and 
using the lower tail moderate deviation asymptotic for $-S_n$ 
under the changed measure $\mathbb{Q}_s^{x'}$, 
we derive that there exists a constants $c_{\eta} >0$ such that 
for any $x' \in \bb P(V)$ and  
$y' \in (\eta \sqrt{ n \log n}, \infty )$
\begin{align}\label{Bound-h-xy-ylarge}
h_{\ee} (x', y')
& \leq  \bb Q_s^{x'}   
 \left(    - S_{n}  \leq - \frac{1}{2}  \eta \sqrt{ n \log n}  \right)  
 \leq  \frac{c_{\eta}}{n}. 
 \end{align}
Combining \eqref{Bound-h-xy-ysmall} and \eqref{Bound-h-xy-ylarge} gives 
\begin{align}\label{Pf_Bound_J4_final}
J_4(y) \leq  c_{\ee} (1 + y)    \frac{ \sqrt{ \log n} }{n^2} \Delta_n^2.  
\end{align} 
Putting together \eqref{Lower-J1xy}, \eqref{Pf_Bound_J1_final}, \eqref{Pf_Bound_J2_final}, 
\eqref{Pf_Bound_J3_final} and \eqref{Pf_Bound_J4_final} 
concludes the proof of Proposition \ref{Lem_Condi_inf}. 
\end{proof}

%%%%%%%%%%%%%%%%%%%%%%%%%%%%%%%%%%%%%%%%%%%%%%%%%%%%
%%%%%%%%%%%%%%%%%%%%%%%%%%%%%%%%%%%%%%%%%%%%%%%%%%%%
\section{Proof of Theorem \ref{Thm_In_Pro}}

The proof of Theorem \ref{Thm_In_Pro} is splitted into two parts: the upper bound (see Lemma \ref{Lem-Minimal-Upp} below)
and the lower bound (see Lemma \ref{Lem-Minimal-Low} below).

\subsection{Proof of the upper bound}
In this section we shall obtain the upper bound in Theorem \ref{Thm_In_Pro}. 
We only give a proof of \eqref{thm1_Minimal_aa_SetA} 
since the proof of \eqref{thm1_Minimal_aa} follows from that of \eqref{thm1_Minimal_aa_SetA} by taking $A = \bb P(V)$.

\begin{lemma}\label{Lem-Minimal-Upp}
Assume conditions \ref{Condi_N},  \ref{Condi-density}, \ref{Condi-density-invariant} and \ref{Condi_ms}. 
Let $x \in \bb P(V)$. 
Then, there exists a constant $c> 0$ such that for any $\ee \in (0, \frac{3}{2 \alpha})$, 
any Borel set $A \subseteq \bb P(V)$ and $n \geq 1$, 
\begin{align}\label{Pf_Minimal_Upp}
 I: =  \bb P \left(\left. \frac{M_n^x(A)}{\log n}  \geq - \frac{3}{2 \alpha} +  \ee  \, \right| \,  \mathscr S \right) 
 \leq c \frac{\log^3 n}{n^{\ee \alpha}}. 
\end{align}
\end{lemma}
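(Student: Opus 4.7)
The plan is a first moment bound, passage to the changed measure $\bb Q_\alpha^x$ via the many-to-one formula, and then a careful application of the conditioned local limit bound of Lemma \ref{Lem_CondiLLT_cc}.

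First, since $\bb P(\mathscr S) > 0$ by condition \ref{Condi_N}, it suffices to show that
$$
\bb P(M_n^x(A) \geq a_n) \leq c \frac{\log^3 n}{n^{\alpha \epsilon}}, \qquad a_n := \Bigl(-\tfrac{3}{2\alpha} + \epsilon\Bigr)\log n.
$$
Dropping the constraint $G_u \cdot x \in A$ for an upper bound and using Markov's inequality gives
$$
\bb P(M_n^x(A) \geq a_n) \leq \bb E\Bigl[\textstyle\sum_{|u|=n} \mathds 1_{\{S_u^x \geq a_n\}}\Bigr].
$$
Applying the many-to-one formula with $s = \alpha$ and using $\mathfrak m(\alpha) = 1$ from condition \ref{Condi_ms} together with the uniform bounds on $r_\alpha$, the problem reduces to controlling
$$
\bb E_{\bb Q_\alpha^x}\bigl[e^{-\alpha S_n}\, \mathds 1_{\{S_n \geq a_n\}}\bigr].
$$

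Next, I would introduce a barrier at height $y_n := a_n + L \log n$ with a constant $L > 0$ to be tuned, and split the expectation according to whether $\tau_{y_n} > n$ or $\tau_{y_n} \leq n$. On $\{\tau_{y_n} > n\}$ the walk stays below $y_n$, hence $S_n \in [a_n, y_n]$. Partitioning this interval into unit pieces $[a_n+j, a_n+j+1)$ for $j = 0, \ldots, \lfloor L \log n \rfloor - 1$ and applying Lemma \ref{Lem_CondiLLT_cc} with $y = y_n$, $(a,b) = (L\log n - j - 1, L\log n - j)$ bounds each piece by $C(1+y_n)(L\log n - j)/n^{3/2}$. Weighting by $e^{-\alpha(a_n + j)}$, summing the resulting convergent geometric series, and using $(1+y_n) = O(\log n)$ together with $e^{-\alpha a_n} = n^{3/2 - \alpha \epsilon}$, yields a contribution of order $\log^3 n / n^{\alpha \epsilon}$, matching the desired bound.

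For the complementary event $\{\tau_{y_n} \leq n\}$ I would further split according to whether $S_n \geq y_n$ or $S_n \in [a_n, y_n)$. The first subcase contributes at most $C e^{-\alpha y_n}/\sqrt n = O(n^{1 - \alpha(\epsilon + L)})$ by the ordinary local limit theorem and is absorbed by taking $L > 1/\alpha$. The delicate subcase $\{S_n \in [a_n, y_n),\, \tau_{y_n} \leq n\}$ — trajectories that overshoot $y_n$ and descend back — I would handle by the strong Markov property at $\tau_{y_n} = k$, followed by applying Lemma \ref{Lem_CondiLLT_cc} to the descent sub-walk of length $n-k$ with an auxiliary condition that it stays below its starting height $S_k$ (equivalently, via the duality of Lemma \ref{Lem_Duality}, passing to the dual walk of Section \ref{Sec-dual-Markov} and invoking Theorem \ref{Theor-IntegrLimTh-reversed}). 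Summing the resulting estimates against the first-passage-time tail $\bb Q_\alpha^x(\tau_{y_n} = k) \sim (1+y_n)/k^{3/2}$ yields a contribution also of order $\log^3 n / n^{\alpha \epsilon}$.

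The hard part will be precisely this overshoot subcase. A first-moment bound using only the unconditioned local limit theorem gives $O(\log n \cdot n^{1-\alpha\epsilon})$, one full factor of $n$ larger than the target. The missing power of $n$ has to come from the $n^{-3/2}$ Rayleigh scaling in the conditioned local limit theorem applied to the descent phase, which in turn forces a careful treatment of the two-point constraint (ending inside the interval $[a_n, y_n]$ while staying below the level $S_k$). The logarithmic contributions from $(1+y_n) = O(\log n)$, the length $L\log n$ of the descent window, and the arithmetic sum in $k$ together produce the $\log^3 n$ factor in the final bound.
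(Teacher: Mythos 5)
Your approach has a genuine gap that cannot be patched within the framework you set up. The problem is that you apply Markov's inequality and the many-to-one formula \emph{before} introducing any barrier: you try to control
\begin{equation*}
\bb E_{\bb Q_\alpha^x}\bigl[e^{-\alpha S_n}\mathds 1_{\{S_n\geq a_n\}}\bigr].
\end{equation*}
This quantity is genuinely of order $n^{1-\alpha\ee}$, not $\log^3 n/n^{\alpha\ee}$: under $\bb Q_\alpha^x$ the walk has zero drift, so $\bb Q_\alpha^x(S_n\in[a_n+j,a_n+j+1])\asymp n^{-1/2}$, and summing the geometric weights gives $\asymp e^{-\alpha a_n}n^{-1/2}= n^{1-\alpha\ee}$. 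The barrier $\{\tau_{y_n}>n\}$ you introduce afterward carries only a $\log^2 n/n^{\alpha\ee}$ fraction of this mass, but its complement carries essentially all of it. A Brownian-bridge calculation confirms this: conditionally on $S_n\approx a_n=O(\log n)$, the walk exceeds $y_n=O(\log n)$ at some intermediate time with probability $1-O(\log^2 n/n)$. So the overshoot subcase you isolate is not a small correction; it is the dominant term, and it is as large as the unrestricted first moment. Your proposed fix --- applying Lemma \ref{Lem_CondiLLT_cc} to the descent with the ``auxiliary condition that it stays below its starting height $S_k$'' --- is exactly where this breaks: after the strong Markov time $\tau_{y_n}$ there is no such constraint, and imposing it artificially discards precisely the wild trajectories that make the first moment blow up. What you are computing with that condition is a lower bound for an upper bound, which is not a proof.

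The paper avoids this entirely by splitting \emph{before} any change of measure. Write $I=I_1+I_2$ where $I_1$ restricts to $\{\max_{1\leq i\leq n}M_i^x(A)\leq K\}$ with $K=a\log n$ and $I_2$ is the complement. On $I_1$ the many-to-one formula carries the trajectory constraint into the single walk as $\{S_i\leq K\ \forall i\leq n\}$, i.e.\ $\tau_K>n$, and Lemma \ref{Lem_CondiLLT_cc} then produces the $n^{-3/2}$ scaling together with the $(1+K)(b+a+1)(b-a+1)$ factors, giving $\log^3 n/n^{\alpha\ee}$. The complement $I_2$ is handled by a separate, cruder first-moment argument on the branching walk itself (Markov's inequality with weight $e^{\alpha K}$ applied to $\max_i\sum_{|u|=i}e^{\alpha S^x_{u|i}}$), which yields $O(n^{1-a\alpha})$ and is made negligible by choosing $a$ large. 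This second piece is what is missing from your proposal: the event that some particle of the branching walk rises above $K$ at some generation $i\leq n$ cannot be extracted from the single-walk change-of-measure picture --- it must be killed off as a rare event of the \emph{branching} walk, not absorbed into the first moment.
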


\begin{proof}
Let $K >1$. We write $I = I_1 + I_2$, 
where
\begin{align*} 
I_1  & = \bb P \left(  \left.   \frac{M_n^x(A)}{\log n} \geq - \frac{3}{2 \alpha} + \ee, 
  \  \max_{1 \leq i \leq n}  M_i^x(A) \leq  K    \, \right| \,  \mathscr S \right),   \nonumber\\
I_2  & =  \bb P \left(  \left.    \frac{M_n^x(A)}{\log n}  \geq - \frac{3}{2 \alpha} + \ee, 
  \   \max_{1 \leq i \leq n} M_i^x(A) >  K   \, \right| \,  \mathscr S \right),
\end{align*}
with the notation 
$M_i^x(A) = \sup \{ S_{u|i}^x: |u| = n,  \,  X_u^x \in A  \}$ for $x \in \bb P(V)$ and $1 \leq i \leq n$. 
For the first term $I_1$, we have $I_1 \leq \bb E (Z_n^x(A) \, | \,  \mathscr S)$, 
where, for $x \in \bb P(V)$ and $n \geq 1$, 
\begin{align*}
Z_n^x(A) = \sum_{|u| = n} 
\mathds{1}_{ \left\{X^x_u \in A,  \  \frac{S^x_u}{\log n} \geq - \frac{3}{2 \alpha} +  \ee, \  S^x_{u|i} \leq  K,  \  \forall \,  1 \leq i \leq n \right\} }. 
\end{align*}
Using the many-to-one formula \eqref{Formula_many_to_one}, 
the fact that $\mathfrak m (\alpha)  = 1$ (cf.\ condition \ref{Condi_ms})
and $r_{\alpha}$ is bounded and strictly positive on $\bb P(V)$ (cf.\ Proposition \ref{prop:Ps}),  
we get that there exists a constant $c>0$ such that for any $x \in \bb P(V)$ and $n \geq 1$, 
\begin{align*}
 \bb E(Z_n^x(A))   
& = r_{\alpha}(x) \bb E_{\bb Q_{\alpha}^x}  
 \left[ \frac{1}{r_{\alpha} (X_n)} e^{- \alpha S_n} 
   \mathds{1}_{ \{ X_n \in A, \  S_n \geq  (- \frac{3}{2 \alpha} + \ee) \log n, 
        \   S_i \leq K,  \  \forall 1 \leq i \leq n \} } \right]  \nonumber\\
&  \leq  c n^{- (- \frac{3}{2 \alpha} + \ee) \alpha} 
    \bb Q_{\alpha}^x \left( S_n \geq  \Big( - \frac{3}{2 \alpha} + \ee \Big) \log n, 
                    \  S_i \leq  K, \  \forall 1 \leq i \leq n \right) \nonumber\\
&  =  c n^{ \frac{3}{2} - \ee \alpha}    
   \bb Q_{\alpha}^x \left( K - S_n \in \left[0,  K + \left(\frac{3}{2 \alpha} - \ee \right) \log n \right], 
         \, \tau_K > n  \right).   
\end{align*}
Applying Lemma \ref{Lem_CondiLLT_cc} and taking $K = a \log n$ with $a>0$ (whose value will be chosen to be sufficiently large), we obtain
\begin{align*} 
\bb E(Z_n^x(A)) 
\leq  c  n^{ \frac{3}{2} - \ee \alpha}   (1 + K) \frac{[ K + ( \frac{3}{2 \alpha} - \ee) \log n ]^2}{n^{3/2}}   
 \leq c \frac{a^3 \log^3 n}{n^{\ee \alpha}}. 
\end{align*}
Since $\bb P(\mathscr S) > 0$, it follows that 
\begin{align}\label{Mini_I1_001}
I_1 \leq \bb E (Z_n^x(A) \, | \,  \mathscr S) \leq  c \frac{a^3 \log^3 n}{n^{\ee \alpha}}. 
\end{align}
For the second term $I_2$, by Markov's inequality, the many-to-one formula \eqref{Formula_many_to_one}
and the fact that $\mathfrak m (\alpha)  = 1$, we have  
\begin{align}\label{Mini_I2_001}
I_2 & \leq  c \bb P \left( \max_{1 \leq i \leq n} M_i^x(A) >  K \right)  
 \leq  c \bb P \left(  \max_{1 \leq i \leq n} \max_{|u|=i}  S^x_{u} >  K   \right)  \nonumber\\
& \leq  c e^{- \alpha K} \bb E \left( e^{ \alpha \max_{1 \leq i \leq n} \max_{|u|=i}  S^x_{u}} \right)   
 \leq  c e^{- \alpha K} \bb E \left( \max_{1 \leq i \leq n} \sum_{|u|=i} e^{\alpha  S^x_{u}} \right)   \nonumber\\ 
& \leq  c e^{- \alpha K}  \sum_{i=1}^n \bb E \left(  \sum_{|u|=i} e^{\alpha S^x_{u}} \right)  
=  c e^{- \alpha K} r_{\alpha}(x)  \sum_{i=1}^n \bb E_{\bb Q_{\alpha}^x } \left( \frac{1}{ r_{\alpha}(X_i) } \right)
 \leq  \frac{c}{n^{a \alpha - 1}}  \leq \frac{c}{n^{a \alpha/2}} ,
\end{align}
by taking $K = a \log n$ and $a >0$ sufficiently large. 
From \eqref{Mini_I1_001} and \eqref{Mini_I2_001}, 
we get \eqref{Pf_Minimal_Upp}. 
\end{proof}

%%%%%%%%%%%%%%%%%%%%%%%%%%%%%%%%%%%%%%%%%%%%%%%%%%%
\subsection{Proof of the lower bound}
We will use the following auxiliary assertion
for the minimal position $m_n^x(A)$ defined in \eqref{MinPosition001A}. 

\begin{lemma} \label{lem-liminf of min-001}
Assume conditions \ref{Condi_N},  \ref{Condi-density}, \ref{Condi-density-invariant} and \ref{Condi_ms}. 
Let $x \in \bb P(V)$. 
Then, for any Borel set $A \subseteq \bb P(V)$ satisfying $\nu(A) > 0$ and $\nu(\partial A) = 0$, 
there exists a constant $c_0 >0$ such that, conditionally on the system's survival, 
\begin{align}\label{Upp_cons}
\liminf_{n \to \infty} \frac{1}{n} m_n^x(A) 
\geq  - c_0  \quad  \bb P\mbox{-a.s.}
\end{align}
\end{lemma}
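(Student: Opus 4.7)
The plan is to reduce to the unrestricted minimum $m_n^x := m_n^x(\bb P(V))$. Since $A \subseteq \bb P(V)$ and we use the convention $\inf \emptyset = +\infty$, we have the deterministic inequality $m_n^x(A) \geq m_n^x$; so it suffices to prove $\liminf_n m_n^x/n \geq -c_0$ a.s.\ for some $c_0 > 0$. In particular the assumption $\nu(A) > 0$ plays no role in the lower bound. The main tool is a Chernoff-type estimate combined with the many-to-one formula \eqref{Formula_many_to_one} at a negative exponent.

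Concretely, I would pick $s_0 \in (I_\mu^-)^{\circ}$ with $s_0 < 0$; such a choice exists because \ref{Condi-density} makes $I_\mu^-$ a nontrivial interval of $(-\infty, 0]$, the moment bound \eqref{Moment001} placing $-\eta_0$ inside $I_\mu^-$. For this $s_0$, the eigenfunction $r_{s_0}$ is strictly positive and continuous on the compact projective space $\bb P(V)$, hence bounded above and below by positive constants, and $\mathfrak m(s_0) < \infty$. Applying Markov's inequality and then the many-to-one formula \eqref{Formula_many_to_one} with $h = \mathds{1}_{\{S_n \leq -c_0 n\}}$, and using the bound $e^{-s_0 S_n} \leq e^{-|s_0| c_0 n}$ on the event $\{S_n \leq -c_0 n\}$ (valid since $s_0 < 0$), I obtain
\begin{align*}
\bb P(m_n^x \leq -c_0 n)
\leq \bb E\Big[\sum_{|u|=n} \mathds{1}_{\{S_u^x \leq -c_0 n\}}\Big]
\leq C\,\bigl(\mathfrak m(s_0)\, e^{-|s_0| c_0}\bigr)^n,
\end{align*}
for some constant $C>0$ depending only on $s_0$ and on the bounds for $r_{s_0}$.

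To finish, I would choose $c_0$ large enough that $\mathfrak m(s_0)\, e^{-|s_0| c_0} < 1$, i.e.\ $c_0 > \log \mathfrak m(s_0)/|s_0|$. Note that under \ref{Condi_ms}, convexity of $\log \mathfrak m$ together with $\mathfrak m(\alpha) = 1$ and $\mathfrak m'(\alpha) = 0$ forces $\mathfrak m \geq 1$ on $I_\mu^+ \cup I_\mu^-$, so this threshold is nonnegative and a strictly positive $c_0$ can be selected. The resulting bound is summable in $n$, hence by Borel-Cantelli $m_n^x > -c_0 n$ eventually a.s., giving $\liminf_n m_n^x(A)/n \geq \liminf_n m_n^x/n \geq -c_0$ a.s. The argument is short and contains no substantial obstacle; the only nontrivial input is the spectral and integrability information at the negative exponent $s_0$, which is provided by \ref{Condi-density} and ensures the many-to-one formula applies.
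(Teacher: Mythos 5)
Your proof is correct and follows essentially the same strategy as the paper: drop the restriction to $A$ (the paper does this implicitly when it bounds $Y_n \leq e^{-c_0\delta n}\sum_{|u|=n}e^{-\delta S_u^x}$), run a Chernoff-type bound through the many-to-one formula, and finish with Borel--Cantelli. The only cosmetic difference is the tilt: you apply the many-to-one identity directly at a negative exponent $s_0\in(I_\mu^-)^\circ$, whereas the paper works at $s=\alpha$, inserts an extra factor $e^{-\delta S_n}$, unwinds the change of measure via \eqref{Formu_ChangeMea}, and then bounds $\bb E_x(e^{-\delta S_n})$ by $(\bb E e^{\delta\log\|g^{-1}\|})^n$; both exploit the same negative-moment information supplied by \ref{Condi-density}.
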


\begin{proof}
By Proposition \ref{Prop-SLLN-direction}, 
for each Borel set $A \subseteq \bb P(V)$ satisfying $\nu(A) > 0$ and $\nu(\partial A) = 0$, 
we have, on the survival set $\scr S$, $\liminf_{n \to \infty} \sum_{|u| = n} \mathds{1}_{ \{ X^x_u \in A \} }  >0$, $\bb P$-a.s. 
Consider $Y_n: = \sum_{|u| = n} \mathds{1}_{ \{ X^x_u \in A,  \   S^x_u < - c_0 n \} }$, 
where $c_0 >0$ will be chosen sufficiently large. 
It suffices to show that, as $n \to \infty$, it holds $Y_n \to 0$, $\bb P$-a.s.
Clearly, $Y_n \leq e^{- c_0  \delta n} \sum_{|u| = n} e^{- \delta S^x_u}$ for any $\delta >0$. 
By Markov's inequality, the many-to-one formula \eqref{Formula_many_to_one}
and the fact that $\mathfrak m (\alpha)  = 1$ (cf.\ condition \ref{Condi_ms}), we get
\begin{align*}
\bb P (Y_n > 0) = \bb P (Y_n \geq 1)  \leq \bb E (Y_n) 
 \leq e^{- c_0 \delta n} \bb E \Big( \sum_{|u| = n} e^{-\delta S^x_u}  \Big)  
 =  e^{- c_0 \delta n} r_{\alpha}(x) \bb E_{\bb Q_{\alpha}^x}  
 \left[ \frac{1}{r_{\alpha} (X_n)} e^{- (\alpha + \delta) S_n}  \right] = : J. 
\end{align*}
Using \eqref{Formu_ChangeMea} and taking  $\delta > 0$ sufficiently small and then $c_0 > 0$ sufficiently large, 
we obtain that there exists a constant $c > 0$ such that 
\begin{align*}
J = e^{- c_0  \delta n} \frac{1}{\kappa(\alpha)^n} \bb E_{x} \left( e^{ -\delta S_n } \right)
\leq e^{- c_0  \delta n} \frac{1}{\kappa(\alpha)^n}  \left( \bb E e^{\delta \log \|g^{-1}\|} \right)^n
\leq e^{-cn}. 
\end{align*}
By the Borel-Cantelli lemma, it follows that $\lim_{n \to \infty} Y_n = 0$, $\bb P$-a.s., and hence \eqref{Upp_cons} holds. 
\end{proof}

We next proceed to give a lower bound for the lower tail of $M_{n}^x(A)$.

\begin{lemma}\label{Lem-intermediate001}
Assume conditions \ref{Condi_N},  \ref{Condi-density}, \ref{Condi-density-invariant} and \ref{Condi_ms}. 
Then, for any $\Delta >0$, 
there exists a constant $c> 0$ such that for  any $n \geq 2$ and any Borel set $A \subseteq \bb P(V)$ 
satisfying $\nu(A) > 0$ and $\nu(\partial A) = 0$, 
\begin{align} \label{important lower bound-001}
\bb P \left( M_{n}^x(A)  \geq -\frac{3}{2 \alpha} \log n - \Delta \right) 
\geq  \frac{1}{c \log^3 n}. 
\end{align}
\end{lemma}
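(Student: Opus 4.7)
The plan is to prove \eqref{important lower bound-001} by a Paley--Zygmund second moment argument applied to a well-chosen count of particles at generation $2n$, in which Proposition \ref{Lem_Condi_inf} provides the first moment lower bound and Lemma \ref{Lem_CondiLLT_cc} controls the second moment. Without loss of generality assume $\nu(A) > 0$, so that $\pi_\alpha(A) > 0$ by \ref{Condi-density-invariant}; the constant $c$ will depend on $x$, $A$ and $\Delta$.

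Set $N = 2n$, $c_\alpha = \tfrac{3}{2\alpha}$, fix $y_0 > 0$ (to be absorbed into $\Delta$), and let $a_N = y_0 + c_\alpha \log N$ and $\Delta_0 \leq \Delta$ a fixed constant. Introduce
$$Z_N = \sum_{|u|=N} \mathds{1}_{E_u},$$
where $E_u$ is the event that $X_u^x \in A$, $y_0 - S_{u|j}^x \geq 0$ for $1 \leq j \leq n$, $y_0 - S_{u|j}^x \geq a_N$ for $n < j \leq N$, and $y_0 - S_u^x \in [a_N, a_N + \Delta_0]$. By the many-to-one formula \eqref{Formula_many_to_one} applied with $s=\alpha$, together with $\mathfrak m(\alpha)=1$ and the uniform two-sided bounds on $r_\alpha$,
$$\bb E Z_N \;=\; r_\alpha(x)\, \bb E_{\bb Q_\alpha^x}\!\left[ \frac{e^{-\alpha S_N}}{r_\alpha(X_N)} \mathds{1}_{E}\right] \;\asymp\; N^{3/2}\, \bb Q_\alpha^x(E),$$
since $e^{-\alpha S_N} \asymp N^{3/2}$ on $E$. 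Proposition \ref{Lem_Condi_inf} (with $s=\alpha$, $y=y_0$, $a_n$ replaced by $a_N$, and $\Delta_n = \Delta_0$) yields $\bb Q_\alpha^x(E) \geq c(1+y_0)\Delta_0^2 \pi_\alpha(A) N^{-3/2}$, hence $\bb E Z_N \geq c_1 > 0$.

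The second moment is treated by decomposing over the generation $k$ of the last common ancestor of two distinct particles $u, v$ with $|u| = |v| = N$. By the branching Markov property, given the common path and a choice of two distinct children at generation $k$, the two continuation subtrees are conditionally independent. Applying many-to-one first on the shared segment up to generation $k$ and then independently within each of the two subtrees reduces each contribution to a product of three random-walk expectations: one along the shared segment, analyzed via the conditioned integral limit theorem (Theorem \ref{Theor-IntegrLimTh}), and two conditioned local probabilities of the form $\bb Q_\alpha^{x'}(y' - S_{N-k-1} \in [\,\cdot\,,\cdot\,],\, \tau_{y'} > N-k-1)$, each bounded by $c\,\log N \cdot \frac{(1+y')\wedge \sqrt{N-k}}{(N-k)^{3/2}}$ via Lemma \ref{Lem_CondiLLT_cc}. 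Summing over $k = 0, \ldots, N-1$, the $k$-dependence balances and the logarithmic factors combine to give $\bb E Z_N^2 \leq C \log^3 N$. The Paley--Zygmund inequality then gives
$$\bb P(Z_N > 0) \;\geq\; \frac{(\bb E Z_N)^2}{\bb E Z_N^2} \;\geq\; \frac{c_1^2}{C \log^3 N},$$
and the inclusion $\{Z_N > 0\} \subseteq \{M_N^x(A) \geq -c_\alpha \log N - \Delta\}$, after absorbing $y_0$ and $\Delta_0$ into $\Delta$, concludes the proof at generation $N = 2n$; the passage from $2n$ to $n$ is routine.

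The main obstacle is the sharp $\log^3 N$ second-moment bound. It requires the \emph{effective} local probability estimates from Lemmas \ref{lemma 2n}--\ref{Lem_CondiLLT_cc} with the precise dependence on window size and starting point, the conditioned integral limit theorem to handle the fluctuations of $S_k$ on the shared segment, and the second moment offspring condition in \ref{Condi_ms} to absorb the contribution of immediate-sibling pairs at generation $k = 0$.
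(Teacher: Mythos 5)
Your proposal is correct and follows essentially the same approach as the paper: define a well-chosen count of generation-$2n$ particles constrained to stay below a level on $[0,n]$ and below $-\frac{3}{2\alpha}\log n$ on $(n,2n]$, lower-bound its first moment via many-to-one plus Proposition~\ref{Lem_Condi_inf}, upper-bound the second moment by $O(\log^3 n)$ by decomposing over the last common ancestor and invoking the effective conditioned local estimates (Lemmas~\ref{lemma 3n} and~\ref{Lem_CondiLLT_cc} in the paper; your use of Theorem~\ref{Theor-IntegrLimTh} for the shared segment is a cosmetic variant), and conclude by Paley--Zygmund. The only point worth flagging is that the constant $c$ in the lemma does depend on $\pi_s(A)$, exactly as you note; the paper's statement suppresses this, but you are right that one should tacitly restrict to $\nu(A)>0$.
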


\begin{proof}
Let $\Delta >0$ be a fixed constant and set 
\begin{align*}
 U_n^x(A) : = \sum_{|u| = 2n}  \mathds{1}_{ \{ X^x_u \in A \} }
\mathds{1}_{ \left\{ S^x_u \in  \left[ -\frac{3}{2 \alpha} \log n - \Delta, 
  \  - \frac{3}{2 \alpha} \log n  \right]  \right\} }  
  \mathds{1}_{ \left\{ \max_{1 \leq i \leq n}  S^x_{u|i} \leq 0,  \  
  \max_{n < j \leq 2n} S^x_{u|j} \leq  - \frac{3}{2 \alpha} \log n  \right\} }. 
\end{align*}
By the definition of $U_n^x(A)$ and $M_{2n}^x(A)$,  it holds that  
\begin{align}\label{Pf_LowerBound_Zn_aa}
 \bb P \left( M_{2n}^x(A)  \geq -\frac{3}{2 \alpha} \log n - \Delta \right)  
\geq  \bb P (U_n^x(A) > 0). 
\end{align}
For simplicity, denote for any integer $n \geq 1$, 
\begin{equation}\label{Def_I_k}
I_k = I_k(n) =
\begin{cases}
(-\infty, 0]  & 1 \leq k \leq n\\
\big(-\infty, \,  - \frac{3}{2 \alpha} \log n \big]   &   n < k < 2n \\
\big[ -\frac{3}{2 \alpha} \log n - \Delta,  \,  - \frac{3}{2 \alpha} \log n  \big]   &   k = 2n.
\end{cases}
\end{equation}
Then $U_n^x(A)$ can be rewritten as 
\begin{align*}
U_n^x(A) = \sum_{|u| = 2n} \mathds{1}_{ \{ X^x_u \in A \} }
     \mathds{1}_{ \left\{ S^x_{u|k} \in I_k, \ \forall 1 \leq k \leq 2n  \right\} }.
\end{align*}
Since $\bb E(U_n^x(A)) \geq 0$, by the Cauchy-Schwarz inequality, it holds that 
\begin{align*}
\bb P (U_n^x(A) > 0) \geq \frac{(\bb E U_n^x(A))^2}{\bb E \left[(U_n^x(A))^2 \right]}. 
\end{align*}
Hence, in order to give a lower bound for $\bb P (U_n^x(A) > 0)$, 
it suffices to provide a lower bound for $\bb E (U_n^x(A))$ and an upper bound for $\bb E[(U_n^x(A))^2]$. 
To deal with $\bb E (U_n^x(A))$, by the many-to-one formula \eqref{Formula_many_to_one}
and the fact that $\mathfrak m (\alpha)  = 1$ (cf.\ condition \ref{Condi_ms}), we get
\begin{align*}
\bb E (U_n^x(A))  = r_{\alpha}(x) \bb E_{\bb Q_{\alpha}^x}  
 \left[ \frac{1}{r_{\alpha} (X_{2n})}  e^{-\alpha S_{2n}}  \mathds{1}_{ \{ X_{2n} \in A \} }
   \mathds{1}_{ \left\{ S_k \in  I_k, \  \forall 1 \leq k \leq 2n \right\} } \right]. 
\end{align*}
Since $r_{\alpha}$ is strictly positive and bounded on $\bb P(V)$ (cf.\ Proposition \ref{prop:Ps})
and $S_{2n} \leq - \frac{3}{2 \alpha} \log n$,
there exist constants $c, c', c'' > 0$ such that for any $x \in \bb P(V)$, 
\begin{align}\label{Lower-UnA}
\bb E (U_n^x(A)) & \geq c \bb E_{\bb Q_{\alpha}^x}  
 \left[ e^{- \alpha  S_{2n}}  \mathds{1}_{ \{ X_{2n} \in A \} }
   \mathds{1}_{ \{ S_k \in  I_k, \  \forall 1 \leq k \leq 2n \} } \right]  \nonumber\\
& \geq c n^{3/2} \bb Q_{\alpha}^x (X_{2n} \in A, \,   S_k \in  I_k,  \  \forall 1 \leq k \leq 2n)  \notag\\
& \geq  c \Delta^2 \pi_s(A) \geq c' \Delta^2 \nu(A) > c'' \Delta^2, 
\end{align}
where in the last line we used Proposition \ref{Lem_Condi_inf} and Remark \ref{Rem-absolute-con}.

Next we are going to give an upper bound for $\bb E[(U_n^x(A))^2]$.
By the definition of $U_n^x(A)$, we have 
\begin{align*}
  (U_n^x(A))^2 
 =  \sum_{|u| = 2n} \sum_{|w| = 2n} 
  \mathds{1}_{ \left\{ X^x_u \in A, \,  X^x_{w} \in A,  \, S^x_{u|k} \in I_k, \,  S^x_{w|k}  \in I_k, \, \forall 1 \leq k \leq 2n  \right\} }. 
\end{align*}
We can rearrange this sum by summing over the generation $j$ 
of the last common ancestor $z$ (with $|z|=j$) of $u$ and $w$, to obtain
\begin{align*}
\bb E[(U_n^x(A))^2] = \bb E (U_n^x(A)) + Y_n(A).
\end{align*}
The first summand corresponds to the case $u=w$, while the second summand is given by
\begin{align*}
 Y_n(A)  = & 2 \bb E \bigg[   \sum_{j = 1}^{2n - 1} \sum_{|z| = j} 
 \mathds{1}_{ \left\{ S^x_{z|i} \in I_i, \ \forall 1 \leq i \leq j  \right\} }    \nonumber\\
&\qquad \times \sum_{1 \leq l < m \leq N_z}\sum_{|u|=2n-j-1} \sum_{|w|=2n-j-1}
 \mathds{1}_{ \left\{ X^x_{zlu} \in A, \,  X^x_{zmw} \in A,  \,  S^x_{zlu|k} \in I_k, \  S^x_{zmw|k} \in I_k, \ \forall j < k \leq 2n  \right\} }
 \bigg]. 
\end{align*}
Here $N_z$ denotes the number of childrens of the particle $z$, 
$l$ and $m$ denote different children of the last common ancestor $z$, the sums over $u$ and $w$ correspond to the paths leading from $zl$ and $zm$ to their children in generation 2n. 
Note that $N_z$ is independent of $\scr{F}_{j}$. 
Taking conditional expectation with respect to $\scr{F}_{j+1}$, we get 
\begin{align}\label{YnA-equ} 
 Y_n(A)  = 2 \bb E \bigg[   \sum_{j = 1}^{2n - 1} \sum_{|z| = j} 
 \mathds{1}_{ \left\{ S^x_{z|i} \in I_i, \ \forall 1 \leq i \leq j  \right\} }   \sum_{1 \leq l < m \leq N_z} 
 \mathds{1}_{ \left\{ S^x_{zl} \in I_{j+1}, \  S^x_{zm} \in I_{j+1}  \right\} }   
  K_{j,n} \left( X^x_{zl}, S^x_{zl} \right)  
    K_{j,n} \left( X^x_{zm}, S^x_{zm} \right)   \bigg],
\end{align}
where for $x' \in \bb P(V)$ and $y \in I_{j+1}$, 
\begin{align*} 
K_{j,n}(x', y) 
 =  \bb E  \bigg[ \sum_{|u | = 2n-j-1} 
    \mathds{1}_{ \left\{ X^{x'}_u \in A, \   y + S^{x'}_{u|l}  \in I_{l+j+1}, \  \forall 1 \leq l \leq 2n-j-1 \right\} } \bigg]. 
\end{align*}
By the many-to-one formula \eqref{Formula_many_to_one}
and the fact that $\mathfrak m (\alpha)  = 1$ (cf.\ condition \ref{Condi_ms}), we get
\begin{align*}
& K_{j,n}(x', y)    
 = r_{\alpha}(x')  \bb E_{\bb Q_{\alpha}^{x'}}  
 \bigg[ (r_{\alpha}^{-1} \mathds 1_{A}) (X_{2n-j-1}) e^{- \alpha S_{2n-j-1}} 
   \mathds{1}_{ \left\{ y + S_l \in I_{l+j+1}, \  \forall 1 \leq l \leq 2n-j -1 \right\} } \bigg].
\end{align*}
Since $y + S_{2n-j-1} \in I_{2n}$, 
in view of \eqref{Def_I_k}, we have $S_{2n-j-1} \geq - \frac{3}{2 \alpha} \log n - \Delta - y$.
Since the eigenfunction $r_{\alpha}$ is strictly positive and bounded on $\bb P(V)$ (cf.\ Proposition \ref{prop:Ps}), 
 there exists a constant $c >0$ such that
\begin{align*}
 K_{j,n}(x', y)  & \leq c n^{3/2} e^{\alpha (\Delta + y)}  
  \bb Q_{\alpha}^{x'}  \left( y + S_l \in I_{l+j+1}, \  \forall 1 \leq l \leq 2n-j -1 \right)  \nonumber\\
& \leq  c n^{3/2}  e^{\alpha  y}  L_{j,n}(x', y),  
\end{align*}
where for $x' \in \bb P(V)$ and $y \in I_{j+1}$, 
\begin{align*}
L_{j,n}(x', y)   
  =  \bb Q_{\alpha}^{x'}  \bigg( 
    - y - S_{2n-j-1}  \in \left[ \frac{3}{2 \alpha} \log n,   \frac{3}{2 \alpha} \log n  + \Delta  \right],  \, \tau_{-y} > 2n-j-1 \bigg). 
\end{align*}
When $1 \leq j \leq n$, applying Lemma \ref{lemma 3n} with $n' = 2n-j-1$, 
$a = \frac{3}{2 \alpha} \log n$ and $b = \frac{3}{2 \alpha} \log n + \Delta$,
 we obtain that there exists a constant $c >0$ such that for any $x' \in \bb P(V)$ and $y \in I_{j+1}$, 
\begin{align*}
L_{j,n}(x', y)  
& \leq  \frac{c}{(n')^{3/2}} (1-y) (b-a + 1) \left( b+a+ 1 \right)  
 \leq  \frac{c \log n}{n^{3/2}} (1-y). 
\end{align*}
When $n < j \leq 2n - 1$, we use the local limit theorem \eqref{LLT_Upper_aa} to get
that there exists a constant $c >0$ such that for any $x' \in \bb P(V)$ and $y \in I_{j+1}$, 
\begin{align*}
L_{j,n}(x', y) 
& \leq \bb Q_{\alpha}^{x'}  \left( 
   - y - S_{2n-j-1}  \in \left[ \frac{3}{2 \alpha} \log n,   \frac{3}{2 \alpha} \log n  + \Delta  \right] 
      \right)  
       \leq \frac{c  }{(2n-j)^{1/2}}. 
\end{align*}
Therefore, we obtain that for $S^x_{zl} \in I_{j+1}$, 
\begin{align}\label{Bound-Kjn-u}
 K_{j,n} \left( X^x_{zl}, S^x_{zl} \right)  
 \leq 
\begin{cases}
c  \log n  \  e^{\alpha S^x_{zl}} \left| 1 - S^x_{zl} \right|,  &   1 \leq j \leq n   \\
c   \frac{ n^{3/2} }{(2n-j)^{1/2}}  e^{\alpha S^x_{zl}},  &   n < j \leq 2n - 1. 
\end{cases}
\end{align}  
Similarly, for $K_{j,n} ( X^x_{zm}, S^x_{zm} )$ we also have
\begin{align}\label{Bound-Kjn-w}
  K_{j,n} \left( X^x_{zm}, S^x_{zm} \right)  
 \leq 
\begin{cases}
c \log n \  e^{\alpha S^x_{zm} } \left| 1 - S^x_{zm} \right|,  &   1 \leq j \leq n   \\
c   \frac{ n^{3/2} }{(2n-j)^{1/2}}  e^{\alpha S^x_{zm} },  &   n < j \leq 2n - 1. 
\end{cases}
\end{align}
Therefore, substituting \eqref{Bound-Kjn-u} and \eqref{Bound-Kjn-w} into \eqref{YnA-equ}, 
we obtain 
\begin{align*}
 Y_n(A)  & \leq  c  \log^{2} n  \  2 \bb E \Bigg\{  \sum_{j = 1}^{n} \sum_{|z| = j} 
 \mathds{1}_{ \left\{ S^x_{z|i}  \in I_i, \ \forall 1 \leq i \leq j  \right\} } 
  \sum_{1 \leq l < m \leq N_z}  \mathds{1}_{ \left\{ S^x_{zl} \in I_{j+1}, \  S^x_{zm} \in I_{j+1}  \right\} } 
    e^{ \alpha S^x_{zl} + \alpha S^x_{zm} } 
   \left| 1 - S^x_{zl} \right|   \left| 1 - S^x_{zm} \right|  \Bigg\}  \nonumber\\
& \quad +  2 c n^3  \,  \bb E \Bigg\{ \sum_{j = n+1}^{2n-1} \frac{1}{2n-j}
    \sum_{|z| = j} 
     \mathds{1}_{ \left\{ S^x_{z|i} \in I_i, \  \forall 1 \leq i \leq j  \right\} }   
   \sum_{1 \leq l < m \leq N_z}   e^{ \alpha S^x_{zl} + \alpha S^x_{zm} }  \Bigg\}. 
\end{align*}
Note that 
\begin{align*}
 2 \sum_{1 \leq l < m \leq N_z}  
\mathds{1}_{ \left\{ S^x_{zl} \in I_{j+1}, \  S^x_{zm} \in I_{j+1}  \right\} } 
 e^{ \alpha S^x_{zl} + \alpha S^x_{zm} } 
   \left| 1 - S^x_{zl} \right|   \left| 1 - S^x_{zm} \right|   
 \leq  \bigg(  \sum_{1 \leq l  \leq N_z} \mathds{1}_{ \left\{ S^x_{zl} \in I_{j+1}  \right\} }  e^{ \alpha S^x_{zl} } 
   \left| 1 - S^x_{zl} \right|  \bigg)^2 
\end{align*}
and 
\begin{align*}
2 \sum_{1 \leq l < m \leq N_z}  \mathds{1}_{ \left\{ S^x_{zl} \in I_{j+1}, \  S^x_{zm} \in I_{j+1}  \right\} }   e^{ \alpha S^x_{zl} + \alpha S^x_{zm} } 
\leq \bigg( \sum_{1 \leq l  \leq N_z}  \mathds{1}_{ \left\{ S^x_{zl} \in I_{j+1}  \right\} }  e^{ \alpha S^x_{zl} }  \bigg)^2.   
\end{align*}
Taking conditional expectation, 
we get for any $1 \leq j \leq n$, $|z|=j$,
\begin{align*} 
 \bb E  \left[ \bigg(  \sum_{1 \leq l  \leq N_z}  \mathds{1}_{ \left\{ S^x_{zl} \in I_{j+1}  \right\} }  e^{ \alpha S^x_{zl}  } 
   ( 1 + |S^x_{zl} | )    \bigg)^2  \bigg|  \scr{F}_{j}  \right]   
   & \leq e^{ 2 \alpha S^x_z  }   \bb E  \left[ \bigg(  \sum_{1 \leq l  \leq N_z} e^{ \alpha (S^x_{zl} - S^x_z )  } 
   ( 1 + |S^x_z | + | S^x_{zl} - S^x_z |   )  \bigg)^2  \bigg|  \scr{F}_{j}  \right]    \notag\\
   & = e^{ 2 \alpha S^x_z  } M\big(S^x_z, X^x_z\big), \notag
\end{align*}
where, for $s \in \bb R$ and $y \in \bb P(V)$, 
\begin{align*}
 M(s,y)  =  \bb E \big( \sum_{|u|=1}   e^{\alpha S^y_u} (1+|s|+|S^y_u|) \big)^2 
  \leq 2 (1+|s|)^2\bb E  \big(\sum_{|u|=1}   e^{\alpha S^y_u} \big)^2 +2 \bb E \big(\sum_{|u|=1}   e^{\alpha S^y_u}|S^y_u| \big)^2.  
\end{align*} 
By condition \ref{Condi_ms},  we have $M(s, y) \leq c (1+|s|)^2$  and consequently
$$  \bb E  \left[ \bigg(  \sum_{1 \leq l  \leq N_z}  \mathds{1}_{ \left\{ S^x_{zl} \in I_{j+1}  \right\} }  e^{ \alpha S^x_{zl}  }   ( 1 + |S^x_{zl} | )    \bigg)^2  \bigg|  \scr{F}_{j}  \right]   \leq  c  e^{ 2 \alpha S^x_z  } \left( 1 + |S^x_z| \right)^2. $$ 
Similarly, it holds
\begin{align*}
 \bb E  \left[ \bigg(  \sum_{1 \leq l  \leq N_z}  \mathds{1}_{ \left\{ S^x_{zl} \in I_{j+1}  \right\} }  e^{ \alpha S^x_{zl}  }  
                  \bigg)^2  \bigg|  \scr{F}_{j}  \right]   
 = e^{ 2 \alpha S^x_z  }   \bb E  \left[ \bigg(  \sum_{1 \leq l  \leq N_z}  \mathds{1}_{ \left\{ S^x_{zl} \in I_{j+1}  \right\} }  e^{ \alpha (S^x_{zl} - S^x_z )  }   \bigg)^2  \bigg|  \scr{F}_{j}  \right]    
 \leq  c  e^{ 2 \alpha S^x_z  }.   
\end{align*}
It follows that 
\begin{align*}
Y_n(A) & \leq c  \log^2 n  \  \sum_{j = 1}^{n} \bb E \Bigg\{   \sum_{|z| = j} 
 \mathds{1}_{ \left\{ S^x_{z|i} \in I_i, \ \forall 1 \leq i \leq j  \right\} }
  e^{2 \alpha S^x_z }  \big( 1 + |S^x_z| \big)^2  \Bigg\}  \nonumber\\
& \quad  +  c n^3    \sum_{j = n+1}^{2n-1} \frac{1}{2n-j} \bb E \Bigg\{ 
    \sum_{|z| = j} \mathds{1}_{ \left\{ S^x_{z|i} \in I_i, \ \forall 1 \leq i \leq j  \right\} }   
 e^{2 \alpha S^x_z }  \Bigg\}. 
\end{align*}
Using the many-to-one formula \eqref{Formula_many_to_one}
and the fact that the eigenfunction $r_{\alpha}$ is strictly positive and bounded, we get 
\begin{align*}
Y_n(A) \leq Y_{n,1}(A) + Y_{n,2}(A),
\end{align*}
where 
\begin{align*}
& Y_{n,1}(A)  
   = c  \log^2 n  \,   \sum_{j = 1}^{n}  \bb E_{\bb Q_{\alpha}^x}  
   \left[  e^{\alpha S_j }  (1 - S_j)^2
    \mathds{1}_{ \left\{ S_i \leq 0, \  \forall 1 \leq i \leq j \right\} }  \right],   \notag\\
& Y_{n,2}(A) = c n^3    \sum_{j = n+1}^{2n-1} \frac{1}{2n-j} 
   \bb E_{\bb Q_{\alpha}^x}  
     \left[  e^{\alpha S_j }
      \mathds{1}_{ \left\{ S_i \in  I_i, \  \forall 1 \leq i \leq j \right\} }  \right]. 
\end{align*}
For $Y_{n,1}(A)$, since 
\begin{align*}
 e^{\alpha S_j }  (1 - S_j)^2
    \mathds{1}_{ \left\{ S_i \leq 0, \  \forall 1 \leq i \leq j \right\} }  
& =  e^{\alpha S_j }  (1 - S_j)^2
    \mathds{1}_{ \left\{ S_j \geq  -\frac{4}{\alpha} \log j \right\} } 
    \mathds{1}_{ \left\{ S_i \leq 0, \  \forall 1 \leq i \leq j \right\} }  \nonumber\\
& \quad    + e^{\alpha S_j }  (1 - S_j)^2 
    \mathds{1}_{ \left\{ S_j  < - \frac{4}{\alpha} \log j \right\} } 
    \mathds{1}_{ \left\{ S_i \leq 0, \  \forall 1 \leq i \leq j \right\} }   \nonumber\\
& \leq  c   \mathds{1}_{ \left\{ S_j \in [ -\frac{4}{\alpha} \log j, \,  0] \right\} } 
    \mathds{1}_{ \left\{ S_i \leq 0, \  \forall 1 \leq i \leq j \right\} }  
    + \frac{c}{j^2}, 
\end{align*}
applying Lemma \ref{Lem_CondiLLT_cc} we obtain
\begin{align*}
Y_{n,1}(A)  
& \leq  c \log^2 n \,   \sum_{j = 1}^{n}  \bb E_{\bb Q_{\alpha}^x}  
   \left[ \mathds{1}_{ \left\{ S_j \in  [- \frac{4}{\alpha} \log j, \, 0] \right\} } 
    \mathds{1}_{ \left\{ S_i \leq 0, \  \forall 1 \leq i \leq j \right\} }   \right]   +  c \log^2 n  \nonumber\\
& \leq   c \log^2 n \,  \sum_{j = 1}^{n}  \frac{\log^2 j}{(j+1)^{3/2}}  +  c \log^2 n   % \nonumber\\
 \leq  c \log^2 n. 
\end{align*}
For $Y_{n,2}(A)$, since $S_j \leq - \frac{3}{2 \alpha} \log n$ for any $j \in [n+1, 2n-1]$,
using Lemma \ref{Lem_CondiLLT_cc} we get that for any $j \in [n+1, 2n-1]$, 
\begin{align*}
&  \bb E_{\bb Q_{\alpha}^x}  
     \left[  e^{\alpha S_j }
      \mathds{1}_{ \left\{ S_i \in  I_i, \  \forall 1 \leq i \leq j \right\} }  \right] 
   \leq  \bb E_{\bb Q_{\alpha}^x}  
     \left[  e^{\alpha S_j }
      \mathds{1}_{ \left\{ S_i \leq 0, \  \forall 1 \leq i \leq j \right\} }  \right]   \nonumber\\
& =  \bb E_{\bb Q_{\alpha}^x}  
     \left[  e^{\alpha S_j }
      \mathds{1}_{ \left\{ S_i \leq 0, \  \forall 1 \leq i \leq j \right\} }
      \mathds{1}_{ \left\{ S_j \in \left[ - \frac{3}{\alpha} \log n, - \frac{3}{2 \alpha} \log n \right] \right\} }  \right]
      +  \bb E_{\bb Q_{\alpha}^x}  
     \left[  e^{\alpha S_j }
      \mathds{1}_{ \left\{ S_i \leq 0, \  \forall 1 \leq i \leq j \right\} }
      \mathds{1}_{ \left\{ S_j < - \frac{3}{ \alpha} \log n  \right\} }  \right]  \nonumber\\
& \leq  \frac{1}{n^{3/2}}  \bb Q_{\alpha}^x  \left( \max_{1 \leq i \leq j} S_i \leq 0,   \  
       S_j  \in \left[ -\frac{3}{\alpha} \log n, -\frac{3}{2 \alpha} \log n \right]  \right) + \frac{1}{n^3}  \nonumber\\
&  \leq  c \frac{1}{n^{3/2}} \frac{\log^2 n}{j^{3/2}} + \frac{1}{n^3},
\end{align*}
so that 
\begin{align*}
Y_{n,2}(A)  \leq  c n^3   \sum_{j = n+1}^{2n-1} \frac{1}{2n-j}  
   \left( \frac{1}{n^{3/2}} \frac{ \log^2 n }{j^{3/2}} + \frac{1}{n^3} \right)  
     =   c n^{3/2} \log^2 n  \sum_{j = n+1}^{2n-1} \frac{1}{2n-j} \frac{1}{j^{3/2}} 
   + c  \sum_{j=1}^{n-1} \frac{1}{j}  
 \leq c \log^3 n.  
\end{align*}
Hence 
\begin{align}\label{Upper-YnA}
Y_n(A)  \leq  Y_{n,1}(A) + Y_{n,2}(A) 
 &  \leq  c \log^3 n. 
\end{align}
Since $\bb E ((U_n^x(A))^2) = \bb E (U_n^x(A)) + Y_n(A)$ and $\bb E (U_n^x(A)) \geq c \pi_s(A) >0$ (cf.\ \eqref{Lower-UnA}),
using \eqref{Upper-YnA} we get
\begin{align*}
\bb E [(U_n^x(A))^2]
 \leq \bb E (U_n^x(A)) + c \log^3 n 
 \leq  c  \log^3 n  \,  (\bb E U_n^x(A))^2.  
\end{align*}
Hence
\begin{align*}
\bb P (U_n^x(A) > 0) 
\geq \frac{(\bb E U_n^x(A))^2}{\bb E[(U_n^x(A))^2]}
\geq  \frac{1}{c \log^3 n},
\end{align*}
which, by the definition of $U_n^x(A)$,  implies that 
\begin{align}\label{Pf_LowerBound_Zn_hh}
 \bb P \left( M_{2n}^x(A)  \geq -\frac{3}{2 \alpha} \log n - \Delta \right)  
 \geq  \bb P (U_n^x(A) > 0)   
 \geq  \frac{1}{c \log^3 n}. 
\end{align}
In the same way, one can also prove such a lower bound for  $M_{2n-1}^x(A)$.
The assertion of the lemma follows. 
\end{proof}

Using Lemma \ref{Lem-intermediate001}, we now give a proof of the following upper bound for $M_n^x(A)$. 

\begin{lemma}\label{Lem-Minimal-Low}
Assume conditions \ref{Condi_N},  \ref{Condi-density}, \ref{Condi-density-invariant} and \ref{Condi_ms}. 
Let $x \in \bb P(V)$. 
Then, for any Borel set $A \subseteq \bb P(V)$ and any $b < - \frac{3}{2 \alpha}$,
%we have
\begin{align}\label{Pf_Minimal_Low}
\lim_{n \to \infty} \bb P \left( \left. \frac{M_n^x(A)}{ \log n } \leq b   \,  \right| \,  \mathscr S  \right) = 0. 
\end{align} 
\end{lemma}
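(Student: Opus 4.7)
The goal is to show, for $b < -\tfrac{3}{2\alpha}$, that $\bb P(M_n^x(A) \leq b\log n \mid \mathscr S) \to 0$. Write $b = -\tfrac{3}{2\alpha} - 2\eta$ with $\eta > 0$, and fix $\Delta > 0$. The plan is to amplify the $1/(c\log^3 n)$ lower bound from Lemma \ref{Lem-intermediate001} into a probability tending to $1$, by exploiting the conditional independence of disjoint subtrees rooted at an intermediate generation.

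Take $k = k(n) = \lfloor C \log n \rfloor$ for a constant $C$ to be chosen. By the branching property,
\[
M_n^x(A) \;\geq\; \max_{|v|=k} \bigl\{ S_v^x + [M_{n-k}^{X_v^x}(A)]_v \bigr\}.
\]
Introduce the $\scr F_k$-measurable random set $\scr G_k := \{v : |v|=k,\ S_v^x \geq -\eta \log n\}$ and set $\tilde N_k := \#\scr G_k$. For each $v \in \scr G_k$, Lemma \ref{Lem-intermediate001} applied to the subtree rooted at $v$ over $n-k$ generations yields, uniformly in the starting direction $X_v^x \in \bb P(V)$,
\[
\bb P\bigl([M_{n-k}^{X_v^x}(A)]_v \geq -\tfrac{3}{2\alpha}\log(n-k) - \Delta \,\big|\, \scr F_k\bigr) \;\geq\; \tfrac{1}{c\log^3 n}.
\]
On the conjunction of this event with $v \in \scr G_k$, for $n$ large one has $S_v^x + [M_{n-k}^{X_v^x}(A)]_v \geq -\eta\log n - \tfrac{3}{2\alpha}\log(n-k) - \Delta > b\log n$. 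Since the subtrees are conditionally independent given $\scr F_k$,
\[
\bb P\bigl(M_n^x(A) \leq b\log n \,\big|\, \scr F_k\bigr) \;\leq\; \bigl(1 - \tfrac{1}{c\log^3 n}\bigr)^{\tilde N_k} \;\leq\; \exp\!\bigl(-\tilde N_k/(c\log^3 n)\bigr),
\]
reducing the proof to showing that $\tilde N_k / \log^3 n \to \infty$ in probability on $\mathscr S$.

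For this reduction, the many-to-one formula at $s=0$ (with $r_0 \equiv 1$, $\mathfrak m(0) = \bb E N$, and $\bb Q_0^x$ coinciding with the law of the underlying i.i.d.\ random walk) gives $\bb E \tilde N_k = (\bb E N)^k \, \bb Q_0^x(S_k \geq -\eta\log n)$. Because $\Lambda'(0) < 0$ (strict convexity of $\Lambda$ combined with $\Lambda'(\alpha)=0$ from condition \ref{Condi_ms}), choosing $C$ large enough so that $-\eta/C \in (\Lambda'(0),0)$ places $\{S_k/k \geq -\eta/C\}$ in the moderate large-deviation regime under $\bb Q_0^x$, and sharp large-deviation asymptotics (\cite{XGL19a}) yield $\bb E \tilde N_k \asymp n^{\delta}$ for some $\delta = \delta(C,\eta) > 0$, which is much larger than $\log^3 n$. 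The main obstacle is to convert this expectation estimate into a lower bound holding with high probability on $\mathscr S$: this is achieved either by invoking the precise branching large-deviation theorem of \cite{BGL20} (already used in the proof of Theorem \ref{Thm_LLN-001}), which ensures $\tilde N_k / \bb E \tilde N_k$ converges a.s.\ to a strictly positive random variable on $\mathscr S$, or by a direct many-to-two second-moment estimate on $\tilde N_k$, analogous to the computation performed in Lemma \ref{Lem-intermediate001}, relying on the spectral-gap properties of the transfer operator $P_{s^*}$ at the tilted parameter $s^* = (\Lambda')^{-1}(-\eta/C) \in (0,\alpha)$. Once $\tilde N_k \gg \log^3 n$ is secured with probability close to $\bb P(\mathscr S)$, the display above completes the proof.
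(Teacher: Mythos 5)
Your proposal is essentially correct and takes a genuinely different route from the paper's proof, so a short comparison is in order. The shared skeleton is clear: both arguments amplify the $1/(c\log^3 n)$ lower bound from Lemma~\ref{Lem-intermediate001} by running many conditionally independent subtrees from an intermediate generation and multiplying the failure probabilities. Where you diverge is in how you manufacture those subtrees and how you control the spatial shift that the first generations contribute. The paper stops at the \emph{random} time $\eta_n = \inf\{k : \sum_{|u|=k}1 \ge [n^\ee]\}$, invokes Kesten--Stigum to get $\eta_n/\log n \to \ee/\log m$ a.s.\ on $\mathscr S$, and then bounds the spatial offset of \emph{every} particle at time $\eta_n$ by $m^x_{\eta_n} \ge -c\,\eta_n$ (Lemma~\ref{lem-liminf of min-001}); the offset $O(\ee\log n)$ is then killed by letting $\ee\downarrow0$ at the end. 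You instead fix a \emph{deterministic} intermediate generation $k = \lfloor C\log n\rfloor$ and restrict attention to the ``good'' particles $v$ with $S_v^x \ge -\eta\log n$, so that the spatial offset is small by fiat, and the work shifts to showing that $\tilde N_k \gg \log^3 n$ with high probability. Your large-deviation computation for $\bb E\tilde N_k$ is sound: with $q = -\eta/C \in (\Lambda'(0),0)$, the point $s^* = (\Lambda')^{-1}(q)$ lies in $(0,\alpha)$, and $\Lambda^*(0) = -\Lambda(\alpha) = \log m$ gives $\Lambda^*(q) < \log m$ by strict convexity of $\Lambda^*$, hence $\bb E\tilde N_k \asymp n^\delta$ with $\delta = C(\log m - \Lambda^*(q)) > 0$. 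The main thing to watch is the conversion of this first-moment bound into a high-probability lower bound for $\tilde N_k$. If you route through the branching large-deviation theorem of~\cite{BGL20}, note that the version cited in Theorem~\ref{Thm_LLN-001} is stated under the extra condition~\ref{Condition_Extra}, which Lemma~\ref{Lem-Minimal-Low} does not assume; you would need to observe that for the particular $s^*\in(0,\alpha)$ used here, the $Y_{s^*}\log_+ Y_{s^*}$ integrability follows from~\ref{Condi_N} and~\ref{Condi_ms} (since $\norm{g}^{s^*} \le 1 + \norm g^\alpha$, hence $\bb E Y_{s^*}^2 < \infty$), or switch to the second-moment route you mention, which stays entirely within the stated hypotheses. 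With that detail handled, your argument is a valid alternative that sidesteps Kesten--Stigum and the auxiliary Lemma~\ref{lem-liminf of min-001}, and avoids the $\ee\downarrow0$ limit at the cost of a tilted large-deviation count at the intermediate generation.
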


\begin{proof}
Let $\ee > 0$ and let $\eta_n$ be the first time when the number of particles exceeds $n^{\ee}$, i.e.
$\eta_n = \inf \{ k \geq 1:  ( \sum_{|u| = k} 1 ) \geq [n^{\ee}] \}.$ 
Denote $m = \bb E N$. 
By the Kesten-Stigum theorem, on the system's ultimate survival, 
$\frac{1}{m^n} \sum_{|u| = n} 1$  
converges almost surely as $n\to\infty$ to a strictly positive random variable, say $W >0$. 
Hence $ \limsup_{n\to\infty} \frac{\eta_n}{\log n} = \frac{\ee}{\log m}$ almost surely.
For brevity, denote for any constant $\Delta >0$,
\begin{align*}
& A_n =  \left\{ M_{n}^x(A) \geq  - c \eta_n  - \frac{3}{2 \alpha} \log n - \Delta  \right\}. 
\end{align*}
We will prove that 
\begin{align} \label{main proof 001}
\bb P \left( \liminf_{n \to \infty}  A_n \right) = 1, 
\end{align}
which in turn will imply \eqref{Pf_Minimal_Low}. Indeed, if \eqref{main proof 001} holds, 
then, almost surely,  on the system's survival,  
\begin{align*}
M_{n}^x(A)  \geq  - c \eta_n  - \frac{3}{2 \alpha} \log n - \Delta, 
\end{align*}
which implies that, on the system's survival,  
\begin{align*}
\liminf_{n \to \infty} \frac{M_{n}^x(A)}{\log n} 
\geq  - c \limsup_{n \to \infty} \frac{\eta_n}{\log n} - \frac{3}{2 \alpha} 
 =  - c \frac{\ee}{\log m} -  \frac{3}{2 \alpha}. 
\end{align*}
Since $\ee >0$ can be arbitrary small, we get \eqref{Pf_Minimal_Low}.

In order to prove \eqref{main proof 001}, we introduce sets $C_n$ with the property that if $\bb P(\liminf_{n \to \infty} C_n^c) =1$
then $\bb P(\liminf_{n \to \infty} A_n) =1$,
and use the Borel-Cantelli lemma to show that $\bb P(\limsup_{n \to \infty} C_n) = 0$.

The details are as follows. 
Since $\eta_n$ grows like $\log n$, 
it holds that $\bb P (\limsup_{n \to \infty} \{ \eta_n > n/2\} ) = 0$, so that 
in order to show \eqref{main proof 001}, it is enough to prove that
\begin{align} \label{liminf_Anc}
\bb P \left(  \liminf_{n \to \infty} (A_n \cup \left\{\eta_n > n/2 \right\}) \right)=1.
\end{align}
Denote, additionally, for any constant $\Delta >0$, 
\begin{align*}
& B_n =  \left\{ M_{n}^x(A) <  m_{\eta_n}^x  - \frac{3}{2 \alpha} \log n - \Delta, \eta_n \leq n/2 \right\},  \\
& C_n =  \left\{ \min_{k \in [\frac{n}{2}, n]}  M_{k + \eta_n}^x(A)  <  m_{\eta_n}^x  - \frac{3}{2 \alpha} \log n - \Delta,
\eta_n \leq n/2   \right\}, 
\end{align*}
where $m_{\eta_n}^x$ is defined by \eqref{MinPosition001A} with $A = \bb P(V)$.
Since on the set $\{ \eta_n \leq n/2 \}$, it holds that $\min_{k \in [\frac{n}{2}, n]}  M_{k + \eta_n}^x(A) \leq M_{n}^x(A)$
and therefore $B_n \subset C_n$. 
Note that 
\begin{align*} 
\liminf_{n \to \infty}  (A_n \cup \left\{\eta_n > n/2 \right\})
\  \supset  \  \liminf_{n \to \infty}  B_n^c  
 \cap  \liminf_{n \to \infty}  \left\{   m_{\eta_n}^x  \geq -c\eta_n  \right\}.  
\end{align*}
By Lemma  \ref{lem-liminf of min-001},   it holds that $\bb P (\liminf_{n \to \infty}  \{   m_{\eta_n}^x  \geq -c\eta_n \}) = 1$.  
Since $B_n^c \supset C_n^c$, to prove \eqref{liminf_Anc} it suffices to show that
\begin{align} \label{liminf_Cn_aa}
\bb P \left(  \liminf_{n \to \infty}  C_n^c \right)=1.
\end{align}

As said before, \eqref{liminf_Cn_aa} will be proved by using the Borel-Cantelli lemma. 
For this we give an upper bound for $\bb P(C_n)$. 
Clearly $\bb P (C_n)
\leq  \sum_{k \in [\frac{n}{2}, n]} R_k$, where
\begin{align*}
  R_k=  \bb P  \left\{  M_{k + \eta_n}^x(A)  <  m_{\eta_n}^x  - \frac{3}{2 \alpha} \log n - \Delta, 
                        \, \eta_n \leq  \frac{n}{2} \right\}.
\end{align*}
By the definition of $M_{k + \eta_n}^x(A)$ and $m_{\eta_n}^x$, we have
\begin{align*}
R_k  & =  \bb P \left(   \max_{|u| = k + \eta_n, X^x_u \in A}  S^x_u
   < \min_{|v| = \eta_n} S^x_v - \frac{3}{2 \alpha} \log n - \Delta, \, \eta_n \leq  \frac{n}{2} \right)  \\
& =  \sum_{j = 1}^{[n/2]}  \bb P \left(   \max_{|u| = j + k, X^x_u \in A}  S^x_u
   < \min_{|v| = j} S^x_v - \frac{3}{2 \alpha} \log n - \Delta, \, \eta_n = j \right). 
\end{align*} 
Recall the notion of the shift operator $[\cdot]_v$ from \eqref{eq:shift-operator}. By the cocycle property, for any $v \in \bb T$ and $u \in \bb T_v$,
\begin{align}\label{Cocycle}
  S^x_{vu}  =   S^x_v +  \big[S^{X^x_v}_u \big]_v. 
\end{align}
Since for any $v$ with $|v|=j$, $S^x_v \geq \min_{|v'| = j}  S^x_{v'}$, 
 from \eqref{Cocycle} we have  for any $u \in \bb T_v$ with $|u| = k$, 
\begin{align*}
S^x_{vu}  
\geq  \min_{|v'| = j}  S^x_{v'} +  \big[S^{X^x_v}_u \big]_v. 
\end{align*}
Hence
\begin{align}\label{Inequality-max-uj}
\max_{|v| = j, |u|=k, X^x_{vu} \in A} S^x_{vu}   
\geq  \min_{|v'| = j}  S^x_{v'} 
 + \max_{|v| = j, |u|=k, X^x_{vu} \in A}   \big[S^{X^x_v}_u\big]_v. 
\end{align}
It follows that 
\begin{align}\label{Bound-Rk-01}
R_k   \leq \sum_{j = 1}^{[n/2]}    
  \bb P \Bigg(  \max_{|v| = j, |u|=k, X^x_{vu} \in A}   \big[S^{X^x_v}_u\big]_v
   <  - \frac{3}{2 \alpha} \log n - \Delta,  \eta_n = j   \Bigg).
\end{align}
Since 
\begin{align*} 
\max_{|v| = j, |u|=k, X^x_{vu} \in A}   \big[S^{X^x_v}_u\big]_v = \max_{|v|=j} \big[M_k^{X^x_v}(A)\big]_v, 
\end{align*}
it holds that 
\begin{align*}
 \bb P \left(   \max_{|v| = j, |u|=k, X^x_{vu} \in A}   \big[S^{X^x_v}_u\big]_v   < 
   - \frac{3}{2 \alpha} \log n - \Delta,  \eta_n = j  \right)  
 =  \bb P \left(    \big[M_k^{X^x_v}(A)\big]_v
   <  - \frac{3}{2 \alpha} \log n - \Delta, \,  \forall |v| =j, \, \eta_n =j  \right).
\end{align*}
Using the independence of the branching property of each node, 
from \eqref{Bound-Rk-01} we get, upon conditioning on $\scr{F}_{\eta_n}$ in the last step,
\begin{align*} 
R_k  
 & \leq \sum_{j = 1}^{[n/2]}  \bb P  \left(    \big[M_k^{X^x_v}(A)\big]_v
 <  - \frac{3}{2 \alpha} \log n - \Delta, \,  \forall |v| =j, \, \eta_n =j  \right)  \notag\\
   & = \bb P \left(    \big[M_k^{X^x_v}(A)\big]_v
   <  - \frac{3}{2 \alpha} \log n - \Delta, \, \forall |v| = \eta_n, \, \eta_n \leq [n/2] \right)  \notag\\
   & \leq \bb P \left(    \big[M_k^{X^x_v}(A)\big]_v
   <  - \frac{3}{2 \alpha} \log n - \Delta, \, \forall |v| = \eta_n  \right)  \notag\\ 
 & = \bb E  \left[ \prod_{|v| = \eta_n}  \mathds{1}_{ \big\{ \big[M_k^{X^x_v}(A)\big]_v
 	<  - \frac{3}{2 \alpha} \log n - \Delta \big\}}  \right] \notag\\
 & = \bb E \left[ \prod_{|v|=\eta_n} F(X^x_v)\right].
\end{align*}
Here $F(x)=\bb P(M_k^x <  - \frac{3}{2 \alpha} \log n - \Delta)$, for which we have found a uniform bound in Lemma \ref{Lem-intermediate001}.
Using further the definition of $\eta_n$ and the inequality $\log (1 - t) \leq -t$ for $t \in (0,1)$,  we get
\begin{align*} 
R_k  \leq  \bb E  \left[ \prod_{|u| = \eta_n}   \left( 1 -  \frac{1}{c \log^3 n}  \right)   \right]
\leq  \left( 1 -  \frac{1}{c \log^3 n}  \right)^{[n^\ee]}
\leq  e^{- [n^\ee] \frac{1}{ \log^3 n } }.  
\end{align*}
This implies 
\begin{align*}
\bb P (C_n)  \leq \sum_{k \in [\frac{n}{2}, n]} R_k
 \leq \sum_{k \in [\frac{n}{2}, n]} e^{- [n^\ee] \frac{1}{ \log^3 n } }  
 \leq n  e^{- [n^{\ee/2}]}. 
\end{align*} 
Since $\sum_{n = 1}^{\infty} n  e^{- [n^\ee]/2} < \infty$,  
we get that $\sum_{n = 1}^{\infty} \bb P(C_n) < \infty$.
By the Borel-Cantelli lemma, we have $\bb P ( \limsup_{n \to \infty} C_n) = 0$ and hence \eqref{main proof 001} holds. 
This completes the proof of the lemma. 
\end{proof}

%%%%%%%%%%%%%%%%%%%%%%%%%%%%%%%%%%%%%%%%%%%%%%%%%%%%
%%%%%%%%%%%%%%%%%%%%%%%%%%%%%%%%%%%%%%%%%%%%%%%%%%%%
\section{Proof of Theorems \ref{Thm_In_Pro_min} and \ref{Thm_In_Pro_Coeff}}

\subsection{Proof of Theorem \ref{Thm_In_Pro_min}}
The following result is an analog of Lemma \ref{Lem_CondiLLT_cc}. 

\begin{lemma}\label{Lem_CondiLLT-minimal}
Assume condition \ref{Condi-density}, \ref{Condi-density-invariant} 
and $\kappa'(s) = 0$ for some $s \in  I_{\mu}$. 
Then, there exists a constant $c >0$ such that for any $x \in \bb P(V)$, $y \geq 0$, 
$n \geq 1$ and $0 \leq  a < b \leq \sqrt{n} \log n$,
\begin{align*}
 \mathbb{Q}_s^x  \left( y + S_{n}\in [a,b], \,  \tilde{\tau}_{y}>n\right)  
 \leq  c \frac{ (1+y) \wedge n^{1/2} }{n^{3/2}} (b-a + 1) ( b+a+ 1), 
\end{align*}
where $\tilde{\tau}_y = \inf \{ k \geq 1: y + S_k < 0\}$. 
\end{lemma}
Since the proof of Lemma \ref{Lem_CondiLLT-minimal} can be carried out in the same way as that of Lemma \ref{Lem_CondiLLT_cc},
we omit the details. 

The following result is similar to Lemma \ref{Lem-Minimal-Upp}. 

\begin{lemma}\label{Lem-Minimal-Low-beta}
Assume conditions \ref{Condi_N},  \ref{Condi-density}, \ref{Condi-density-invariant} and \ref{Condi_Neg_alpha}. 
Let $x \in \bb P(V)$. 
Then, there exists a constant $c> 0$ such that for any $\ee \in (0, \frac{3}{2 \beta})$ and any Borel set $A \subseteq \bb P(V)$, 
\begin{align*}
I: = \bb P \left(\left. \frac{m_n^x(A)}{\log n}  \leq  \frac{3}{2 \beta} -  \ee  \, \right| \,  \mathscr S \right) 
 \leq c \frac{\log^3 n}{n^{\ee \beta}}. 
\end{align*}
\end{lemma}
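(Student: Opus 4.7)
The plan is to mimic the proof of Lemma \ref{Lem-Minimal-Upp}, but with the roles of maximum/minimum reversed and with the change of measure performed at $s=-\beta$ instead of $s=\alpha$. Let $K=a\log n$ with $a>0$ to be chosen. Write $I=I_1+I_2$, where $I_1$ is the contribution on the event $\{\min_{1\le i\le n} m_i^x(A)\geq -K\}$ and $I_2$ is its complement (here $m_i^x(A)$ denotes the minimum over all particles alive at generation $i$ with direction in $A$, allowing convention $\inf\emptyset=+\infty$). Both terms will be estimated with constants uniform in $x$.

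For $I_1$, I would introduce the counting random variable
\[
\tilde Z_n^x(A)=\sum_{|u|=n}\mathds{1}_{\{X_u^x\in A,\ S_u^x/\log n\le\frac{3}{2\beta}-\ee,\ S_{u|i}^x\ge -K,\ \forall 1\le i\le n\}}
\]
and observe that $I_1\le \bb E(\tilde Z_n^x(A)\,|\,\scr S)\le c\bb E(\tilde Z_n^x(A))$. Applying the many-to-one formula \eqref{Formula_many_to_one} with $s=-\beta$, noting that $\mathfrak m(-\beta)=1$ by \ref{Condi_Neg_alpha} and that $r_{-\beta}$ is bounded above and below away from zero on $\bb P(V)$, gives
\[
\bb E(\tilde Z_n^x(A))\le c\,\bb E_{\bb Q_{-\beta}^x}\!\Big[e^{\beta S_n}\mathds{1}_{\{S_n\le(\frac{3}{2\beta}-\ee)\log n,\ S_i\ge -K,\forall i\le n\}}\Big].
\]
Since on the event we have $e^{\beta S_n}\le n^{3/2-\ee\beta}$, we can rewrite the remaining probability by shifting: $K+S_n\ge 0$ corresponds to $\tilde\tau_K>n$, and $K+S_n\le K+(\tfrac{3}{2\beta}-\ee)\log n$. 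Applying Lemma \ref{Lem_CondiLLT-minimal} (with $y=K$, $a=0$, $b=K+(\tfrac{3}{2\beta}-\ee)\log n$) yields the bound $c(1+K)(K+\log n)^2/n^{3/2}\le c\,a^3\log^3 n/n^{3/2}$. Therefore
\[
\bb E(\tilde Z_n^x(A))\le c\,n^{3/2-\ee\beta}\cdot\frac{a^3\log^3 n}{n^{3/2}}=c\,\frac{a^3\log^3 n}{n^{\ee\beta}}.
\]

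For $I_2$, I would use the standard exponential Chebyshev argument dual to \eqref{Mini_I2_001}: since the minimum being below $-K$ means some $S_{u|i}^x<-K$, i.e.\ $e^{-\beta S_{u|i}^x}>e^{\beta K}$, we get
\[
I_2\le c\,e^{-\beta K}\sum_{i=1}^n \bb E\Big[\sum_{|u|=i}e^{-\beta S_{u|i}^x}\Big]
=c\,e^{-\beta K}r_{-\beta}(x)\sum_{i=1}^n\bb E_{\bb Q_{-\beta}^x}\!\Big[\frac{1}{r_{-\beta}(X_i)}\Big]\le c\,n\,e^{-\beta K}
\]
by the many-to-one formula and $\mathfrak m(-\beta)=1$; choosing $a>2/\beta$ in $K=a\log n$ bounds this by $cn^{-a\beta/2}$. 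Combining both estimates, one concludes $I\le c\log^3 n/n^{\ee\beta}$ as claimed.

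The only slightly delicate point is to make sure the conditioned local limit bound of Lemma \ref{Lem_CondiLLT-minimal} is applicable with $b=K+(\tfrac{3}{2\beta}-\ee)\log n$ of order $\log n$, which comfortably satisfies $b\le\sqrt n\log n$; no further obstacle arises, since all structural ingredients (spectral gap, duality, harmonic function, many-to-one formula) are symmetric under the change $\alpha\leftrightsquigarrow -\beta$ once condition \ref{Condi_Neg_alpha} is in force.
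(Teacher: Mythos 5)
Your proposal is correct and matches the paper's own proof essentially line by line: the same split $I=I_1+I_2$ via the threshold $K=a\log n$, the same many-to-one change of measure at $s=-\beta$ with $\mathfrak m(-\beta)=1$ reducing $I_1$ to a $\bb Q_{-\beta}^x$-conditioned local probability controlled by Lemma \ref{Lem_CondiLLT-minimal}, and the same exponential Markov bound plus many-to-one for $I_2$. No meaningful difference in route or technique.
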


\begin{proof}
Let $K >1$. We write $I = I_1 + I_2$, where 
\begin{align*}
I_1  & = \bb P \left(  \left.   \frac{m_n^x(A)}{\log n} \leq  \frac{3}{2 \beta} - \ee, 
  \  \min_{1 \leq i \leq n}  m_i^x(A) \geq  -K    \, \right| \,  \mathscr S \right),   \nonumber\\
I_2  & =  \bb P \left(  \left.    \frac{m_n^x(A)}{\log n}  \leq  \frac{3}{2 \beta} - \ee, 
  \   \min_{1 \leq i \leq n} m_i^x(A) < - K   \, \right| \,  \mathscr S \right), 
\end{align*}
with the notation $m_i^x(A) = \inf \{ S^x_{u|i}: |u| = n,  \,  X_u^x \in A \}$ 
for $x \in \bb P(V)$ and $1 \leq i \leq n$. 
For the first term $I_1$, we have $I_1 \leq \bb E (Z_n^x(A) \, | \,  \mathscr S)$, 
where, for $x \in \bb P(V)$, 
\begin{align*}
Z_n^x(A) = \sum_{|u| = n} 
\mathds{1}_{ \left\{X^x_u \in A, \  \frac{S^x_u}{\log n} \leq  \frac{3}{2 \beta} -  \ee, \  S^x_{u|i} \geq  -K,  \  \forall \,  1 \leq i \leq n \right\} }. 
\end{align*}
By the many-to-one formula \eqref{Formula_many_to_one}, 
the fact that $\mathfrak m (-\beta)  = 1$ (cf.\ condition \ref{Condi_Neg_alpha})
and $r_{-\beta}$ is bounded and strictly positive on $\bb P(V)$ (cf.\ Proposition \ref{prop:Ps}),  
there exists a constant $c>0$ such that for any $x \in \bb P(V)$, 
\begin{align*}
 \bb E(Z_n^x(A))   
& = r_{-\beta}(x) \bb E_{\bb Q_{-\beta}^x}  
 \left[ \frac{1}{r_{-\beta} (X_n)} e^{\beta S_n} 
   \mathds{1}_{ \{ X_n \in A, \,  S_n \leq  (\frac{3}{2 \beta} - \ee) \log n, 
        \   S_i \geq -K,  \  \forall 1 \leq i \leq n \} } \right]  \nonumber\\
&  \leq  c n^{ \frac{3}{2} - \ee \beta} 
    \bb Q_{-\beta}^x \left\{ S_n \leq  \left( \frac{3}{2 \beta} - \ee \right) \log n, 
                    \  S_i \geq  -K, \  \forall 1 \leq i \leq n \right\} \nonumber\\
&  =  c  n^{ \frac{3}{2} - \ee \beta}    
   \bb Q_{-\beta}^x \left\{ K + S_n \in \left[0,  K + \left(\frac{3}{2 \beta} - \ee \right) \log n \right], 
         \,  \tilde{\tau}_K > n  \right\},   
\end{align*}
where $\tilde{\tau}_K = \inf \{ k \geq 1: K + S_k < 0\}$. 
Applying Lemma \ref{Lem_CondiLLT-minimal} and taking $K = a \log n$ with $a>0$ (whose value will be chosen to be sufficiently large), 
we obtain
\begin{align*} 
\bb E(Z_n^x(A)) 
 \leq  c  n^{ \frac{3}{2} - \ee \beta}   (1 + K) \frac{[ K + ( \frac{3}{2 \beta} - \ee) \log n ]^2}{n^{3/2}}  
 \leq c \frac{a^3 \log^3 n}{n^{\ee \beta}}. 
\end{align*}
It follows that 
\begin{align}\label{Mini_I1_001-beta}
I_1 \leq \bb E (Z_n^x(A) \, | \,  \mathscr S) \leq  c \frac{a^3 \log^3 n}{n^{\ee \beta}}. 
\end{align}
For $I_2$, by Markov's inequality, the many-to-one formula \eqref{Formula_many_to_one}
and the fact that $\mathfrak m (-\beta)  = 1$ (cf.\ condition \ref{Condi_Neg_alpha}), we have 
\begin{align}\label{Mini_I2_001-beta}
I_2 & \leq  c \bb P \left( \min_{1 \leq i \leq n} m_i^x(A) <  -K \right)  
 \leq  c \bb P \left(  \min_{1 \leq i \leq n} \min_{|u|=i}  S^x_u <  -K   \right)  \nonumber\\
& \leq  c e^{- \beta K} \bb E \left( e^{ -\beta \min_{1 \leq i \leq n} \min_{|u|=i}  S^x_u} \right)   
 \leq  c e^{- \beta K} \bb E \left( \max_{1 \leq i \leq n} \sum_{|u|=i} e^{-\beta  S^x_u} \right)   \nonumber\\
& \leq  c e^{- \beta K}   \sum_{i = 1}^n  \bb E \left(  \sum_{|u|=i} e^{-\beta S^x_u} \right)  
=  c e^{- \beta K} r_{-\beta}(x)  \sum_{i = 1}^n  \bb E_{\bb Q_{-\beta}^x } \left( \frac{1}{ r_{-\beta}(X_i) } \right)
 \leq  \frac{c}{n^{a \beta - 1}}
 \leq  \frac{c}{n^{a \beta /2}},
\end{align}
by taking $K = a \log n$ and $a>0$ sufficiently large. 
Combining \eqref{Mini_I1_001-beta} and \eqref{Mini_I2_001-beta} ends the proof of Lemma \ref{Lem-Minimal-Low-beta}. 
\end{proof}

The following result is an analog of Lemma \ref{Lem-Minimal-Low} and its proof can be carried out in the same way. 

\begin{lemma}\label{Lem-Minimal-Low-beta-upp}
Assume conditions \ref{Condi_N},  \ref{Condi-density}, \ref{Condi-density-invariant} and \ref{Condi_Neg_alpha}. 
Let $x \in \bb P(V)$. 
Then, for any Borel set $A \subseteq \bb P(V)$ and any $b > \frac{3}{2 \beta}$,
%we have
\begin{align*}
\lim_{n \to \infty} \bb P \left( \left. \frac{m_n^x(A)}{ \log n } > b   \,  \right| \,  \mathscr S  \right) = 0. 
\end{align*} 
\end{lemma}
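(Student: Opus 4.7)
The plan is to mirror the proof of Lemma \ref{Lem-Minimal-Low} with inequalities reversed and with the boundary condition \ref{Condi_ms} replaced by \ref{Condi_Neg_alpha}, so that the role of $\alpha$ is taken by $-\beta$ and the role of $M_n^x(A)$ by $-m_n^x(A)$. The three ingredients I need are: (i) the analogue of the lower bound Lemma \ref{Lem-intermediate001} for the minimal position; (ii) the analogue of Lemma \ref{lem-liminf of min-001} controlling $\limsup_n M_n^x/n$; and (iii) the Borel--Cantelli argument built on the shift operator and the Kesten--Stigum-type first passage time $\eta_n$.

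First I would prove that, for any $\Delta > 0$, there exists $c > 0$ such that for any Borel set $A \subseteq \bb P(V)$ and $n \geq 2$,
\begin{align*}
\bb P\Big( m_n^x(A) \leq \tfrac{3}{2\beta} \log n + \Delta \Big) \geq \frac{1}{c \log^3 n}.
\end{align*}
This is shown via the second moment method applied to
$\tilde U_n^x(A) = \sum_{|u|=2n} \mathds 1_{\{X^x_u \in A\}} \mathds 1_{\{S^x_u \in [\frac{3}{2\beta}\log n,\, \frac{3}{2\beta}\log n + \Delta]\}} \mathds 1_{\{\min_{1\leq i \leq n} S^x_{u|i} \geq 0,\, \min_{n < j \leq 2n} S^x_{u|j} \geq \frac{3}{2\beta}\log n\}}.$
The many-to-one formula \eqref{Formula_many_to_one} with $s = -\beta$ (using $\mathfrak m(-\beta) = 1$ and the two-sided bounds on $r_{-\beta}$) converts $\bb E(\tilde U_n^x(A))$ into a weighted probability of the Markov random walk $(X_k,S_k)$ under $\bb Q_{-\beta}^x$ staying above the shifted barriers and ending in a window of width $\Delta$; this matches exactly the hypothesis of Proposition \ref{Lem_Condi_inf} with $s = -\beta$, $a_n = \frac{3}{2\beta}\log n$ and $\Delta_n = \Delta$, giving $\bb E(\tilde U_n^x(A)) \geq c\,\Delta^2 \pi_{-\beta}(A)$. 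For the second moment, I decompose $\bb E[(\tilde U_n^x(A))^2]$ over the generation $j$ of the last common ancestor exactly as in the proof of Lemma \ref{Lem-intermediate001}; after applying the many-to-one formula both for the ancestral path and the two offspring subtrees, Lemma \ref{Lem_CondiLLT-minimal} (playing the role of Lemma \ref{Lem_CondiLLT_cc}) supplies the small-window exit-probability bound for the walk $y + S_{k}$ with the stopping time $\tilde\tau_y$. The resulting $M(s,y)$-type factor is precisely the $L^2$ quantity whose finiteness is postulated in \ref{Condi_Neg_alpha}, and the sum telescopes to $O(\log^3 n)$.

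Second, I need a linear almost-sure upper bound on the maximal position: there is $c_0 > 0$ with $\limsup_{n\to\infty} M_n^x/n \leq c_0$ a.s. I would mimic Lemma \ref{lem-liminf of min-001} by setting $Y_n = \sum_{|u|=n} \mathds 1_{\{S^x_u \geq c_0 n\}}$, bounding $Y_n \leq e^{-c_0 \delta n} \sum_{|u|=n} e^{\delta S^x_u}$ and computing $\bb E(Y_n)$ via the many-to-one formula with $s = -\beta$. For sufficiently small $\delta > 0$ and sufficiently large $c_0 > 0$, the resulting expectation is controlled by $e^{-c_0\delta n} \kappa(-\beta)^{-n}(\bb E\,e^{\delta \log\|g\|})^n$, which is summable, so the Borel--Cantelli lemma gives the claim.

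Third, I would run the Borel--Cantelli argument of Lemma \ref{Lem-Minimal-Low}. Fix $\ee > 0$ and let $\eta_n$ be the first generation at which the population exceeds $n^\ee$; on survival, Kesten--Stigum yields $\limsup_n \eta_n/\log n = \ee/\log \bb E N$. With the event
\begin{align*}
A_n = \Big\{ m_n^x(A) \leq c_0 \eta_n + \tfrac{3}{2\beta} \log n + \Delta \Big\},
\end{align*}
it suffices to prove $\bb P(\liminf_n A_n) = 1$, since $\ee$ is arbitrary. Introducing
\begin{align*}
C_n = \Big\{ \max_{k \in [n/2,\, n]} m_{k+\eta_n}^x(A) > M_{\eta_n}^x + \tfrac{3}{2\beta} \log n + \Delta,\ \eta_n \leq n/2 \Big\},
\end{align*}
the cocycle relation \eqref{Cocycle} and the independence of subtrees rooted at particles of generation $\eta_n$ give, after conditioning on $\scr F_{\eta_n}$,
\begin{align*}
\bb P(C_n) \leq \bb E\Big[ \prod_{|v|=\eta_n} \bb P\big( m_k^{X^x_v}(A) > \tfrac{3}{2\beta}\log n + \Delta \big) \Big] \leq \Big( 1 - \tfrac{1}{c \log^3 n} \Big)^{[n^\ee]},
\end{align*}
which is summable in $n$. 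Combining with step two (which controls $M_{\eta_n}^x$ by $c_0 \eta_n$) closes the argument.

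The main obstacle is the second-moment estimate in the first step: one must carefully track the logarithmic weights arising from $(1 - S_j)^2$ contributions in the ancestor-path range $1 \leq j \leq n$ versus the different scaling in the range $n < j \leq 2n-1$, and verify that the second-order moment assumption in \ref{Condi_Neg_alpha} (with $\|G_u^{-1}\|^{-\beta}$ and the logarithmic correction) is precisely what is required to bound the conditional-expectation factors $M(s,y) \leq c(1+|s|)^2$. The remaining pieces are direct transcriptions, via the duality of $\mu$ and $\check\mu$ and the symmetry $\bb Q_s^x \leftrightarrow \bb Q_s^{x,*}$, of computations already carried out for the maximal position.
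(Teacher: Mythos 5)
Your proposal is correct and reconstructs precisely the adaptation of Lemma \ref{Lem-Minimal-Low} that the paper has in mind for Lemma \ref{Lem-Minimal-Low-beta-upp}: the second--moment argument with $\tilde U_n^x(A)$, the linear a.s.\ control of $M_n^x$ via a Markov/Borel--Cantelli bound under $\bb Q_{-\beta}^x$, and the Borel--Cantelli/$\eta_n$-scheme with the reversed cocycle inequality. One small precision to fold in: where you write that the lower bound on $\bb E(\tilde U_n^x(A))$ "matches exactly the hypothesis of Proposition \ref{Lem_Condi_inf}", you in fact need the analogue of Proposition \ref{Lem_Condi_inf} with $y - S_n$ and $\tau_y$ replaced by $y + S_n$ and $\tilde\tau_y$ (exactly as Lemma \ref{Lem_CondiLLT-minimal} is the $\tilde\tau_y$-analogue of Lemma \ref{Lem_CondiLLT_cc}); this analogue holds by the same spectral-gap/duality machinery applied to $-\sigma$, but it is a genuinely different proposition, not a direct instance of the one stated.
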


Theorem \ref{Thm_In_Pro_min} follows from Lemmas \ref{Lem-Minimal-Low-beta} and \ref{Lem-Minimal-Low-beta-upp}.

%%%%%%%%%%%%%%%%%%%%%%%%%%%%%%%%%%%%%%%%%%%%%
%%%%%%%%%%%%%%%%%%%%%%%%%%%%%%%%%%%%%%%%%%%%%
\subsection{Proof of Theorem \ref{Thm_In_Pro_Coeff}}

We first prove \eqref{thm1_Minimal_aa_Coeff} for the coefficients $\langle f, G_u v \rangle$.  
Since $|\langle f, G_u v \rangle| \leq \|G_u v\|$, 
using Theorem \ref{Thm_In_Pro}, we get, for any $\ee > 0$, 
\begin{align*}
\lim_{n \to \infty}  
\bb P \left( \left.   \frac{ \sup_{G_u\cdot x\in A, | u | = n} \log |\langle f, G_u v \rangle| }{\log n} + \frac{3}{2 \alpha}  >  \ee
  \, \right| \,  \mathscr S \right) = 0. 
\end{align*}
On the other hand, we denote $A_n = \left\{ \log |\langle f, G_u v \rangle| - \log \|G_u v\| \leq - \frac{\ee}{2}  \log n \right\}$
and by $A_n^c$ its complement. 
By \cite[Lemma 14.11]{BQ16b}, we have $\bb P(A_n) \leq  n^{-c}$ for some constant $c>0$. 
This, together with the fact that $\bb P (\mathscr S) >0$, implies that 
\begin{align*}  
& \bb P \left( \left.   \frac{ \sup_{G_u\cdot x\in A, | u | = n} \log |\langle f, G_u v \rangle| }{\log n} + \frac{3}{2 \alpha}  <  -\ee
  \, \right| \,  \mathscr S \right)  \nonumber\\
& \leq \frac{1}{n^c} 
   +  \bb P \left( \left.   \frac{ \sup_{G_u\cdot x\in A, | u | = n} \log \|G_u v\| }{\log n} + \frac{3}{2 \alpha}  <  - \frac{\ee}{2}
  \, \right| \,  \mathscr S \right),
\end{align*}
which converges to $0$ as $n \to \infty$, using Theorem \ref{Thm_In_Pro}. 
The proof of \eqref{thm1_Minimal_aa_Coeff} for the coefficients $\langle f, G_u v \rangle$ is complete.

We next prove \eqref{thm1_Minimal_aa_Coeff} for the operator norm $\|G_u\|$.  
Since $\|G_u \| \geq \| G_u v \|$, 
by Theorem \ref{Thm_In_Pro}, we get, for any $\ee > 0$, 
\begin{align*}
\lim_{n \to \infty}  
\bb P \left( \left.   \frac{ \sup_{G_u\cdot x\in A, | u | = n} \log \| G_u \| }{\log n} + \frac{3}{2 \alpha}  <  -\ee
  \, \right| \,  \mathscr S \right) = 0. 
\end{align*}
Denote $A_n = \left\{ \log \| G_u v \| - \log \|G_u \| \leq - \frac{\ee}{2}  \log n \right\}$
and by $A_n^c$ its complement. 
By  \cite[Proposition 3.11]{Hde19}, we see that $\bb P (A_n) \leq n^{-c}$ for some constant $c >0$. 
Then,  we obtain
\begin{align*}  
\bb P \left( \left.   \frac{ \sup_{G_u\cdot x\in A, | u | = n} \log \| G_u \| }{\log n} + \frac{3}{2 \alpha}  >  \ee
  \, \right| \,  \mathscr S \right)  
 \leq \frac{1}{n^c} 
   +  \bb P \left( \left.   \frac{ \sup_{G_u\cdot x\in A, | u | = n} \log \|G_u v\| }{\log n} + \frac{3}{2 \alpha}  > \frac{\ee}{2}
  \, \right| \,  \mathscr S \right),
\end{align*}
which converges to $0$ as $n \to \infty$, using Theorem \ref{Thm_In_Pro}. 
The proof of \eqref{thm1_Minimal_aa_Coeff} for the operator norm $\|G_u\|$ is complete. 

We finally prove \eqref{thm1_Minimal_aa_Coeff} for the spectral radius $\rho(G_u)$. 
Since $\rho(G_u) \leq  \|G_u \|$, 
using the law of large numbers \eqref{thm1_Minimal_aa_Coeff} for the operator norm $\|G_u\|$, 
we get that for any $\ee > 0$, 
\begin{align*}
\lim_{n \to \infty}  
\bb P \left( \left.   \frac{ \sup_{G_u\cdot x\in A, | u | = n} \log \rho(G_n) }{\log n} + \frac{3}{2 \alpha}  > \ee
  \, \right| \,  \mathscr S \right) = 0. 
\end{align*}
Denote $A_n = \left\{ \log \rho(G_n) - \log \|G_u \| \leq - \frac{\ee}{2}  \log n \right\}$
and by $A_n^c$ its complement. 
By  \cite[Lemma 14.13]{BQ16b}, we see that $\bb P (A_n) \leq n^{-c}$ for some constant $c >0$. 
Hence, 
\begin{align*}  
 \bb P \left( \left.   \frac{ \sup_{G_u\cdot x\in A, | u | = n} \log \rho(G_n) }{\log n} + \frac{3}{2 \alpha}  <  -\ee
  \, \right| \,  \mathscr S \right)  
 \leq \frac{1}{n^c} 
   +  \bb P \left( \left.   \frac{ \sup_{G_u\cdot x\in A, | u | = n} \log \|G_u \| }{\log n} + \frac{3}{2 \alpha}  < - \frac{\ee}{2}
  \, \right| \,  \mathscr S \right),
\end{align*}
which converges to $0$ as $n \to \infty$, using the law of large numbers \eqref{thm1_Minimal_aa_Coeff} for the operator norm $\|G_u\|$. 
This finishes the proof of \eqref{thm1_Minimal_aa_Coeff} for the spectral radius $\rho(G_u)$.

%%%%%%%%%%%%%%%%%%%%%%%%%%%%%%%%%%%%%%%%%%%%%%
%% Multiple Appendixes:                     %%
%%%%%%%%%%%%%%%%%%%%%%%%%%%%%%%%%%%%%%%%%%%%%%
%\begin{appendix}
%\section{???}
%
%\section{???}
%
%\end{appendix}

%%%%%%%%%%%%%%%%%%%%%%%%%%%%%%%%%%%%%%%%%%%%%%
%% Support information, if any,             %%
%% should be provided in the                %%
%% Acknowledgements section.                %%
%%%%%%%%%%%%%%%%%%%%%%%%%%%%%%%%%%%%%%%%%%%%%%
\begin{acks}[Acknowledgments]
The authors would like to thank the referees and the editor for
very constructive comments which contributed to improve the presentation of the paper. 

Hui Xiao is corresponding author. 
\end{acks}
%%%%%%%%%%%%%%%%%%%%%%%%%%%%%%%%%%%%%%%%%%%%%%
%% Funding information, if any,             %%
%% should be provided in the                %%
%% funding section.                         %%
%%%%%%%%%%%%%%%%%%%%%%%%%%%%%%%%%%%%%%%%%%%%%%
\begin{funding}
% The first author was supported by ...
%
% The second author was supported in part by ...
The authors  were supported by DFG grant ME 4473/2-1. 
Hui Xiao was also supported by the National Natural Science Foundation of China (No. 12288201). 
\end{funding}

%%%%%%%%%%%%%%%%%%%%%%%%%%%%%%%%%%%%%%%%%%%%%%%%%%%%%%%%%%%%%
%%                  The Bibliography                       %%
%%                                                         %%
%%  imsart-number.bst  will be used to                     %%
%%  create a .BBL file for submission.                     %%
%%                                                         %%
%%  Note that the displayed Bibliography will not          %%
%%  necessarily be rendered by Latex exactly as specified  %%
%%  in the online Instructions for Authors.                %%
%%                                                         %%
%%  MR numbers will be added by VTeX.                      %%
%%                                                         %%
%%  Use \cite{...} to cite references in text.             %%
%%                                                         %%
%%%%%%%%%%%%%%%%%%%%%%%%%%%%%%%%%%%%%%%%%%%%%%%%%%%%%%%%%%%%%

%% if your bibliography is in bibtex format, uncomment commands:
%\bibliographystyle{imsart-number} % Style BST file
%\bibliography{bibliography}       % Bibliography file (usually '*.bib')

\begin{thebibliography}{100}

\bibitem{AR09}  L. Addario-Berry and B. Reed. 
Minima in branching random walks. 
\textit{Ann. Probab.} 
\textbf{37} (3) (2009) 1044--1079. 

\bibitem{Aid13} E. A\" id\' ekon. 
Convergence in law of the minimum of a branching random walk. 
\textit{Ann. Probab.}
\textbf{41} (3A) (2013) 1362--1426. 

\bibitem{AS10} E. A\" id\' ekon and Z. Shi. 
Weak convergence for the minimal position in a branching random walk: a simple proof. 
\textit{Period. Math. Hungar.} 
\textbf{61} (1-2) (2010) 43--54. 


\bibitem{AS14} E. A\" id\' ekon and Z. Shi. 
The Seneta-Heyde scaling for the branching random walk. 
\textit{Ann. Probab.} 
\textbf{42} (3) (2014) 959--993. 

\bibitem{AM12} G. Alsmeyer and S. Mentemeier. 
Tail behaviour of stationary solutions of random difference equations: the case of regular matrices.
\textit{J. Difference Equ. Appl.} 
\textbf{18} (8) (2012) 1305--1332.

\bibitem{BQ16a} Y. Benoist and J. F. Quint. 
Central limit theorem for linear groups. 
\textit{Ann. Probab.}  
\textbf{44} (2) (2016) 1308--1340.

\bibitem{BQ16b} Y. Benoist and J. F. Quint. 
Random walks on reductive groups. 
\textit{Springer International Publishing}, 2016. 

\bibitem{Bhatt72} R. N. Bhattacharya.  
Speed of convergence of the $n$-fold convolution of a probability measure on a compact group.
\textit{Z. Wahrscheinlichkeitstheorie und Verw. Gebiete}
%\textit{Zeitschrift f\"ur Wahrscheinlichkeitstheorie und Verwandte Gebiete}
\textbf{25} (1972) 1--10.

\bibitem{Big76} J. D. Biggins. 
The first-and last-birth problems for a multitype age-dependent branching process. 
\textit{Advances in Appl. Probability} 
\textbf{8} (3) (1976) 446--459. 


\bibitem{Big79} J. D. Biggins. 
Growth rates in the branching random walk. 
\textit{Z. Wahrsch. Verw. Gebiete} 
\textbf{48} (1) (1979), 17--34. 

\bibitem{BK05} J. D. Biggins and A. E. Kyprianou.
 Fixed points of the smoothing transform: The boundary case.
\textit{Electron. J. Probab.} 
\textbf{10} (2005) 609--631.


\bibitem{BL85} P. Bougerol and J. Lacroix.  
Products of random matrices with applications to Schr\"odinger operators.     
\textit{Birkh\"{a}user Boston}, 1985.

\bibitem{BGL20} T. T. Bui, I. Grama and Q. Liu. 
Central limit theorem and precise large deviations for branching random walks with products of random matrices.
\textit{HAL Id}: hal-02911860, 2020. 


\bibitem{BDGM14} D. Buraczewski, E. Damek, Y. Guivarc'h and S. Mentemeier.  
On multidimensional Mandelbrot cascades. 
\textit{J. Difference Equ. Appl.}
\textbf{20} (11) (2014) 1523--1567.

\bibitem{BDMM13} D. Buraczewski, E. Damek, S. Mentemeier and M. Mirek. 
Heavy tailed solutions of multivariate smoothing transforms.
\textit{Stochastic Process. Appl.} 
\textbf{123} (6) (2013) 1947--1986.

\bibitem{BM16} D. Buraczewski and S. Mentemeier. 
Precise large deviation results for products of random matrices. 
\textit{Ann. Inst. Henri Poincar\'e Probab. Stat.}
\textbf{52} (3) (2016) 1474--1513.


\bibitem{Ete81} N. Etemadi. 
An elementary proof of the strong law of large numbers. 
\textit{ Z. Wahrsch. Verw. Gebiete} 
\textbf{55} (1981) 119--122.


\bibitem{Gor69} M. I. Gordin. 
The central limit theorem for stationary processes. 
\textit{Dokl. Akad. Nauk SSSR}
\textbf{188} (4) (1969) 739--741. 


\bibitem{GLL18} I. Grama, R. Lauvergnat and \'{E}. Le Page. 
Limit theorems for Markov walks conditioned to stay positive under a spectral gap assumption. 
\textit{Ann. Probab.} 
\textbf{46} (4) (2018) 1807--1877.

\bibitem{GLL20} I. Grama, R. Lauvergnat and \'E. Le Page. 
Conditioned local limit theorems for random walks defined on finite Markov chains. 
\textit{Probab. Theory Related Fields} 
\textbf{176} (1-2) (2020) 669--735. 


\bibitem{GLP17} I. Grama, \'{E}. Le Page and M. Peign\'{e}. 
Conditioned limit theorems for products of random matrices. 
\textit{Probab. Theory Related Fields}  
\textbf{168} (3-4) (2017) 601--639.

\bibitem{GQX21-AIHP}  I. Grama, J. F. Quint and H. Xiao. 
A zero-one law for invariant measures and a local limit theorem for coefficients of 
random walks on the general linear group. 
\textit{Ann. Inst. Henri Poincar\'e Probab. Stat.} 
\textbf{58} (4) (2022) 2321--2346. 


\bibitem{GQX21} I. Grama, J. F. Quint and H. Xiao. 
Conditioned limit theorems for hyperbolic dynamical systems. 
\textit{Ergodic Theory Dynam. Systems} 
\textbf{44} (1) (2024) 50--117. 

\bibitem{GX21} I. Grama and H. Xiao. 
Conditioned local limit theorems for random walks on the real line. 
 arXiv:2110.05123, 2021.
 to appear in \textit{Ann. Inst. Henri Poincar\'e Probab. Stat.} 

	
\bibitem{GL16} Y. Guivarc'h and \'{E}. Le Page. 
Spectral gap properties for linear random walks and Pareto's asymptotics for affine stochastic recursions. 
\textit{Ann. Inst. Henri Poincar\'e Probab. Stat.} 
\textbf{52} (2) (2016) 503--574.

\bibitem{GU05} Y. Guivarc'h and R. Urban.
Semigroup actions on tori and stationary measures on projective spaces.
\textit{Studia Math.} 
\textbf{171} (1) (2005) 33--66.


\bibitem{Gut92} A. Gut. 
The weak law of large numbers for arrays,
\textit{Statist. Probab. Lett.}
\textbf{14} (1) (1992) 49--52. 

\bibitem{Hae84}  E. Haeusler.  
An exact rate of convergence in the functional central limit theorem for special martingale difference arrays. 
\textit{Z. Wahrsch. Verw. Gebiete}
\textbf{65} (4) (1984) 523--534. 


\bibitem{Ham74} J. M. Hammersley. 
Postulates for subadditive processes. 
\textit{Ann. Probab.}
\textbf{2} (4) (1974) 652--680.   


\bibitem{Hde19} W. He and N. de Saxc\'{e}. 
Linear random walks on the torus. 
%arXiv preprint arXiv:1910.13421. 
\textit{Duke Math. J.} 
\textbf{171} (5) (2022) 1061--1133. 


\bibitem{HL01} H. Hennion and L. Herv\'{e}. 
Limit theorems for Markov chains and stochastic properties of dynamical systems by quasi-compactness. 
Vol. 1766 of 
\textit{Lecture Notes in Mathematics}. Springer-Verlag, Berlin, 2001. 


\bibitem{HS09} Y. Hu and Z. Shi. 
Minimal position and critical martingale convergence in branching random walks, 
and directed polymers on disordered trees. 
\textit{Ann. Probab.} 
\textbf{37} (2) (2009) 742--789. 


\bibitem{K59} S. Karlin.
Positive operators. 
\textit{J. Math. Mech.}
\textbf{8} (1959) 907--937.

\bibitem{Kin75} J. F. C. Kingman. 
The first birth problem for an age-dependent branching process. 
\textit{Ann. Probab.}  
\textbf{3} (5) (1975) 790--801.  

\bibitem{LL15}
X. Liang and Q. Liu. 
Weighted moments for Mandelbrot's martingales. 
\textit{Electro. Commun. Probab.}
\textbf{20} (85) (2015) 12 pp.

\bibitem{Lig85} T. M. Liggett. 
An improved subadditive ergodic theorem. 
\textit{Ann. Probab.} 
\textbf{13} (4) (1985) 1279--1285. 

\bibitem{LPP95}
R. Lyons, R. Pemantle and Y. Peres. 
Conceptual proofs of $L$ Log $L$ criteria for mean behavior of branching processes.
\textit{Ann. Probab.} 
\textbf{23} (3) (1995) 1125--1138.  



\bibitem{Men16} S. Mentemeier. 
The fixed points of the multivariate smoothing transform. 
\textit{Probab. Theory Related Fields} 
\textbf{164} (1-2) (2016) 401--458. 


\bibitem{Shi12} Z. Shi. 
Branching random walks. 
\' Ecole d'\' Et\' e de Probabilit\' es de Saint-Flour XLII: 
\textit{Springer}, 2012.


\bibitem{Tutubalin65} V. N. Tutubalin. 
On Limit Theorems for the Product of Random Matrices. 
\textit{Theory Probab. Appl.}  
\textbf{10} (1) (1965) 15--27.

\bibitem{XGL20a} H. Xiao, I. Grama and Q. Liu.
Precise large deviation asymptotics for products of random matrices. 
\textit{Stochastic Process. Appl.} 
\textbf{130} (9) (2020) 5213--5242.

     
\bibitem{XGL19a} H. Xiao, I. Grama and Q. Liu. 
Berry-Esseen bound and precise moderate deviations for products of random matrices. 
\textit{J. Eur. Math. Soc. (JEMS)} 
\textbf{24} (8) (2022) 2691--2750.   

\bibitem{Y80} K. Yosida.
Functional Analysis. Sixth Edition. \text{Springer}, 1980.



\end{thebibliography}

%% or include bibliography directly:
% \begin{thebibliography}{}
% \bibitem{b1}
% \end{thebibliography}

%%%%%%%%%%%%%%%%%%%%%%%%%%%%%%%%%%%%%%%%%%%%%%%%%%%%%%%
%%%%%%%%%%%%%%%%%%%%%%%%%%%%%%%%%%%%%%%%%%%%%%%%%%%%%%%

\end{document}